\numberwithin{equation}{section}
\numberwithin{figure}{section}
\theoremstyle{plain}
\newtheorem{thm}{\protect\theoremname}[section]
\theoremstyle{plain}
\newtheorem{cor}[thm]{\protect\corollaryname}
\theoremstyle{plain}
\newtheorem{prop}[thm]{\protect\propositionname}
\theoremstyle{plain}
\newtheorem{lem}[thm]{\protect\lemmaname}
\theoremstyle{remark}
\newtheorem{rem}[thm]{\protect\remarkname}
\theoremstyle{definition}
\newtheorem{defn}[thm]{\protect\definitionname}
\DeclareMathOperator\tr{tr}
\DeclareMathOperator\Tr{Tr}
\providecommand{\corollaryname}{Corollary}
\providecommand{\definitionname}{Definition}
\providecommand{\lemmaname}{Lemma}
\providecommand{\propositionname}{Proposition}
\providecommand{\remarkname}{Remark}
\providecommand{\theoremname}{Theorem}
\begin{document}
\global\long\def\F{\mathcal{F} }%
\global\long\def\Aut{\mathrm{Aut}\chi}%
\global\long\def\C{\mathbb{C}}%
\global\long\def\H{\mathcal{H}}%
\global\long\def\U{\mathrm{U}}%
\global\long\def\P{\mathcal{P}}%
\global\long\def\ext{\mathrm{ext}}%
\global\long\def\hull{\mathrm{hull}}%
\global\long\def\triv{\mathrm{triv}}%
\global\long\def\Hom{\mathrm{Hom}}%

\global\long\def\trace{\mathrm{tr}}%
\global\long\def\End{\mathrm{End}}%

\global\long\def\L{\mathcal{L}}%
\global\long\def\W{\mathcal{W}}%
\global\long\def\E{\mathbb{E}}%
\global\long\def\SL{\mathrm{SL}}%
\global\long\def\R{\mathbb{R}}%
\global\long\def\Z{\mathbb{Z}}%
\global\long\def\rs{\to}%
\global\long\def\A{\mathcal{A}}%
\global\long\def\a{\mathbf{a}}%
\global\long\def\rsa{\rightsquigarrow}%
\global\long\def\D{\mathbf{D}}%
\global\long\def\b{\mathbf{b}}%
\global\long\def\df{\mathrm{def}}%
\global\long\def\eqdf{\stackrel{\df}{=}}%
\global\long\def\ZZ{\mathcal{Z}}%
\global\long\def\Tr{\mathrm{Tr}}%
\global\long\def\N{\mathbb{N}}%
\global\long\def\std{\mathrm{std}}%
\global\long\def\HS{\mathrm{H.S.}}%
\global\long\def\e{\varepsilon}%
\global\long\def\c{\mathbf{c}}%
\global\long\def\d{\mathbf{d}}%
\global\long\def\AA{\mathbf{A}}%
\global\long\def\BB{\mathbf{B}}%
\global\long\def\u{\mathbf{u}}%
\global\long\def\v{\mathbf{v}}%
\global\long\def\spec{\mathrm{spec}}%
\global\long\def\Ind{\mathrm{Ind}}%
\global\long\def\half{\frac{1}{2}}%
\global\long\def\Re{\mathrm{Re}}%
\global\long\def\Im{\mathrm{Im}}%
\global\long\def\p{\mathfrak{p}}%
\global\long\def\j{\mathbf{j}}%
\global\long\def\uB{\underline{B}}%
\global\long\def\tr{\mathrm{tr}}%
\global\long\def\rank{\mathrm{rank}}%
\global\long\def\K{\mathcal{K}}%
\global\long\def\hh{\mathcal{H}}%
\global\long\def\h{\mathfrak{h}}%

\global\long\def\EE{\mathcal{E}}%
\global\long\def\PSL{\mathrm{PSL}}%
\global\long\def\G{\mathcal{G}}%
\global\long\def\Int{\mathrm{Int}}%
\global\long\def\acc{\mathrm{acc}}%
\global\long\def\awl{\mathsf{awl}}%
\global\long\def\even{\mathrm{even}}%
\global\long\def\z{\mathbf{z}}%
\global\long\def\id{\mathrm{id}}%
\global\long\def\CC{\mathcal{C}}%
\global\long\def\cusp{\mathrm{cusp}}%
\global\long\def\new{\mathrm{new}}%

\global\long\def\LL{\mathbb{L}}%
\global\long\def\M{\mathbf{M}}%
\global\long\def\I{\mathcal{I}}%
\global\long\def\X{X}%
\global\long\def\free{\mathbf{F}}%
\global\long\def\into{\hookrightarrow}%
\global\long\def\Ext{\mathrm{Ext}}%
\global\long\def\B{\mathcal{B}}%
\global\long\def\Id{\mathrm{Id}}%
\global\long\def\Q{\mathbb{Q}}%

\global\long\def\O{\mathcal{O}}%
\global\long\def\Mat{\mathrm{Mat}}%
\global\long\def\NN{\mathrm{NN}}%
\global\long\def\nn{\mathfrak{nn}}%
\global\long\def\Tr{\mathrm{Tr}}%
\global\long\def\SGRM{\mathsf{SGRM}}%
\global\long\def\m{\mathbf{m}}%
\global\long\def\n{\mathbf{n}}%
\global\long\def\k{\mathbf{k}}%
\global\long\def\GRM{\mathsf{GRM}}%
\global\long\def\vac{\mathrm{vac}}%
\global\long\def\SS{\mathcal{S}}%
\global\long\def\red{\mathrm{red}}%
\global\long\def\V{V}%
\global\long\def\SO{\mathrm{SO}}%
\global\long\def\Gd{\Gamma^{\vee}}%
\global\long\def\fd{\mathrm{fd}}%
\global\long\def\perm{\mathrm{perm}}%
\global\long\def\tos{\xrightarrow{\mathrm{strong}}}%
\global\long\def\HH{\mathbb{H}}%
\global\long\def\T{\mathcal{T}}%
\global\long\def\Jac{\mathsf{Jac}}%
\global\long\def\Ham{\mathsf{Ham}}%
\global\long\def\chit{\chi}%
\global\long\def\fix{\mathsf{fix}}%

\global\long\def\Cnk{\left(\C^{n}\right)^{\otimes k}}%
\global\long\def\scl{\mathrm{scl}}%
\global\long\def\F{\mathbf{F}}%
\global\long\def\prob{\mathbb{P}}%
\global\long\def\cU{\mathcal{U}}%
\global\long\def\SU{\mathrm{SU}}%

\vspace{-5in} 
\title{Strong asymptotic freeness of Haar unitaries in quasi-exponential
dimensional representations}
\author{Michael Magee and Mikael de la Salle}
\maketitle
\begin{abstract}
We prove almost sure strong asymptotic freeness of i.i.d. random unitaries
with the following law: sample a Haar unitary matrix of dimension
$n$ and then send this unitary into an irreducible representation
of $\U(n)$. The strong convergence holds as long as the irreducible
representation arises from a pair of partitions of total size at most
$n^{\frac{1}{42}-\varepsilon}$ and is uniform in this regime. 

Previously this was known for partitions of total size up to $\asymp\log n/\log\log n$
by a result of Bordenave and Collins. 
\end{abstract}
\tableofcontents{}

\section{Introduction}

Let $\U(n)$ denote the group of complex $n\times n$ unitary matrices.
For each $k,\ell\in\N$ there is a unitary representation
\[
\pi_{k,\ell}^{0}:\U(n)\to\U\left(\left(\C^{n}\right)^{\otimes k}\otimes\left(\left(\C^{n}\right)^{\vee}\right)^{\otimes\ell}\right).
\]
This representation has a non-zero invariant vector for $\U(n)$ if
and only if $k=\ell$, and in that case the space of invariant vectors
is understood\footnote{The space of invariant vectors is the image of $\C[S_{k}]$ in $\End\left((\C^{n})^{\otimes k}\right)\cong\left(\C^{n}\right)^{\otimes k}\otimes\left(\left(\C^{n}\right)^{\vee}\right)^{\otimes k}$.
If $n\geq k$ this is isomorphic to $\C[S_{k}]$ and we will always
be in this regime in this paper. If $n<k$ the dimension of the space
is still understood as the number of permutations in $S_{k}$ with
longest increasing subsequence $\leq n$ \cite[\S 8]{BaikRains}.}. Let $\pi_{k,\ell}$ be the restriction of $\pi_{k,\ell}^{0}$ to
the orthocomplement to the invariant vectors.

Fix $r\in[n]$. The main theorem of this paper is the following.
\begin{thm}
\label{thm:main}Let $U_{1}^{(n)},\ldots,U_{r}^{(n)}$ denote i.i.d.
Haar distributed elements of $\U(n)$. For any $A<\frac{1}{42}$ the
following holds almost surely. For any non-commutative $*$-polynomial\footnote{That is, a polynomial in $r$ non-commuting indeterminates $X_{1},\ldots,X_{r}$
and their formal adjoints $X_{1}^{*},\ldots,X_{r}^{*}$.} $p$
\[
\sup_{k+\ell\leq n^{A}}\left|\left\Vert \pi_{k,\ell}\left(p\left(U_{1}^{(n)},\ldots,U_{r}^{(n)}\right)\right)\right\Vert -\|p(x_{1},\ldots,x_{r})\|\right|=o(1)
\]
where $x_{1},\ldots,x_{r}$ are generators of a free group $\F_{r}$
and the norm on the right is the one in $C_{\red}^{*}\left(\F_{r}\right)$.
\end{thm}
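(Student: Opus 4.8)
\emph{Strategy.} I would prove the matching upper and lower bounds on $\bigl\|\pi_{k,\ell}(p(\vec U))\bigr\|$ separately, all estimates uniform in $k+\ell\le n^{A}$, reducing everything to expected traces in $\pi^{0}_{k,\ell}$. Both directions rest on one identity: expanding $b:=p^{*}p=\sum_{w\in\F_r}c_{w}w$ in the group algebra and using $\pi^{0}_{k,\ell}(w)=w(\vec U)^{\otimes k}\otimes\overline{w(\vec U)}^{\otimes\ell}$,
\[
\E\ \Tr\,\pi^{0}_{k,\ell}\bigl(b^{m}(\vec U)\bigr)=\underbrace{n^{k+\ell}\,\tau\bigl(b^{m}\bigr)}_{w=e}\ +\ \sum_{w\ne e}c_{w}\bigl(b^{m}\bigr)\,W_{k,\ell}(w),\qquad W_{k,\ell}(w):=\E\Bigl[\bigl(\Tr w(\vec U)\bigr)^{k}\bigl(\overline{\Tr w(\vec U)}\bigr)^{\ell}\Bigr],
\]
where $\tau$ is the canonical trace on $\C[\F_r]$ and the sum is finite (words of length $\le 2m\deg p$). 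Equivalently, $q\mapsto n^{-(k+\ell)}\E\Tr\pi^{0}_{k,\ell}(q(\vec U))$ is a tracial state $\phi_{n}$ on $\C[\F_r]$, and the content is that it is \emph{strongly}, not merely weakly, close to $\tau$.

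\emph{Lower bound (soft direction).} For $b=p^{*}p\ge 0$ one has $\|\pi_{k,\ell}(p)\|^{2m}=\|\pi_{k,\ell}(b^{m})\|\ge\tr_{d_{k,\ell}}\pi_{k,\ell}(b^{m})$, where $d_{k,\ell}=\dim\pi_{k,\ell}$. Removing the invariant subspace changes $\Tr$ by $\delta_{k\ell}k!\,|p(1,\dots,1)|^{2m}$, which is negligible, and from the identity above, using only the crude facts that $W_{k,\ell}(w)=0$ unless $w$ is balanced (by the $U_{j}\mapsto z_{j}U_{j}$ symmetry) and $|W_{k,\ell}(w)|\le(2m\deg p)^{2k}k!$, one gets, for each \emph{fixed} $m$, $\E\,\tr_{d_{k,\ell}}\pi_{k,\ell}(b^{m})\to\tau(b^{m})$ uniformly in $k+\ell\le n^{A}$ as $n\to\infty$ (here $k\le n^{A}$ forces $k!/n^{2k}\to 0$). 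Upgrading to almost sure convergence by the logarithmic Sobolev inequality on $\U(n)$ and Borel--Cantelli (over the $\le n^{2A}$ pairs $(k,\ell)$ and a countable dense set of $p$), then letting $m\to\infty$ with $\tau(b^{m})^{1/2m}\to\|p\|$, gives the lower bound.

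\emph{Upper bound and the key estimate.} Since $\pi_{k,\ell}$ restricts $\pi^{0}_{k,\ell}$ to an invariant subspace and $\pi^{0}_{k,\ell}(b^{m})\ge 0$, we have $\|\pi_{k,\ell}(p)\|^{2m}\le\Tr\,\pi^{0}_{k,\ell}(b^{m})$; so by Markov's inequality, $(n^{k+\ell})^{1/2m}\to 1$, log-Sobolev concentration, and Borel--Cantelli (again over the polynomially many pairs $(k,\ell)$ and a countable dense set of $p$), it suffices to choose $m=m(n)$ in a suitable window $n^{A}\log n\ll m\ll n^{c}$ and prove
\[
\E\ \Tr\,\pi^{0}_{k,\ell}\bigl(b^{m}(\vec U)\bigr)\ \le\ n^{k+\ell}\bigl(\|p\|+\varepsilon_{n}\bigr)^{2m},\qquad \varepsilon_{n}\to 0,\ \text{uniformly in }k+\ell\le n^{A}.
\]
The $w=e$ term gives $n^{k+\ell}\tau(b^{m})\le n^{k+\ell}\|p\|^{2m}$, so everything reduces to showing $\sum_{w\ne e}|c_{w}(b^{m})|\,|W_{k,\ell}(w)|$ is smaller by the required margin. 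Here I would estimate $W_{k,\ell}(w)=\E_{\vec U}[\Tr\pi^{0}_{k,\ell}(w(\vec U))]$ through the $\U(n)$ Weingarten calculus applied to the $\le|w|\max(k,\ell)$ copies of each $U_{j}$, obtaining a \emph{signed} sum over tuples of permutations weighted by products of Weingarten functions. Two structural inputs control it: (i) if $w$ is balanced and not a proper power, the word map $\U(n)^{r}\to\U(n)$ pushes Haar forward to Haar, so $W_{k,\ell}(w)=\delta_{k\ell}\dim(\pi^{0}_{k,k})^{\U(n)}=\delta_{k\ell}k!\ll n^{2k}$, and more generally the expansion is governed by the Euler characteristic / primitivity rank of $w$; (ii) if $w=v^{d}$, a Rains-type decomposition of $\Tr v(\vec U)^{d}$ gives $W_{k,k}(w)\le\min(d^{2k}k!,\,n^{2k})$, so only very high powers produce word-traces comparable to $\dim\pi^{0}_{k,\ell}=n^{k+\ell}$, and those are excluded by $d\le 2m\deg p$.

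\emph{Main obstacle.} The hard part — and where the threshold $\tfrac1{42}$ is produced — is assembling these into a bound on $\sum_{w\ne e}|c_{w}(b^{m})|\,|W_{k,\ell}(w)|$ that beats $n^{k+\ell}\|p\|^{2m}$ \emph{uniformly} over $k+\ell\le n^{A}$. The naive estimate $\sum_{w\ne e}|c_{w}(b^{m})|\le\|p\|_{1}^{2m}$ is useless: it injects a factor $(\|p\|_{1}/\|p\|)^{2m}$ which, since $m\gg n^{A}\log n$, dwarfs $n^{k+\ell}$ whenever $\|p\|_{1}>\|p\|$. One is therefore forced to (a) exploit cancellation in the Weingarten sums rather than absolute-value bounds — precisely the obstruction that stalls the Bordenave--Collins argument at $k+\ell\asymp\log n/\log\log n$ — and (b) organize the sum by the combinatorial complexity (core size / primitivity rank) of $w$, using Cauchy--Schwarz against $\sum_{w}|c_{w}(b^{m})|^{2}\le\|p\|^{4m}$ together with cardinality bounds for the near-trivial words, and the power/primitivity suppression of (ii) for the rest. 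The exponent $\tfrac1{42}$ is the value of $A$ up to which the three competing losses — powers of the word length $\le 2m\deg p$, powers of $k+\ell$, and the genus/primitivity-rank constants — can still be balanced against powers of $n$ within an admissible window for $m$. Everything else (log-Sobolev concentration with constant $O(1/n)$, the reduction to a countable dense family of polynomials by Lipschitz continuity of both norms in the coefficients, and the elementary spectral steps) is routine.
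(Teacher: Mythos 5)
Your proposal is a sketch of the trace/moment method (compute $\E\,\Tr\,\pi^{0}_{k,\ell}(b^{m})$ for $m=m(n)\to\infty$ and beat $n^{k+\ell}\|p\|^{2m}$), and it stops exactly at the step that makes the theorem hard: you name the ``main obstacle'' --- controlling $\sum_{w\neq e}|c_{w}(b^{m})|\,|W_{k,\ell}(w)|$ without paying the fatal factor $(\|p\|_{1}/\|p\|)^{2m}$ --- but you do not supply any mechanism for the required cancellation. Saying that one must ``exploit cancellation in the Weingarten sums'' and that $\tfrac1{42}$ emerges from ``balancing competing losses'' is a restatement of the problem, not a proof; this is precisely where the Bordenave--Collins argument stalls at $k+\ell\asymp\log n/\log\log n$, and with $m\gg n^{A}\log n$ the Weingarten calculus is being applied to $\gtrsim m(k+\ell)$ copies of each $U_{j}$, far outside any regime where its asymptotics are tractable. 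In addition, your structural input (i) is false: for a balanced word that is not a proper power, the word map does \emph{not} push Haar measure forward to Haar measure (e.g.\ $\E\,\Tr([u_{1},u_{2}])=\tfrac1n\neq0$); Haar pushforward holds only for primitive words. The correct substitute is a decay estimate $\E_{n}[s_{\lambda,\mu}(w)]=O(n^{-(k+\ell)/6})$ for non-proper-powers, which must be proved (the paper does so via a surface/Euler-characteristic argument building on Magee--Puder), and even granting it your scheme gives no way to sum it against the coefficients of $b^{m}$. Similarly, the ``Rains-type'' bound in (ii) does not apply to $v(\vec U)^{d}$ since $v(\vec U)$ is not Haar distributed.

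The paper avoids the moment method entirely, and you should be aware that its route is structurally different. The key facts are: (a) for fixed $w\neq e$, $g_{(k+\ell)q}(1/n)\,\E_{n}[s_{\lambda,\mu}(w)]$ is a \emph{polynomial in $1/n$} of degree $O((k+\ell)q\log((k+\ell)q))$; (b) the Markov brothers inequality then converts the trivial bound $|\E_{n}[s_{\lambda,\mu}(x)]|\leq\dim(\sigma_{n})\|x\|_{C^{*}(\F_{r})}$ into bounds on all Taylor coefficients at $1/n=0$, yielding a ``master inequality'' with error $\varepsilon_{n}\exp\bigl(q/\log(2+q)^{2}\bigr)\|x\|_{C^{*}(\F_{r})}$ on the ball of radius $q$; (c) the Taylor coefficients $v_{i}$ are supported, up to polynomial factors, on proper powers (for $i<K/6$), and a random-walk argument (proper powers have probability $O(n^{5}\rho^{n})$ at time $n$) shows they are \emph{tempered}; (d) a new strong-convergence criterion --- apply the master inequality to $x=f(y^{*}y)$ for a compactly supported bump function $f$ whose Chebyshev coefficients decay like $\exp(-M|t|/(\log|t|)^{3/2})$, so that the sum over $q$ converges against the weight in (b) --- replaces both Pisier linearization and growing moments. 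The exponent $\tfrac1{42}=\tfrac16\cdot\tfrac17$ comes from the interplay of the $n^{-(k+\ell)/6}$ decay (how many derivatives vanish) with the degree bound in (a) raised to the fourth power in the Markov inequality, not from a balancing act inside a Weingarten expansion. If you want to salvage your outline, the honest conclusion is that steps (a)--(d) are not optional refinements but the entire content of the proof.
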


An analog of Theorem \ref{thm:main} with $\U(n)$ replaced by $S_{n}$ 
has been obtained by Cassidy \cite{cassidy2023projectionformulasrefinementschurweyljones, cassidy2025randompermutationsactingktuples}, building in part on the current work.

Theorem \ref{thm:main} gives a clean statement about strong convergence
of i.i.d. Haar elements of $\SU(n)$ in representations of quasi-exponential
dimension in the following form:
\begin{cor}
\label{cor:mainSUn}Let $U_{1}^{(n)},\ldots,U_{r}^{(n)}$ denote i.i.d.
Haar distributed elements of $\SU(n)$. For any $A<\frac{1}{42}$
the following holds almost surely. For any non-commutative $*$-polynomial
$p$
\[
\sup_{\text{ }\substack{\pi\in\widehat{\SU(n)}\backslash\triv\\
\dim(\pi)\leq\exp\left(n^{A}\right)
}
}\left|\left\Vert \pi\left(p\left(U_{1}^{(n)},\ldots,U_{r}^{(n)}\right)\right)\right\Vert -\|p(x_{1},\ldots,x_{r})\|\right|=o(1).
\]
\end{cor}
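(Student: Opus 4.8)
The plan is to deduce Corollary~\ref{cor:mainSUn} from Theorem~\ref{thm:main}: the inequality ``$\le$'' will come from realising each irreducible representation of $\SU(n)$ as a direct summand of some $\pi_{k,\ell}$, while ``$\ge$'' will come from the ``easy direction'' of strong convergence, i.e.\ from convergence of normalised characters, and this is where the genuine work lies. As preliminary reductions: every irreducible representation of $\U(n)$ is one of the $\pi_{\lambda,\mu}$ of highest weight a pair of partitions $(\lambda,\mu)$ with $\ell(\lambda)+\ell(\mu)\le n$, realised inside $(\C^{n})^{\otimes k}\otimes((\C^{n})^{\vee})^{\otimes\ell}$ with $k=|\lambda|$, $\ell=|\mu|$; its restriction to $\SU(n)$ stays irreducible, every element of $\widehat{\SU(n)}$ arises this way, and two restrictions agree precisely when the $\U(n)$-representations differ by a power of $\det$. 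Fix $\pi\in\widehat{\SU(n)}\setminus\triv$ with $\dim\pi\le\exp(n^{A})$ and write $\pi=\pi_{\lambda,\mu}|_{\SU(n)}$ with $k+\ell$ minimal in its $\det$-class, so $(\lambda,\mu)\ne(\emptyset,\emptyset)$. A standard dimension estimate -- e.g.\ the Weyl dimension formula gives $\dim\pi_{\lambda,\mu}\ge\bigl((n-k-\ell)/(k+\ell)\bigr)^{\max(k,\ell)}$ whenever $k+\ell<n$ -- then forces $k+\ell\le n^{A}$ for $n$ large, the exponent $\tfrac1{42}<\tfrac12$ leaving ample room, and the finitely many small $n$ are irrelevant to an $o(1)$ statement. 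Finally, a Haar-random $V\in\SU(n)$ is coupled to a Haar-random $U\in\U(n)$ by $V=cU$, $c=(\det U)^{-1/n}$ central with $|c-1|\le\pi/n$; since $\pi_{k,\ell}(cU)=c^{\,k-\ell}\pi_{k,\ell}(U)$ and $|c^{\,k-\ell}-1|\le(k+\ell)\pi/n\le\pi n^{A-1}=o(1)$ uniformly over $k+\ell\le n^{A}$, both Theorem~\ref{thm:main} and all expectations of class functions used below are unchanged, up to $o(1)$, on replacing $\U(n)$-Haar by $\SU(n)$-Haar. So henceforth $U_{1},\ldots,U_{r}$ are i.i.d.\ Haar in $\SU(n)$.

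\emph{The upper bound.} As $(\lambda,\mu)\ne(\emptyset,\emptyset)$, the representation $\pi_{\lambda,\mu}$ is a nontrivial irreducible summand of $(\C^{n})^{\otimes k}\otimes((\C^{n})^{\vee})^{\otimes\ell}$, hence orthogonal to the $\U(n)$-invariant vectors (which are nonzero only for $k=\ell$, where they span exactly the trivial isotypic part); thus $\pi_{\lambda,\mu}$ is a direct summand of $\pi_{k,\ell}$, so $\|\pi(q(U))\|\le\|\pi_{k,\ell}(q(U))\|$ for every $q\in\C[\F_{r}]$. Applying Theorem~\ref{thm:main} to $p$ with exponent $A$ then gives, almost surely, $\sup_{\pi}\|\pi(p(U))\|\le\|p(x_{1},\ldots,x_{r})\|+o(1)$, the supremum over nontrivial $\pi\in\widehat{\SU(n)}$ with $\dim\pi\le\exp(n^{A})$.

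\emph{The lower bound, and the main obstacle.} It suffices to show that the normalised traces $\varphi_{\pi}:=\tfrac{1}{\dim\pi}\Tr\,\pi(\,\cdot\,)$ satisfy $\varphi_{\pi}(q(U))\to\tau(q)$ for every $q\in\C[\F_{r}]$, uniformly over the relevant $\pi$, almost surely, where $\tau$ is the canonical trace: since $\tau$ is faithful on $C^{*}_{\red}(\F_{r})$, this weak convergence of tracial states together with the upper bound already in hand forces $\liminf_{n}\inf_{\pi}\|\pi(p(U))\|\ge\|p\|$ (apply the standard ``easy half'' of strong convergence to the self-adjoint element $p^{*}p$), whence the corollary, running the argument over a countable dense family of polynomials. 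Writing $q=\sum_{w}c_{w}w$ with $c_{e}=\tau(q)$, one is reduced to proving that for each fixed nontrivial $w$, $\sup_{\pi}|\varphi_{\pi}(w(U))|\to0$ almost surely; I would split this into two steps. \emph{(a) Concentration:} the normalised character $\chi_{\lambda,\mu}/\dim\pi_{\lambda,\mu}$ is Lipschitz on $\SU(n)$ in the Frobenius metric with constant $O(|\lambda|+|\mu|)=O(n^{A})$ (the relevant weights of $\pi_{\lambda,\mu}$ have $1$-norm $\le|\lambda|+|\mu|$, so $\|d\pi_{\lambda,\mu}(X)\|_{\mathrm{op}}\le(|\lambda|+|\mu|)\|X\|_{2}$ and $|\tfrac{1}{\dim\pi}\Tr(\pi_{\lambda,\mu}(g)d\pi_{\lambda,\mu}(X))|\le\|d\pi_{\lambda,\mu}(X)\|_{\mathrm{op}}$); composing with the $|w|$-Lipschitz word map $\SU(n)^{r}\to\SU(n)$ and using the sub-Gaussian concentration of Haar measure on $\SU(n)^{r}$ (at scale $n^{-1/2}$ for Frobenius-Lipschitz functions) gives $\prob(|\varphi_{\pi}(w(U))-\E\varphi_{\pi}(w(U))|>t)\le2\exp(-c\,n^{1-2A}t^{2})$, and since only $\exp(O(n^{A/2}))$ representations $\pi$ are in play and $1-2A>A/2$, a union bound and Borel--Cantelli remove the fluctuation uniformly in $\pi$, almost surely. \emph{(b) Vanishing mean:} it remains to show $\sup_{\pi}|\E\varphi_{\pi}(w(U))|\to0$ for fixed $w\ne e$. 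If $w$ is primitive in $\F_{r}$ then $w(U_{1},\ldots,U_{r})$ is Haar in $\SU(n)$, so $\E\chi_{\pi}(w(U))=0$ by orthogonality of characters; if $w=v^{d}$ with $v$ primitive and $d\ge2$, then $w(U)$ has the law of $V^{d}$ with $V$ Haar, and $\E\chi_{\pi}(V^{d})$ -- a multiplicity of the trivial representation in the $d$-th Adams operation of $\pi$, a bounded integer for each fixed $(\lambda,\mu)$ -- is $o(\dim\pi_{\lambda,\mu})$ uniformly; but for a general nontrivial $w$ one needs $\E\chi_{\lambda,\mu}(w(U))=o(\dim\pi_{\lambda,\mu})$ uniformly over $|\lambda|+|\mu|\le n^{A}$, a first-moment statement about word maps in $\U(n)$ which should follow from the Weingarten/Schur--Weyl analysis underlying Theorem~\ref{thm:main} (it is far weaker than that theorem). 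Granting (a) and (b) the lower bound, and hence the corollary, follows; and controlling this expected character of a nonpower nonprimitive word uniformly over representations of polynomially bounded complexity is, I expect, the one point of the deduction that is not soft.
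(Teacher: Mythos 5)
Your reduction to $\SU(n)$-Haar variables via the central coupling $V=cU$ is fine (the paper instead proves an exact identity of expected characters, Lemma~\ref{lem:comparisonUnSUn}, but your $o(1)$-perturbation argument serves the same purpose for a fixed polynomial), and your upper bound is correct: each nontrivial $\pi$ of dimension $\leq\exp(n^{A})$ sits inside some $\pi_{k,\ell}$ with $k+\ell\leq n^{A'}$, $A<A'<\tfrac{1}{42}$, so Theorem~\ref{thm:main} gives $\|\pi(p(U))\|\leq\|p\|+o(1)$ uniformly. (Two small caveats in the dimension step: your lower bound $\bigl((n-k-\ell)/(k+\ell)\bigr)^{\max(k,\ell)}$ says nothing when $k+\ell\geq n$, so you also need something like the paper's $\dim\pi\geq\exp(c\min(\|\Lambda\|_{1},n))$ to exclude that range; and the constant $c$ forces you to pass to a slightly larger exponent $A'$, which is harmless.)

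The genuine gap is in your lower bound, step (b). You need $\sup_{|\lambda|+|\mu|\leq n^{A}}|\E\chi_{\lambda,\mu}(w(U))|/\dim\pi_{\lambda,\mu}\to0$ for each fixed $w\neq e$, and this is not ``far weaker'' than Theorem~\ref{thm:main} in the relevant regime: the per-representation estimates of Theorem~\ref{thm:random-matrix-est} carry implied constants depending on $(k,\ell,w)$ in an uncontrolled way, and removing that dependence uniformly over $k+\ell\leq n^{A}$ is exactly the hard content of the paper (the Markov-brothers/Sobolev machinery of \S\ref{sec:Markov-brothers-inequality}--\ref{sec:Proof-of-Theorem-main}). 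Even that machinery only controls the aggregate $\E\Tr\sigma_{K,n}$, which is of size $n^{K}$, whereas an individual $\dim\pi_{\lambda,\mu}$ can be as small as roughly $n^{K}/K!$, so the uniform bound you would inherit does not divide down to $o(1)$ per irreducible; and for proper powers $w=v^{d}$ with $v$ non-primitive the expected character does not even decay, so ``bounded for each fixed $(\lambda,\mu)$'' does not yield the needed uniformity. The fix is that this entire branch is unnecessary: the lower bound is soft. Since $C^{*}_{\lambda}(\F_{r})$ has a unique trace, Lemma~\ref{lem:consequence_of_unique_trace} produces a finite test family $y_{1},\dots,y_{m}\in\C[\F_{r}]$ and $\delta>0$ such that \emph{any} finite-dimensional unitary representation $\rho$ with $\max_{i}\|\rho(y_{i})\|\leq(1+\delta)\|\lambda(y_{i})\|$ automatically satisfies $\|\rho(y)\|\geq(1-\varepsilon)\|\lambda(y)\|$ for all $y\in\C_{\leq q}[\F_{r}]$. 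Applying this to $\rho=\pi\circ(x_{j}\mapsto U_{j})$ for each nontrivial $\pi$ in your range --- the hypothesis being exactly your already-established uniform upper bound applied to the finitely many $y_{i}$ --- gives the matching lower bound with no further random-matrix input. This is how the paper closes the argument (inside Proposition~\ref{prop:criterion_for_strong_convergence}), and replacing your step (b) by it turns your proposal into a complete proof.
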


Our proof of Theorem \ref{thm:main} relies on two things:
\begin{itemize}
\item a recent breakthrough of Chen, Garza-Vargas, Tropp, and van Handel
\cite{chen2024new} who found a new (and remarkable) approach to strong
convergence based on `differentiation with respect to $n^{-1}$'.
We add to this method in various ways in the sequel. One of the conceptual
differences is a new criterion that we find for temperedness of a
unitary representation of a free group (or more generally a group
with the rapid decay property), and therefore for strong convergence
towards the regular representation, see \S \ref{sec:Temperdness_criterion}.
This allows us to bypass the use of Pisier's linearization argument,
which was essential in many proofs of strong convergence so far (a
notable exception is the work of Paraud and Collins-Guionnet-Parraud
\cite{CollGuiParr22,parraud2023asymptotic,ParraudCMP,ParraudPTRF}),
and replace it by considerations on random walks on free groups, see
\S \ref{sec:RW}.
\item The method above is given as input rapid decay estimates for expected
values of stable characters of word maps on $\U(n)$. The key feature
of these estimates is that, up to a point, they actually improve with
the complexity of the representations that we consider. This feature
is intimately related to the concept of \emph{stable commutator length}
in free groups as uncovered in \cite{MageePuder1}.
\end{itemize}
Theorem \ref{thm:main} was proved when $k=1,\ell=0$ by Collins and
Male in \cite{Collins2014}. This resolved a problem left open in
Haagerup and Thorbjørnsen's breakthrough work \cite{HaagerupThr}.
Bordenave and Collins \cite{bordenave2022strong} prove Theorem \ref{thm:main}
in the regime 
\[
k+\ell\leq c\frac{\log(n)}{\log\log(n)}
\]
for some positive $c$. 

Theorem~\ref{thm:main} can be written in the following two alternate
ways.
\begin{itemize}
\item Let $V_{i}^{(n)}\eqdf\bigoplus_{k+\ell\leq n^{A}}\pi_{k,\ell}(U_{i}^{(n)})$.
The random matrices $V_{i}^{(n)}$ are (almost surely)\emph{ strongly
asymptotically free}.
\item The random unitary representations of $\F_{r}$ described by $x_{i}\mapsto V_{i}^{(n)}$
almost surely \emph{strongly converge} to the regular representation
of $\F_{r}$.
\end{itemize}
The following simple case of Theorem \ref{thm:main} is both new and
important.
\begin{cor}
\label{cor:Hecke-operator}For any $A<\frac{1}{42},$ almost surely,
for all $k,\ell\in\N\cup\{0\}$ such that $k+\ell\leq n^{A}$
\[
\left\Vert \sum_{i=1}^{r}\pi_{k,\ell}\left(U_{i}^{(n)}\right)+\pi_{k,\ell}\left(U_{i}^{(n)}\right)^{-1}\right\Vert =2\sqrt{2r-1}+o_{n\to\infty}(1).
\]
\end{cor}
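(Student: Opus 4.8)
The plan is to derive Corollary~\ref{cor:Hecke-operator} directly from Theorem~\ref{thm:main} by specializing the polynomial $p$ and invoking the known spectral radius of the free group. Concretely, apply the theorem to the self-adjoint $*$-polynomial
\[
p(X_{1},\ldots,X_{r})=\sum_{i=1}^{r}\bigl(X_{i}+X_{i}^{*}\bigr),
\]
so that $\pi_{k,\ell}(p(U_{1}^{(n)},\ldots,U_{r}^{(n)}))=\sum_{i=1}^{r}\pi_{k,\ell}(U_{i}^{(n)})+\pi_{k,\ell}(U_{i}^{(n)})^{-1}$, using that each $\pi_{k,\ell}(U_{i}^{(n)})$ is unitary so its adjoint equals its inverse. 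On the right-hand side, $p(x_{1},\ldots,x_{r})=\sum_{i=1}^{r}(x_{i}+x_{i}^{-1})$ is exactly the element of $C^{*}_{\red}(\F_{r})$ whose image under the left regular representation is the adjacency operator of the $2r$-regular tree (the Cayley graph of $\F_{r}$ with respect to the free generators and their inverses). By Kesten's theorem, the norm of this operator in $C^{*}_{\red}(\F_{r})$ is $2\sqrt{2r-1}$.

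Then I would simply read off the conclusion: Theorem~\ref{thm:main} asserts that, almost surely, for this fixed $p$,
\[
\sup_{k+\ell\leq n^{A}}\Bigl|\bigl\|\pi_{k,\ell}(p(U_{1}^{(n)},\ldots,U_{r}^{(n)}))\bigr\|-\|p(x_{1},\ldots,x_{r})\|\Bigr|=o(1),
\]
and substituting the two computations above gives precisely
\[
\sup_{k+\ell\leq n^{A}}\Bigl|\,\bigl\|\textstyle\sum_{i=1}^{r}\pi_{k,\ell}(U_{i}^{(n)})+\pi_{k,\ell}(U_{i}^{(n)})^{-1}\bigr\|-2\sqrt{2r-1}\,\Bigr|=o_{n\to\infty}(1),
\]
which is the stated claim (the $o(1)$ being uniform over the range $k+\ell\leq n^{A}$ by the form of Theorem~\ref{thm:main}).

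There is essentially no obstacle here: the corollary is a one-line specialization. The only points worth a sentence of care are (i) confirming that $p$ as above is a legitimate non-commutative $*$-polynomial in the sense of the theorem (it is, being linear in the $X_{i}$ and $X_{i}^{*}$), and (ii) justifying the value $\|\sum_{i}(x_{i}+x_{i}^{-1})\|_{C^{*}_{\red}(\F_{r})}=2\sqrt{2r-1}$, for which one cites Kesten's computation of the spectral radius of the simple random walk on $\F_{r}$ (equivalently, the $\ell^{2}$-operator norm of the adjacency operator of the $(2r)$-regular tree). Both are standard, so the entire proof amounts to plugging in.
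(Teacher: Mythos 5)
Your proposal is correct and is exactly the intended derivation: the paper presents Corollary \ref{cor:Hecke-operator} as an immediate specialization of Theorem \ref{thm:main} (it gives no separate proof), and plugging in $p=\sum_i(X_i+X_i^*)$ together with Kesten's computation $\|\sum_i(\lambda(x_i)+\lambda(x_i)^{-1})\|=2\sqrt{2r-1}$ is all that is needed, with the uniformity over $k+\ell\leq n^{A}$ coming directly from the supremum in the theorem's statement.
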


We highlight this corollary in connection with the following well
known question of Gamburd, Jakobson, and Sarnak \cite[pg. 57]{GamburdJakobsonSarnak}
--- do generic in (Haar) measure $(u_{1},\ldots,u_{r})\in\mathrm{SU}(2)^{r}$
have some $\epsilon>0$ such that for all $k>0$
\[
\left\Vert \sum_{i=1}^{r}\pi_{k}\left(u_{i}\right)+\pi_{k}\left(u_{i}\right)^{-1}\right\Vert \leq2r-\epsilon?
\]

It was proved by Bourgain and Gamburd in \cite{BourgainGamburdSU2}
for $u_{i}$ with algebraic entries that generate a dense subgroup
of $\mathrm{SU}(2)$. This was extended to $\mathrm{SU}(n)$ for general
$n\geq3$ by Bourgain and Gamburd \cite{BourgainGamburdSUd} ---
in this case the spectral gap is uniform over all representations
$\pi_{k,\ell}$ with $k+\ell>0$. The analogous result for all compact
simple Lie groups was obtained by Benoist and de Saxce \cite{BenoistdeSaxce}.

Of course, Corollary \ref{cor:Hecke-operator} does not advance these
questions for any fixed $n$ as it deals only with infinite sequences
of random matrices of dimension $n\to\infty$ but it does make significant
progress on a relaxation of the problem. The motif of Theorem \ref{thm:main}
is that we have only a small amount of randomness of our matrices
compared to their dimensions.

The method of proof also allows to obtain estimates for matrix coefficients. For example, we prove
\begin{thm}\label{thm:large_coefficients_standard} Let $k_n=\exp(n^{\frac 1 2} (\log n)^{-4})$. For every $q$ and every sequence $P_n$ of non-commutative $*$-polynomial of degree $\leq q$ and with coefficients in $M_{k_n}(\C)$, almost surely
  \[ \lim_n \frac{\|P_n(U_1^{(n)},\dots,U_r^{(n)})\|}{\|P_n(x_1,\dots,x_r)\|}=1.\]
  \end{thm}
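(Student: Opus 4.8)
The plan is to run a moment argument, in the quantitative form that makes the matrix dimension $k_n$ appear as a budget. Fix $q$ and the deterministic sequence $P_n$, which we may assume satisfies $\|P_n(x_1,\dots,x_r)\|=1$ for every $n$; write $P_n(x)=P_n(x_1,\dots,x_r)$ and $P_n(U)=P_n(U_1^{(n)},\dots,U_r^{(n)})$. Choose integers $m=m_n\asymp n^{1/2}(\log n)^{-7/2}$, so that $\log k_n\ll m_n$ while $2qm_n\le n^{1/2}(\log n)^{-3}$ for $n$ large; the point is that the word-length budget $2qm_n$ exceeds $\log k_n\asymp n^{1/2}(\log n)^{-4}$ only by a factor $\asymp\log n$, which we will spend on polynomial and combinatorial losses. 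For the upper bound we will prove
\[
\E\,\Tr_{k_n n}\bigl((P_n(U)^{*}P_n(U))^{m_n}\bigr)\le k_n\,n\,(1+o(1));
\]
taking $2m_n$-th roots and using $(k_n n)^{1/2m_n}\to1$ gives $\E\|P_n(U)\|\to1$, whereupon Gaussian concentration on $\U(n)^{r}$ and the Borel--Cantelli lemma yield $\limsup_n\|P_n(U)\|\le1$ almost surely. Concentration applies because $U\mapsto\|P_n(U)\|$ is Lipschitz with a constant depending only on $q$ and $r$: every coefficient of $P_n$ has operator norm at most $\|P_n(x)\|=1$, so the Lipschitz constant is at most $q$ times the number of monomials of degree $\le q$, independently of $n$ and $k_n$.

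Expand $(P_n^{*}P_n)^{m}=\sum_{v\in\F_r,\,|v|\le2qm}A_v\otimes v$ with $A_v\in M_{k_n}(\C)$; since $\sum_vA_v^{*}A_v\le\|(P_n^{*}P_n)^{m}\|^{2}\,\Id=\Id$, one has $\sum_v\|A_v\|_{\mathrm{H.S.}}^{2}\le k_n$. Applying $\tr_{k_n}\otimes(\E\,\tr_n)$,
\[
\E\,(\tr_{k_n}\otimes\tr_n)\bigl((P_n^{*}P_n)^{m}\bigr)=(\tr_{k_n}\otimes\tau)\bigl((P_n^{*}P_n)^{m}\bigr)+\sum_{v\ne e}\tr_{k_n}(A_v)\,\E\,\tr_n\bigl(v(U)\bigr),
\]
where $\tau$ is the canonical trace on $C^{*}_{\red}(\F_r)$. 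The first term is at most $\|P_n(x)\|^{2m}=1$, and is at least $\|P_n(x)\|^{2m}$ times a quantity bounded below by a negative power of $m\,k_n$ -- a general fact about $M_{k_n}(\C)\otimes C^{*}_{\red}(\F_r)$ resting on faithfulness of $\tau$ and regularity of the spectral measure of $P_n(x)^{*}P_n(x)$ near its top -- so, since $m_n\gg\log k_n$, it equals $e^{-o(m_n)}$, which is what we need for both directions. In the second (off-diagonal) sum only $v\in[\F_r,\F_r]$ survive: scaling one generator $U_i$ by $e^{i\theta}$ multiplies $\tr_n(v(U))$ by $e^{i\theta s_i(v)}$, where $s_i(v)$ is the exponent sum of $x_i$ in $v$, forcing $\E\,\tr_n(v(U))=0$ unless every $s_i(v)=0$.

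The heart of the matter is the off-diagonal sum, and this is where the rapid-decay estimates for expected stable characters of word maps on $\U(n)$ from the earlier part of the paper enter. These control $\E\,\tr_n(v(U))$ for $v\in[\F_r,\F_r]$ through the topology of the surfaces implicit in the moment expansion -- a factor of order $n^{-2}$ per unit of genus, amplified by the stable commutator length of the boundary words -- and, crucially, this amplification remains effective up to word lengths of order $n^{1/2+o(1)}$. One must estimate the coefficients $\tr_{k_n}(A_v)$ jointly with these bounds, grouping the $2m_n$ factors of $(P_n^{*}P_n)^{m_n}$ by the gluing pattern producing a given reduced word $v$: the matrix coefficients attached to a pattern contribute a normalized trace controlled by the number of its faces, the exponential growth in $|v|$ of the number of patterns is matched against the decay of a random walk on $\F_r$ at its spectral radius (the free-group combinatorics of \S\ref{sec:RW}) and against the $k_n^{-1/2}$ gain from $\sum_v\|A_v\|_{\mathrm{H.S.}}^{2}\le k_n$, and when the bookkeeping is done the competing exponentials cancel, leaving the off-diagonal sum $o(1)$ for $2qm_n\le n^{1/2}(\log n)^{-3}$. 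This gives the displayed moment bound and hence the almost sure upper bound. (Equivalently one may route this half through the temperedness criterion of \S\ref{sec:Temperdness_criterion} for the random representation $x_i\mapsto U_i^{(n)}$, with $k_n$ entering as the scale up to which decay of matrix coefficients must be checked; the inputs are the same.)

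For the lower bound $\liminf_n\|P_n(U)\|\ge1$, contractivity of the conditional expectation $\mathrm{id}_{k_n}\otimes\tr_n\colon M_{k_n}(\C)\otimes M_n(\C)\to M_{k_n}(\C)$ gives
\[
\|P_n(U)\|^{2m_n}\ge\bigl\|(\mathrm{id}_{k_n}\otimes\tr_n)\bigl((P_n(U)^{*}P_n(U))^{m_n}\bigr)\bigr\|_{\mathrm{op}},
\]
whose expectation converges -- by the same off-diagonal analysis, now in operator norm on $M_{k_n}(\C)$ -- to $\|(\mathrm{id}_{k_n}\otimes\tau)((P_n(x)^{*}P_n(x))^{m_n})\|_{\mathrm{op}}\ge k_n^{-1}$, and $(k_n^{-1})^{1/2m_n}\to1$ since $m_n\gg\log k_n$. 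As this quantity is exponentially small, Gaussian concentration is too crude to pass from the mean to the almost sure statement; instead one bounds the variance of $(\tr_{k_n}\otimes\tr_n)\bigl((P_n(U)^{*}P_n(U))^{m_n}\bigr)$ by expanding over pairs of words and using smallness of the covariances $\bigl|\E[\tr_n(v(U))\,\tr_n(v'(U))]-\E\,\tr_n(v(U))\,\E\,\tr_n(v'(U))\bigr|$ (a further output of the character estimates), and then applies Chebyshev and Borel--Cantelli. The principal obstacle is the joint estimate of the third paragraph: the complexity-refined character bounds must interlock with the free-group combinatorics of the matrix coefficients so as to stay non-trivial all the way out to word length $\asymp n^{1/2}$. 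It is precisely the survival of the decay near the exponent $1/2$, together with the polylogarithmic loss in this interlocking, that fixes the shape $k_n=\exp\bigl(n^{1/2}(\log n)^{-4}\bigr)$.
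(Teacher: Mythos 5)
Your outline is a direct moment method, which is genuinely different from the paper's route (the paper deduces the theorem from Theorem~\ref{thm:main_operator_coefficients} with $K_n=1$, i.e.\ from the $1/n$-Taylor-expansion machinery of Lemma~\ref{lem:Taylor_expansion_in_Schwartz_space}, the matrix-coefficient criterion of Proposition~\ref{prop:criterion_for_strong_convergence_coefficients}, and Parraud's theorem that every coefficient $v_{1,i}$ of the $1/n$-expansion is tempered). Unfortunately the central step of your argument is asserted rather than proved, and it is precisely the step the paper's machinery exists to circumvent. Your off-diagonal sum $\sum_{v\neq e}\tr_{k_n}(A_v)\,\E\tr_n(v(U))$ ranges over exponentially many words of length up to $2qm_n\asymp n^{1/2}$, and the available character estimates (e.g.\ \eqref{eq:mpsclbound} with Duncan--Howie) give only $\E\tr_n(v(U))=O_v(n^{-2})$ with an implied constant depending on $v$ in an uncontrolled way; no bound uniform (or even subexponential) in $|v|$ up to length $n^{1/2}$ is established anywhere, and obtaining one is the whole difficulty. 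The sentence ``when the bookkeeping is done the competing exponentials cancel'' is a placeholder for this. The paper avoids word-by-word estimates entirely: it exploits that $n\,\E\tr_n(x(U))$ is (essentially) a polynomial in $1/n$ of controlled degree, uses the a priori bound $|P(1/n)|\leq C\|x\|_{C^*}$ together with the Markov brothers inequality to control derivatives at $0$, Taylor-expands to order $r\asymp n^{1/2}(\log n)^{-7/2}$, and then kills each Taylor coefficient $v_{1,i}$ applied to a test function $f(y^*y)$ via (complete) temperedness --- for which Parraud's theorem is the indispensable external input, nowhere present in your sketch. Your passing remark that one could ``route this half through the temperedness criterion'' does not repair this, since that criterion needs exactly the master inequality and the temperedness of all the $v_{1,i}$ that your argument does not supply.

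The lower bound also has a genuine error. The claim that $(\tr_{k_n}\otimes\tau)\bigl((y_n^*y_n)^{m}\bigr)\geq e^{-o(m)}\|y_n\|^{2m}$ is ``a general fact\dots resting on faithfulness of $\tau$'' is not correct in the uniformity you need: faithfulness gives $\lim_m\bigl((\Tr\otimes\tau)((y^*y)^m)\bigr)^{1/m}=\|y\|^2$ for each fixed $y$, but with no rate, and here $y_n$ varies with $n$ while $m_n$ is prescribed, so the spectral mass near $\|y_n\|^2$ could a priori decay faster than any $e^{-o(m_n)}$. This is exactly why the paper's two-sided statement goes through the unique-trace property (to get, for each $q$, a fixed power $p$ with $\|\lambda(y)\|\leq(1+\varepsilon)\tau((y^*y)^p)^{1/2p}$ uniformly on $\C_{\leq q}[\F_r]$) combined with exactness of $C^*_\lambda(\F_r)$ (Lemmas~\ref{lem:exactness_in_terms_of_fd_approximations} and~\ref{lem:consequence_of_unique_trace}) to make the estimate uniform in the matrix size $k_n$. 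Your variance/Chebyshev route for the lower bound would additionally require covariance estimates $\E[\tr_n(v(U))\tr_n(v'(U))]$ uniform over pairs of words of length $\asymp n^{1/2}$, compounding the first gap.
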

Bordenave and Collins \cite[Theorem 1.1]{BordenaveCollins3} proved the same theorem for $q=1$ and $k_n=\exp(n^{\frac{1}{32r+160}})$ instead of $k_n=\exp(n^{\frac 1 2 + o(1)})$; this result gives a third proof of the result from \cite{belinschi2024strongconvergencetensorproducts}, which has important consequences in the theory of von Neumann algebras \cite{zbMATH07565550,hayes2024consequencesrandommatrixsolution}. It is an intriguing question whether the exponent $\frac 1 2$ is optimal or not. As observed by Pisier \cite{pisiersubexponential}, the optimal exponent is necessarily at most $2$.
\subsection*{Acknowledgments}

We give huge thanks to Ramon van Handel who generously explained the
methods of the work \cite{chen2024new} to us during several enjoyable
meetings at the I.A.S. in April/May 2024. Without these meetings,
this paper would not exist.

Funding: 

M. M. This material is based upon work supported by the National Science
Foundation under Grant No. DMS-1926686. This project has received
funding from the European Research Council (ERC) under the European
Union\textquoteright s Horizon 2020 research and innovation programme
(grant agreement No 949143).

M. S. Research supported by the Charles Simonyi Endowment at the Institute
for Advanced Study, and the ANR project ANCG Project-ANR-19-CE40-0002.

\section{Preliminaries}

\subsection{Representation theory of $\protect\U(n)$\label{subsec:Representation-theory-of}}

For partitions $\lambda=(\lambda_{1},\ldots,\lambda_{p})\vdash k,\mu=(\mu_{1},\ldots,\mu_{q})\vdash\ell$,
for $n\geq p+q$ let $s_{\lambda,\mu}:\U(n)\to\C$ denote the character
of the representation 
\[
(\pi_{\lambda,\mu},V^{\lambda,\mu})
\]
with dominant weight
\begin{align*}
(\lambda_{1},\lambda_{2},\ldots,\lambda_{p},\underbrace{0,0,\ldots,0} & ,-\mu_{q},-\mu_{q-1},\ldots,-\mu_{1}).\\
n-(p+q)
\end{align*}
Because this character interpolates for all $n\geq p+q$, we refer
to it as a \emph{stable }character. In the paper we write $|\lambda|=k$
for the size of $\lambda$.

\subsection{Representation theory of $\protect\SU(n)$}

The irreducible unitary representations of $\SU(n)$ are closely related
to those of $\U(n)$: for every partitions $\lambda,\mu$ as before
with $n\ge p+q$, the restriction of $\pi_{\lambda,\mu}$ to $\SU(n)$
is an irreducible representation; all irreducible representations
of $\SU(n)$ appear in this way; and the restrictions of $\pi_{\lambda,\mu}$ to
$\SU(n)$ and $\pi_{\lambda',\mu'}$ to $\SU(n)$ coincide if and only
if the dominant weights differ by a multiple of $(1,\dots,1)$. In
other words, the irreducible unitary representations of $\SU(n)$
are indexed by the sets of integral dominant weights of the root system
$\mathrm{A}_{n-1}$, which can be parametrized by the set of $n$-tuples
of integers $\Lambda=(\Lambda_{1},\dots,\Lambda_{n})\in\Z^{n}$ with
$\Lambda_{1}\geq\dots\geq\Lambda_{n}$, modulo the subgroup $\Z(1,\dots,1)$. 

Write $\|\Lambda\|_{1}$ for the norm on $\R^{n}/\R(1,\dots,1)$

\[
\|\Lambda\|_{1}=\inf_{t\in\R}\sum_{i}|\Lambda_{i}-t|.
\]

It is the natural norm in the dual of the subspace of $(\R^{n},\|\cdot\|_{\infty})$
for which the coordinates sum to $0$. In what follows, whenever $\Lambda$
belongs to the quotient $\R^{n}/\big(\R(1,\dots,1)\big)$, we will
chose a representative $(\Lambda_{1},\dots,\Lambda_{n})\in\R^{n}$
such that $\sum_{i}|\Lambda_{i}|=\|\Lambda\|_{1}$. If $\Lambda$
belongs to the image of $\Z^{n}$, we can and will assume that $(\Lambda_{1},\dots,\Lambda_{n})$
has integer coordinates.

By Weyl's dimension formula, if $\pi$ is an irreducible representation
of $\SU(n)$ with highest weight $\Lambda$, 
\begin{equation}
\dim(\pi)=\prod_{1\leq i<j\leq n}\frac{j-i+\Lambda_{i}-\Lambda_{j}}{j-i}.\label{eq:Weyl_dimension_formula}
\end{equation}
Lower bounds for $\dim(\pi)$ in terms of $\Lambda$ have been obtained
in \cite{GuralnickLarsenManack}, but we will need a bound that is
more precise\footnote{For example if $\Lambda=(k,0,\dots,0)$, our bound gives $\dim(\pi)\geq\exp(ck)$,
whereas \cite{GuralnickLarsenManack} gives $\dim(\pi)\geq(k+1)^{\log(n-1)}$.} in the regime $\|\Lambda\|_{1}=o(n)$. 
\begin{prop}
There is a constant $c>0$ such that for every integer $n\geq2$ and
every irreducible representation $\pi$ of $\SU(n)$, 
\[
\exp(c\min(\|\Lambda\|_{1},n))\leq\dim(\pi)\leq\exp(\|\Lambda\|_{1}\log n),
\]
where $\Lambda$ is the highest weight of $\pi$. 
\end{prop}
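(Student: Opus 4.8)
The upper bound is the easy part: in Weyl's dimension formula \eqref{eq:Weyl_dimension_formula}, each factor $\frac{j-i+\Lambda_i-\Lambda_j}{j-i} = 1 + \frac{\Lambda_i-\Lambda_j}{j-i}$ is at most $\exp\!\big(\frac{\Lambda_i-\Lambda_j}{j-i}\big)$ (using $1+x\le e^x$, valid since the factor is positive), so $\dim(\pi)\le \exp\!\big(\sum_{i<j}\frac{\Lambda_i-\Lambda_j}{j-i}\big)$. It then suffices to check $\sum_{i<j}\frac{\Lambda_i-\Lambda_j}{j-i}\le \|\Lambda\|_1\log n$. Since the left side is unchanged by replacing $\Lambda_i$ by $\Lambda_i - t$, I may assume $\sum_i|\Lambda_i| = \|\Lambda\|_1$. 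Writing $\sum_{i<j}\frac{\Lambda_i-\Lambda_j}{j-i} = \sum_i \Lambda_i\big(\sum_{j>i}\frac{1}{j-i} - \sum_{j<i}\frac{1}{i-j}\big)$, each bracketed coefficient is a difference of two harmonic-type sums each bounded by $H_{n-1}\le \log n$ (for $n\ge 2$), hence has absolute value $\le \log n$; summing gives the claim.

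For the lower bound, the point is to show $\dim(\pi)\ge \exp(c\min(\|\Lambda\|_1,n))$. The plan is to keep only a well-chosen sub-collection of the factors in \eqref{eq:Weyl_dimension_formula} that are all $\ge 1$ and whose product is already exponentially large. Normalize so that $\Lambda_1\ge\cdots\ge\Lambda_n$ and $\sum_i|\Lambda_i|=\|\Lambda\|_1$; then there is an index $m$ with $\Lambda_i\ge 0$ for $i\le m$ and $\Lambda_i\le 0$ for $i>m$, and $\sum_{i\le m}\Lambda_i - \sum_{i>m}\Lambda_i = \|\Lambda\|_1$, so one of $\sum_{i\le m}\Lambda_i$, $-\sum_{i>m}\Lambda_i$ is at least $\|\Lambda\|_1/2$; by the symmetry $\Lambda\mapsto -\Lambda$ (which fixes $\dim\pi$) I may assume $\sum_{i\le m}\Lambda_i\ge \|\Lambda\|_1/2$. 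Now take the factors with $i\le m < j$: each is $\frac{j-i+\Lambda_i-\Lambda_j}{j-i}\ge 1+\frac{\Lambda_i-\Lambda_j}{n}\ge 1+\frac{\Lambda_i}{n}\ge 1$. So
\[
\dim(\pi)\ \ge\ \prod_{i\le m}\prod_{j>m}\Big(1+\tfrac{\Lambda_i}{n}\Big)\ =\ \prod_{i\le m}\Big(1+\tfrac{\Lambda_i}{n}\Big)^{n-m}.
\]
Using $\log(1+x)\ge x/2$ for $x\in[0,1]$ (reducing to this case by first splitting each $\Lambda_i\le n$ into pieces, or simply noting $1+\tfrac{\Lambda_i}{n}\ge 2^{\min(\Lambda_i/n,1)}$), one gets $\log\dim(\pi)\ge c(n-m)\sum_{i\le m}\min(\Lambda_i/n,1)$; an analogous estimate using the columns on the negative side gives $\log\dim(\pi)\ge c\,m\sum_{j>m}\min(|\Lambda_j|/n,1)$.

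It remains to combine these two bounds into $c\min(\|\Lambda\|_1,n)$, and this bookkeeping is the one genuinely fiddly step. If $m\ge n/2$, use the second bound together with $-\sum_{j>m}\Lambda_j$ possibly being the larger half; more cleanly, redo the case analysis so that the "large half" of $\|\Lambda\|_1$ sits on whichever side ($i\le m$ or $j>m$) also has the larger number of rows, which can always be arranged after possibly applying $\Lambda\mapsto-\Lambda$, at the cost of a factor $2$. Then both factors in $(\text{rows on the big side})\times\sum\min(|\Lambda_i|/n,1)$ are under control: the number of rows is $\ge n/2$, and $\sum_{i}\min(|\Lambda_i|/n,1)\ge \min(\|\Lambda\|_1/(2n),\,1/2)$ since either all the relevant $|\Lambda_i|\le n$, in which case the sum is $\ge \|\Lambda\|_1/(2n)$, or some $|\Lambda_i|>n$, in which case that single term already contributes $1$. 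Multiplying, $\log\dim(\pi)\ge c\,n\cdot\min(\|\Lambda\|_1/n,1) = c\min(\|\Lambda\|_1,n)$, as desired. The only obstacle is getting this last step's constants and the row/column symmetry reduction stated cleanly; the exponential growth itself is immediate once one isolates the $m\times(n-m)$ block of factors.
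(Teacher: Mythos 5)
Your lower-bound strategy is essentially the paper's (isolate the block of Weyl factors with $i$ in the nonnegative part and $j$ in the nonpositive part, then use $\log(1+x)\gtrsim x$), but both halves of the proposal have concrete problems.

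For the upper bound, the inequality you reduce to, $\sum_{i<j}\tfrac{\Lambda_i-\Lambda_j}{j-i}\le\|\Lambda\|_1\log n$, is false: for $\Lambda=(k,0,\dots,0)$ (so $\|\Lambda\|_1=k$ when $n\ge 3$) the left side is $kH_{n-1}$, and $H_{n-1}>\log n$ for every $n\ge 2$ (already $H_1=1>\log 2$), so your step ``$H_{n-1}\le\log n$'' is wrong and the route only yields $\exp\big(\|\Lambda\|_1(1+\log n)\big)$ or so. The paper gets the exact constant with no estimate on the Weyl product at all: $\pi$ embeds in $(\C^n)^{\otimes k}\otimes(\overline{\C^n})^{\otimes\ell}$ with $k+\ell=\|\Lambda\|_1$, whose dimension is exactly $n^{k+\ell}=\exp(\|\Lambda\|_1\log n)$.

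For the lower bound, your two estimates have the shape (rows on one side) $\times$ (truncated mass on the \emph{other} side), so you win only if the bulk of $\|\Lambda\|_1$ sits on the side with \emph{few} rows, making the multiplier contributed by the opposite side at least $n/2$. Your combination recipe --- put the large half on the side with the \emph{larger} number of rows --- is exactly backwards, and your final display reads the bound as (rows on the big side) $\times$ (mass on the big side), which is neither of the two inequalities you derived. The missing ingredient, which the paper states explicitly, is that the normalization $\sum_i|\Lambda_i|=\|\Lambda\|_1$ forces at most $n/2$ strictly positive and at most $n/2$ strictly negative entries. Hence after possibly replacing $\Lambda$ by $-\Lambda$, the positive entries carry mass $\ge\|\Lambda\|_1/2$ while occupying at most $n/2$ indices, the complementary block has $\ge n/2$ indices, and your first estimate together with $\sum_i\min(a_i,1)\ge\min(\sum_i a_i,1)$ gives $\log\dim(\pi)\ge c\,\tfrac n2\min\big(\tfrac{\|\Lambda\|_1}{2n},1\big)\ge c'\min(\|\Lambda\|_1,n)$. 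With that one correction your $\min(\Lambda_i/n,1)$ truncation handles the general case directly, where the paper instead first reduces to $\|\Lambda\|_1\le n$ by scaling $\Lambda$.
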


\begin{proof}
The upper bound is because $\pi$ appears as a sub-representation
of the representation of $\SU(n)$ on $(\C^{n})^{k}\otimes(\overline{\C^{n}})^{\otimes\ell})$,
where $k$ is the sum of the positive $\Lambda_{i}$'s and $-\ell$
is the sum of the negative $\Lambda_{i}$'s. And $(\C^{n})^{k}\otimes(\overline{\C^{n}})^{\otimes\ell})$
has dimension $n^{k+\ell}=\exp(\|\Lambda\|_{1}\log n)$.

For the lower bound, by Weyl's dimension formula (\ref{eq:Weyl_dimension_formula})
we have to prove 
\begin{equation}
c\min(\|\Lambda\|_{1},n)\leq\sum_{i<j}\log\left(1+\frac{\Lambda_{i}-\Lambda_{j}}{j-i}\right).\label{eq:goal_upper_bound_dim}
\end{equation}
We will prove this inequality for every $\Lambda\in\R^{n}$ with $\Lambda_{1}\geq\dots\geq\Lambda_{n}$.
In that case, since the right-hand side increases if we replace $\Lambda$
by $t\Lambda$ for $t>1$, it is enough to consider the case when
$\|\Lambda\|_{1}\leq n$. Finally, replacing $\Lambda$ by $(-\Lambda_{n},\dots,-\Lambda_{1})$,
we can assume that the sum of the positive entries of $\Lambda$ is
at least $\|\Lambda\|_{1}/2$.

Let $n_{1}=\lfloor\frac{n}{2}\rfloor$ and $n_{2}=\lfloor\frac{3n}{4}\rfloor$.
Expressing that the $\ell_{1}$ norm of $(\Lambda_{1},\dots,\Lambda_{n})$
is at most the $\ell_{1}$ norm of $(\Lambda_{1}-t,\dots,\Lambda_{n}-t)$
for every $t\neq0$, we see that $\Lambda$ has at most $\frac{n}{2}$
positive entries, and at most $\frac{n}{2}$ negative entries. In
particular, we have 
\begin{equation}
\sum_{i=1}^{n_{1}}\Lambda_{i}\geq\frac{1}{2}\|\Lambda\|_{1},\label{eq:ell1_norm_of_Lambda_almost_sum_of_positives}
\end{equation}
and $\Lambda_{j}\leq0$ for every $n_{2}<j\leq n$. If $i\leq n_{1}$
and $j>n_{2}$, we have 
\[
\frac{\Lambda_{i}-\Lambda_{j}}{j-i}\geq\frac{\Lambda_{i}}{n}.
\]
We deduce the obvious bound 
\[
\sum_{i<j}\log\Big(1+\frac{\Lambda_{i}-\Lambda_{j}}{j-i}\Big)\geq\sum_{i\leq n_{1},n_{2}<j}\log\Big(1+\frac{\Lambda_{i}}{n}\Big).
\]
By the bound $\log(1+t)\geq t\log(2)$ for every $0\leq t\leq1$ and
the inequality $0\leq\Lambda_{i}\leq\|\Lambda\|_{1}=n$, this is at
most 
\[
\log(2)\sum_{1\leq i\leq n_{1}}\sum_{n_{2}<j\leq n}\frac{\Lambda_{i}}{n}\geq\frac{\log(2)}{4}\sum_{i=1}^{n_{1}}\Lambda_{i}.
\]
By (\ref{eq:ell1_norm_of_Lambda_almost_sum_of_positives}), we obtain
(\ref{eq:goal_upper_bound_dim}) with $c=\frac{\log(2)}{8}$.
\end{proof}
\begin{cor}
\label{cor:dimensions_SUn_representations}For every $0<A\leq1$ and
every $n\geq N$, for every irreducible representation $\pi$ of $\SU(n)$:
\[
\dim(\pi)<\exp(cn^{A})
\]
implies 
\[
\pi\subset\bigoplus_{k+\ell\leq n^{A}}\pi_{k,\ell},
\]
which in turn implies 
\[
\dim(\pi)\leq\exp(n^{A}\log n).
\]
\end{cor}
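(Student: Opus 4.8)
The plan is to read everything off the preceding Proposition together with the standard dictionary between irreducible representations of $\SU(n)$ and the mixed tensor representations $\pi_{k,\ell}^{0}$. First I would dispose of the trivial representation, which I take to be excluded here (for $A<1$ it lies in no $\pi_{k,\ell}$, since these have no $\U(n)$-invariant vectors and the volume forms do not fit into $\pi_{k,\ell}^{0}$ when $k+\ell<n$), and assume $\pi$ non-trivial with highest weight $\Lambda$. I normalise $\Lambda$ as in the paragraph before the Proposition, i.e. I choose the integer representative with $\Lambda_1\geq\dots\geq\Lambda_n$ minimizing $\sum_i|\Lambda_i|$, so that $\sum_i|\Lambda_i|=\|\Lambda\|_1$. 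Such a $\Lambda$ has the partition-pair shape
\[
\Lambda=(\lambda_1,\dots,\lambda_p,\underbrace{0,\dots,0}_{n-p-q},-\mu_q,\dots,-\mu_1),\qquad\lambda\vdash k,\ \mu\vdash\ell,
\]
with $k+\ell=\|\Lambda\|_1$ and $p+q\leq\|\Lambda\|_1$ (each nonzero entry is an integer of absolute value at least $1$); hence, as soon as $p+q\leq n$, one has $\pi=\pi_{\lambda,\mu}|_{\SU(n)}$.

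For the first implication, suppose $\dim(\pi)<\exp(cn^A)$ with $c$ the constant of the Proposition. Its lower bound gives $c\min(\|\Lambda\|_1,n)\leq\log\dim(\pi)<cn^A$, and since $A\leq1$ we have $n^A\leq n$, so $\min(\|\Lambda\|_1,n)<n^A\leq n$ and therefore $\|\Lambda\|_1<n^A$. In particular $k+\ell<n^A\leq n$ and $p+q<n$, so $\pi=\pi_{\lambda,\mu}|_{\SU(n)}$ is legitimately defined. Now I invoke the classical mixed Schur--Weyl decomposition: since $p+q\leq n$, the $\U(n)$-irreducible $\pi_{\lambda,\mu}$ is a constituent of $\pi_{k,\ell}^{0}=(\C^n)^{\otimes k}\otimes\big((\C^n)^{\vee}\big)^{\otimes\ell}$. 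Being non-trivial, $\pi_{\lambda,\mu}$ is disjoint from the space of $\U(n)$-invariant vectors, so it already occurs in $\pi_{k,\ell}$. Passing to $\SU(n)$: the $\U(n)$-invariant part of $\pi_{k,\ell}^{0}$ restricts to a sum of trivial $\SU(n)$-representations, while $\pi$ is irreducible and non-trivial, so $\pi$ must still occur in $\pi_{k,\ell}|_{\SU(n)}$; as $k+\ell\leq n^A$ this gives $\pi\subset\bigoplus_{k+\ell\leq n^A}\pi_{k,\ell}$.

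For the second implication, if $\pi\subset\bigoplus_{k+\ell\leq n^A}\pi_{k,\ell}$ then by irreducibility $\pi$ embeds into a single $\pi_{k,\ell}$ with $k+\ell\leq n^A$, hence into $\pi_{k,\ell}^{0}$, whence $\dim(\pi)\leq\dim\pi_{k,\ell}^{0}=n^{k+\ell}\leq n^{n^A}=\exp(n^A\log n)$. (Alternatively, the embedding into $\pi_{k,\ell}^{0}$ forces $\|\Lambda\|_1\leq k+\ell\leq n^A$, and one quotes the Proposition's upper bound.)

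None of this is deep; the points that need care are (i) that the norm-minimizing representative of $\Lambda$ has the displayed shape with $p+q\leq\|\Lambda\|_1$, so that $\pi=\pi_{\lambda,\mu}|_{\SU(n)}$; (ii) the mixed Schur--Weyl input, namely that $\pi_{\lambda,\mu}$ genuinely occurs in $\pi_{k,\ell}^{0}$ and survives to the orthocomplement $\pi_{k,\ell}$ of the invariant vectors; and (iii) the restriction bookkeeping to $\SU(n)$, which preserves non-triviality and prevents the $\U(n)$-invariant vectors from absorbing $\pi$. The constant $N$ is only there to make these identifications valid (the Proposition itself holds for all $n\geq2$), and there is no need to optimise it.
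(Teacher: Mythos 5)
Your proof is correct and follows exactly the route the paper leaves implicit: the Proposition's lower bound forces $\|\Lambda\|_1<n^A\leq n$, the norm-minimizing weight representative has partition-pair shape so that mixed Schur--Weyl places $\pi_{\lambda,\mu}$ inside $\pi_{k,\ell}^0$ (and, being non-trivial, inside $\pi_{k,\ell}$) with $k+\ell=\|\Lambda\|_1$, and $\dim\pi_{k,\ell}^0=n^{k+\ell}$ gives the final bound. Your observation that the trivial representation must be excluded (it satisfies the dimension hypothesis but not the containment when $A<1$) correctly identifies a small imprecision in the statement, harmless because Corollary \ref{cor:mainSUn} excludes the trivial representation anyway.
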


\section{Matrix integral results\label{sec:Matrix-integral-results}}

Let $\F_{r}$ denote the free group on a fixed generating set $X\eqdf\{x_{1},\ldots,x_{r}\}$.
For $w\in\F_{r}$ let 
\[
w:\U(n)^{r}\to\U(n)
\]
denote the induced word map defined by substituting in an $r$-tuple
of unitaries for the elements of $X$ appearing in a reduced expression
of $w$. For example if $w=x_{1}x_{2}x_{1}^{-1}x_{2}^{-1}$ then $w(u_{1},u_{2},\ldots,u_{r})=u_{1}u_{2}u_{1}^{-1}u_{2}^{-1}$.
Let $|w|$ denote the word length of $w$ w.r.t. $X$. Let 
\begin{equation}
\E_{n}[s_{\lambda,\mu}(w)]\eqdf\int_{\U(n)^{r}}s_{\lambda,\mu}(w(u_{1},u_{2},\ldots,u_{r}))du_{1}\cdots du_{r}.\label{eq:exp-trace-word-def}
\end{equation}
For $L\in\N$ let 
\begin{equation}
g_{L}(x)\eqdf\prod_{c=1}^{L}(1-c^{2}x^{2})^{\lfloor\frac{L}{c}\rfloor}.\label{eq:g-def}
\end{equation}

The input into our analysis is the following result about expected
values of stable characters of word maps, building on the works \cite{MageePuder1,MageeRURSGI,MageeRURSGII}.
\begin{thm}
\label{thm:random-matrix-est}Let $w\in\F_{r}$ be not the identity
with $|w|\leq q$.
\begin{enumerate}
\item \label{enu:deg-of-poly}There is a polynomial $P_{\lambda,\mu,w}\in\Q[x]$
such that for\\
 $n\geq(k+\ell)|w|$,
\[
\E_{n}[s_{\lambda,\mu}(w)]=\frac{P_{\lambda,\mu,w}\left(\frac{1}{n}\right)}{g_{(k+\ell)q}\left(\frac{1}{n}\right)},
\]
with 
\[
\deg\left(P_{\lambda,\mu,w}\right)\leq3(k+\ell)q\left(1+\log\left((k+\ell)q\right)\right).
\]
\item \label{enu:generic-decay}If $w$ is not a proper power in $\F_{r}$,
then $\E_{n}[s_{\lambda,\mu}(w)] = O(n^{-\frac 1 6 (k+\ell)})$.
\item \label{enu:poly}If $w$ is not a proper power in $\F_{r}$ and $\mu=\emptyset$, then  $\E_{n}[s_{\lambda,\emptyset}(w)] = O(n^{-(k+\ell)})$.
\end{enumerate}
\end{thm}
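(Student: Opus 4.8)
The plan has three stages: a representation-theoretic reduction to expected products of traces, the algebraic bookkeeping behind part~\ref{enu:deg-of-poly}, and the decay estimates of parts~\ref{enu:generic-decay}--\ref{enu:poly}; the last is where the real difficulty lies.

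The first step is to expand the stable character $s_{\lambda,\mu}$ in power sums. In the ring of universal characters of $\U(n)$ --- a polynomial ring in the class functions $p_{j}\colon u\mapsto\tr(u^{j})$, $j\in\Z\setminus\{0\}$ --- there is an identity
\[
s_{\lambda,\mu}=\sum_{\a}\alpha_{\lambda,\mu}(\a)\prod_{i}p_{a_{i}},
\]
a finite sum over tuples $\a=(a_{1},\dots,a_{s})$ of nonzero integers with $\sum_{a_{i}>0}a_{i}=k$ and $\sum_{a_{i}<0}|a_{i}|=\ell$ (so $\sum_{i}|a_{i}|=k+\ell$), the coefficients $\alpha_{\lambda,\mu}(\a)\in\Q$ being explicit in terms of $S_{k}\times S_{\ell}$ characters and Littlewood--Richardson numbers (Koike's formula for rational Schur functions). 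Applying $\E_{n}$ termwise reduces the theorem to the mixed moments $\E_{n}\big[\prod_{i}\tr(w^{a_{i}})\big]$ with $\sum_{i}|a_{i}|\le k+\ell$ and $|w|\le q$. Two elementary reductions are worth isolating. First, substituting $U_{j}\mapsto\zeta_{j}U_{j}$ with $|\zeta_{j}|=1$ preserves Haar measure and multiplies $\prod_{i}\tr(w^{a_{i}})$ by $\zeta^{(\sum_{i}a_{i})\mathrm{ab}(w)}$, so this moment vanishes unless $(\sum_{i}a_{i})\,\mathrm{ab}(w)=0$; since $\mu=\emptyset$ forces every $a_{i}>0$, this already gives $\E_{n}[s_{\lambda,\emptyset}(w)]=0$ whenever $\mathrm{ab}(w)\ne0$, which settles part~\ref{enu:poly} outside $[\F_{r},\F_{r}]$. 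Second, if $w$ is primitive then some automorphism of $\F_{r}$ carries it to $x_{1}$, and since $\mathrm{Aut}(\F_{r})$ is generated by Nielsen transformations --- each acting on $\U(n)^{r}$ by a Haar-preserving map --- the matrix $w(U_{1},\dots,U_{r})$ is Haar distributed, so $\E_{n}[s_{\lambda,\mu}(w)]=\int_{\U(n)}s_{\lambda,\mu}(g)\,dg=0$ for $(k,\ell)\ne(0,0)$. Hence in parts~\ref{enu:generic-decay} and~\ref{enu:poly} we may assume $w$ is neither a proper power nor primitive.

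For the quantitative statements I would invoke the Weingarten calculus for $\U(n)$ together with the topological (surface) expansion of such moments developed in \cite{MageePuder1,MageeRURSGI,MageeRURSGII}. Writing each $w^{a_{i}}$ as a word of length $|a_{i}|\,|w|$ in the letters $U_{j}^{\pm1}$ and integrating out the generators introduces, for each $j$, a pair of permutations in $S_{m_{j}}$, where $m_{j}$ counts the occurrences of $U_{j}$ (necessarily equal to that of $U_{j}^{-1}$, else the contribution is zero) and $\sum_{j}m_{j}\le\tfrac12(k+\ell)|w|$. Each resulting term is a product of Weingarten functions on these $S_{m_{j}}$ times a power of $n$ which, after cancellation, is governed by the Euler characteristic of a surface assembled from $w$ and the permutation data. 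Now $\mathrm{Wg}(n,\cdot)$ on $S_{m}$ is rational in $n$ with a pole at $n=\pm c$ of order at most $\lfloor m/c\rfloor$; since $m_{j}\le L:=(k+\ell)q$ and $\lfloor a\rfloor+\lfloor b\rfloor\le\lfloor a+b\rfloor$, multiplying $\E_{n}[s_{\lambda,\mu}(w)]$ by $g_{L}(1/n)$ clears every pole at a finite nonzero value of $1/n$, and since the expression is regular at $1/n=0$ the product is a polynomial $P_{\lambda,\mu,w}\in\Q[x]$, the identity being valid for $n\ge(k+\ell)|w|$. Its degree is at most the total order $2\sum_{c=1}^{L}\lfloor L/c\rfloor\le2L(1+\log L)$ of the zeros of $g_{L}$ at the points $x=\pm1/c$, plus the at most $L$ coming from the pole of the moments at $n=0$, whence $\deg P_{\lambda,\mu,w}\le3L(1+\log L)=3(k+\ell)q\big(1+\log((k+\ell)q)\big)$, proving part~\ref{enu:deg-of-poly}.

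The decay estimates are the crux, and the main obstacle is that \emph{one cannot argue termwise in the power-sum expansion}: individual moments such as $\E_{n}[\tr(w^{a})\tr(w^{-a})]$ tend to $a$ rather than to $0$, so any decay of $\E_{n}[s_{\lambda,\mu}(w)]$ is produced by cancellation among the $\a$-terms and must be extracted from $s_{\lambda,\mu}$ as a whole. The device is to run the surface expansion directly for the character: one then obtains $\E_{n}[s_{\lambda,\mu}(w)]=O(n^{\chi})$, with $\chi$ the maximal Euler characteristic of a surface compatible with $w$ in the sense of \cite{MageePuder1}, and the hypothesis that $w$ is not a proper power is exactly what prevents such a surface from being efficient. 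For part~\ref{enu:poly}, once $\mathrm{ab}(w)=0$ all the $a_{i}$ are positive, the relevant surfaces carry only coherently oriented boundary components, and no near-trivial ``annular'' pieces can occur; the Duncan--Howie inequality ($\scl\ge\tfrac12$ on $[\F_{r},\F_{r}]\setminus\{1\}$, whence $\scl(w^{a})\ge a/2$ and $\mathrm{cl}(w^{a})\ge a/2+\tfrac12$) then forces an Euler-characteristic deficit of at least $\sum_{i}a_{i}=k+\ell$, giving $O(n^{-(k+\ell)})$. For part~\ref{enu:generic-decay} the sign-mixed boundary components do permit near-trivial annular pieces, so one must genuinely exploit the cancellation among the $\a$-terms; organizing the sum over monomials into the surface expansion, bounding the residual, and optimizing over the shape of $\a$ produces the exponent $\tfrac16(k+\ell)$. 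The point that goes beyond \cite{MageePuder1,MageeRURSGI,MageeRURSGII} is that all of this must be made uniform as $k+\ell$ grows with $n$, which is exactly why part~\ref{enu:deg-of-poly} is phrased with explicit polynomial-in-$((k+\ell)q)$ bounds and parts~\ref{enu:generic-decay}--\ref{enu:poly} with decay exponents linear in $k+\ell$.
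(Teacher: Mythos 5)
Your treatment of parts \ref{enu:deg-of-poly} and \ref{enu:poly} follows the paper's route: Koike's expansion into power sums plus naive Weingarten pole-counting gives part \ref{enu:deg-of-poly} with the same degree accounting ($2L(1+\log L)$ from the zeros of $g_{L}$ plus $L$ from the $n^{-L}$ normalization of the content products), and part \ref{enu:poly} is the Duncan--Howie bound $\scl(w)\geq\frac{1}{2}$ fed into the estimate $\E_{n}[s_{\lambda}(w)]=O(n^{-2k\,\scl(w)})$ from \cite{MageePuder1}. You also correctly diagnose the central obstruction to part \ref{enu:generic-decay}: termwise decay fails in the power-sum expansion because $\E_{n}[\tr(w^{a})\tr(w^{-a})]\to a$.

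But part \ref{enu:generic-decay} is where the real content lies, and your proposal stops at naming the difficulty rather than resolving it. Two ingredients are missing. First, the surface expansion ``run directly for the character'' is not off-the-shelf: one must extend the machinery of \cite{MageeRURSGII} (built there for $w=[x_{1},x_{2}][x_{3},x_{4}]$) to arbitrary reduced words, and the output is not simply $O(n^{\max\chi})$ over all surfaces compatible with $w$ --- the maximum is restricted to transverse maps satisfying a \textbf{Forbidden Matchings} condition, which encodes the cancellation and comes from the vanishing of partial contractions of the projection onto $V^{\lambda,\mu}$ (the contraction-free subspace). Without this restriction the bound is vacuous, since unrestricted surfaces include annular pieces with $\chi=0$. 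Second, and more importantly, you give no argument for why ``not a proper power'' forces $\chi\leq-\frac{1}{6}(k+\ell)$. In the paper this is a separate combinatorial lemma: for a cyclically reduced non-proper-power $w$, every topological edge of the ribbon graph underlying an admissible transverse map has length at most $|w|$ (a longer edge would yield an identity of reduced words such as $w^{a}w_{2}=w_{1}w^{B}$, forcing $w$ to be a proper power, or else would violate Forbidden Matchings); hence the total boundary length $\frac{1}{2}(k+\ell)|w|$ forces at least $\frac{k+\ell}{2}$ topological edges, and trivalence gives $-\chi\geq\frac{E^{*}}{3}\geq\frac{k+\ell}{6}$. Your phrase ``optimizing over the shape of $\a$'' suggests extracting the cancellation monomial-by-monomial from the power-sum expansion, which is precisely what the authors say they do not know how to do; the exponent $\frac{1}{6}$ has a specific origin ($\frac{1}{2}\times\frac{1}{3}$) that your sketch does not reach.
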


It is probable that the conclusion of Part \ref{enu:generic-decay}
also holds with $O(n^{-(k+\ell)})$, but we are unsure how to prove
this at the moment, and it just changes constants in our theorems.

Part \ref{enu:poly} of Theorem \ref{thm:random-matrix-est} follows
from combining a result of \cite{MageePuder1} with one of Duncan
and Howie \cite{DuncanHowie} as explained in $\S$\ref{sec:Improved-bounds-for-poly-char}.

Theorem \ref{thm:random-matrix-est} will be proved in the later part
of the paper ($\S$\ref{sec:Transverse-maps}-\ref{sec:Proof-of-Theorem-random-matric}).

In $\S$\ref{sec:Markov-brothers-inequality}-\ref{sec:Proof-of-Theorem-main}
we prove Theorem \ref{thm:main} from Theorem \ref{thm:random-matrix-est}.

\section{Markov brothers inequality\label{sec:Markov-brothers-inequality}}

We use the following fundamental inequality by Andrey and Vladimir
Markov, see \cite[Section 4.1]{chen2024new} for references. If $P$
is a degree $\leq D$ polynomial in one variable, then for every integer
$k$, 
\begin{equation}
\sup_{[-1,1]}|P^{(k)}|\leq\frac{D^{2}(D^{2}-1)\dots(D^{2}-(k-1)^{2})}{(2k-1)!!}\sup_{[-1,1]}|P|\label{eq:Markov1}
\end{equation}
where $(2k-1)!!=1\cdot3\dots(2k-1)$. By an affine change of variable,
for a general interval (\ref{eq:Markov2}) becomes: 
\begin{equation}
\sup_{[a,b]}|P^{(k)}|\leq\frac{2^{k}}{(b-a)^{k}}\frac{D^{2}(D^{2}-1)\dots(D^{2}-(k-1)^{2})}{(2k-1)!!}\sup_{[a,b]}|P|.\label{eq:Markov2}
\end{equation}

We deduce the following, which is essentially \cite[Lemma 4.2]{chen2024new}.
\begin{lem}
\label{lemm:epsilon-net} For every polynomial $P$ of degree $\leq D$
and every integer $N\geq D^{2}$
\[
\sup_{[0,\frac{1}{N}]}|P|\leq\frac{1}{1-\frac{D^{2}}{N+1}}\sup_{n\geq N}\left|P\left(\frac{1}{n}\right)\right|.
\]
\end{lem}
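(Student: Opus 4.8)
The idea is to reduce everything to the Markov brothers inequality on a real interval. The hypothesis gives us control of $P(1/n)$ for all integers $n \geq N$; these are sample points $1/n$ lying in the interval $[0,1/N]$, clustering near $0$. The consecutive gaps are $\frac1n - \frac1{n+1} = \frac1{n(n+1)}$, which is largest near $n = N$, where it equals $\frac1{N(N+1)}$. So I would set $M \eqdf \sup_{n\geq N}\left|P\left(\frac1n\right)\right|$ and let $t_0 \in [0,1/N]$ be a point where $|P|$ attains its supremum on that interval; call $S \eqdf \sup_{[0,1/N]}|P| = |P(t_0)|$. The goal is $S \leq \frac{1}{1 - D^2/(N+1)}\,M$.

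First I would find the sample point closest to $t_0$. If $t_0 = 0$ then $P(0) = \lim_{n} P(1/n)$, so $|P(0)| \le M \le S$ and the bound is immediate; so assume $t_0 \in (0,1/N]$ and pick the unique integer $n_0 \ge N$ with $\frac1{n_0+1} \le t_0 \le \frac1{n_0}$ (taking $n_0 = N$ if $t_0 = 1/N$). Then $|t_0 - \tfrac1{n_0}| \le \frac1{n_0} - \frac1{n_0+1} = \frac{1}{n_0(n_0+1)} \le \frac{1}{N(N+1)}$. Now apply the mean value theorem on the segment joining $t_0$ and $1/n_0$: there is a point $\xi$ in $[0,1/N]$ with
\[
|P(t_0)| \le \left|P\!\left(\tfrac1{n_0}\right)\right| + |t_0 - \tfrac1{n_0}|\,|P'(\xi)| \le M + \frac{1}{N(N+1)}\sup_{[0,1/N]}|P'|.
\]

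The key step is to bound $\sup_{[0,1/N]}|P'|$ by the Markov inequality \eqref{eq:Markov2} applied on $[a,b] = [0,1/N]$ with $k = 1$, giving $\sup_{[0,1/N]}|P'| \le 2N D^2 \sup_{[0,1/N]}|P| = 2ND^2 S$. Plugging in,
\[
S \le M + \frac{1}{N(N+1)}\cdot 2ND^2 S = M + \frac{2D^2}{N+1}\,S.
\]
Hmm — this yields $S \le \frac{1}{1 - 2D^2/(N+1)}M$, off by a factor of $2$ in the constant compared to the target. The fix, and the only delicate point, is to sharpen the gap estimate: $t_0$ need not be at the worst sample point. Since $t_0 \le 1/N$, if $t_0$ lies in an interval $[\frac1{n_0+1}, \frac1{n_0}]$ with $n_0$ somewhat larger than $N$ the gap shrinks like $1/n_0^2$; and for $t_0$ very close to $1/N$ one should instead compare $t_0$ to the \emph{nearer} of its two neighbouring sample points, i.e. use $|t_0 - \frac1{n}| \le \frac12(\frac1{n} - \frac1{n+1}) \le \frac{1}{2N(N+1)}$ for the appropriate $n$ (midpoint rounding). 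This halves the error term and produces exactly $S \le \frac{1}{1 - D^2/(N+1)}M$. The condition $N \ge D^2$ guarantees $D^2/(N+1) < 1$ so the denominator is positive and the statement is non-vacuous. I expect this midpoint-rounding bookkeeping — and checking it also covers the endpoint $t_0 = 1/N$, where only one neighbour exists but then $|P(1/N)| \le M$ directly — to be the main (though minor) obstacle; everything else is a one-line application of \eqref{eq:Markov2}.
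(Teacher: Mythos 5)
Your argument is correct and is essentially the paper's proof: both rest on the observation that every point of $[0,\frac1N]$ is within $\frac{1}{2N(N+1)}$ of the sample set $\{\frac1n\mid n\geq N\}$ (the midpoint rounding you arrive at), followed by the fundamental theorem of calculus and the Markov inequality \eqref{eq:Markov2} with $k=1$ on $[0,\frac1N]$. The endpoint worry is moot since $\frac1N$ is itself a sample point, and $t_0=0$ is handled by continuity exactly as you note.
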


\begin{proof}
Every element of the interval $[0,\frac{1}{N}]$ is at distance at
most $\frac{1}{2N(N+1)}$ from $\{\frac{1}{n}\mid n\geq N\}$. Therefore,
by the fundamental inequality of calculus, we have 
\[
\sup_{[0,\frac{1}{N}]}|P|\leq\sup_{n\geq N}\left|P\left(\frac{1}{n}\right)\right|+\frac{1}{2N(N+1)}\sup_{[0,\frac{1}{N}]}|P'|.
\]
By the Markov brothers inequality (\ref{eq:Markov2}) with $k=1$,
this is less than 
\[
\sup_{n\geq N}\left|P\left(\frac{1}{n}\right)\right|+\frac{1}{2N(N+1)}\cdot2ND^{2}\sup_{[0,\frac{1}{N}]}|P|.
\]
The lemma follows. 
\end{proof}
As a consequence, 
\begin{lem}
\label{lemm:Markov_after_epsilon_net} For every polynomial $P$ of
degree $\leq D$ and every integer $k\leq D$, 
\[
\sup_{[0,\frac{1}{2D^{2}}]}|P^{(k)}|\leq\frac{2^{2k+1}D^{4k}}{(2k-1)!!}\sup_{n\geq D^{2}}|P\left(\frac{1}{n}\right)|.
\]
\end{lem}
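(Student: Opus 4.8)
The plan is to combine the two preceding lemmas. Lemma~\ref{lemm:epsilon-net} lets us pass from control of $P$ on the discrete set $\{1/n : n \geq N\}$ to control of $P$ on the whole interval $[0,\frac 1N]$, at the cost of a harmless multiplicative factor, provided $N \geq D^2$; and the Markov brothers inequality~\eqref{eq:Markov2} lets us bound $\sup_{[a,b]}|P^{(k)}|$ by $\sup_{[a,b]}|P|$ on any interval. So the natural route is: first apply the Markov inequality on the interval $[0,\frac 1{2D^2}]$ (so $b-a = \frac{1}{2D^2}$), then apply Lemma~\ref{lemm:epsilon-net} with $N = 2D^2$ to replace $\sup_{[0,\frac{1}{2D^2}]}|P|$ by something comparable to $\sup_{n \geq 2D^2}|P(1/n)|$.

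First I would write, using \eqref{eq:Markov2} with $a=0$, $b=\frac{1}{2D^2}$:
\[
\sup_{[0,\frac{1}{2D^2}]}|P^{(k)}| \leq (2\cdot 2D^2)^k\,\frac{D^2(D^2-1)\cdots(D^2-(k-1)^2)}{(2k-1)!!}\,\sup_{[0,\frac{1}{2D^2}]}|P| \leq \frac{4^k D^{2k}\cdot D^{2k}}{(2k-1)!!}\,\sup_{[0,\frac{1}{2D^2}]}|P|,
\]
where I bounded the product $D^2(D^2-1)\cdots(D^2-(k-1)^2) \leq D^{2k}$. Next I would invoke Lemma~\ref{lemm:epsilon-net} with $N = 2D^2 \geq D^2$, which gives
\[
\sup_{[0,\frac{1}{2D^2}]}|P| \leq \frac{1}{1-\frac{D^2}{2D^2+1}}\,\sup_{n\geq 2D^2}\left|P\left(\tfrac1n\right)\right| \leq 2\,\sup_{n\geq 2D^2}\left|P\left(\tfrac1n\right)\right|,
\]
since $\frac{D^2}{2D^2+1} < \frac 12$ forces $\frac{1}{1-D^2/(2D^2+1)} < 2$. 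Combining the two displays yields $\sup_{[0,\frac{1}{2D^2}]}|P^{(k)}| \leq \frac{2\cdot 4^k D^{4k}}{(2k-1)!!}\sup_{n\geq 2D^2}|P(1/n)| = \frac{2^{2k+1}D^{4k}}{(2k-1)!!}\sup_{n\geq 2D^2}|P(1/n)|$.

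The only discrepancy with the stated claim is that I have obtained $\sup_{n \geq 2D^2}$ on the right-hand side, whereas the lemma is stated with $\sup_{n\geq D^2}$; but the set $\{1/n : n \geq 2D^2\}$ is contained in $\{1/n : n \geq D^2\}$, so $\sup_{n\geq 2D^2}|P(1/n)| \leq \sup_{n \geq D^2}|P(1/n)|$ and the inequality still holds as stated. There is no real obstacle here — the statement follows by bookkeeping — and the only mildly delicate point is being slightly generous with the constants (bounding the falling factorial by $D^{2k}$ and the $\varepsilon$-net correction factor by $2$) so that the clean exponent $2^{2k+1}$ comes out; one should double-check that $k \leq D$ is used implicitly (it guarantees $(2k-1)!! $ is a genuine double factorial with $k$ terms and that the Markov derivative bound is nonvacuous), though in fact the inequality is trivially true when $k > D$ since then $P^{(k)} \equiv 0$.
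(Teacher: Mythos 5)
Your proof is correct and follows essentially the same route as the paper: apply the Markov brothers inequality \eqref{eq:Markov2} on $[0,\frac{1}{2D^2}]$ to get the factor $(4D^2)^k$, bound the falling factorial by $D^{2k}$, and then invoke Lemma~\ref{lemm:epsilon-net} with $N=2D^2$ to pick up the final factor of $2$. The constants and the observation that $\sup_{n\geq 2D^2}\leq\sup_{n\geq D^2}$ all check out.
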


\begin{proof}
By (\ref{eq:Markov2}) with $a=0$ and $b=\frac{1}{2D^{2}}$, 
\[
\sup_{[0,\frac{1}{2D^{2}}]}|P^{k}|\leq(4D^{2})^{k}\frac{D^{2}(D^{2}-1)\dots(D^{2}-(k-1)^{2})}{(2k-1)!!}\sup_{[0,\frac{1}{2D^{2}}]}|P|.
\]
The lemma follows from the bound $D^{2}(D^{2}-1)\dots(D^{2}-(k-1)^{2})\leq D^{2k}$
and Lemma~\ref{lemm:epsilon-net}. 
\end{proof}

\section{A criterion for strong convergence\label{sec:Temperdness_criterion}}
In this section, $\Gamma$ will be a finitely generated group with a
fixed finite generating set and corresponding word-length. We denote by 
$\C_{\leq q}[\Gamma]$ the subspace of the elements of the group
algebra of $\Gamma$ supported in the ball of radius $q$. 

\begin{defn}\label{defn:tempered}
  A function $u \colon \Gamma \to \C$ is tempered if
  \[\limsup_{n \to \infty} |u((x^*x)^n)|^{\frac 1 {2n}} \leq \|\lambda(x)\|\] for every $x \in \C[\Gamma]$. Here $\lambda$ is the left-regular representation of $\Gamma$ on $\ell_2(\Gamma)$ and the norm is the operator norm.

  A function $u \colon \Gamma \to \C$ is completely tempered
  if \[\limsup_{n \to \infty} |\Tr \otimes u((x^*x)^n)|^{\frac 1 n}
  \leq \|(\mathrm{id}\otimes \lambda)(x)\|\] for every integer $k \geq
  1$ and every $x \in M_k(\C)\otimes \C[\Gamma]$, where $\Tr$ is the trace on $M_k(\C)$.
\end{defn}
For example, if $u(\gamma) = \langle \pi(\gamma)\xi,\xi\rangle$ is a matrix coefficient of a unitary representation of $\Gamma$, then $u$ is tempered if and only if it is completely tempered, if and only if the cyclic representation generated by $\xi$ is weakly-contained in the left-regular representation. So Definition~\ref{defn:tempered} can be seen as an adaptation to functions that are not related to representations of the classical notion of tempered representation.

This section is devoted to the proof of the following criterion for strong convergence in probability of a sequence of random representations. 
\begin{prop}\label{prop:criterion_for_strong_convergence}
Let $\pi_n$ be a sequence of random unitary representations of finite nonrandom dimension, $u_n \colon \Gamma \to \C$ be functions and $\varepsilon_n>0$. Assume that
  \begin{itemize}
  \item $u_n$ is tempered and there is a polynomial $P_n$ such that for every $q$ and every $x \in \C_{\leq q}[\Gamma]$, $|u_n(x)| \leq P_n(q) \|x\|_{C^*(\Gamma)}$,
  \item $\big|\E \Tr(\pi_n(x)) - u_n(x)\big| \leq \varepsilon_n \exp\left(\frac{q}{\log(2+q)^2}\right) \|x\|_{C^*(\Gamma)}$ for every $q$ and every $x \in \C_{\leq q}[\Gamma]$,
  \item $\lim_n \varepsilon_n = 0$.\end{itemize}

  Then for every $y \in \C[\Gamma]$, and every $\delta>0$,
  \[\lim_n \prob( \|\pi_n(y)\| \geq \|\lambda(y)\|+\delta) = 0.\]

  If $C^*_\lambda(\Gamma)$ has a unique trace, then the conclusion becomes that
  \[\lim_n \prob( \big| \|\pi_n(y)\| - \|\lambda(y)\| \big| \geq \delta) = 0.\]
\end{prop}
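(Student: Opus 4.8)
The plan is to deduce the operator-norm upper bound $\|\pi_n(y)\| \le \|\lambda(y)\| + \delta$ (with probability tending to $1$) from a moment estimate on $\mathrm{Tr}\,\pi_n\big((y^*y)^m\big)$ for a suitable power $m = m(n) \to \infty$, using the two hypotheses to compare this quantity first to $u_n\big((y^*y)^m\big)$ and then to $\|\lambda(y)\|^{2m}$. The point is that $(y^*y)^m$ is supported in a ball of radius $q_m := m|y^*y| \le 2m\,\mathrm{deg}(y)$, so the growth factor $\exp\big(q_m/\log(2+q_m)^2\big)$ from the second hypothesis is subexponential in $m$: it behaves like $\exp\big(O(m/\log m)\big)$. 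Meanwhile $\|(y^*y)^m\|_{C^*(\Gamma)} = \|\lambda(y)\|^{2m}$ up to a bounded factor if $C^*_\lambda(\Gamma)$ were the relevant $C^*$-algebra — but we cannot assume $\|\cdot\|_{C^*(\Gamma)} = \|\cdot\|_{C^*_\lambda(\Gamma)}$, so here is where temperedness must do its job: $u_n$ being tempered gives exactly $\limsup_m |u_n((y^*y)^m)|^{1/(2m)} \le \|\lambda(y)\|$, i.e. $|u_n((y^*y)^m)| \le (\|\lambda(y)\| + \delta/2)^{2m}$ for $m$ large (depending on $n$, which requires care — see below). Combining, $\big|\E\,\mathrm{Tr}\,\pi_n((y^*y)^m)\big| \le (\|\lambda(y)\|+\delta/2)^{2m} + \varepsilon_n e^{O(m/\log m)} \|y\|_{C^*(\Gamma)}^{2m}$.

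First I would handle the interplay between the two ``large $m$'' thresholds. Fix $\delta > 0$ and $y$. Let $M := \|y\|_{C^*(\Gamma)}$ and $L := \|\lambda(y)\|$. Choose $m = m(n)$ growing slowly enough with $n$ that $\varepsilon_n e^{C m/\log m} M^{2m} \to 0$ relative to $(L+\delta)^{2m}$; since $\varepsilon_n \to 0$, taking $m(n) = \lfloor \log(1/\varepsilon_n)^{1/2}\rfloor$ (say) makes $\varepsilon_n e^{Cm/\log m} \to 0$, and then one needs $m(n) \to \infty$ also to exceed the temperedness threshold. But temperedness of $u_n$ gives a threshold $m_0(n)$ possibly depending on $n$ in an uncontrolled way; this is the one genuinely delicate point. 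The fix is to use the \emph{polynomial} bound $|u_n(x)| \le P_n(q)\|x\|_{C^*(\Gamma)}$ to get a \emph{quantitative} temperedness estimate: iterating or interpolating, for any $m$, $|u_n((y^*y)^m)| \le P_n(2m\deg y)\, L^{2m}$ — indeed $u_n$ restricted to the cyclic structure it generates behaves like a (non-normalized) positive functional on $C^*_\lambda$, and rapid decay / the polynomial bound let one replace $\|(y^*y)^m\|_{C^*(\Gamma)}$-type quantities by $\|\lambda((y^*y)^m)\| = L^{2m}$ at the cost of a polynomial-in-$m$ factor. (If $u_n$ is an honest matrix coefficient of a tempered representation, this is immediate from weak containment; in general one argues via the spectral theorem applied to the self-adjoint $\lambda(y^*y)$ together with the norm comparison.) With this, $|u_n((y^*y)^m)| \le P_n(2m\deg y) L^{2m}$ holds for \emph{all} $m$ with no $n$-dependent threshold, which is what makes the diagonal argument go through: pick $m(n) \to \infty$ slowly enough that simultaneously $P_n(2m(n)\deg y)^{1/m(n)} \to 1$ and $\varepsilon_n e^{C m(n)/\log m(n)} \to 0$.

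With $m = m(n)$ so chosen, $\E\,\mathrm{Tr}\,\pi_n((y^*y)^m) \le \alpha_n^{2m}$ where $\alpha_n \to L$; more precisely $\alpha_n \le L + \delta$ for $n$ large. Now $\mathrm{Tr}\,\pi_n((y^*y)^m) = \sum_j s_j^{2m} \ge \|\pi_n(y)\|^{2m}$ where $s_j \ge 0$ are the singular values of $\pi_n(y)$ (the representation is unitary so $\pi_n(y^*y) = \pi_n(y)^*\pi_n(y) \ge 0$ and its largest eigenvalue is $\|\pi_n(y)\|^2$), and this is a nonnegative random variable. By Markov's inequality, $\prob\big(\|\pi_n(y)\|^{2m} \ge (L+2\delta)^{2m}\big) \le \prob\big(\mathrm{Tr}\,\pi_n((y^*y)^m) \ge (L+2\delta)^{2m}\big) \le (L+\delta)^{2m}/(L+2\delta)^{2m} \to 0$ as $m = m(n) \to \infty$. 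Relabelling $2\delta \to \delta$ proves the first conclusion $\lim_n \prob(\|\pi_n(y)\| \ge \|\lambda(y)\| + \delta) = 0$.

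For the second conclusion, assume $C^*_\lambda(\Gamma)$ has a unique trace $\tau$. I would produce the matching lower bound $\|\pi_n(y)\| \ge \|\lambda(y)\| - \delta$ by the standard trace argument: normalize to get $\tau_n := \frac{1}{\dim \pi_n}\mathrm{Tr}\,\pi_n$, a random tracial state on $\C[\Gamma]$. From the second hypothesis with $x$ ranging over a fixed finite-dimensional subspace (fixed $q$), $\E\,\tau_n(x) \to \tau(x)$ provided $\dim\pi_n \to \infty$ — and indeed $\dim \pi_n \to \infty$ is forced, because otherwise a fixed-dimension subsequence would contradict temperedness-plus-strong-convergence-failure; alternatively one notes $u_n(1) = \dim$-scale... (this needs $u_n(e)$ comparable to $\dim\pi_n$, which the first conclusion already implicitly uses). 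Then $\tau_n \to \tau$ weakly in probability on each $\C_{\le q}[\Gamma]$; since the GNS representation of the unique trace $\tau$ is $\lambda$, weak convergence of traces together with the already-established norm \emph{upper} bound upgrades (by a now-classical argument, e.g. as in Collins–Male) to $\|\pi_n(y)\| \to \|\lambda(y)\|$ in probability.

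\textbf{Main obstacle.} The crux is the second paragraph: converting ``$u_n$ is tempered'' (an asymptotic statement with an $n$-dependent, uncontrolled threshold in $m$) into a bound valid for \emph{all} $m$ with only a polynomial-in-$m$ prefactor, so that $m(n)$ can be chosen to diverge slowly enough to beat $\varepsilon_n e^{m/\log m}$ while still exceeding the temperedness threshold. The polynomial bound $|u_n(x)| \le P_n(q)\|x\|_{C^*(\Gamma)}$ is evidently placed in the hypotheses precisely to enable this, via a rapid-decay-type argument on $\Gamma$ (this is consistent with the paper's stated use of rapid decay in $\S\ref{sec:RW}$); making that conversion precise — presumably by applying the spectral theorem to $\lambda(y^*y)$ and approximating spectral projections by polynomials of controlled degree, as in Lemma~\ref{lemm:Markov_after_epsilon_net} — is where the real work lies.
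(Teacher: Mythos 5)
Your overall strategy --- bound $\E\Tr\pi_n((y^*y)^{m})$ for a slowly diverging $m=m(n)$, compare with $u_n((y^*y)^m)$ via the second hypothesis, and finish with Markov's inequality --- founders on exactly the point you flag as the ``main obstacle'', and the fix you propose does not close it. The quantitative temperedness bound you want, $|u_n((y^*y)^m)|\le P_n(2m\deg y)\,L^{2m}$ with $L=\|\lambda(y)\|$, is indeed essentially available (one shows $f\mapsto u_n(f(y^*y))$ is a compactly supported distribution of order $\le \deg P_n+1$ whose support lies in $[-L^2,L^2]$ by temperedness, then evaluates it on $t\mapsto t^m$), but the resulting prefactor is a polynomial in $m$ of degree roughly $\deg P_n$, with a constant also depending on $n$, and the hypotheses impose no uniformity whatsoever on the $P_n$. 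Your two requirements on $m(n)$ then pull in opposite directions: $P_n(2m\deg y)^{1/m}\to 1$ is a \emph{lower} bound on $m(n)$ of order $\deg P_n$ (it forces $m(n)$ to grow fast, not slowly), while killing the error term $\varepsilon_n\exp(Cm/\log m)\,\|y\|_{C^*(\Gamma)}^{2m}$ against $(L+\delta)^{2m}$ forces $m(n)\lesssim\log(1/\varepsilon_n)$, since $\|y\|_{C^*(\Gamma)}$ is the \emph{full} group $C^*$-norm and may exceed $L+\delta$. If, say, $\deg P_n=\exp(1/\varepsilon_n)$, no admissible $m(n)$ exists. So the moment method with powers $(y^*y)^m$ cannot work under the stated hypotheses.

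The paper's proof avoids this by never letting $m\to\infty$: it replaces $t^m$ by a single \emph{fixed} function $f$ that vanishes identically on $[-L^2,L^2]$ and is strictly positive off it, chosen (via the bump functions of Lemma~\ref{lemm:bump}, whose Fourier coefficients decay like $\exp(-M|k|/(\log(2+|k|))^{3/2})$) so that $x=f(y^*y)$ lies in the weighted algebra $\mathcal{S}_w(\Gamma)$ with $w(q)=\exp(q/\log(2+q)^2)$. Temperedness is then used only \emph{qualitatively} --- the support of the distribution $f\mapsto u_n(f(y^*y))$ is contained in $[-L^2,L^2]$, hence $u_n(x)=0$ exactly for every $n$, with no $n$-dependent constants entering --- and the second hypothesis gives $\E\Tr\pi_n(x)\le\varepsilon_n\|x\|_w\to0$ for this one fixed $x$; Markov applied to the nonnegative operator $\pi_n(x)$, together with $f>c$ outside the $\delta$-enlarged interval, finishes. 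Separately, your sketch of the second conclusion (normalized traces converging to $\tau$, requiring $\dim\pi_n\to\infty$) is not what the paper does and is itself incomplete; the paper deduces the lower bound deterministically from the already-proved upper bound via Lemma~\ref{lem:consequence_of_unique_trace}: unique trace implies that near-optimal norms on a finite test family force $\|\pi(y)\|\ge(1-\varepsilon)\|\lambda(y)\|$ for all $y$ in a fixed ball.
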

\begin{rem}
  The choice of $w(q) = \exp\left(\frac{q}{\log(2+q)^2}\right)$ is an arbitrary choice that is convenient for our applications, the crucial property that is used is that $\sum_q \frac{1}{1+q^2} \log w(q)<\infty$, reminiscent of the Beurling-Malliavin Theorem. It could be replaced by $\exp( q^2 u_q)$ for an arbitrary decreasing and summable sequence $(u_q)_{g \geq 0}$.
  \end{rem}
\begin{rem}
  Proposition~\ref{prop:criterion_for_strong_convergence} is a variant with high derivatives of the criterion for strong convergence that appeared implicitly, for polynomial $w$, in \cite{HaagerupThr} with $u_n(\gamma) = 1_{\gamma=1}$, and \cite{Schultz} for general $u_n$.
\end{rem}
\begin{rem}
  The condition on the existence of $P_n$ is probably not needed, but will be trivially satisfied in the applications. It allows to use standard results on distributions rather than ad-hoc proofs.
\end{rem}

We have the following variant for matrix coefficients. 
\begin{prop}\label{prop:criterion_for_strong_convergence_coefficients} Let $\pi_n,d_n,u_n,\varepsilon_n$ be as in Proposition~\ref{prop:criterion_for_strong_convergence}. Assume moreover that $u_n$ is completely tempered. Let $k_n$ be a sequence of integers such that $\lim_n \varepsilon_n k_n=0$.

  Then for every integer $q$, $\delta>0$ and every sequence $y_n \in \C_{\leq q}[\Gamma] \otimes M_{k_n}(\C)$ with $\|y_n\|_{C^*(\Gamma)\otimes M_{k_n}} \leq 1$,
  \[\lim_n \prob( \|\pi_n(y_n)\| \geq \|\lambda(y_n)\|+\delta) = 0.\]

  If $C^*_\lambda(\Gamma)$ has a unique trace and is exact, then the conclusion becomes that
  \[\lim_n \prob( \big| \|\pi_n(y_n)\| - \|\lambda(y_n)\| \big| \geq \delta) = 0.\]
\end{prop}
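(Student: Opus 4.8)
\textbf{Proof plan for Proposition~\ref{prop:criterion_for_strong_convergence_coefficients}.}

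The plan is to mimic the proof of Proposition~\ref{prop:criterion_for_strong_convergence}, tensoring everything with $M_{k_n}(\C)$ and keeping careful track of the dimension-dependent factors. Fix $q$, $\delta>0$, and a sequence $y_n \in \C_{\leq q}[\Gamma] \otimes M_{k_n}(\C)$ with $\|y_n\| \leq 1$ in $C^*(\Gamma) \otimes M_{k_n}$. The basic object is, for a parameter $m$ to be chosen, the element $z_n = (\id \otimes \pi_n)\big((y_n^* y_n)^m\big)$ together with the comparison quantity $(\Tr \otimes \Tr \otimes u_n)\big((y_n^* y_n)^m\big)$. Note $(y_n^* y_n)^m \in \C_{\leq 2mq}[\Gamma] \otimes M_{k_n}(\C)$, so writing it as $\sum_{|\gamma| \leq 2mq} a_\gamma \otimes \gamma$ with $a_\gamma \in M_{k_n}(\C)$, the second hypothesis of Proposition~\ref{prop:criterion_for_strong_convergence} gives, after applying it coordinate-wise in $M_{k_n}$ and summing,
\[
\big| \E\, (\Tr \otimes \Tr \otimes \pi_n)\big((y_n^* y_n)^m\big) - (\Tr \otimes \Tr \otimes u_n)\big((y_n^* y_n)^m\big)\big| \leq k_n \, \varepsilon_n \, w(2mq) \, \big\| (y_n^* y_n)^m \big\|_{C^*(\Gamma) \otimes M_{k_n}} \leq k_n \varepsilon_n w(2mq),
\]
where $w(q) = \exp(q/\log(2+q)^2)$; the extra factor $k_n$ comes from taking the (unnormalized) trace on the $M_{k_n}$ factor. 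Simultaneously, complete temperedness of $u_n$ controls the main term: $\limsup_m \big| (\Tr \otimes \Tr \otimes u_n)\big((y_n^* y_n)^m\big)\big|^{1/m} \leq \|(\id \otimes \lambda)(y_n)\|^2 = \|\lambda(y_n)\|^2$ (here the $M_{k_n}$-trace contributes a harmless factor $k_n^{1/m} \to 1$ for fixed $n$). Also trivially $(\Tr \otimes \Tr \otimes \pi_n)\big((y_n^* y_n)^m\big) \geq \|(\id\otimes\pi_n)(y_n)\|^{2m} = \|\pi_n(y_n)\|^{2m}$, since the trace of a positive operator power dominates its largest eigenvalue to the $m$-th power.

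Now the argument runs as in Proposition~\ref{prop:criterion_for_strong_convergence}: by Markov's inequality,
\[
\prob\big(\|\pi_n(y_n)\| \geq \|\lambda(y_n)\| + \delta\big) \leq \frac{\E\, (\Tr \otimes \Tr \otimes \pi_n)\big((y_n^* y_n)^m\big)}{(\|\lambda(y_n)\| + \delta)^{2m}} \leq \frac{C_n(m) + k_n \varepsilon_n w(2mq)}{(\|\lambda(y_n)\| + \delta)^{2m}},
\]
where $C_n(m)$ bounds the main term. The contribution of $C_n(m)$ is handled exactly as in the scalar case: one must choose $m = m_n \to \infty$ (depending on $n$) so that $C_n(m_n)^{1/m_n}$ is within, say, $\delta/2$ of $\|\lambda(y_n)\|^2$ — this is where one needs the polynomial bound $|u_n(x)| \leq P_n(q)\|x\|$ from the first hypothesis to make the convergence in the $\limsup$ effective, giving $C_n(m) \leq \mathrm{(poly\ in\ }k_n, m, q) \cdot (\|\lambda(y_n)\| + \delta/2)^{2m}$ for $m$ large, so that ratio term tends to $0$. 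The genuinely new point is the second term: since $2mq$ grows with $m_n$, we need $k_n \varepsilon_n w(2 m_n q) / (\|\lambda(y_n)\|+\delta)^{2 m_n} \to 0$. Because $\log w(Q)/Q \to 0$ while $(\|\lambda(y_n)\|+\delta)^{2m_n}$ grows geometrically in $m_n$ (note $\|\lambda(y_n)\| + \delta \geq \delta > 0$ and also $\|\lambda(y_n)\|$ is bounded since $\|y_n\| \leq 1$), for $m_n$ growing slowly enough the factor $w(2m_n q)/(\|\lambda(y_n)\|+\delta)^{2m_n}$ stays bounded (even tends to $0$), so the whole term is $O(k_n \varepsilon_n) \to 0$ by the hypothesis $\lim_n \varepsilon_n k_n = 0$.

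The main obstacle — and the reason the hypotheses are arranged the way they are — is the tension in choosing $m_n$: it must grow fast enough that $C_n(m_n)^{1/m_n} \to \|\lambda(y_n)\|^2$ (this requires beating the polynomial-in-$k_n$ and polynomial-in-$q$ prefactors from effectivizing complete temperedness, hence $m_n$ at least of order $\log k_n$), yet slow enough that $k_n \varepsilon_n w(2m_n q)$ is still killed by the decay of $\varepsilon_n k_n$ — which is fine precisely because $w$ is subexponential, so $w(2m_n q) = \exp(o(m_n))$ cannot overcome the geometric denominator for any $m_n \to \infty$. One checks these two requirements are simultaneously satisfiable; a clean way is to first fix $n$, let $m \to \infty$ to get the right exponential rate up to a subexponential error, then run a diagonal/extraction argument over $n$. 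For the last sentence of the statement: if $C^*_\lambda(\Gamma)$ has a unique trace and is exact, then $\|\lambda(y_n)\| - \|\pi_n(y_n)\|$ can be controlled by the standard argument — exactness ensures $C^*_\lambda(\Gamma) \otimes M_{k_n}$ behaves well under the relevant ultraproduct/weak-$*$ limits, and the unique-trace property forces any weak-$*$ limit point of the normalized traces of $\pi_n$ to be the canonical trace, so the already-established upper bound $\|\pi_n(y_n)\| \leq \|\lambda(y_n)\| + \delta$ can be upgraded to a two-sided estimate by applying the one-sided bound to a family of test elements and using that $\tau(\,|\lambda(y_n)|\,) $ is detected in the limit; this parallels verbatim the corresponding deduction in Proposition~\ref{prop:criterion_for_strong_convergence}, with $M_{k_n}$ carried along and exactness supplying the needed approximation property.
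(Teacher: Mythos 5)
Your proposal replaces the paper's argument with a bare moment method, and this route has a genuine gap at its central step. The paper's own proof of Proposition~\ref{prop:criterion_for_strong_convergence_coefficients} is a one-line reduction: it runs the proof of Proposition~\ref{prop:criterion_for_strong_convergence} verbatim on the distribution $f\mapsto \Tr\otimes u_n(f(y_n^*y_n))$, using complete temperedness to locate the support of this distribution inside $[-\|\lambda(y_n)\|^2,\|\lambda(y_n)\|^2]$, and then evaluates it on the Beurling--Malliavin bump function of Lemma~\ref{lemm:bump}. The whole point of that construction is that the test element $x_n=f(y_n^*y_n)$ satisfies $\Tr\otimes u_n(x_n)=0$ \emph{exactly} --- no quantitative rate is needed for the ``main term'' --- while the quasi-exponential decay of the Chebyshev coefficients of $f$ keeps $\|x_n\|_w$ bounded, so that the error term is $O(k_n\varepsilon_n)\to 0$.

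Your moment method does not close, for the following concrete reason. You assert that $(\|\lambda(y_n)\|+\delta)^{2m_n}$ ``grows geometrically in $m_n$'' because $\|\lambda(y_n)\|+\delta\geq\delta>0$. This is false unless the base exceeds $1$; and under your normalization $\|y_n\|_{C^*(\Gamma)\otimes M_{k_n}}\leq 1$, the typical (indeed, the only non-trivial) situation is $\|\lambda(y_n)\|+\delta<1$, since the reduced norm is generically strictly smaller than the full norm. In that case the denominator is geometrically \emph{small}, so the error contribution $k_n\varepsilon_n w(2m_nq)\,(\|\lambda(y_n)\|+\delta)^{-2m_n}$ blows up unless $m_n=O(\log(1/(k_n\varepsilon_n)))$: you get an \emph{upper} cap on $m_n$ dictated by the data $k_n,\varepsilon_n,\delta$. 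On the other side, your main term $C_n(m)$ is controlled only through the qualitative $\limsup$ in the definition of complete temperedness together with the bound $|u_n(x)|\leq P_n(q)\|x\|$, whose constants and degree depend on $n$ in a completely uncontrolled way; effectivizing this (as in the ``preliminary observation'' of Proposition~\ref{prop:characterization_of_temperedness_in_Sobolev_algebras}) yields $C_n(m)\leq C(n)\,m^{\deg P_n+1}(\|\lambda(y_n)\|^2+\delta')^m$ and hence a \emph{lower} requirement $m_n\geq M_n$ with $M_n$ not comparable to the cap above. Nothing in the hypotheses reconciles these two constraints, so the ``diagonal/extraction argument over $n$'' you invoke cannot be carried out. (Shifting $y_n^*y_n$ by a constant does not help: it inflates $\|(y_n^*y_n+c)^m\|_{C^*(\Gamma)}$ by the same geometric factor and reproduces the identical tension.) For the second part, your appeal to ultraproducts and ``exactness supplying the needed approximation property'' is a restatement of the claim rather than an argument; the paper's mechanism is Lemma~\ref{lem:consequence_of_unique_trace}, whose exactness half (via Lemma~\ref{lem:exactness_in_terms_of_fd_approximations}) reduces the matrix-amplified lower bound to finitely many scalar test elements with a dimension-independent compression.
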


We first collect a few ingredients that we need for the proof.

\subsection{Bump functions with small Fourier coefficient}
A nonzero compactly supported continuous function cannot have a Fourier transform that decays too fast at infinity, as it has to satisfy $\int \frac{\log |\hat f(t)|}{1+t^2} dt > -\infty$. In other words, the condition $\int_{0}^\infty  \frac{\varphi(t)}{1+t^2}dt <\infty$ on a function $\varphi \colon \R^+ \to \R^+$ is a necessary condition for the existence of a nonzero compactly supported continuous function $f$ such that $|\hat f(t) | \leq \exp(-\varphi(|t|)$ for every $t$. The Beurling-Malliavin theorem asserts that this is often also a sufficient condition, see for example \cite{MashreghiNazarovHavin}. The following elementary fact is an illustration of the same phenomenon. The small nuance compared to the Beurling-Malliavin theorem is that we need that $f$ be nonnegative. The same proof works for $\varphi(t) = \frac{t}{(\log(2+|t|))^{1+\varepsilon}}$ replaced by any continuous function $[0,\infty) \to [0,\infty)$ such that $\frac{\varphi(t)}{t^2}$ is non-increasing and integrable on $[1,\infty)$.

\begin{lem}
\label{section:bump} For every $\varepsilon>0$, there is a real
number $M>0$ and an even function $C^{\infty}$ function $f\colon\R\to\R$
that is strictly positive on $(-1,1)$, zero outside of $(-1,1)$, and such
that 
\[
|\hat{f}(t)|\leq\exp\left(-M\frac{|t|}{(\log(2+|t|))^{1+\varepsilon}}\right).
\]
\end{lem}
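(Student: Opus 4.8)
The goal is to construct a nonnegative $C^\infty$ bump function supported on $(-1,1)$, strictly positive on $(-1,1)$, whose Fourier transform decays like $\exp(-M|t|/(\log(2+|t|))^{1+\varepsilon})$. The plan is to build $f$ as an infinite convolution of rescaled simple bumps, a classical trick (essentially due to Ingham, Mandelbrojt, and others) for manufacturing compactly supported functions with prescribed sub-exponential Fourier decay.

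Here is the construction I would carry out. Fix a nonnegative even $C^\infty$ function $\psi$ supported in $[-1,1]$ with $\int\psi = 1$, so that $\widehat{\psi}$ is a Schwartz function, in particular $|\widehat\psi(t)|\le C_0/(1+|t|^2)$ and, more usefully, $|\widehat\psi(t)|\le 1$ everywhere and $|\widehat\psi(t)|\le 1/2$ for $|t|\ge T_0$ for some $T_0$. Choose a sequence of positive scales $(a_j)_{j\ge 1}$ with $\sum_j a_j = 1$, and set $\psi_j(x) = a_j^{-1}\psi(x/a_j)$, which is supported in $[-a_j,a_j]$ with $\int\psi_j = 1$ and $\widehat{\psi_j}(t) = \widehat\psi(a_j t)$. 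Define
\[
f = \Conv_{j\ge 1}\psi_j,
\]
the infinite convolution. Since each factor is a probability density supported in $[-a_j,a_j]$, the partial convolutions are probability densities supported in $[-\sum_{i\le N}a_i,\sum_{i\le N}a_i]$, and they converge (e.g. in $L^1$, or uniformly after a few smoothings) to a probability density $f$ supported in $[-1,1]$. On the Fourier side the product converges absolutely: $\widehat f(t) = \prod_{j\ge 1}\widehat\psi(a_j t)$. Smoothness of $f$ follows since, once we also arrange that the tail $\sum_{j\ge N}a_j$ decays fast enough, $\widehat f$ decays faster than any polynomial; strict positivity of $f$ on $(-1,1)$ requires a small separate argument — e.g. take $\psi > 0$ on $(-1,1)$ and note that convolving finitely many of the $\psi_j$ already gives something strictly positive on an interval that exhausts $(-1,1)$ as $N\to\infty$, and convolving with the nonnegative tail cannot destroy positivity on the interior (one can also just replace $f$ by $f * \psi_{\text{tiny}}$ or add a single fixed strictly-positive-on-$(-1,1)$ bump scaled down, renormalizing the $a_j$ to keep total support $1$).

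The heart of the matter is choosing $(a_j)$ to get the decay rate. For $|t|$ large, estimate $|\widehat f(t)| = \prod_j |\widehat\psi(a_j t)| \le \prod_{j : a_j|t|\ge T_0} \tfrac12 = 2^{-N(t)}$ where $N(t) = \#\{j : a_j \ge T_0/|t|\}$. So I want $N(t)\gtrsim |t|/(\log(2+|t|))^{1+\varepsilon}$. Taking $a_j$ roughly of size $c/(j(\log(2+j))^{1+\varepsilon})$ — which is summable, good — one has $a_j\ge T_0/|t|$ roughly when $j(\log j)^{1+\varepsilon}\lesssim c|t|/T_0$, i.e. $N(t)$ is the number of such $j$, which is of order $|t|/(\log|t|)^{1+\varepsilon}$ up to constants (invert $j(\log j)^{1+\varepsilon}\asymp |t|$). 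This yields $|\widehat f(t)|\le \exp(-M|t|/(\log(2+|t|))^{1+\varepsilon})$ for a suitable $M>0$ and all $|t|$ large, and then one adjusts $M$ (shrinking it) to cover all $t$ and absorbs the normalization. Finally, rescale the variable by the constant $\sum a_j$ so that the support is exactly $(-1,1)$ rather than $[-1,1]$ up to endpoints — or just note support $\subseteq [-1,1]$ and strict positivity on $(-1,1)$ is exactly what is claimed, with vanishing at $\pm1$ automatic from the open-support factors; set $M$ to be this constant.

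The main obstacle is the delicate bookkeeping in the last step: making the counting function $N(t)$ genuinely of order $|t|/(\log(2+|t|))^{1+\varepsilon}$ (and not, say, off by a logarithm), which forces the precise choice $a_j\asymp 1/(j(\log(2+j))^{1+\varepsilon})$ and a careful inversion of $j\mapsto j(\log(2+j))^{1+\varepsilon}$; and simultaneously verifying that this same choice makes $\widehat f$ decay fast enough to guarantee $f\in C^\infty$ (it does, since sub-exponential-with-exponent-arbitrarily-close-to-$1$ decay beats every polynomial) and that $f$ is not identically zero, which is immediate here because $f$ is a probability density by construction. The positivity-on-$(-1,1)$ claim is the one place where "infinite convolution of bumps" needs a genuine (though easy) argument rather than a formality, and I would handle it by the finite-truncation-plus-nonnegative-tail observation above. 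Everything else — absolute convergence of the product, support, evenness, the affine rescaling — is routine.
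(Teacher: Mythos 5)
Your proposal is correct and is essentially the paper's own proof: the paper also takes an infinite convolution of rescaled bumps (indicator functions of $[-a_j,a_j]$ rather than smooth bumps) with the identical choice of scales $a_j\asymp c/(j(\log(2+j))^{1+\varepsilon})$, and bounds the Fourier transform by a product over the $j$ with $a_j|t|\gtrsim 1$. The only real divergence is the strict-positivity step, where the paper invokes log-concavity of the limit instead of your truncation-plus-tail argument; both work.
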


\begin{proof}
The proof is a standard construction of a smooth bump function as
infinite convolution product of indicator functions \cite[Lemma V.2.7]{Katznelson}.
Define $a_{j}=\frac{c}{j(\log2+j)^{1+\varepsilon}}$, where $c>0$
is chosen such that $\sum_{j\geq1}a_{j}=1$. Let $(X_{j})_{j\geq1}$
be a sequence of independent random variables, all uniform in $[-1,1]$.
Let $\mu$ be the law of $\sum_{j}a_{j}X_{j}$. It is a measure with
full support in $[-\sum_{j}a_{j},\sum_{j}a_{j}]=[-1,1]$. Then 
\[
\widehat{\mu}(t)=\E\exp(-it\sum a_{j}X_{j})=\prod_{j\geq1}\frac{\sin(ta_{j})}{ta_{j}}.
\]
The inequality 
\[
|\widehat{\mu}(t)|\leq\prod_{j\leq\frac{t}{(\log2+t)^{1+\varepsilon}}}\frac{1}{|t|a_{j}}\leq\exp\left(-M\frac{|t|}{(\log2+t)^{1+\varepsilon}}\right)
\]
is a computation. It implies that $\mu$ is absolutely continuous
with respect to the Lebesgue measure and that its Radon-Nikodym derivative,
which is our $f$, is $C^{\infty}$. $f$ does not vanish on the interval
$(-1,1)$ because $f$ is a log-concave function as a limit of convolutions
of log-concave functions. 
\end{proof}
\begin{rem}
In this lemma, we have tried to optimize the rate of decay of the
Fourier transform of $f$, because this is what allows to take $A$
the largest in Theorem~1.1. But this is reflected by the fact that
$f(t)$ is extremely small when $t<1$ is close to $1$, something
like $\exp(-\exp(O(\frac{1}{1-t})))$, which gives rather bad quantitative
estimates for the speed of the convergence. For smaller values of
$K\ll n^{A}$, it would better to use functions with slower decay
of $\hat{f}$. 
\end{rem}

We can periodize the above function:
\begin{lem}
\label{lemm:bump} For every $0<\alpha<\pi$, there is a nonnegative
real-valued even function $\varphi_{\alpha}\in\C^{\infty}(\R/2\pi\Z)$ that is strictly positive on $(-\alpha,\alpha) +2\pi \Z$, zero outside of $(-\alpha,\alpha) +2\pi \Z$ 
with support equal to $[\alpha,\alpha]$ and such that 
\[
\left|\hat{\varphi}_{\alpha}(k)\right|\leq\exp\left(-M\frac{|k|\alpha}{\log(2+|k|\alpha|)^{1+\varepsilon}}\right)
\]
for every $k\in\Z$. 
\end{lem}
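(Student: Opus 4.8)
The plan is to obtain $\varphi_\alpha$ from the function $f$ of Lemma~\ref{section:bump} by rescaling and periodizing. First I would rescale: since $f$ is supported in $(-1,1)$, the function $f_\alpha(x) \eqdf f(x/\alpha)$ is supported in $(-\alpha,\alpha)$, strictly positive there, still $C^\infty$ and even, and by the scaling law of the Fourier transform $\widehat{f_\alpha}(t) = \alpha\,\widehat f(\alpha t)$, so that $|\widehat{f_\alpha}(t)| \leq \alpha\exp\!\left(-M\frac{|\alpha t|}{(\log(2+|\alpha t|))^{1+\varepsilon}}\right)$. The prefactor $\alpha$ is harmless: it can be absorbed into the exponential at the cost of slightly shrinking $M$ (or simply normalizing $\varphi_\alpha$), so I will not dwell on it.

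Next I would periodize. Define
\[
\varphi_\alpha(x) \eqdf \sum_{m \in \Z} f_\alpha(x - 2\pi m),
\]
which makes sense and is $C^\infty$ on $\R/2\pi\Z$ because the sum is locally finite (for $0<\alpha<\pi$, at most one term is nonzero at each point near the fundamental domain, and in general only finitely many), and it is nonnegative, even, strictly positive on $(-\alpha,\alpha)+2\pi\Z$ and zero outside, with support exactly $[-\alpha,\alpha]+2\pi\Z$ — these properties are inherited directly from $f_\alpha$ since $\alpha<\pi$ means the translates do not overlap. The Fourier coefficients of the periodization are the samples of the Fourier transform of $f_\alpha$ at integers (Poisson summation / the standard identity $\widehat{\varphi_\alpha}(k) = \widehat{f_\alpha}(k)$ for the periodization of a Schwartz, indeed compactly supported smooth, function), so
\[
|\widehat{\varphi_\alpha}(k)| = |\widehat{f_\alpha}(k)| \leq \exp\!\left(-M\frac{|k|\alpha}{(\log(2+|k|\alpha))^{1+\varepsilon}}\right)
\]
after renaming the constant. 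This is exactly the claimed bound.

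There is really no serious obstacle here; the only points requiring a line of care are (i) the absorption of the multiplicative constant $\alpha$ from the rescaling into the exponential decay constant — which is immediate since $\alpha \leq \pi$ and $\log\alpha \leq \frac{1}{2}M\frac{|k|\alpha}{(\log(2+|k|\alpha))^{1+\varepsilon}}$ fails only for finitely many $k$, handled by further shrinking $M$ — and (ii) checking that the periodized Fourier coefficient identity $\widehat{\varphi_\alpha}(k)=\widehat{f_\alpha}(k)$ holds, which follows by integrating $f_\alpha$ term-by-term over $\R$ after unfolding the sum, legitimate because $f_\alpha$ is continuous with compact support. The hardest (though still routine) conceptual ingredient, Lemma~\ref{section:bump} itself, has already been established, so this lemma is a short corollary of it.
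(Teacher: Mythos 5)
Your construction coincides with the paper's one-line proof: since $\alpha<\pi$ the translates $f((\cdot-2\pi m)/\alpha)$ have pairwise disjoint supports, so your periodized sum is exactly the paper's definition $\varphi_{\alpha}(t+2\pi\Z)=f(t/\alpha)$ on $[-\pi,\pi]$, and the identification of the Fourier coefficients with samples of $\hat f_\alpha$ (plus absorbing or normalizing away the harmless constant) is the intended routine computation. The proposal is correct and takes essentially the same approach as the paper.
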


\begin{proof}
Set $\varphi_{\alpha}(t+2\pi\Z)=f(t/\alpha)$ for every $t\in[-\pi,\pi]$. 
\end{proof}

\subsection{Sobolev algebras in the full group $C^{*}$-algebra}\label{subsec:Sobolev}

Let $(w(q))_{q\in\N}$ be a non-decreasing sequence of positive real
numbers satisfying $w(q_{1}+q_{2})\leq w(q_{1})w(q_{2})$. Set
\[
\mathcal{S}_{w}(\Gamma)=\big\{ x\in C^{*}(\Gamma)\mid x=\sum_{q}x_{q},x_{q}\in\C_{\leq q}[\Gamma],\sum_{q}w(q)\|x_{q}\|_{C^{*}(\Gamma)}<\infty\big\}.
\]
It is a Banach algebra for the norm 
\[
\|x\|_{w}:=\inf\big\{\sum_{q}w(q)\|x_{q}\|_{C^{*}(\Gamma)}\mid x=\sum_{q}x_{q},x_{q}\in\C_{\leq q}[\Gamma]\big\}.
\]
For $w(q)=(1+q)^{i}$ we denote the corresponding space $\mathcal{S}_{i}(\Gamma)$. We have the following elementary fact:
\begin{lem}\label{lem:linear_forms_on_Sw} A function $u \colon \Gamma \to \C$ extends by linearity to a linear map $\mathcal{S}_w(\Gamma)\to \C$ of norm $\leq N$ if and only if for every $q$ and every $x \in \C_{\leq q}[\Gamma]$,
  \[ |u(x):=\sum_\gamma x(\gamma) u(\gamma)| \leq N w(q) \|x\|_{C^*(\Gamma)}.\]
\end{lem}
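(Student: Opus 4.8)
\textbf{Proof proposal for Lemma~\ref{lem:linear_forms_on_Sw}.}

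The plan is to prove both implications directly from the definitions, the only real content being that the norm $\|\cdot\|_w$ is an infimum over decompositions and that finite sums $\sum_q x_q$ are dense in $\mathcal{S}_w(\Gamma)$. First I would handle the easy direction: assume $u$ extends to a bounded linear map $\mathcal{S}_w(\Gamma) \to \C$ of norm $\leq N$. Given $q$ and $x \in \C_{\leq q}[\Gamma]$, note that $x \in \mathcal{S}_w(\Gamma)$ with the trivial one-term decomposition $x = x_q$, which gives $\|x\|_w \leq w(q)\|x\|_{C^*(\Gamma)}$. Hence $|u(x)| \leq N\|x\|_w \leq N w(q)\|x\|_{C^*(\Gamma)}$, as desired; here one also checks that the extension agrees with the original $u$ on group elements, so that $u(x) = \sum_\gamma x(\gamma)u(\gamma)$ on $\C_{\leq q}[\Gamma]$.

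For the converse, suppose $|u(x)| \leq N w(q)\|x\|_{C^*(\Gamma)}$ for all $q$ and all $x \in \C_{\leq q}[\Gamma]$. I would first extend $u$ to the dense subalgebra $\C[\Gamma] = \bigcup_q \C_{\leq q}[\Gamma] \subset \mathcal{S}_w(\Gamma)$ by linearity. For $x \in \C[\Gamma]$ and any decomposition $x = \sum_q x_q$ with $x_q \in \C_{\leq q}[\Gamma]$ (only finitely many nonzero, since $x$ has finite support), linearity and the hypothesis give $|u(x)| \leq \sum_q |u(x_q)| \leq \sum_q N w(q)\|x_q\|_{C^*(\Gamma)}$; taking the infimum over such decompositions yields $|u(x)| \leq N\|x\|_w$. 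Thus $u$ is $\|\cdot\|_w$-bounded of norm $\leq N$ on the dense subspace $\C[\Gamma]$, and by the general Banach-space fact that a bounded linear functional on a dense subspace extends uniquely to the completion with the same norm, $u$ extends to $\mathcal{S}_w(\Gamma) \to \C$ of norm $\leq N$.

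The only point requiring a moment's care is that $\C[\Gamma]$ is genuinely dense in $\mathcal{S}_w(\Gamma)$: given $x = \sum_q x_q \in \mathcal{S}_w(\Gamma)$ with $\sum_q w(q)\|x_q\|_{C^*(\Gamma)} < \infty$, the partial sums $\sum_{q \leq Q} x_q$ lie in $\C[\Gamma]$ and converge to $x$ in $\|\cdot\|_w$ since the tail $\sum_{q > Q} w(q)\|x_q\|_{C^*(\Gamma)} \to 0$. This is routine and I do not anticipate any genuine obstacle; the submultiplicativity and monotonicity of $w$ are not even needed for this particular lemma (they are what make $\mathcal{S}_w(\Gamma)$ a Banach \emph{algebra}, but the statement here only concerns its Banach space structure and the linear functional $u$). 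I would keep the write-up to a few lines, citing the standard extension-by-density fact rather than reproving it.
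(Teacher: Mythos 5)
The paper states this lemma without proof, so there is nothing to compare against; your overall strategy (trivial one-term decomposition for the forward direction, boundedness on the dense subspace $\C[\Gamma]$ plus extension by density for the converse) is the natural one and the forward direction and the density argument are fine. However, there is a genuine gap in the converse: your parenthetical claim that a decomposition $x=\sum_q x_q$ of a finitely supported $x$ has only finitely many nonzero terms is false. The decomposition is not unique, and an element of $\C[\Gamma]$ admits decompositions with infinitely many nonzero $x_q$ (which cancel in $C^*(\Gamma)$); the infimum defining $\|x\|_w$ runs over \emph{all} such decompositions. What your argument actually yields is $|u(x)|\leq N\cdot\inf\{\sum_q w(q)\|x_q\|\}$ where the infimum is restricted to \emph{finite} decompositions, and that restricted infimum is a priori strictly larger than $\|x\|_w$. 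So the bound $|u(x)|\leq N\|x\|_w$ on $\C[\Gamma]$, which is exactly what the extension-by-density step needs, is not yet established.

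The gap is fillable, but the fix uses the monotonicity of $w$, contrary to your closing remark that it is not needed. Given an arbitrary decomposition $x=\sum_q x_q$ with $x$ supported in the ball of radius $q_0$, set $y_Q=\sum_{q\leq Q}x_q$. Then $x-y_Q\in\C_{\leq\max(q_0,Q)}[\Gamma]$ and, since $w$ is non-decreasing,
\[
\|x-y_Q\|_{C^*(\Gamma)}\leq\sum_{q>Q}\|x_q\|_{C^*(\Gamma)}\leq\frac{1}{w(Q+1)}\sum_{q>Q}w(q)\|x_q\|_{C^*(\Gamma)},
\]
so for $Q\geq q_0$ the hypothesis gives $|u(x)-u(y_Q)|=|u(x-y_Q)|\leq N\,\frac{w(Q)}{w(Q+1)}\sum_{q>Q}w(q)\|x_q\|_{C^*(\Gamma)}\to 0$. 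Hence $u(x)=\sum_q u(x_q)$ and $|u(x)|\leq N\sum_q w(q)\|x_q\|_{C^*(\Gamma)}$ for every admissible decomposition, which gives $|u(x)|\leq N\|x\|_w$ after taking the genuine infimum. (Essentially the same estimate is what shows the extension to all of $\mathcal{S}_w(\Gamma)$ is well defined if one prefers to define it directly by $\sum_q u(x_q)$ rather than by abstract density.) With this correction your proof is complete.
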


The next lemma is useful and suggests the terminology of Sobolev algebras for the algebras $\mathcal{S}_i(\Gamma)$.
\begin{lem}\label{lem:C_i+1} Let $y=y^{*}\in\C_{\leq q}[\Gamma]$ and $f:[-\|y\|_{C^{*}(\Gamma)},\|y\|_{C^{*}(\Gamma)}]\to\R$ be a continuous function. Let $w \colon \N \to \R$ as above. Assume that the function $\varphi(\theta) = f(\|y\|_{C^{*}(\Gamma)} \cos \theta)$ satisfies $\sum_{n \in \Z} w(q |n|) |\hat \varphi(n)|<\infty$. Then $f(y)\in\mathcal{S}_{w}(\Gamma)$ and
  \[ \|f(y)\|_w \leq \sum_{n \in \Z} w(q |n|) |\hat \varphi(n)|.\]

In particular, if $f$ is $C^{i+1}$, then $f(y) \in \mathcal{S}_i(\Gamma)$ with norm $\leq C_i (1+\|y\|_{C^*(\Gamma)})^{i+1} (1+q)^i \|f\|_{C^{i+1}}$.
\end{lem}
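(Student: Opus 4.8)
\textbf{Proof plan for Lemma \ref{lem:C_i+1}.}

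The plan is to expand $f(y)$ via the functional calculus in terms of Fourier modes on the circle, using the substitution that turns the spectral interval into an arc. First I would write $y = \|y\|_{C^*(\Gamma)}\, z$ where $z=z^*$ has $\|z\|_{C^*(\Gamma)}\le 1$, so that the spectrum of $z$ lies in $[-1,1]=\{\cos\theta : \theta\in\R/2\pi\Z\}$. Given the continuous function $f$ on $[-\|y\|,\|y\|]$, set $\varphi(\theta) = f(\|y\|\cos\theta)$, which is an even continuous function on $\R/2\pi\Z$; let $\hat\varphi(n)$ be its Fourier coefficients, so $\varphi(\theta) = \sum_{n\in\Z}\hat\varphi(n)e^{in\theta}$ in $C(\R/2\pi\Z)$ provided the series converges absolutely, which is exactly what the hypothesis $\sum_n w(q|n|)|\hat\varphi(n)|<\infty$ guarantees (since $w\ge $ some positive constant, actually $w(0)>0$ and $w$ nondecreasing gives $\sum_n|\hat\varphi(n)|<\infty$). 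By continuous functional calculus, $f(y) = f(\|y\|z) = \varphi(\arccos z)$ in the sense that if we write $T = \arccos z$... more cleanly: the Chebyshev polynomials give $\cos(n\theta) = T_n(\cos\theta)$, so $\varphi(\theta) = \sum_n \hat\varphi(n)e^{in\theta}$ means $f(\|y\|t) = \sum_n \hat\varphi(n) \big(T_{|n|}(t) + i\,\mathrm{sign}(n) U_{|n|-1}(t)\sqrt{1-t^2}\big)$; but since $\varphi$ is even, only the $\cos$ part survives and $f(\|y\| t) = \sum_{n}\hat\varphi(n) T_{|n|}(t)$ with absolute convergence uniformly on $[-1,1]$.

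Next I would feed $z\in\C_{\le q}[\Gamma]$ into this identity. Since $z$ is a self-adjoint element of $C^*(\Gamma)$ supported in the ball of radius $q$, the element $T_{|n|}(z)\in C^*(\Gamma)$ is a polynomial of degree $|n|$ in $z$, hence supported in the ball of radius $q|n|$, i.e. $T_{|n|}(z)\in\C_{\le q|n|}[\Gamma]$; moreover $\|T_{|n|}(z)\|_{C^*(\Gamma)}\le \sup_{[-1,1]}|T_{|n|}| = 1$. Therefore the series $f(y) = \sum_{n\in\Z}\hat\varphi(n)T_{|n|}(z)$ converges absolutely in $\mathcal{S}_w(\Gamma)$: grouping the $n$-th and $(-n)$-th terms, the $\mathcal{S}_w$-norm is bounded by $\sum_{n\in\Z} w(q|n|)\,|\hat\varphi(n)|\cdot\|T_{|n|}(z)\|_{C^*(\Gamma)} \le \sum_{n\in\Z} w(q|n|)\,|\hat\varphi(n)|$. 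Since $\mathcal{S}_w(\Gamma)\hookrightarrow C^*(\Gamma)$ continuously and the same series converges to $f(y)$ in $C^*(\Gamma)$ by the functional calculus, the limits agree, giving both $f(y)\in\mathcal{S}_w(\Gamma)$ and the claimed norm bound.

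For the "in particular" clause, take $w(q) = (1+q)^i$, so $\mathcal{S}_w(\Gamma) = \mathcal{S}_i(\Gamma)$, and I must bound $\sum_{n\in\Z}(1+q|n|)^i|\hat\varphi(n)|$ when $f$ is $C^{i+1}$. Note $\varphi(\theta)=f(\|y\|\cos\theta)$ is $C^{i+1}$ on $\R/2\pi\Z$ with $\|\varphi\|_{C^{i+1}} \le C_i'(1+\|y\|_{C^*(\Gamma)})^{i+1}\|f\|_{C^{i+1}}$ by the chain rule (the derivatives of $\cos$ are bounded by $1$, and each of the up-to-$(i+1)$ derivatives brings down at most a factor $\|y\|$, with combinatorial constants absorbed into $C_i'$). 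Standard Fourier estimates then give $|\hat\varphi(n)| \le \|\varphi\|_{C^{i+1}}/(1+|n|)^{i+1}$, so
\[
\sum_{n\in\Z}(1+q|n|)^i|\hat\varphi(n)| \le (1+q)^i\|\varphi\|_{C^{i+1}}\sum_{n\in\Z}\frac{(1+|n|)^i}{(1+|n|)^{i+1}} = (1+q)^i\|\varphi\|_{C^{i+1}}\sum_{n\in\Z}\frac{1}{1+|n|},
\]
which unfortunately diverges, so I would instead use $|\hat\varphi(n)|\le \|\varphi\|_{C^{i+1}}/(1+|n|)^{i+1}$ together with one more factor: actually one should estimate $|\hat\varphi(n)| \le \|\varphi^{(i+2)}\|_\infty/(1+|n|)^{i+2}$ if $f\in C^{i+2}$, or — staying with $C^{i+1}$ — split $\sum (1+q|n|)^i(1+|n|)^{-(i+1)} \le (1+q)^i\sum(1+|n|)^{-1}$, still divergent; the honest fix is that $\|\varphi\|_{C^{i+1}}$ should be read as controlling $|\hat\varphi(n)|\lesssim (1+|n|)^{-(i+1)}$ only for $n\ne 0$ and then $\sum_{n\ne 0}(1+q|n|)^i/(1+|n|)^{i+1}\le (1+q)^i\sum_{n\ne0}|n|^{-1}$ — so in fact one needs the slightly stronger input, and I would state the clause with $\|f\|_{C^{i+1}}$ meaning the $C^{i+1}$-norm which by interpolation/Bernstein controls the needed decay, or simply require $f\in C^{i+2}$; the main obstacle is precisely this bookkeeping of how much smoothness is needed to make $\sum_n(1+q|n|)^i|\hat\varphi(n)|$ converge with the stated power of $(1+q)$, and it is resolved by using that $\hat\varphi(n)=O(|n|^{-(i+2)})$, pulling out $(1+q)^i$, and summing the convergent tail $\sum_n |n|^{-2}$, with the constant $C_i$ absorbing everything.
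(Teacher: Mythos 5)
The first half of your argument is correct and is exactly the paper's proof: expand $\varphi(\theta)=f(\|y\|\cos\theta)$ in Fourier series, use $T_{|n|}(y/\|y\|)\in\C_{\leq q|n|}[\Gamma]$ with $C^*$-norm at most $1$, and sum the $\mathcal S_w$-norms term by term.

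The second half has a genuine gap, which you notice but do not repair correctly. Your strategy is to use the pointwise decay $|\hat\varphi(n)|\leq \|\varphi\|_{C^{i+1}}(1+|n|)^{-(i+1)}$, and as you observe this leaves you with the divergent series $\sum_n (1+|n|)^{-1}$. Your proposed fixes — assuming $f\in C^{i+2}$, or asserting $\hat\varphi(n)=O(|n|^{-(i+2)})$ — either change the statement of the lemma or claim decay that does not follow from $f\in C^{i+1}$. The correct repair, which is what the paper does, is to abandon the pointwise bound and use Plancherel plus Cauchy--Schwarz: since $\varphi^{(i+1)}$ is continuous on the circle it lies in $L^2$, so
\[
\Bigl(\sum_{n}|n|^{2i+2}|\hat\varphi(n)|^{2}\Bigr)^{1/2}=\|\varphi^{(i+1)}\|_{L^{2}}\lesssim_i (1+\|y\|)^{i+1}\|f\|_{C^{i+1}},
\]
and then
\[
\sum_{n}(1+q|n|)^{i}|\hat\varphi(n)|\leq(1+q)^{i}\Bigl(\sum_{n}(1+|n|)^{-2}\Bigr)^{1/2}\Bigl(\sum_{n}(1+|n|)^{2i+2}|\hat\varphi(n)|^{2}\Bigr)^{1/2},
\]
where the first factor converges. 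The point is that $\ell^{2}$-summability of the $(i+1)$-st derivative's Fourier coefficients, tested against the square-summable weight $(1+|n|)^{-1}$, recovers exactly the logarithmic margin that the sup-norm estimate loses. With this substitution your argument closes under the hypothesis $f\in C^{i+1}$ as stated.
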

\begin{proof} In the proof, we write $\|y\|$ for $\|y\|_{C^{*}(\Gamma)}$. The assumption implies that the Fourier coefficients of $\varphi$ are absolutely summable, which gives rise to a norm-converging expansion
\[
f(y)=\sum_{n \in \Z}\hat{\varphi}(n)T_{n}(y/\|y\|)
\]
where $T_n$ is the $n$-th Chebyshev polynomial $T_n(\cos \theta)=\cos(n\theta)$. So $T_{n}(y/\|y\|)$ belongs to $\C_{\leq q|n|}[\Gamma]$, has norm $\leq 1$ in $C^*(\Gamma)$ 
and 
\[
\sum_{n}w(q|n|)|\hat{\varphi}(n)|\|T_{n}(y/\|y\|)\|_{C^{*}(\Gamma)}\leq \sum_{n}w(q|n|)|\hat{\varphi}(n)|.
\]
The proves the first part of the lemma. 

For the second part, if $f$ is $C^{i+1}$, then so is $\varphi$ and by the chain rule we have $\|\varphi\|_{C^{i+1}} \lesssim (1+\|y\|)^{i+1}\|f\|_{C^{i+1}}$ and therefore by the Plancherel formula
\begin{equation}\label{eq:plancherel_phi} (\sum_{n}|\widehat{\varphi^{(i+1)}}(n)|^{2})^{\frac{1}{2}}\lesssim (1+\|y\|)^{i+1}\|f\|_{C^{i+1}}
\end{equation}
with constant that depend on $i$ only. By the Cauchy-Schwarz inequality and the inequality $(1+q|n|) \leq (1+q)(1+|n|)$
\[
\sum_n (1+q|n|)^i |\hat \varphi(n)| \leq (1+q)^i \big(\sum_{n}(1+|n|)^{-2}\big)^{\frac{1}{2}}\big(\sum_{n}(1+|n|)^{2i+2}|\hat{\varphi}(n)|^{2}\big)^{\frac{1}{2}}.
\]
We bound $(1+|n|) \leq 1_{n=0}+2|n|$ and write $n^{i+1} \hat\varphi(n) = \widehat{\varphi^{(i+1)}}(n)$, so that
\[ (\sum_{n}(1+|n|)^{2i+2}|\hat{\varphi}(n)|^{2})^{\frac{1}{2}} \leq |\hat{\varphi}(0)| + 2^{i+1}(\sum_{n}|\widehat{\varphi^{(i+1)}}(n)|^{2})^{\frac{1}{2}},\]
The lemma follows by \eqref{eq:plancherel_phi}.
\end{proof}
\begin{prop}\label{prop:characterization_of_temperedness_in_Sobolev_algebras} Let $u \colon \Gamma \to \C$. Assume that $u$ extends to a continuous linear map $u\colon \mathcal{S}_i(\Gamma) \to \C$. Then $u$ is tempered if and only if there is $n$ such that for every selfadjoint $x \in \mathcal{S}_i(\Gamma) \cap \ker \lambda$, $u(x^n)=0$. In that case, this holds for all $n\geq i+2$.

Furthermore, $u$ is completely tempered if and only if there is $n$ such 
that for every $k$ and every selfadjoint $x \in M_k(\C)\otimes 
(\mathcal{S}_i(\Gamma) \cap \ker \lambda)$, $(\mathrm{Tr}\otimes 
u)(x^n)=0$. In that case, this holds for all $n\geq i+2$.
\end{prop}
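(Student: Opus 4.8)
The plan is to reduce the $\limsup$ condition in Definition~\ref{defn:tempered} to an exact vanishing statement, using the functional calculus afforded by Lemma~\ref{lem:C_i+1}. I will treat the non-complete case first; the complete case is formally identical with $\Gamma$ replaced by the obvious matrix-amplified objects, so I would state it with the scalar argument and remark that replacing $u$ by $\Tr\otimes u$, $\C[\Gamma]$ by $M_k(\C)\otimes\C[\Gamma]$, and $\lambda$ by $\id\otimes\lambda$ changes nothing. The two directions are quite different in character, so I would organize the proof accordingly.

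\emph{The easy direction (vanishing $\Rightarrow$ tempered).} Suppose $u(x^n)=0$ for all selfadjoint $x\in\mathcal{S}_i(\Gamma)\cap\ker\lambda$, for some fixed $n$; I claim it then holds for all $n'\geq n$, and in particular for all $n'\geq i+2$ once we know it for $n=i+2$. Given a selfadjoint $x\in\C[\Gamma]$ with $\|\lambda(x)\|=:\rho$, and given $\e>0$, I want to show $|u((x^2)^m)|\leq(\rho+\e)^{2m}$ for $m$ large. The idea is to subtract off the part of the spectrum of $x$ near $\pm\rho$: choose a $C^{i+1}$ function $g$ on $[-\|x\|_{C^*},\|x\|_{C^*}]$ with $g(t)=t$ on $[-\rho-\e,\rho+\e]$ and such that $y:=g(x)$ satisfies $\lambda(y)=\lambda(x)$ (this is possible because the spectrum of $\lambda(x)$ lies in $[-\rho,\rho]$, so $g$ can be taken to fix a neighborhood of that spectrum while being supported appropriately — more precisely one arranges $g-\,\mathrm{id}$ to vanish on $[-\rho-\e,\rho+\e]$ so that $y-x$ maps to $0$ under $\lambda$, hence $x-y\in\ker\lambda$). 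Then $x-y\in\mathcal{S}_i(\Gamma)\cap\ker\lambda$ and $x^{n'}-y^{n'}\in\mathcal{S}_i(\Gamma)\cap\ker\lambda$ for all $n'$; one needs to check that powers stay in $\ker\lambda$ and that the hypothesis, stated for elements of the form (selfadjoint)$^n$, can be bootstrapped — here I would either directly invoke the hypothesis on $z:=x-y$ to get $u(z^n)=0$ and then argue by a polynomial identity that $u$ annihilates $z\cdot(\text{polynomial in }z)$, or, cleaner, observe that $u$ restricted to the (nonunital) subalgebra generated by $z$ vanishes on $z^n$ and hence — since that algebra is commutative and generated by a single selfadjoint with $u$ continuous on $\mathcal{S}_i$ — vanishes on the closed ideal $z^n\mathcal{S}_i$, which for $n\geq 1$ already forces $u(z^{n'})=0$ for $n'\geq n$ provided we also know $u(z^j)=0$ for $1\le j<n$; this last point is exactly why the threshold is $i+2$ and not smaller, and I would get it from the next paragraph's rigidity applied to $z$ itself. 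Granting $u(x^{n'})=u(y^{n'})$, I bound $|u(y^{n'})|\leq\|u\|_{\mathcal{S}_i}\|y^{n'}\|_{\mathcal{S}_i}$ and apply Lemma~\ref{lem:C_i+1} to $t\mapsto g(t)^{n'}$: its $C^{i+1}$-norm grows polynomially in $n'$ (like $(n')^{i+1}(\rho+\e)^{n'}$ up to constants depending on $g$ and $\|x\|_{C^*}$), and crucially $y^{n'}=h_{n'}(x)$ with $h_{n'}$ supported in a fixed ball, so the word-length factor $q$ in Lemma~\ref{lem:C_i+1} is that of $x$, independent of $n'$. Hence $\|y^{n'}\|_{\mathcal{S}_i}\lesssim (n')^{i+1}(\rho+\e)^{n'}$, so $|u(x^{n'})|^{1/n'}\to\rho+\e$ at most, and letting $\e\to0$ gives temperedness (with $x^2$ in place of general selfadjoint by positivity, $\|\lambda(x^*x)\|=\|\lambda(x)\|^2$).

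\emph{The hard direction (tempered $\Rightarrow$ vanishing), which I expect to be the main obstacle.} Here is where the exponent $i+2$ must come from. Let $x=x^*\in\mathcal{S}_i(\Gamma)\cap\ker\lambda$. The point is that $\lambda(x)=0$ means $x$, viewed in $C^*(\Gamma)$, has spectrum... not necessarily $\{0\}$ — $x$ need not be $0$ in $C^*(\Gamma)$ — but $\tau(x^{2m})^{1/2m}$ with respect to the canonical trace... no: rather, temperedness says $\limsup_m|u((x^2)^m)|^{1/2m}\leq\|\lambda(x)\|=0$, i.e. $|u(x^{2m})|^{1/2m}\to 0$ super-exponentially fast — for every $\e>0$, $|u(x^{2m})|\leq\e^{2m}$ eventually. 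I now want to upgrade "$u(x^{2m})\to 0$ faster than any geometric rate" to "$u(x^{2m})=0$ for $2m\geq i+2$". The mechanism is a \emph{quasianalyticity/rigidity} argument: consider the function $F(t):=u(g_t(x))$ where $g_t$ ranges over a suitable family, or more directly, use that $u$ is a distribution of order $i$ (this is the role of Lemma~\ref{lem:linear_forms_on_Sw} and the standing hypothesis $u\in\mathcal{S}_i(\Gamma)^*$, together with the remark that existence of $P_n$ lets us "use standard results on distributions"). Concretely: the linear functional $p\mapsto u(p(x))$ on polynomials extends, via $\|p(x)\|_{\mathcal{S}_i}\lesssim\|p\|_{C^{i+1}[-R,R]}$ (Lemma~\ref{lem:C_i+1}, $R=\|x\|_{C^*}$), to a distribution $T_x$ on $[-R,R]$ of order $\leq i+1$ (an order-$i$ statement after a standard one-loss); thus $T_x$ is a finite sum of derivatives up to order $i+1$ of measures. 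Its moments are $\langle T_x,t^m\rangle=u(x^m)$. Two facts combine: (a) for $m$ odd, $u(x^m)$ tends to $0$ faster than geometrically; for $m$ even it does too by temperedness; so \emph{all} moments of $T_x$ tend to $0$ faster than any geometric rate; (b) a compactly supported distribution of finite order $d$ all of whose moments from some point on are zero — or merely decay faster than geometrically, which I will show forces them to be eventually zero — must be supported at finitely many points, and in fact if $\langle T,t^m\rangle=0$ for all $m\geq d+1$ then $T$ is a combination of $\delta_0$ and its derivatives up to order $d$, whence $\langle T,t^m\rangle=0$ for all $m\geq d+1$; matching $d=i+1$ gives vanishing for $m\geq i+2$. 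The step "faster-than-geometric decay of moments $\Rightarrow$ eventually zero, for a fixed finite-order compactly supported distribution" is the crux: it is a quasianalyticity phenomenon and I would prove it by writing $\widehat{T_x}(\xi)$, which is entire of exponential type $R$ and polynomially bounded on $\R$ (Paley–Wiener–Schwartz), with Taylor coefficients at $0$ given by the (super-exponentially small) moments; super-exponential decay of the coefficients would make $\widehat{T_x}$ of exponential type $0$, i.e. a polynomial (of degree $\leq i+1$ by the growth bound), hence $T_x$ is supported at $0$ with the stated structure. Packaging this uniformly in $k$ for the completely-tempered case requires only that the estimate $\|p(x)\|_{\mathcal{S}_i}\lesssim\|p\|_{C^{i+1}}$ and the Paley–Wiener bounds are dimension-free, which they are since they come from Lemma~\ref{lem:C_i+1} with the same word-length $q$ and a constant depending only on $i$; so I would run the scalar argument verbatim with $\Tr\otimes u$ and $M_k(\C)\otimes\mathcal{S}_i(\Gamma)$, noting $\|(\id\otimes\lambda)(x)\|$ plays the role of $\|\lambda(x)\|$ throughout.

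Finally, I would remark that the "in that case, this holds for all $n\geq i+2$" clauses are automatic from the structure above: whichever $n$ witnesses the vanishing, the distribution/Paley–Wiener argument shows $T_x$ (resp. its matrix analogue) is supported at $0$ with order $\leq i+1$, and that single structural fact yields vanishing of $u(x^{n'})$ (resp. $(\Tr\otimes u)(x^{n'})$) simultaneously for every $n'\geq i+2$.
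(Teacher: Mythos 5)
Both directions of your proposal contain genuine gaps, even though the overall shape (functional calculus via Lemma~\ref{lem:C_i+1}, compactly supported distributions of order $\leq i+1$, support localized by temperedness) is the right one. In the ``vanishing $\Rightarrow$ tempered'' direction, the step $u(x^{n'})=u(y^{n'})$ is not justified. The hypothesis only says $u(w^n)=0$ when the argument of $u$ is \emph{exactly} the $n$-th power of a selfadjoint $w\in\mathcal{S}_i(\Gamma)\cap\ker\lambda$. The difference $x^{n'}-y^{n'}=h(x)$ with $h(t)=t^{n'}-g(t)^{n'}$ is a selfadjoint element of $\ker\lambda$, but it is not an $n$-th power of one ($h$ changes sign, and $h^{1/n}$ need not be smooth), and neither of your repairs works: $u$ is a bare linear functional, neither positive nor multiplicative, so it has no reason to vanish on the ideal $z^n\mathcal{S}_i(\Gamma)$; and invoking ``the next paragraph's rigidity applied to $z$'' presupposes temperedness of $u$, which is the conclusion. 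The paper's proof avoids the bootstrap entirely by taking the test element to be $x=(y^*y)^{k/n}f(y^*y)$ with $f$ a smooth cutoff vanishing on $[-\|\lambda(y)\|^2,\|\lambda(y)\|^2]$: then $u\big((y^*y)^k\big)-u(x^n)=u\big((y^*y)^k(1-f(y^*y)^n)\big)$, the first subtracted term is an exact instance of the hypothesis, and the remainder is controlled by the $C^{i+1}$-norm of $t\mapsto t^k(1-f(t)^n)$, which is $O\big((\|\lambda(y)\|^2+\varepsilon)^k\big)$.

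In the ``tempered $\Rightarrow$ vanishing'' direction, you apply Lemma~\ref{lem:C_i+1} and Definition~\ref{defn:tempered} directly to a general $x\in\mathcal{S}_i(\Gamma)\cap\ker\lambda$, but both are only available for finitely supported elements: the Chebyshev argument in Lemma~\ref{lem:C_i+1} needs $T_n(y/\|y\|)$ to lie in $\C_{\leq q|n|}[\Gamma]$ with $C^*$-norm $\leq 1$, which fails for a general element of $\mathcal{S}_i(\Gamma)$ (there is no finite-order distribution $T_x$ attached to it), and temperedness is a statement about $x\in\C[\Gamma]$ only. Your Paley--Wiener/exponential-type-zero mechanism is correct as far as it goes --- it is essentially the cited Lemma 4.9 of Chen--Garza-Vargas--Tropp--van Handel, and it is where $i+2$ comes from --- but it must be run on the truncations $y_q=\sum_{s\leq q}x_s\in\C_{\leq q}[\Gamma]$. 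The paper's key quantitative point, absent from your proposal, is that $\|\lambda(y_q)\|=\|\lambda(y_q-x)\|=o((1+q)^{-i})$ while the order-$(i+1)$ bound on the distribution $f\mapsto u(f(y_q))$ degrades only like $(1+q)^i$; hence $u(y_q^n)=(1+q)^i\,O(\|\lambda(y_q)\|^{n-i-1})\to0$ precisely when $n\geq i+2$, and $u(x^n)=\lim_q u(y_q^n)=0$ by continuity of $u$ on $\mathcal{S}_i(\Gamma)$. Without this truncation-and-balance step the argument does not close.
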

\begin{proof} We prove the equivalence for temperedness, the proof for complete temperedness is identical.

  We start with a preliminary observation. Lemma~\ref{lem:C_i+1} tells us that for every self-adjoint $y \in \C_{\leq q}(\Gamma)$, the function $f \in C^\infty(\R) \mapsto u(f(y))$ is a compactly supported distribution of order $\leq i+1$, and the largest symmetric interval containing its support is $[-\limsup|u(y^k)|^{\frac 1 k},\limsup|u(y^k)|^{\frac 1 k}]$, see \cite[Lemma 4.9]{chen2024new}).

 Given this observation, the if direction is direct. Let $\varepsilon>0$, and pick $f$ be a $C^\infty$ function equal to $0$ on the interval $[-\|\lambda(y)\|^2,\|\lambda(y)\|^2]$ and to $1$ outside of the $\varepsilon$-neighbourhood of it.  For $k$ large enough (so that $t\mapsto |t|^{k/n}$ is $C^{i+2}$), $x=(y^*y)^{k/n} f(y^*y)$ belongs to $\mathcal{S}_i(\Gamma) \cap \ker \lambda$. As a consequence, $u(x^{n})=0$ and therefore
  \[ u( (y^*y)^{k}) = u( (y^*y)^{k}(1-f(y^*y)^n)).\]
  But the $C^{i+1}$ norm of $t\mapsto t^{k}(1-f(t)^n)$ is $\leq C(i,\varepsilon,y) (\|\lambda(y)\|^2+\varepsilon)^{k}$, which implies that
  \[ \limsup_k |u( (y^*y)^k)|^{1/2k} \leq \sqrt{\|\lambda(y)\|^2+\varepsilon}.\]
  This proves that $u$ is tempered.

  For the converse, consider a self-adjoint $x \in \mathcal{S}_i(\Gamma)\cap \ker\lambda$. Decompose $x= \sum_q x_q$ as in the definition of $\mathcal{S}_i(\Gamma)$. We can moreover assume that each $x_q$ is self-adjoint. Let $y_q = \sum_{s\leq q} x_s \in \C_{\leq q}[\Gamma]$. Then we have
  \begin{equation}\label{eq:bound_on_lambdayq} \|\lambda(y_q)\| = \|\lambda(y_q-x)\|\leq \sum_{s>q} \|x_s\| = o((1+q)^{-i}),
  \end{equation}
  because $\sum_q (1+q)^i\|x_q\|<\infty$. Let $\varphi$ be a $C^\infty$ function equal to $1$ on $[-1,1]$ and $0$ outside of $[-2,2]$, and let $\varphi_q(t) = \varphi(\frac{t}{\|\lambda(y_q)\|})$. We know from the preliminary observation that $\Lambda_q(f) = u(f(y_q))$ is a distribution with support inside $[-\|\lambda(y_q)\|,\|\lambda(y_q)\|]$ and such that $|\Lambda_q(f) |\leq C (1+q)^i \|f\|_{C^{i+1}(\R)}$ for some $C$ independent from $q$. Therefore, for every $n$, since the functions $t\mapsto t^n$ and $f:t \mapsto t^n \varphi_q(t)$ coincide on a neighbourhood of $[-\|\lambda(y_q)\|,\|\lambda(y_q)\|]$, we have
  \begin{equation}\label{eq:bound_on_u_y_qn} u(y_q^n) = u(y_q^n \varphi_q(y_q))=(1+q)^i O_{q \to \infty}(\|\lambda(y_q)\|^{n-i-1}).
  \end{equation}
  The last inequality is because the function $f$ has $C^{i+1}$-norm $\leq C(i,n) \|\lambda(y_q)\|^{n-i-1}$. Indeed, for $k \leq i+1$,
  \[ f^{(k)}(t) = \sum_{j=0}^k \binom{k}{j} \frac{n!}{(n-j)!} \frac{t^{n-j}}{\|\lambda(y_q)\|^{k-j}} \varphi^{(k-j)}\Big(\frac{t}{\|\lambda(y_q)\|}\Big).\]
  Using  $|t| \leq 2 \|\lambda(y_q)\|$ on the support of $\varphi_q$, we see that this is less than $2^n(1+n)^k\|\lambda(y_q)\|^{n-k} \|\varphi\|_{C^k}$ as claimed. If $n\geq i-2$ we conclude from  \eqref{eq:bound_on_lambdayq} and \eqref{eq:bound_on_u_y_qn} that $\lim_q u(y_q^n)=0$. Therefore, we have
    \[ u(x^n) = u(\lim_q y_q^n) = \lim_q u(y_q^n)=0.\]
The proposition is proved. 
\end{proof}
\subsection{Unique trace and exactness}
We use the following easy consequence of the operator-space reformulation of exactness, in the line of \cite[Theorem 7.3]{BordenaveCollins3}.
\begin{lem}\label{lem:exactness_in_terms_of_fd_approximations} Let $A$ be an exact $C^*$-algebra. For every finite family $a_1,\dots,a_n$ in $A$ and every $\varepsilon>0$, there is an integer $d$ such that for every $k$ and every $b_1,\dots,b_n \in M_k(\C)$, there are norm $\leq 1$ matrices $u,v \in M_{d,k}(\C)$ such that
  \[  (1-\varepsilon)\| \sum_i a_i \otimes b_i\| \leq \|\sum_i a_i \otimes u b_{i} v^*\|.\]
\end{lem}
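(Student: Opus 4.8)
The plan is to unwind the operator-space characterization of exactness. Recall that $A$ is exact if and only if the identity map on $A$ factors approximately through finite-dimensional matrix algebras in the point-norm topology of \emph{completely bounded} maps — more precisely, for the finite set $a_1,\dots,a_n$ and $\varepsilon>0$ there are an integer $d$ and completely bounded maps $\varphi\colon A \to M_d(\C)$, $\psi\colon M_d(\C)\to A$ with $\|\varphi\|_{cb}\|\psi\|_{cb}\le 1+\varepsilon'$ and $\|\psi\varphi(a_i)-a_i\|<\varepsilon'$ for all $i$ (this is Kirchberg's characterization; see \cite[Chapter 17]{BrownOzawa} or the discussion around \cite[Theorem 7.3]{BordenaveCollins3}). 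First I would apply this to produce such $\varphi,\psi$. Then, using that $M_d(\C)$ is injective so every c.b.\ map into or out of it factors through Hilbert space with control on the norm (Wittstock/Paulsen factorization), I would write $\varphi$ and $\psi$ in Stinespring-type form; after tensoring with $M_k(\C)$ and testing on the element $\sum_i a_i\otimes b_i$, the composition $\psi\varphi\otimes\id$ becomes a two-sided multiplication by matrices, which is exactly the shape $b_i \mapsto u b_i v^*$ claimed.

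In more detail, the key steps in order are: (1) invoke exactness to get the finite-dimensional factorization $A \xrightarrow{\varphi} M_d(\C) \xrightarrow{\psi} A$ with $\|\varphi\|_{cb},\|\psi\|_{cb}\le 1$ (rescale) and $\|\psi\varphi(a_i)-a_i\|\le \varepsilon \|\sum a_i\otimes b_i\|/\|\sum a_i \otimes b_i\|$-type error — more carefully, small enough that replacing $a_i$ by $\psi\varphi(a_i)$ changes $\|\sum_i a_i\otimes b_i\|$ by at most a factor $(1-\varepsilon)$; (2) since $\psi\colon M_d(\C)\to A\subseteq B(H)$ is completely bounded with $\|\psi\|_{cb}\le 1$, Paulsen's factorization gives $\psi(m) = S^* (m\otimes 1) T$ for bounded $S,T$ with $\|S\|\|T\|\le 1$ acting between $H$ and $\C^d\otimes H$; (3) likewise $\varphi\colon A\to M_d(\C)$ factors; composing, $\psi\varphi(a)$ is a compression of $a$ (or of $a^{\oplus d}$) conjugated by fixed operators. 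The cleanest route is actually to note $\|\sum_i \psi\varphi(a_i)\otimes b_i\| = \|\sum_i \varphi(a_i)\otimes b_i\|_{\text{(compressed)}}$ and that $\sum_i \varphi(a_i)\otimes b_i \in M_d(\C)\otimes M_k(\C) = M_{dk}(\C)$, where one can replace $\varphi(a_i)$ by $a_i$ at the cost of conjugating the $b_i$: writing $\varphi(a_i) = R^*(a_i\otimes 1_d)Q$ with $\|R\|\|Q\|\le 1$ exhibits $\sum_i \varphi(a_i)\otimes b_i$ as $\sum_i a_i \otimes (\text{something})$ after rearranging the $M_d$ legs into the $M_k$ leg, i.e.\ with $u,v$ built from $R,Q$ and the $b_i$.

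The main obstacle is bookkeeping the tensor-leg shuffling so that the finite-dimensional $M_d(\C)$ factor genuinely gets absorbed into the matrix amplification and reappears only through the norm-$\le 1$ rectangular matrices $u,v\in M_{d',k}(\C)$ for some $d'$ (which we then rename $d$), rather than as an extra ambient tensor factor on $A$'s side. Concretely one must check that the Haagerup/Paulsen factorization of the completely bounded map $\varphi\colon A\to M_d(\C)$ can be taken with the Hilbert spaces finite-dimensional on the relevant side — this is automatic because the range is $M_d(\C)$, so $\varphi(a) = \sum_{j=1}^d \langle \cdot\,\xi_j, \eta_j\rangle$-style formulas with finitely many vectors suffice, and the resulting intertwiners land in $M_{d,?}$. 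Once the maps are in finite-dimensional Stinespring form, identities (1)–(3) are routine matrix manipulations; the only real content is (1), the definition/characterization of exactness, which we are entitled to quote. I would also remark that the uniqueness-of-trace hypothesis plays no role in this lemma — it is used elsewhere in the section — so the statement as given is purely about exactness.
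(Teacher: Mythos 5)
Your high-level strategy --- reduce to the case where the $a_i$ are matrices by invoking Kirchberg's characterization of exactness --- is the same as the paper's, but two things go wrong in the execution. First, the characterization you quote (cb maps $\varphi\colon A\to M_d(\C)$ and $\psi\colon M_d(\C)\to A$ with $\psi\varphi\approx\mathrm{id}$ pointwise) is the factorization characterization of \emph{nuclearity}; for exactness $\psi$ only lands in $B(H)$ (nuclear embeddability). This is repairable because the minimal tensor norm is spatial, but note also that converting the pointwise error $\|\psi\varphi(a_i)-a_i\|<\varepsilon'$ into a relative error on $\|\sum_i a_i\otimes b_i\|$ \emph{uniformly over all $k$ and all $b_i$} requires a bound $\max_i\|b_i\|\leq C\|\sum_j a_j\otimes b_j\|$, i.e.\ passing to a linearly independent subfamily first. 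The cleaner route --- and the one the paper takes --- is the operator-space form of Kirchberg's theorem: the span of $a_1,\dots,a_n$ is completely $(1+\varepsilon)$-isomorphic to a subspace of some $M_N(\C)$, which directly yields matrices $A_1,\dots,A_n$ with $\|\sum_i a_i\otimes b_i\|\leq\|\sum_i A_i\otimes b_i\|\leq(1+\varepsilon)\|\sum_i a_i\otimes b_i\|$ for every $k$ and every $b_i$, with no uniformity issue.

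The more serious gap is in your step (3), the finite-dimensional step, which is where the specific shape $u b_i v^*$ with $u,v\in M_{d,k}(\C)$ must come from. The Wittstock--Paulsen factorization gives $\varphi(a)=R^*\pi(a)Q$ for a $*$-representation $\pi$ of $A$ on some Hilbert space $K$; it does not give $\varphi(a)=R^*(a\otimes 1)Q$, and even when $\pi(a)=a\otimes 1_{K_0}$ the intertwiners $R,Q\colon\C^d\to H\otimes K_0$ mix the $A$-leg with the auxiliary leg, so the compression $(R^*\otimes 1_k)\bigl(\sum_i a_i\otimes 1\otimes b_i\bigr)(Q\otimes 1_k)$ is \emph{not} of the form $\sum_i a_i\otimes(\text{matrix})$: the $M_d$ factor does not get absorbed into the $M_k$ leg as you hope, and no choice of $u,v$ built from $R,Q$ achieves this. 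What actually closes the argument is an elementary rank observation that your write-up never states: for $A_i\in M_N(\C)$, a near-maximizing pair of unit vectors for $\sum_i A_i\otimes b_i$ lies in $\C^N\otimes E_1$ and $\C^N\otimes E_2$ for subspaces $E_1,E_2\subseteq\C^k$ of dimension $\leq N$, so compressing by the partial isometries $u,v\in M_{N,k}(\C)$ onto $E_2,E_1$ gives $(1-\varepsilon)\|\sum_i A_i\otimes b_i\|\leq\|\sum_i A_i\otimes u b_i v^*\|$; one then returns from $A_i$ to $a_i$ using the complete contraction in the other direction. Without that step the proof does not close. (You are right that the unique-trace hypothesis plays no role in this lemma.)
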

\begin{proof}
If $a_i$ belong to $M_N(\C)$, the lemma is easy with $d=N$, because the 
norm of $X=\sum a_i \otimes b_i$ is the supremum of $\langle X 
\xi,\eta\rangle$ over norm $1$ elements of  $\C^N \otimes \C^k$, and 
every element of $\C^N \otimes \C^k$ belongs to $\C^N \otimes E$ for a 
subspace $E \subset \C^k$ of dimension $\leq N$. The general case 
follows by Kirchberg's characterization of exact $C^*$-algebras, which 
implies that for every $\varepsilon>0$, there is $N$ such that the 
operator space spanned by $a_1,\dots,a_n$ is at completely bounded 
distance $\leq 1+\varepsilon$ from a subspace of $M_N(\C)$ 
\cite[Corollary 17.5]{PisierOS}. This means that there exist $A_1,\dots,A_n \in M_N(\C)$ such that
\[ \| \sum_{i=1}^n a_i \otimes b_i\| \leq \| \sum_{i=1}^n A_i \otimes b_i\|  \leq (1+\varepsilon)\| \sum_{i=1}^n a_i \otimes b_i\|\]
for every $k$ and every $b_1,\dots,b_n \in M_k(\C)$.
\end{proof}
\begin{lem}\label{lem:consequence_of_unique_trace} Let $\Gamma$ be a finitely generated group. Assume that $C^*_\lambda(\Gamma)$ has a unique trace. Then for every integer $q$ and $\varepsilon>0$, there exist $y_1,\dots,y_n \in \C[\Gamma]$ and $\delta>0$ such that, for every finite dimensional unitary representation $\pi$ of $\Gamma$, if
  \[ \max_i \frac{\|\pi(y_i)\|}{\|\lambda(y_i)\|} \leq 1+\delta\]
  then
  \begin{equation}\label{eq:consequence_of_unique_trace} \inf_{y \in \C_{\leq q}[\Gamma]} \frac{\|\pi(y)\|}{\|\lambda(y)\|} \geq 1-\varepsilon.
  \end{equation}
If moreover $C^*_\lambda(\Gamma)$ is exact, the conclusion \eqref{eq:consequence_of_unique_trace} can be strengthened to  \[ \inf_{{\substack{d\geq 1\\ y \in M_d(\C)\otimes \C_{\leq q}[\Gamma]}}} \frac{\|(\mathrm{id}\otimes \pi)(y)\|}{\|(\mathrm{id}\otimes \lambda)(y)\|} \geq 1-\varepsilon.\]
\end{lem}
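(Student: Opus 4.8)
The plan is to exploit the standard fact that uniqueness of trace on $C^*_\lambda(\Gamma)$ is equivalent to the statement that $\lambda$ is, in a suitable quantitative sense, the ``smallest'' unitary representation: more precisely, for any finite family $z_1,\dots,z_m \in \C[\Gamma]$ and any $\eta>0$ there is a \emph{finite} family $y_1,\dots,y_n \in \C[\Gamma]$ (which we may take to include the $z_j$'s) such that every finite-dimensional (indeed every) unitary representation $\pi$ with $\|\pi(y_i)\| \le (1+\delta)\|\lambda(y_i)\|$ for all $i$ and $\delta$ small must satisfy $\|\pi(z_j)\| \ge (1-\eta)\|\lambda(z_j)\|$. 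I would derive this by a contradiction/compactness argument: if it failed, one would get a sequence $\pi_N$ with $\|\pi_N(y)\|\to\|\lambda(y)\|$ for every $y\in\C[\Gamma]$ along a countable dense set, hence (after passing to a subsequence and taking an ultraproduct, or using the GNS construction on a weak-$*$ limit of the states $\langle \pi_N(\cdot)\xi_N,\xi_N\rangle$) a representation whose induced trace is not the canonical one, contradicting uniqueness of trace. Equivalently one invokes the known characterization that $C^*_\lambda(\Gamma)$ has unique trace iff every unitary representation weakly containing $\lambda$ and ``weakly contained up to a net of vectors'' gives back the canonical trace; the upshot is that the canonical trace on $C^*_\lambda(\Gamma)$ factors through the map $C^*_\lambda(\Gamma)\to \prod_N \B(\H_{\pi_N})/(\text{a suitable ideal})$, forcing the norms to converge.

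Concretely: enumerate a dense sequence in the unit ball of $\C_{\le q}[\Gamma]$ — actually it suffices to work with the compact set of elements of $\C_{\le q}[\Gamma]$ of $C^*$-norm $\le 1$, which is finite-dimensional, and observe that $y \mapsto \|\pi(y)\|/\|\lambda(y)\|$ is continuous. So first I would fix a finite $\varepsilon/2$-net $z_1,\dots,z_m$ of $\{y\in\C_{\le q}[\Gamma] : \|\lambda(y)\|=1\}$. Then I would apply the unique-trace input to this finite family to produce $y_1,\dots,y_n$ and $\delta$ so that $\|\pi(y_i)\|\le(1+\delta)\|\lambda(y_i)\|$ forces $\|\pi(z_j)\|\ge (1-\varepsilon/2)$ for all $j$; a net/triangle-inequality argument then upgrades this from the net $z_j$ to all of $\C_{\le q}[\Gamma]$, giving \eqref{eq:consequence_of_unique_trace}.

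For the exact case, I would add Lemma~\ref{lem:exactness_in_terms_of_fd_approximations} on top: given $\varepsilon$ and $q$, that lemma (applied to the finite family of generators appearing in words of length $\le q$, inside the exact $C^*$-algebra $C^*_\lambda(\Gamma)$) produces an integer $d$ such that for every $k$ and every $y\in M_k(\C)\otimes\C_{\le q}[\Gamma]$ there are contractions $u,v\in M_{d,k}(\C)$ with $\|(\id\otimes\lambda)(y)\| \le (1+\varepsilon)\|(\id_d\otimes\lambda)((1\otimes u) y (1\otimes v^*))\|$, and the latter element lives in the \emph{fixed} finite-dimensional space $M_d(\C)\otimes\C_{\le q}[\Gamma]$. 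One then applies the scalar conclusion \eqref{eq:consequence_of_unique_trace} — enlarged to the finite family needed to control norms of all elements of $M_d(\C)\otimes\C_{\le q}[\Gamma]$, which is again a finite-dimensional compact problem and so reduces to finitely many scalar conditions — to the compression, and transports the estimate back up to $y$ by the displayed inequality, losing only a further factor $1+\varepsilon$. Relabelling $\varepsilon$ finishes the proof.

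The main obstacle, and the step deserving the most care, is the first one: extracting from ``unique trace'' the \emph{uniform, finitely-witnessed} statement that a near-equality of norms on a finite family forces near-equality on all of $\C_{\le q}[\Gamma]$. The cleanest route is the ultraproduct/compactness contradiction sketched above — one must check that the limiting object really is a unitary representation (or at least a $*$-representation of $C^*_\lambda(\Gamma)$) carrying a trace, and that ``$\|\pi_N(y_i)\|\to\|\lambda(y_i)\|$ for a generating-enough family'' genuinely forces the limit trace to be $\tau_\lambda$; this is where uniqueness of trace is used essentially, via the fact that any $*$-representation of $C^*_\lambda(\Gamma)$ that is trace-preserving for \emph{some} trace must, by uniqueness, be isometric on the relevant elements. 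Once this qualitative-to-quantitative passage is in hand, everything else is a soft net/compression argument.
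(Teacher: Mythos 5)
Your proposal is correct and follows essentially the same route as the paper: both extract a finitely-witnessed quantitative consequence of the unique-trace property via an ultraproduct/compactness argument, convert trace control into norm lower bounds through $\|\lambda(y)\|=\lim_p\tau((y^*y)^p)^{1/2p}$ uniformly over the compact unit sphere of the finite-dimensional space $\C_{\leq q}[\Gamma]$, and then use Lemma~\ref{lem:exactness_in_terms_of_fd_approximations} exactly as you describe to handle arbitrary matrix sizes in the exact case. One small caveat: in the exact case the fixed-$d$ matrix statement does not literally ``reduce to finitely many scalar conditions'' (a matrix norm is not controlled by scalar norms); rather, as the paper notes, one reruns the same compactness/trace argument verbatim in $M_d(\C)\otimes\C_{\leq q}[\Gamma]$, using that any trace on $M_d(\C)\otimes C^*_\lambda(\Gamma)$ is $\mathrm{tr}_d\otimes\tau$ by uniqueness.
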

\begin{proof}We use the following fact: $C^*_\lambda(\Gamma)$ has a unique trace if 
and only if for every $\gamma \in \Gamma \setminus\{1\}$ and every 
$\varepsilon$, there is $\delta>0$ and a finite family $y_1,\dots,y_k 
\in \C[\Gamma]$ such that, for every unitary representation $\pi\colon 
\Gamma \to A$ to a $C^*$-algebra $A$ with a tracial state $\sigma$, 
$\max_i \frac{\|\pi(y_i)\|}{\|\lambda(y_i)\|} \leq 1+\delta$ implies 
$|\sigma(\pi(\gamma))| \leq\varepsilon$. The only if direction is 
direct: if $\sigma$ is an arbitrary tracial state on 
$C^*_\lambda(\Gamma)$, by applying the criterion to 
$A=C^*_\lambda(\Gamma)$ and $\pi$ the left-regular representation, we 
obtain that $|\sigma(\lambda(\gamma))|\leq \varepsilon$ for every 
$\gamma \neq 1$ and $\varepsilon>0$, that is $\sigma$ is the standard 
trace. The converse is proved by standard ultraproduct arguments (see 
\cite[Lemma 6.1]{louder2023strongly} for the details). 

The first conclusion of the lemma follows quite easily from this 
characterization of the unique trace property. Indeed, if $y \in 
\C[\Gamma]$, there is an integer $p$ such that $\|\lambda(y)\| \leq 
(1+\varepsilon) \tau((y^*y)^p)^{\frac 1 {2p}}$, and by compactness the 
same $p$ can be taken for every $y \in \C_{\leq q}[\Gamma]$. Now 
applying the criterion for every $\gamma$ in the ball of radius $2qp$, 
we obtain that there is a finite family $y_1,\dots,y_k \in \C[\Gamma]$ 
such that, for $\pi,A,\sigma$ as above, $\max_i 
\frac{\|\pi(y_i)\|}{\|\lambda(y_i)\|} \leq 1+\delta $ implies that for 
every $y \in \C_{\leq q}[\Gamma]$, $\tau((y^*y)^p)^{\frac 1 {2p}} \leq 
(1+\varepsilon)\sigma(\pi((y^*y)^p))^{\frac 1 {2p}}$, and in particular 
$\|\lambda(y)\| \leq (1+\varepsilon)^2 \|\pi(y)\|$. This is 
\eqref{eq:consequence_of_unique_trace}, up to a change of $\varepsilon$.

Observe that exactly the same argument also shows that, without any 
further assumption on $C^*_\lambda(\Gamma)$, for any fixed $d$, 
\eqref{eq:consequence_of_unique_trace} can be replaced by
   \[ \inf_{y \in M_d(\C) \otimes \C_{\leq q}[\Gamma]} 
\frac{\|(\mathrm{id}\otimes \pi)(y)\|}{\|(\mathrm{id}\otimes 
\lambda)(y)\|} \geq 1-\varepsilon.\]
   But of course, $y_i$ and $\delta$ a priori depend on $d$. If 
$C^*_\lambda(\Gamma)$ is exact, 
Lemma~\ref{lem:exactness_in_terms_of_fd_approximations} allows to remove 
this dependence and to conclude the proof of the lemma.

\end{proof}
\subsection{Proof of Proposition~\ref{prop:criterion_for_strong_convergence} and Proposition~\ref{prop:criterion_for_strong_convergence_coefficients}}

\begin{proof}[Proof of Proposition~\ref{prop:criterion_for_strong_convergence}]
  Let $i_n$ be the degree of $P_n$ and $w(q) = \exp(\frac{q}{\log (2+q)^2})$. Lemma~\ref{lem:linear_forms_on_Sw} implies that $u_n$ extends by continuity to $\mathcal{S}_i(\Gamma)$, and that on its subspace $\mathcal{S}_w(\Gamma)$,
  \begin{equation}\label{eq:master_inequality} \big|\E \Tr(\pi_n(x)) - u_n(x)\big| \leq \varepsilon_n \|x\|_{w}.
  \end{equation}
  
  Without loss of generality, we can take $y\neq 0$ and normalize it so that $\|y\|_{C^{*}(\Gamma)}=1$. 

By Lemma~\ref{lem:C_i+1}, the map $f \mapsto u_n(f(y^*y/2))$, is continuous for the\\ 
$C^{i_n+1}([-1/2,1/2])$ topology, so it extends to a distribution. The assumption that $u_n$ is tempered in turn implies
that its support in contained in
$[-\|\lambda(y)\|^2/2,\|\lambda(y)\|^2/2]$ (see \cite[Lemma 4.9]{chen2024new}). In particular, using $\|\lambda(y)\|>0$, we have that $u_n(f(y^*y))=0$ for every $f$ that vanishes on $[-\|\lambda(y)\|^2/2,\|\lambda(y)\|^2/2]$ \cite[Theorem 2.3.3]{Hormander}.
  
Let $\alpha=\arccos(\|\lambda(y^{*}y)\|/2)\in[\frac{\pi}{3},\frac{\pi}{2})$. Let $\varphi_{\alpha}$ be the function given by
Lemma~\ref{lemm:bump}, but for $\varepsilon = \frac{1}{2}$. It is even,
so it is of the form $\varphi_{\alpha}(\theta)=f(\cos(\theta))$ for
a continuous function $f\colon[-1,1]\to\R$ that is zero on $[-\|\lambda(y)\|^2/2,\|\lambda(y)\|^2/2]$, strictly positive outside of this interval, and $C^\infty$ on $(-1,1)$. We take $x=f(y^{*}y)$. By the preceding discussion, $u_n(x)=0$ for every $n$. Moreover, by Lemma~\ref{lem:C_i+1}, $x$ belongs to $\mathcal{S}_w(\Gamma)$, because
\[
\sum_{N\geq1}\exp(\frac{qN}{\log(1+qN)^{2}}-M\frac{\alpha N}{(\log(1+\alpha N))^{1+\frac{1}{2}}})<\infty.
\]

So the inequality \eqref{eq:master_inequality} together with the obvious inequality $\|\pi_n(x)\| \leq \Tr \pi_n(x)$ gives
\[ \E \|\pi_n(x)\| \leq \varepsilon_n \|x\|_w,\]
which goes to zero by the last assumption.

Now, since $f$ is strictly positive on $[-1,1]\setminus [-\|\lambda(y)^2\|/2,-\|\lambda(y)^2\|/2]$, there is $c>0$ such that $f(t/2)>c$ if $|t|\geq (\|\lambda(y)\|+\delta)^2$. In particular, we have
\[ \prob( \|\pi_n(y)\|>\|\lambda(y)\|+\delta) \leq \prob(\|\pi_n(x)\| >c) \leq \frac{1}{c} \varepsilon_n \|x\|_w,\]
which goes to $0$. This proves the first part of the proposition.

The second half follows from the first half of Lemma~\ref{lem:consequence_of_unique_trace}.
\end{proof}

\begin{proof}[Proof of Proposition~\ref{prop:criterion_for_strong_convergence_coefficients}] The first part is identical to that of Proposition~\ref{prop:criterion_for_strong_convergence}, considering the distribution $f\mapsto \Tr \otimes u_n(f(y_n^* y_n))$.

  The second part follows, using the full statement of Lemma~\ref{lem:consequence_of_unique_trace}. 
  \end{proof}

\section{Random walks on free groups}

\label{sec:RW} The content of this section will play a key rôle in
the proof of Theorem~\ref{thm:main} (see Lemma~\ref{lem:derivatives_tempered}).

\subsection{Proper powers}
Let $\mu$ be a symmetric probability measure on $\F_{r}$, whose
support is finite, contains the identity element and generates $\F_{r}$.
To save space, we will call such a measure \textbf{reasonable}. Let
$(g_{n})_{n\geq0}$ be the corresponding random walk on $\F_{r}$,
that is $g_{n}=s_{1}s_{2}\dots s_{n}$ for iid $s_{i}$ distributed
as $\mu$. Let $\rho=\rho(\mu)$ be the spectral radius, that is the
norm of $\lambda(\mu)$ on $\ell_{2}(\F_{r})$. We have the easy bound
\begin{equation}
\forall g\in\F_{r},\prob(g_{n}=g)=\langle\lambda(\mu)^{n}\delta_{e},\delta_{g}\rangle\leq\rho^{n}.\label{eq:prob_for_RW_in_terms_of_spectral_radius}
\end{equation}

We say that an element of $\F_{r}$ is a proper power if it is of
the form $h^{d}$ for $h\in\F_{r}$ and $d\geq2$. 
\begin{prop}
\label{prop:properPower} There is a constant $C=C(\mu)$ such that
\[
\prob(g_{n}\textrm{ is a proper power })\leq Cn^{5}\rho^{n}.
\]
\end{prop}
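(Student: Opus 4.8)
The plan is to bound the probability that $g_n = h^d$ for some $h$ and some $d \geq 2$ by a union bound over the exponent $d$ and over the ``root'' $h$. First I would record that if $g_n$ is a proper power then it is a power $h^d$ with $d \geq 2$ a prime, and (since $|g_n| \leq Cn$ deterministically, $C = \max_{s \in \supp \mu}|s|$) with $|h| \leq Cn/d \leq Cn/2$; in particular $d$ ranges only over primes $\leq Cn$. Fixing $d$, the element $g_n$ is a $d$-th power iff $g_n$ lies in the set $\{h^d : h \in \F_r\}$, and the key point is that this set is small: the number of elements of $\F_r$ of length $\leq \ell$ that are $d$-th powers is at most the number of $h$ of length $\leq \ell$, which is crude, so instead I would use \eqref{eq:prob_for_RW_in_terms_of_spectral_radius} more cleverly. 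Namely, for a \emph{fixed} non-identity target $g$, $\prob(g_n = g) \leq \rho^n$; summing over the at most (number of $h$ with $|h| \leq Cn/d$) many values $g = h^d$ gives $\prob(g_n \in \{d\text{-th powers}\}) \leq (2r-1)^{Cn/d + 1}\rho^n$, which is useless because $(2r-1) > 1/\rho$ in general. So the union bound over all roots is too lossy and a different idea is needed for the inner estimate.

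The better approach, which I would actually carry out, is to split according to whether $|h|$ is small or large. If $g_n = h^d$ with $d \geq 2$, then $g_n$ is determined by $h$ and $d$, but more usefully: consider the random walk at time $\lfloor n/d\rfloor$ or so. Actually the cleanest route is: $g_n = h^d$ forces, for each $1 \leq j \leq d$, that the word $g_n$ read cyclically has period $|h|$; combinatorially, a reduced word of length $L$ which is a proper power $h^d$ with $d$ prime is cyclically reduced after conjugation, and the number of cyclically reduced proper powers of length exactly $L$ is $O(L \cdot (2r-1)^{L/2})$ (choose the primitive root up to cyclic rotation, of which there are $O((2r-1)^{L/d}/L \cdot L)$ for each $d \mid L$). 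Combining with $\prob(g_n = g) \leq \rho^n$ for each such $g$, and with the fact that conjugation costs at most $(2r-1)^{(\text{conjugator length})}$ extra choices but the conjugator length plus the cyclically reduced length is $\leq Cn$, I would get $\prob(g_n \text{ proper power}) \leq \sum_{L \leq Cn} O(L) (2r-1)^{L/2} \cdot (\text{number of } g_n\text{ of that shape}) \cdot \rho^n$. This still has the same $(2r-1)^{L/2}$ versus $\rho^{-1}$ tension when $\rho^{-1} < (2r-1)^{1/2}$, which is exactly the generic case, so this still does not work directly.

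The resolution — and the step I expect to be the main obstacle — is to \emph{not} condition on the full word but to use the renewal/factorization structure of the walk: if $g_n = h^d$, then writing $m = \lfloor n/2 \rfloor$, both $g_m$ and $g_m^{-1}g_n$ are "large pieces" of the power, and one should exploit that $g_n h^{-1} = h^{d-1}$ also has a power structure to set up a recursion, or better, use that a proper power $h^d$ with $h$ primitive has centralizer $\langle h\rangle$, so $g_n$ commutes with $g_n^{1/d}$; equivalently $g_n \in \bigcup_{h \text{ primitive}} \langle h \rangle \setminus\{e, h^{\pm 1}\}$. I would bound $\prob(g_n \in \langle h\rangle, |g_n|\geq 2|h|)$ for fixed primitive $h$ by noting this event implies $g_n = h^{\pm d}$ for a \emph{specific} $d \approx |g_n|/|h|$, hence by \eqref{eq:prob_for_RW_in_terms_of_spectral_radius} it has probability $\leq \sum_{d \geq 2}\rho^n = $ (finitely many $d$, at most $Cn/|h| \cdot 2$ of them, each contributing $\rho^n$) $\leq 2Cn\rho^n/|h|$; then sum over primitive $h$ with $2 \leq 2|h| \leq Cn$, of which there are at most $\sum_{1\leq \ell \leq Cn/2} 2r(2r-1)^{\ell - 1}$. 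The product $(2r-1)^{\ell}\rho^n$ is again the enemy. The actual fix used in such arguments (e.g.\ following Bordenave–Collins or the original references) is that $\prob(g_n = g)$ for $g$ of length $\ell$ is not just $\leq \rho^n$ but decays like $\rho^n \cdot \mathrm{poly}(n) \cdot (\text{something exponentially small in }\ell)$ — precisely, local limit theorem estimates on free groups give $\prob(g_n = g) \leq C n^{-3/2}\rho^n \rho^{|g|}$ (up to polynomial factors), and since $h^d$ has length $\asymp d|h|$, the factor $\rho^{d|h|}$ beats $(2r-1)^{|h|}$ once $d$ is large while the sum over $d \geq 2$ of $\rho^{(d-2)|h|}$ converges. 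Thus I would invoke a local limit theorem for symmetric finitely supported random walks on $\F_r$ (Lalley, or Gouëzel; the estimate $\prob(g_n = g) \leq C n^{-3/2} \rho^{n} r(g)$ with $r(g) \leq \rho^{|g|/|g|_{\text{something}}}$), and then the double sum over $d \geq 2$ and primitive $h$ converges to $O(n^{5}\rho^n)$, the polynomial factor $n^5$ absorbing the $n^{-3/2}$ against the at most $n$ choices of $d$, the at most $n$ choices of conjugacy-shift, and slack. The main obstacle is getting the length-dependence $\rho^{|g|}$ (or a good enough substitute) into the pointwise return probability; everything else is bookkeeping with geometric series.
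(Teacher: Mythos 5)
There is a genuine gap. You correctly diagnose the obstruction --- a union bound over the roots $h$ costs a factor $(2r-1)^{|h|}$ that the root-independent bound \eqref{eq:prob_for_RW_in_terms_of_spectral_radius} cannot absorb --- but the fix you propose (a local limit theorem with an extra factor decaying exponentially in $|g|$) is left unverified and is in fact exactly critical at $d=2$: the best general decay of the Green's function of a reasonable $\mu$ at its spectral radius is of order $(2r-1)^{-|g|/2}$ (this is what non-spectral-degeneracy gives, in an $\ell^2$-averaged sense), so for $g=h^2$ the gain $(2r-1)^{-|h|}$ exactly cancels the $(2r-1)^{|h|}$ count of roots of length $|h|$, and the double sum does not visibly converge. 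You flag this yourself as ``the main obstacle,'' and you do not resolve it; as written the argument does not close, and it would in any case import heavy machinery (Lalley/Gou\"ezel) for what the paper does elementarily.

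The idea you are missing is to avoid any union bound over roots altogether. The paper first proves Lemma~\ref{lem:geodesic}: because the Cayley graph is a tree, conditioning the walk to pass exactly through any prescribed point $h'$ on the geodesic from $e$ to $h$ costs only a bounded factor and a bounded time shift. For a cyclically reduced $h$ and $d\geq 2$, both $h$ and $h^{d-1}$ lie on the geodesic from $e$ to $h^d$, so two applications of the lemma give
\[
\prob(g_{n}=h^{d})\;\leq\;C^{2}\sum_{k_{1}+k_{2}+k_{3}=n+2a}\prob(g_{k_{1}}=h)\,\prob(g_{k_{2}}=h^{d-2})\,\prob(g_{k_{3}}=h).
\]
The middle factor is $\leq\rho^{k_{2}}$ by \eqref{eq:prob_for_RW_in_terms_of_spectral_radius}, and --- this is the key step --- by symmetry of $\mu$ one has $\prob(g_{k_{1}}=h)\prob(g_{k_{3}}=h)=\prob(g_{k_{1}}=h\textrm{ and }g_{k_{1}+k_{3}}=e)$, so summing over all cyclically reduced $h$ sums probabilities of \emph{disjoint} events and yields at most $\prob(g_{k_{1}+k_{3}}=e)\leq\rho^{k_{1}+k_{3}}$. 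The exponentially many roots thus collapse into a single return probability; the polynomial factor $n^{5}$ then comes only from summing over the time-splittings $k_{1}+k_{2}+k_{3}=n+2a$, over $d\leq n$, and from reducing the general (non-cyclically-reduced) case to the cyclically reduced one by one more application of the same lemma at the conjugating prefix. Your ``recursion/renewal'' paragraph gestures in this direction but never lands on either of these two ingredients (the geodesic-hitting lemma and the disjointness trick), which are what actually make the proof work.
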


The proof uses that the Cayley graph of $\F_{r}$ is a tree, in the
following form:
\begin{lem}
\label{lem:geodesic} There is a constant $C$ and an integer $a$
such that for every $h\in\F_{r}$ and every $h'$ on the segment from
the identity to $h$,
\[
\prob(g_{n}=h)\leq C\sum_{k=0}^{n+a}\prob(g_{n+a}=h\textrm{ and }g_{k}=h'\}).
\]
\end{lem}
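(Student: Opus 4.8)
The plan is to exploit the tree structure of the Cayley graph of $\F_r$: since $h'$ lies on the unique geodesic from the identity to $h$, every path from $e$ to $h$ of length $n$ must pass through $h'$ at \emph{some} time, but possibly not at a time $\leq n$ that is "synchronized" with the geodesic distance. The point of adding $a$ extra steps is to absorb the $O(1)$ laziness/backtracking of the walk so that one can force a visit to $h'$. First I would recall that $h' $ separates $e$ from $h$ in the tree, so any finite path in $\F_r$ from $e$ to $h$ visits the vertex $h'$. Hence, writing $d=|h'|$ for the distance from $e$ to $h'$, the event $\{g_n=h\}$ is contained in $\bigcup_{k=0}^n \{g_k=h',\ g_n=h\}$ — but to get the extra factor $C$ and the shift by $a$ we argue more carefully, because we want the bound with $g_{n+a}$ rather than $g_n$ on the right.

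The key steps, in order: (1) Fix $a$ and $C_0$ so that $\prob(g_a = e)\geq 1/C_0$; such $a,C_0$ exist because $\mu$ is reasonable (symmetric, finitely supported, generating, with $\mu(e)>0$), so the walk is aperiodic and irreducible and returns to $e$ with positive probability at some even time, which we may take to be $a$. (2) Observe that $\{g_n=h\}$ and $\{$the independent increments $s_{n+1},\dots,s_{n+a}$ multiply to $e\}$ are independent, and on their intersection $g_{n+a}=h$; therefore
\[
\prob(g_{n+a}=h) \geq \prob(g_n=h)\cdot \prob(g_a=e)\geq \frac{1}{C_0}\,\prob(g_n=h),
\]
so it suffices to bound $\prob(g_n=h)$ by $C\sum_{k=0}^{n+a}\prob(g_{n+a}=h,\ g_k=h')$ with, say, the larger constant. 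Actually the cleaner route is: (3) since the path $(g_0,\dots,g_{n+a})$ from $e$ to $h$ (on the event $g_{n+a}=h$, which has probability $\geq \tfrac1{C_0}\prob(g_n=h)$ conditionally on $g_n=h$) must traverse the vertex $h'$, we have $\{g_{n+a}=h\}=\bigcup_{k=0}^{n+a}\{g_{n+a}=h,\ g_k=h'\}$, whence $\prob(g_{n+a}=h)\leq \sum_{k=0}^{n+a}\prob(g_{n+a}=h,\ g_k=h')$. Combining with step (2) gives $\prob(g_n=h)\leq C_0\,\prob(g_{n+a}=h)\leq C_0\sum_{k=0}^{n+a}\prob(g_{n+a}=h,\ g_k=h')$, which is the claim with $C=C_0$.

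The main obstacle — really the only subtlety — is step (1)/(2): making sure one genuinely gets a \emph{fixed} $a$ and constant independent of $h,h',n$. This is where "reasonable" is used in full: finiteness and the generating property give irreducibility, symmetry plus $\mu(e)>0$ give aperiodicity, so $p_a(e,e)=\prob(g_a=e)>0$ for all large $a$ and we simply fix one such $a$; the independence of $(s_{n+1},\dots,s_{n+a})$ from $\mathcal F_n$ is the Markov property. Once that is in hand, the tree/separation argument is immediate and requires no quantitative input, so no genuinely hard estimate is involved; the lemma is essentially a bookkeeping device that later lets one localize the random walk at an intermediate geodesic point (used in Proposition~\ref{prop:properPower} to control proper powers via their roots).
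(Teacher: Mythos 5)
There is a genuine gap at the heart of your argument: step (3) asserts that on the event $\{g_{n+a}=h\}$ the trajectory $(g_0,\dots,g_{n+a})$ must visit the vertex $h'$, so that $\{g_{n+a}=h\}=\bigcup_{k}\{g_{n+a}=h,\ g_k=h'\}$. This is false for a general \emph{reasonable} measure $\mu$: the definition only requires the support of $\mu$ to be finite, symmetric, generating and to contain $e$ --- it need \emph{not} be contained in the standard generating set. Consequently the walk is not a nearest-neighbour walk on the tree; consecutive positions $g_{k}$ and $g_{k+1}$ can differ by an element of word length up to some $r>1$, and the walk can jump over $h'$ without ever landing on it. (For instance, if $x_1x_2\in\supp\mu$, the walk can go from $e$ to $h=x_1x_2$ in one step, never visiting $h'=x_1$.) Your steps (1)--(2), which append $a$ increments multiplying to $e$ so as to pass from $\prob(g_n=h)$ to $\prob(g_{n+a}=h)$, are correct but address only a cosmetic issue (the shift from $n$ to $n+a$), not the real one. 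The extra $a$ steps in the statement are needed for a different purpose.

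The paper's proof handles exactly this point. It fixes $r$ with $\supp\mu\subset B(e,r)$ and $a_0$ with $c:=\inf_{g\in B(e,r)}\prob(g_{a_0}=g)>0$, introduces the first hitting time $T$ of the ball $B(h',r)$, and observes that $T\leq n$ on $\{g_n=h\}$: since $h'$ separates $e$ from $h$ in the tree and each step moves the walk by at most $r$, the walk must come within distance $r$ of $h'$ at the step where it crosses to the $h$-side. Then, using the strong Markov property, it inserts $2a_0$ fresh steps at time $T$; with probability at least $c^2$ these steps detour through $h'$ and return to $g_T$, producing a coupled walk of length $n+2a_0$ with the same law that ends at $h$ \emph{and} genuinely passes through $h'$. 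The union bound over the visiting time then gives the lemma with $C=c^{-2}$ and $a=2a_0$. Your proof works only in the special case $\supp\mu\subseteq\{x_1^{\pm1},\dots,x_r^{\pm1},e\}$, where the paper notes the statement is anyway immediate with $C=1$, $a=0$; to repair it you would need to add the hitting-time-and-detour surgery (or an equivalent device forcing an actual visit to $h'$).
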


\begin{proof}
When $\mu$ is supported on the generators, this is clear with $C=1$
and $a=0$: any nearest-neighbor path from $e$ to $h$ has to pass
through $h'$. In the general case, the idea is that such a path has
to pass not too far from $h'$, and forcing it to pass exactly through
$h'$ does not cost too much, neither in time nor in probability.
Let us make this idea precise. By our assumptions on $\mu$, there
is an integer $r$ such that the support of $\mu$ is contained in
the ball of radius $r$, and another integer $a_{0}$ such that $c:=\inf_{g\in B(e,r)}\prob(g_{a_{0}}=g)>0$.

Let $T$ be the first hitting time of the ball of radius $r$ around
$h'$, so that if $g_{n}=h$ then $T\leq n$. The lemma will follow
from the following observation: if we add $2a_{0}$ steps to the random
walk after time $T$, with probability at least $c^{2}$ we will be
at $h'$ at time $T+a_{0}$ and back to $g_{T}$ at time $T+2a_{0}$,
and then we can run the random walk as before. More formally, define
another realization of the random walk as follows: let $g'_{i}$ be
an independent copy of the random walk, and define 
\[
\tilde{g}_{n}=\begin{cases}
g_{n} & \textrm{ if }n\leq T\\
g_{T}g'_{n-T} & \textrm{ if }T\leq n\leq T+2a_{0}\\
g_{T}g'_{2a_{0}}g_{T}^{-1}g_{n-2a_{0}} & \textrm{ if }T+2a_{0}\leq n.
\end{cases}
\]
By the Markov property ($T$ is a stopping time) $(\tilde{g}_{n})_{n\geq0}$
is distributed as the random walk with step distribution $\mu$.

Conditionally to $T$, the event $A=\{g'_{2a_{0}}=e\textrm{ and }g'_{a_{0}}=g_{T}^{-1}h'\}$
happens with probability $\geq c^{2}$, and when it happens we have
$\tilde{g}_{n+2a_{0}}=g_{n}$ for every $n\geq T$ and $\tilde{g}_{T+a_{0}}=h'$.
Therefore, we can bound the probability of the event 
\[
B=\big\{\tilde{g}_{n+2a_{0}}=h\textrm{ and }h'\in\{\tilde{g_{k}},0\leq k\leq n+2a_{0}\}\big\}
\]
as follows 
\[
\prob(B)\geq\prob(g_{n}=h\textrm{ and }A)\geq c^{2}\prob(g_{n}=h).
\]
By the union bound we obtain 
\[
c^{2}\prob(g_{n}=h)\leq\sum_{k=0}^{n+2a_{0}}\prob(\tilde{g}_{n+2a_{0}}=h\textrm{ and }\tilde{g_{k}}=h'),
\]
which is the content of the lemma with $C=c^{-2}$ and $a=2a_{0}$. 
\end{proof}
\begin{proof}[\foreignlanguage{british}{Proof of Proposition~\ref{prop:properPower}.}]
Denote by $p_{n}$ the probability that $g_{n}$ is a cyclically
reduced proper power, that is $g_{n}=h^{d}$ for a cyclically reduced
$h$ and $d\geq2$. We shall prove the following two inequalities,
from which the proposition follows immediately: there are constants
$C_{1},C_{2}$ such that 
\begin{equation}
\prob(g_{n}\textrm{ is a proper power })\leq C_{2}n^{2}\max_{k\leq n+2a}p_{n+2a-k}\rho^{k}.\label{eq:dealing_with_not_cyclically_reduced}
\end{equation}

\begin{equation}
p_{n}\leq C_{1}n^{3}\rho^{n},\label{eq:cyclically_reduced}
\end{equation}

We start with the proof of (\ref{eq:cyclically_reduced}). Let $h$
be cyclically reduced and $d\geq2$. The assumption that $h$ is cyclically
reduced means that $h$ belongs to the segment between $1$ and $h^{d}$,
and that $h^{d-1}$ belongs to the segment between $h$ and $h^{d}$
(tautologically if $d=2$). By two applications of Lemma~\ref{lem:geodesic},
we have 
\begin{align*}
\prob(g_{n}=h^{d}) & \leq C^{2}\sum_{k_{1}+k_{2}+k_{3}=n+2a}\prob(g_{k_{1}}=h,g_{k_{1}+k_{2}}=h^{d-1},g_{k_{1}+k_{2}+k_{3}}=h^{d})\\
 & =C^{2}\sum_{k_{1}+k_{2}+k_{3}=n+2a}\prob(g_{k_{1}}=h)\prob(g_{k_{2}}=h^{d-2})\prob(g_{k_{3}}=h).
\end{align*}
The inequality~(\ref{eq:prob_for_RW_in_terms_of_spectral_radius})
allows us to bound the middle term by $\rho^{k_{2}}$, and by symmetry
we have 
\[
\prob(g_{k_{1}}=h)\prob(g_{k_{3}}=h)=\prob(g_{k_{1}}=h)\prob(g_{k_{3}}=h^{-1})=\prob(g_{k_{1}}=h\textrm{ and }g_{k_{1}+k_{2}}=e).
\]
Summing over all cyclically reduced words, we get 
\[
\sum_{h\textrm{ cyclically reduced}}\prob(g_{n}=h^{d})\leq C^{2}\sum_{k_{1}+k_{2}+k_{3}=n+2a}\rho^{k_{2}}\prob(g_{k_{1}+k_{3}}=e).
\]
By (\ref{eq:prob_for_RW_in_terms_of_spectral_radius}), this is less
than $C^{2}(n+2a+1)^{2}\rho^{n+2a}$. Summing over all $d\leq n$
we obtain (\ref{eq:cyclically_reduced}).

We now move to (\ref{eq:dealing_with_not_cyclically_reduced}). Let
$h\in\F_{r}$, not necessarily cyclically reduced. Write $h=w\tilde{h}w^{-1}$
its reduced expression with $\tilde{h}$ cyclically reduced. Then
$h^{d}=w\tilde{h}^{d}w^{-1}$ is the reduced expression of $h^{d}$.
In particular $w$ and $w\tilde{h}^{d}$ are on the segment from the
identity to $h^{d}$, and by two applications of Lemma~\ref{lem:geodesic}
we obtain 
\[
\prob(g_{n}=h^{d})\leq C^{2}\sum_{k_{1}+k_{2}+k_{3}\leq n+2a}\prob(g_{k_{1}}=w)\prob(g_{k_{2}}=\tilde{h}^{d})\prob(g_{k_{2}}=w^{-1}).
\]
If we first sum over all $\tilde{h}$ such that $w\tilde{h}w^{-1}$
is reduced , and then over $w$, we obtain 
\[
\prob(g_{n}\textrm{ is a proper power})\leq C^{2}\sum_{k_{1}+k_{2}+k_{2}\leq n+2a}p_{k_{2}}\sum_{w}\prob(g_{k_{1}}=w)\prob(g_{k_{3}}=w^{-1}).
\]
The formula (\ref{eq:dealing_with_not_cyclically_reduced}) follows
because $\sum_{w}\prob(g_{k_{1}}=w)\prob(g_{k_{3}}=w^{-1})=\prob(g_{k_{1}+k_{3}}=e)\leq\rho^{k_{1}+k_{2}}$,
by (\ref{eq:prob_for_RW_in_terms_of_spectral_radius}). 
\end{proof}

\subsection{Random walks and tempered functions}
The following is a useful criterion for a function on a group with the rapid decay property to be tempered. Recall a group is said to have the rapid decay property if it admits a finite generating set and a polynomial $P$ such that, for every $a\in\C_{\leq q}[\Gamma]$,
\begin{equation}\label{eq:RD}
\|\lambda(a)\|\leq P(q) (\sum_\gamma |a(\gamma)|^2)^{\frac 1 2}.
\end{equation}
Haagerup's inequality \cite[Lemma 1.5]{Haagerup} asserts that free groups with their standard generating sets have the rapid decay property, with $P(R)=3(1+R^2)$.
\begin{prop}\label{prop:criterion_for_tempered_inRDgroups} Let $\Gamma$ be a finitely generated group with the rapid decay property, and $u \colon \Gamma \to \C$ a function. Assume that for every reasonable probability measure $\mu$ on $\Gamma$, if $(\gamma_n)_{n \geq 0}$ is the associated random walk on $\Gamma$,
  \begin{equation}\label{eq:absolutely_tempered} \limsup_n \big(\E |u(\gamma_n)|\big)^{\frac 1 n} \leq \rho(\mu).
  \end{equation}
  Then $u$ is tempered, and even completely tempered.
\end{prop}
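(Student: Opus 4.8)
The plan is to verify directly the definition of temperedness, namely $\limsup_n|u((x^*x)^n)|^{1/2n}\le\|\lambda(x)\|$ for all $x\in\C[\Gamma]$, and its matrix-amplified version. Two preliminary reductions are convenient. First, the hypothesis \eqref{eq:absolutely_tempered} extends from reasonable measures to \emph{all} finitely supported symmetric probability measures $\nu$ on $\Gamma$: writing $\nu'=(1-\varepsilon)\nu+\varepsilon\mu_1$ with $\mu_1$ reasonable, one has $(\nu')^{*n}\ge(1-\varepsilon)^n\nu^{*n}$ coefficientwise and $\|\lambda(\nu')\|\le(1-\varepsilon)\rho(\nu)+\varepsilon$, so applying \eqref{eq:absolutely_tempered} to $\nu'$ and letting $\varepsilon\to0$ gives it for $\nu$. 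Second, for each $\gamma\in\Gamma$ the right translate $R_\gamma u\colon h\mapsto u(h\gamma)$ again satisfies the hypothesis: picking $s$ with $\prob(\gamma_s=\gamma)>0$, the Markov property gives $\prob(\gamma_n=h\gamma^{-1})\le\prob(\gamma_s=\gamma)^{-1}\prob(\gamma_{n+s}=h)$, whence $\E_\mu|R_\gamma u(\gamma_n)|\le\prob(\gamma_s=\gamma)^{-1}\E_\mu|u(\gamma_{n+s})|$ and the exponential rate is unchanged.

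The core of the proof is an \emph{expansion identity}. For nonzero $w=\sum_{g}w_g\delta_g\in\C[\Gamma]$ put $|w|=\sum_g|w_g|\delta_g$, $\|w\|_1=\sum_g|w_g|$, and let $\nu_w:=\|w\|_1^{-2}\,|w|^{*}*|w|$, a finitely supported symmetric probability measure with $e$ in its support. Expanding $(w^{*}w)^k$ over walks of length $2k$ and comparing term by term with $\nu_w^{*k}$ gives, for any function $v\colon\Gamma\to\C$,
\[
|v((w^{*}w)^k)|\le\|w\|_1^{2k}\,\E_{\nu_w}|v(\gamma_k)|\qquad(k\ge1),
\]
where $(\gamma_k)$ is the $\nu_w$-walk. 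Since $\|w\|_1^2\rho(\nu_w)=\|\lambda(|w|)\|^2$, combining this with the (extended) hypothesis applied to $\nu_w$ yields, for every $v$ satisfying the hypothesis, $\limsup_k|v((w^{*}w)^k)|^{1/2k}\le\|\lambda(|w|)\|$. This is not yet the desired bound, since $\|\lambda(|w|)\|$ may exceed $\|\lambda(w)\|$; the loss is exactly the cancellation present in the coefficients of $w$.

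The \emph{bootstrap removing this loss} is where the rapid decay property enters. Fix $x\in\C[\Gamma]$ supported in the ball of radius $q_0$ and set $y=x^{*}x$. For each $m\ge1$ choose $W_m\in\C_{\le q_0m}[\Gamma]$ with $W_m^{*}W_m=y^m$ (take $W_m=y^{m/2}$ or $W_m=xy^{(m-1)/2}$ according to the parity of $m$). Then $\|W_m\|_2^2=\langle\lambda(y^m)\delta_e,\delta_e\rangle\le\|\lambda(y)\|^m=\|\lambda(x)\|^{2m}$, and the rapid decay inequality \eqref{eq:RD} gives $\|\lambda(|W_m|)\|\le P(q_0m)\|W_m\|_2\le P(q_0m)\|\lambda(x)\|^m$. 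Applying the expansion identity with $w=W_m$ and $v=u$, together with the hypothesis for $\nu_{W_m}$, produces for every $\delta>0$ a constant $C_{m,\delta}$ with $|u(y^{mk})|=|u((W_m^{*}W_m)^k)|\le C_{m,\delta}\big(\|\lambda(|W_m|)\|^2+\|W_m\|_1^2\delta\big)^k$ for all $k$. Because $P(q_0m)^{1/m}\to1$, given $\varepsilon>0$ one first chooses $m$ large and then $\delta$ small so that the base is $\le(\|\lambda(x)\|+\varepsilon)^{2m}$, giving $|u(y^{mk})|\le C(\|\lambda(x)\|+\varepsilon)^{2mk}$ for all $k$. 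Finally, for general $N=mk+r$ with $0\le r<m$ one writes $y^N=y^{mk}y^r$ and expands the product over $\operatorname{supp}(y^r)$, reducing $|u(y^N)|$ to a finite sum of terms $|(R_\gamma u)(y^{mk})|$ with $\gamma$ in a fixed finite set; each of these is bounded as above (the right translates are eligible by the second reduction, and the relevant constants stay bounded since $\gamma$ ranges over a finite set). This gives $|u(y^N)|\le C'(\|\lambda(x)\|+\varepsilon)^{2N}$ for all $N$, hence $\limsup_N|u((x^{*}x)^N)|^{1/2N}\le\|\lambda(x)\|$, i.e. $u$ is tempered.

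For complete temperedness one runs the same three steps with $u$ replaced by $\Tr\otimes u$, with $x\in M_k(\C)\otimes\C[\Gamma]$, and with the scalars $|w_g|$ replaced by the operator norms $\|A_g\|$ of the matrix coefficients of $w=\sum_g A_g\otimes\delta_g$: one uses $|\Tr(A_{g_1}^{*}A_{h_1}\cdots A_{g_k}^{*}A_{h_k})|\le k\prod_i\|A_{g_i}\|\,\|A_{h_i}\|$, the matrix-amplified rapid decay inequality $\|\lambda(\sum_g\|(W_m)_g\|\delta_g)\|\le P(q_0m)\sqrt{k}\,\|(\id\otimes\lambda)(W_m)\|$ (a standard consequence of \eqref{eq:RD}), and the fact that $k$ is fixed throughout; the remainder step is handled by translates of $u$ paired against the finitely many matrix coefficients of $y^r$. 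The step I expect to be the main obstacle is the bootstrap: the elementary coefficientwise comparison only ever yields the bound $\|\lambda(|w|)\|$, and the whole point is to apply it to the high powers $w=W_m$ — whose coefficients carry the cancellation that makes $\|\lambda(|W_m|)\|$ close to $\|\lambda(x)\|^m$, precisely as quantified by the rapid decay inequality — and then to absorb the remainders $r<m$ using the translation‑invariance of the hypothesis, so as to recover the sharp constant.
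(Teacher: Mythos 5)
Your proof is correct and takes essentially the same route as the paper's: compare $u$ on powers of $w^*w$ with the random walk driven by the absolute values (operator norms) of the coefficients, control the spectral radius of that walk by rapid decay, and apply this to elements $W_m$ with $W_m^*W_m=(x^*x)^m$ so that the polynomial loss $P(q_0m)^{1/m}$ vanishes as $m\to\infty$ --- which is precisely the paper's tensor-power trick applied to $y=(x^*x)^m$. Your one genuine addition is the translation-invariance reduction used to pass from the exponents $mk$ to all exponents $N$; the paper asserts the full $\limsup$ after applying its intermediate inequality to $y=(x^*x)^m$ without commenting on this point, so your extra care there is welcome but does not constitute a different method.
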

\begin{proof} Before we start, let us observe that the assumption \eqref{eq:absolutely_tempered} in fact holds for every symmetric finitely supported probability measure $\mu$, even if it not reasonable in the sense that its support is not generating or does not contain the identity. Indeed, let $\nu$ be a fixed reasonable probability measure. Then for every $\varepsilon>0$, $\mu_\varepsilon = \frac{1}{1+\varepsilon}(\mu+\varepsilon \nu)$ is a reasonable probability measure, so that \eqref{eq:absolutely_tempered} holds for $\mu_\varepsilon$. If $\E_\varepsilon$ and $\E$ denote the law of the random walk with transition $\mu_\varepsilon$ and $\mu$ respectively, the inequality $\mu \leq (1+\varepsilon) \mu_\varepsilon$ gives
  \[ \E |u(\gamma_n)| \leq (1+\varepsilon)^n \E_\varepsilon |u(\gamma_n)|,\]
  so we obtain 
\[\limsup_n \big(\E |u(\gamma_n)|\big)^{\frac 1 n} \leq (1+\varepsilon)\rho(\mu_\varepsilon) \leq \rho(\mu) + \varepsilon \rho(\nu).\]
This proves that \eqref{eq:absolutely_tempered} holds for $\mu$ by taking $\varepsilon \to 0$.

We now move to the proof of the proposition. We claim that for every $k, q \geq 1$ and every $x=x^* \in M_k(\C) \otimes \C_{\leq q}[\Gamma]$,
  \begin{equation}\label{eq:intermediate_to_completely_tempered} \limsup_{n \to \infty} |\Tr \otimes u(x^n)|^{\frac 1 {n}} \leq P(q) \sqrt{k} \|(\mathrm{id}\otimes \lambda)(x)\|.
  \end{equation}
  Here $P$ is the polynomial satisfying \eqref{eq:RD}, given by the assumption that $\Gamma$ has the rapid decay property.
  
By a standard \emph{tensor-power} trick, \eqref{eq:intermediate_to_completely_tempered} implies that $u$ is completely tempered. Indeed, given an arbitrary $x \in M_k(\C) \otimes \C_{\leq q}[\Gamma]$, applying \eqref{eq:intermediate_to_completely_tempered} to $y=(x^*x)^m$ (which belongs to $M_k(\C) \otimes \C_{\leq 2m q}[\Gamma]$ and satisfies $\|(\mathrm{id}\otimes \lambda)(y)\| = \|(\mathrm{id}\otimes \lambda)(x)\|^{2m}$) and raising to the power $\frac 1 {2m}$, it implies
\[ \limsup_{n \to \infty} |\Tr \otimes u((x^*x)^n)|^{\frac 1 {2n}} \leq (P(2mq) \sqrt{k})^{\frac{1}{2m}} \|(\mathrm{id}\otimes \lambda)(x)\|,\]
and in the $m \to \infty$ limit 
\[ \limsup_{n \to \infty} |\Tr \otimes u((x^*x)^n)|^{\frac 1 {2n}} \leq \|(\mathrm{id}\otimes \lambda)(x)\|.\]

So it remains to prove~\eqref{eq:intermediate_to_completely_tempered}. We can normalize $x$ so that $\sum_\gamma \|x(\gamma)\|=1$, where $\|\cdot\|$ is the operator norm on $M_k(\C)$. Let $\mu(s) = \|x(s)\|$; it is a symmetric finitely supported probability measure on $\Gamma$. By the triangle inequality and the inequality $|\Tr(x(s_1)\dots x(s_{n}))| \leq k \|x(s_1)\| \dots \|x(s_n)\|$, we have 
\[ |\Tr \otimes u(x^n)| \leq k \sum_{s_1,\dots,s_n \in \Gamma} \mu(s_1)\dots \mu(s_n) |u(s_1\dots s_n)|  = k \E |u(\gamma_n)|.\]
By the assumption in the extended form above, we obtain
\[ \limsup_{n \to \infty} |\Tr \otimes u(x^n)|^{\frac 1 n} \leq \rho(\mu).\]
By the rapid decay assumption \eqref{eq:RD}, this is less than
\[ P(q) (\sum_s \mu(s)^2)^{\frac 1 2}.\]
By by the basic fact $\mu(s)^2=\|x(s)\|^2 \leq \Tr(x(s)^* x(s))$, we have
\[ \sum_s \mu(s)^2 \leq (\Tr \otimes \lambda)(x^*x) \leq k \|(\mathrm{id}\otimes \lambda)(x)\|^2,\]which proves the claimed inequality \eqref{eq:intermediate_to_completely_tempered}.
\end{proof}

\section{Proof of Theorem \ref{thm:main}\label{sec:Proof-of-Theorem-main}
from Theorem \ref{thm:random-matrix-est}}

Let $K\geq1$ be an integer. Most of the objects will depend on $K$
but our notation will not reflect this. The only exception is the
letter $C$, which will always denote a constant that does not depend
on anything, but that can change from one line to the next.

\emph{The aim of this section is to explain why Theorem \ref{thm:random-matrix-est}
implies the strong convergence result from Theorem \ref{thm:main}.}

Let 
\[
\sigma_{n}\eqdf\bigoplus_{|\lambda|+|\mu|=K}\pi_{\lambda,\mu}.
\]
It is a sub-representation of $\bigoplus_{k=0}^{\ell}\pi_{k,K-k}^{0}$,
therefore 
\begin{equation}
\mathrm{dim}(\sigma_{n})\leq(K+1)n^{K}.\label{eq:dimension_of_sigma_n}
\end{equation}
Recall the definition of $g_{L}$ from $\S$\ref{sec:Matrix-integral-results}.
Theorem \ref{thm:random-matrix-est} immediately implies the following
result about $\sigma_{n}$. Throughout this section and as in \S
\ref{sec:Matrix-integral-results} $\E_{n}$ or simply $\E$ will denote
the integral with respect to the Haar measure on $\U(n)^{r}$.
\begin{thm}
\label{thm:summary_of_characters} For every $w\in\F_{r}$, there
is a rational function $\varphi_{w}\in\Q(x)$ such that 
\begin{enumerate}
\item \label{item:def_of_phiw} For every integer $n\geq K\max(|w|,1)$,
\[
\varphi_{w}\left(\frac{1}{n}\right)=\frac{1}{n^{K}}\E_{n}[\Tr(\sigma_{n}(u_{1},\dots,u_{r}))].
\]
\item \label{item:denominator+degree} If $w\neq e$ has length $\leq q$,
then $g_{Kq}\varphi_{w}$ is a polynomial of degree $\leq D_{q}=3Kq\log(1+Kq)$.
\item \label{item:derivatives_not_proper_powers} If $w$ is not a proper
power, then $\varphi_{w}^{(i)}(0)=0$ for all $i<K+\frac{K}{6}$. 
\end{enumerate}
\end{thm}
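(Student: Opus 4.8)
The plan is to reduce the whole statement to Theorem~\ref{thm:random-matrix-est} applied termwise to the finitely many stable characters occurring in $\sigma_n$. Since $\sigma_n=\bigoplus_{|\lambda|+|\mu|=K}\pi_{\lambda,\mu}$ and the character of $\pi_{\lambda,\mu}$ is $s_{\lambda,\mu}$, for every $w\in\F_r$ one has $\E_n[\Tr(\sigma_n(w))]=\sum_{|\lambda|+|\mu|=K}\E_n[s_{\lambda,\mu}(w)]$, a sum over a fixed finite index set. For Part~\ref{item:def_of_phiw} I would separate the cases $w\neq e$ and $w=e$. If $w\neq e$, apply Theorem~\ref{thm:random-matrix-est}(\ref{enu:deg-of-poly}) with $k+\ell=K$ and, say, $q=|w|$: for every integer $n\geq K|w|$ each summand equals $P_{\lambda,\mu,w}(1/n)/g_{K|w|}(1/n)$. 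Summing and dividing by $n^{K}$, I set
\[
\varphi_w(x):=\frac{x^{K}\sum_{|\lambda|+|\mu|=K}P_{\lambda,\mu,w}(x)}{g_{K|w|}(x)}\in\Q(x),
\]
which by construction satisfies $\varphi_w(1/n)=\tfrac1{n^{K}}\E_n[\Tr(\sigma_n(w))]$ for every $n\geq K|w|=K\max(|w|,1)$. Note $\varphi_w$ does not depend on which $q\geq|w|$ is used to invoke Theorem~\ref{thm:random-matrix-est}(\ref{enu:deg-of-poly}), since it is pinned down by its values at infinitely many integers. The identity for $w=e$ is not covered by Theorem~\ref{thm:random-matrix-est} and I would prove it directly: there $\Tr(\sigma_n(e))=\dim\sigma_n=\sum_{|\lambda|+|\mu|=K}\dim V^{\lambda,\mu}$, and by Weyl's dimension formula each $\dim V^{\lambda,\mu}$ is a polynomial in $n$ of degree $\leq|\lambda|+|\mu|=K$, valid once $n$ is at least the number of parts of $\lambda$ plus that of $\mu$, hence for $n\geq K$; so $\varphi_e$, the polynomial in $x$ with $\varphi_e(1/n)=\tfrac1{n^K}\dim\sigma_n$, works for $n\geq K=K\max(0,1)$.

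For Part~\ref{item:denominator+degree}, given $w\neq e$ of length $\leq q$, I re-run the computation of Part~\ref{item:def_of_phiw} while invoking Theorem~\ref{thm:random-matrix-est}(\ref{enu:deg-of-poly}) with this $q$, so that the common denominator becomes $g_{Kq}$; then $g_{Kq}\varphi_w=x^{K}\sum_{|\lambda|+|\mu|=K}P_{\lambda,\mu,w}$ is manifestly a polynomial, whose degree is at most $K$ plus $\max_{\lambda,\mu}\deg P_{\lambda,\mu,w}$, and propagating the degree bound of Theorem~\ref{thm:random-matrix-est}(\ref{enu:deg-of-poly}) gives the asserted bound $D_q$ (a routine estimate). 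For Part~\ref{item:derivatives_not_proper_powers}, if $w$ is not a proper power then Theorem~\ref{thm:random-matrix-est}(\ref{enu:generic-decay}) gives $\E_n[s_{\lambda,\mu}(w)]=O(n^{-K/6})$ for each of the finitely many pairs $(\lambda,\mu)$, so $\E_n[\Tr(\sigma_n(w))]=O(n^{-K/6})$, and therefore
\[
\varphi_w(1/n)=\tfrac1{n^{K}}\E_n[\Tr(\sigma_n(w))]=O\big(n^{-(K+K/6)}\big).
\]
Since $g_{Kq}(0)=1$, the rational function $\varphi_w$ is analytic at $0$; if its order of vanishing at $0$ were a finite integer $j<K+\tfrac{K}{6}$, then $\varphi_w(1/n)$ would be of exact order $n^{-j}$, contradicting the decay above. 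Hence $\varphi_w^{(i)}(0)=0$ for all integers $i<K+\tfrac{K}{6}$.

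I do not expect a genuine obstacle: all the substance lives in Theorem~\ref{thm:random-matrix-est}, and what remains is summation over a fixed finite set together with elementary manipulations of rational functions. The points deserving a little care are the ad hoc treatment of $w=e$ in Part~\ref{item:def_of_phiw} (which Theorem~\ref{thm:random-matrix-est} excludes), the check that $\varphi_w$ is well defined independently of the cutoff $q$, and the degree bookkeeping in Part~\ref{item:denominator+degree}, where one must carry the degree bound of Theorem~\ref{thm:random-matrix-est}(\ref{enu:deg-of-poly}) through the extra factor $x^{K}$ and the finite sum.
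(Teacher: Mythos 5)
Your proposal is correct and is essentially the paper's own route: the paper offers no separate proof of Theorem~\ref{thm:summary_of_characters}, asserting only that it follows immediately from Theorem~\ref{thm:random-matrix-est}, and your termwise reduction over the finitely many pairs $(\lambda,\mu)$ with $|\lambda|+|\mu|=K$, the separate handling of $w=e$ via the dimension polynomial, and the order-of-vanishing argument for Part~\ref{item:derivatives_not_proper_powers} supply exactly the intended details. One small caveat on Part~\ref{item:denominator+degree}: what you actually inherit is $\deg(g_{Kq}\varphi_w)\leq K+3Kq\bigl(1+\log(Kq)\bigr)$, which is not literally $\leq D_q=3Kq\log(1+Kq)$; this constant-level mismatch is already present in the paper's own statements and is harmless for every later use of $D_q$, but your ``routine estimate'' does not produce the displayed value of $D_q$ as written.
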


We collect in the next lemma some of the basic properties of $g_{L}$
that we need. 
\begin{lem}
\label{lem:bounds_on_g_L}Let $L\geq1$ be an integer. The polynomial\\
$g_{L}(t)=\prod_{c=1}^{L}(1-c^{2}t^{2})^{\lfloor\frac{L}{c}\rfloor}$
has the following properties, for every $0\leq t\leq\frac{1}{2L^{2}}$
and every integer $i\geq0$: 
\begin{equation}
\frac{1}{2}\leq g_{L}(t)\leq1,\label{eq:bound_on_gL}
\end{equation}
\begin{equation}
|g_{L}^{(i)}(t)|\leq(3iL^{\frac{3}{2}})^{i},\label{eq:bound_on_derivative_of_gL}
\end{equation}
\begin{equation}
\Big|\left(\frac{1}{g_{L}}\right)^{(i)}(t)\Big|\leq2\cdot i!\cdot(C\sqrt{i}L^{\frac{3}{2}})^{i}.\label{eq:bound_on_derivative_of_gLinverse}
\end{equation}
\end{lem}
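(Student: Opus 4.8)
The plan is to first dispatch the two-sided bound on $g_L$ by elementary estimates, and then obtain the two derivative bounds by passing to $h=\log g_L$ and combining a partial‑fraction estimate for $h^{(j)}$ with Faà di Bruno's formula. The upper bound $g_L(t)\le1$ is immediate: for $1\le c\le L$ and $0\le t\le\frac1{2L^2}$ we have $0\le c^2t^2\le\frac1{4L^2}\le1$, so each factor $(1-c^2t^2)^{\lfloor L/c\rfloor}$ lies in $[0,1]$. For the lower bound I would use $-\log(1-x)\le2x$ on $[0,\frac12]$ to obtain
\[
-\log g_L(t)=\sum_{c=1}^L\Big\lfloor\frac Lc\Big\rfloor\bigl(-\log(1-c^2t^2)\bigr)\le2t^2\sum_{c=1}^L\frac Lc\,c^2=2Lt^2\sum_{c=1}^Lc\le2L^3t^2\le\frac1{2L}\le\frac12,
\]
so $g_L(t)\ge e^{-1/2}>\frac12$; this is \eqref{eq:bound_on_gL}.

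For the derivatives, write $h:=\log g_L=\sum_{c=1}^L\lfloor L/c\rfloor\log(1-c^2t^2)$, and using $\log(1-c^2t^2)=\log(1-ct)+\log(1+ct)$ differentiate termwise to get
\[
h^{(j)}(t)=(j-1)!\sum_{c=1}^L\Big\lfloor\frac Lc\Big\rfloor c^{\,j}\Big[\frac{(-1)^{j-1}}{(1+ct)^j}-\frac1{(1-ct)^j}\Big]\qquad(j\ge1).
\]
On $[0,\frac1{2L^2}]$ one has $0\le ct\le\frac1{2L}\le\frac12$, hence $\frac1{1-ct}\le\frac{2L}{2L-1}\le2$. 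The key point — responsible for the exponent $\frac32$ on $L$ — is the parity of $j$. For even $j$ the bracket has modulus $\frac1{(1+ct)^j}+\frac1{(1-ct)^j}\le2\bigl(\tfrac{2L}{2L-1}\bigr)^j\le2^{\,j+1}$, so with $\sum_c\lfloor L/c\rfloor c^j\le L\sum_{c\le L}c^{j-1}\le L^{j+1}$ we get $|h^{(j)}(t)|\le(j-1)!\,2^{\,j+1}L^{j+1}$. For odd $j$ the bracket equals $\frac{(1-ct)^j-(1+ct)^j}{(1-c^2t^2)^j}$, which vanishes at $t=0$; bounding $(1+ct)^j-(1-ct)^j\le2jct\,(1+ct)^{j-1}$ gives bracket modulus $\le\frac{2jct}{(1-ct)^j}$, and combining with $\sum_c\lfloor L/c\rfloor c^{j+1}\le L^{j+2}$ and $t\le\frac1{2L^2}$ yields $|h^{(j)}(t)|\le j!\,2^{\,j}L^{j}$ — one power of $L$ is saved in the odd case, precisely because of the extra factor $t$. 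Since $2^{j+1}L^{1-j/2}\le3^j$ for even $j\ge2$ and $j\,2^{\,j}\le3^j\le3^jL^{j/2}$ for odd $j$ (both valid for all $L\ge1$), both bounds can be unified as
\[
|h^{(j)}(t)|\le(j-1)!\,(3L^{3/2})^{\,j}\qquad\bigl(j\ge1,\ 0\le t\le\tfrac1{2L^2}\bigr).
\]

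Now \eqref{eq:bound_on_derivative_of_gL} and \eqref{eq:bound_on_derivative_of_gLinverse} follow from the Faà di Bruno (exponential) formula $g_L^{(i)}=g_L\sum_{\pi}\prod_{B\in\pi}h^{(|B|)}$, the sum running over all set‑partitions $\pi$ of $\{1,\dots,i\}$. Using $g_L\le1$, the bound above, and the classical identity $\sum_\pi\prod_{B\in\pi}(|B|-1)!=i!$ (each $(|B|-1)!$ counts the cyclic orders on a block, so the sum counts permutations of $\{1,\dots,i\}$),
\[
|g_L^{(i)}(t)|\le\sum_\pi\prod_{B\in\pi}(|B|-1)!\,(3L^{3/2})^{|B|}=(3L^{3/2})^i\,i!\le(3iL^{3/2})^i,
\]
since $i!\le i^i$. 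For $1/g_L$, part \eqref{eq:bound_on_gL} gives $h\in[-\log2,0]$ on the interval, so $1/g_L=e^{-h}$ and the same formula gives $\bigl|(1/g_L)^{(i)}(t)\bigr|\le e^{-h}\sum_\pi\prod_B|h^{(|B|)}|\le2\,(3L^{3/2})^i\,i!\le2\,i!\,(3\sqrt i\,L^{3/2})^i$, using $\sqrt i\ge1$. The only genuinely delicate step is the second paragraph: extracting the exponent $\frac32$ on $L$ rests on the odd/even dichotomy, i.e. ultimately on the fact that $g_L$ is a function of $t^2$, so that the odd‑order derivatives of $\log g_L$ vanish at the origin and hence carry an extra factor $t\le\frac1{2L^2}$; everything afterwards (the Faà di Bruno bookkeeping, the identity for $\sum_\pi\prod_B(|B|-1)!$, and the numerology of constants) is routine.
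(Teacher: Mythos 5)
Your proof is correct, and it takes a genuinely different route from the paper's. The paper works directly with the product structure: it writes $g_L=\prod_{k=1}^N h_k$ with each $h_k(t)=1-c_k^2t^2$ quadratic, applies the Leibniz rule, and groups terms according to how many factors are differentiated twice; the gain of half a power of $L$ per derivative comes from the factor $t\le\frac1{2L^2}$ carried by each once-differentiated factor $h_k'(t)=-2c_k^2t$. For \eqref{eq:bound_on_derivative_of_gLinverse} the paper then runs a separate formal induction expressing $(1/g)^{(i)}$ as a sum of $(2i-1)!!$ terms $\pm g^{(\alpha_1)}\cdots g^{(\alpha_i)}/g^{i+1}$. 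You instead pass to $h=\log g_L$, compute $h^{(j)}$ in closed form via the splitting $\log(1-c^2t^2)=\log(1-ct)+\log(1+ct)$, and extract the exponent $\tfrac32$ from the odd/even parity of $j$ (the odd-order derivatives of $h$ vanish at $0$ and hence carry the small factor $t$ — the same evenness of $g_L$ in $t$ that the paper exploits, surfacing in a different place). The Faà di Bruno/set-partition bookkeeping with $\sum_\pi\prod_B(|B|-1)!=i!$ then handles $g_L=e^h$ and $1/g_L=e^{-h}$ in one stroke, which is arguably cleaner than the paper's two separate arguments and even yields \eqref{eq:bound_on_derivative_of_gLinverse} without the $\sqrt i$ factor (i.e.\ $2\cdot i!\cdot(3L^{3/2})^i$), which you then weaken to match the stated form. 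All the estimates I checked — the bound $-\log(1-x)\le 2x$ on $[0,\tfrac12]$, the partial-fraction formula for $h^{(j)}$, the inequality $(1+x)^j-(1-x)^j\le 2jx(1+x)^{j-1}$, the sums $\sum_c\lfloor L/c\rfloor c^j\le L^{j+1}$, and the numerology unifying the odd and even cases into $(j-1)!(3L^{3/2})^j$ — are sound.
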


\begin{proof}
Fix $0\leq t\leq\frac{1}{2L^{2}}$. The inequality $g_{L}(t)\leq1$
is clear and only uses $|t|\leq\frac{1}{L}$. For the lower bound, use
that $(1-u)\geq e^{-2u}$ for $0<u<\frac{1}{4}$ to bound 
\[
g_{L}(t)\geq\exp\left(-2\sum_{c=1}^{L}\frac{L}{c}c^{2}t^{2}\right)=\exp(-L^{2}(L+1)t^{2})\geq\frac{1}{2}.
\]
We now prove (\ref{eq:bound_on_derivative_of_gL}). Let $N=\sum_{c=1}^{L}\lfloor\frac{L}{c}\rfloor$
and write $g_{L}(t)=\prod_{k=1}^{N}(1-c_{k}^{2}t^{2})$, where each integer $c \leq L$ appears $\lfloor\frac{L}{c}\rfloor$ times in the sequence $c_1,\dots,c_N$. Each of the
factors $h_{k}(t)=(1-c_{k}t^{2})$ is a degree $2$ polynomial, so
when we differentiate $i$ times $g_{L}$ using the Leibniz rule,
we obtain a big sum of terms, in which some factors are derived twice
(so equal to $h''_{k}(t)=-2c_{k}^2$), some are derived once (so
equal to $h'(t)=-2c_{k}^2t$) and all the other are not derived.
Gathering the terms according to how many factors are derived twice,
we can write 
\[
\frac{g_{L}^{(i)}}{g_{L}}=\sum_{s=0}^{\lfloor\frac{i}{2}\rfloor}\binom{i}{2s}(2s-1)!!\sum_{k_{1},\dots,k_{s},\ell_{1},\dots,\ell_{i-2s}\textrm{ all distinct }}\prod_{\alpha=1}^{s}\frac{h_{k_{\alpha}}''}{h_{k_{\alpha}}}\cdot\prod_{\beta=1}^{i-2s}\frac{h'_{\ell_{\beta}}}{h_{\ell_{\beta}}}.
\]
The term $\binom{i}{2s}(2s-1)!!$ appears as the number of ways to
partition a set of size $i$ (the steps of derivation) into $s$ sets
of size $2$ (the steps when a factor $h_k$ is derived twice) and
$i-2s$ sets of size $1$ (the steps when a factor $h_k$ is derived
once). Forgetting the condition that the $k_{\alpha}$ and
$\ell_{\beta}$ are distinct, and bounding $(1-c_{k}^{2}t^{2})\leq1$
and $(2s-1)!!\leq i^{\frac{i}{2}}$, we can bound the preceding, in
absolute value, as follows:
\[
|g_{L}^{(i)}(t)|\leq i^{\frac{i}{2}}\sum_{s=0}^{\lfloor\frac{i}{2}\rfloor}\binom{i}{2s}t^{i-2s}\big(\sum_{k=1}^{N}2c_{k}^{2}\big)^{i-s}\leq i^{\frac{i}{2}}\sum_{j=0}^{i}\binom{i}{j}t^{i-j}\big(\sum_{k=1}^{N}2c_{k}^{2}\big)^{i-\frac{j}{2}}.
\]
Using 
\[
\sum_{k=1}^{N}2c_{k}^{2}=\sum_{c=1}^{L}2\left\lfloor\frac{L}{c}\right\rfloor c^{2}\leq L^{2}(L+1)\leq2L^{3},
\]
we obtain 
\[
|g_{L}^{(i)}(t)|\leq i^{\frac{i}{2}}\sum_{j=0}^{i}\binom{i}{j}\left(\frac{1}{2L^{2}}\right)^{i-j}\big(2L^{3}\big)^{i-\frac{j}{2}}=i^{\frac{i}{2}}\left(L+\sqrt{2}L^{\frac{3}{2}}\right)^{i}\leq\left(3\sqrt{i}L^{\frac{3}{2}}\right)^{i}.
\]
This proves (\ref{eq:bound_on_derivative_of_gL}).

The last inequality (\ref{eq:bound_on_derivative_of_gLinverse}) is
a formal consequence of the first two. Indeed, by induction we see
that, for any function $g$ we can write the $i$\textsuperscript{th}
derivative of $\frac{1}{g}$ as a product of $(2i-1)!!$ terms, each
of which is of the form 
\[
\frac{\pm g^{(\alpha_{1})}\cdot\dots\cdot g^{(\alpha_{i})}}{g^{i+1}}
\]
where $(\alpha_{1},\dots,\alpha_{i})$ are nonnegative integers that
sum to $i$. For $g=g_{L}$, by (\ref{eq:bound_on_gL}) and (\ref{eq:bound_on_derivative_of_gL}),
each of these terms is bounded above by $\left(3\sqrt{i}L^{\frac{3}{2}}\right)^{i}2^{i+1}$. 
\end{proof}
We extend the map $w\mapsto\varphi_{w}$ by linearity, setting $\varphi_{x}=\sum_{w\in\F_{r}}x(w)\varphi_{w}$
for $x\in\C[\F_{r}]$.
\begin{lem}
\label{lem:bound_on_derivatives_of_phix} If $x\in\C_{\leq q}[\F_{r}]$,
then for any $i$, 
\[
\sup_{t\in\big[0,\frac{1}{2D_{q}^{2}}\big]}\frac{|\varphi_{x}^{(i)}(t)|}{i!}\leq p(i,q)\|x\|_{C^{*}(\F_{r})},
\]

where
\[
p(i,q)=\begin{cases}
(4K+4)\left(\frac{CD_{q}^{4}}{i^{2}}\right)^{i} & \mathrm{\mathrm{if }}\:i\leq D_{q,}\\
(4K+4)\left(C\sqrt{i}D_{q}^{\frac{3}{2}}\right)^{i} & \mathrm{if }\:i>D_{q}.
\end{cases}
\]
\end{lem}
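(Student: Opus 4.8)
The strategy is to reduce the estimate on $\varphi_x$ to the estimates on $g_{Kq}$ and on the numerator polynomial from Theorem~\ref{thm:summary_of_characters}, using the Leibniz rule together with the Markov-brothers machinery of \S\ref{sec:Markov-brothers-inequality}. Write $L = Kq$ and, for $x \in \C_{\leq q}[\F_r]$, set $P_x = g_L \varphi_x$, which by Theorem~\ref{thm:summary_of_characters}\eqref{item:denominator+degree} is a polynomial of degree $\leq D_q = 3Kq\log(1+Kq)$. The key input that ties $P_x$ to the $C^*$-norm of $x$ is the identity $P_x(1/n) = n^{-K} g_L(1/n)\,\E_n[\Tr(\sigma_n(x))]$ for $n \geq L\max(|w|,1)$; since $\|\sigma_n\| = 1$ (it is a unitary representation) and $\dim(\sigma_n)\leq (K+1)n^K$ by \eqref{eq:dimension_of_sigma_n}, we get $|\E_n[\Tr(\sigma_n(x))]| \leq (K+1)n^K \|x\|_{C^*(\F_r)}$, and combined with $0 \leq g_L(1/n)\leq 1$ (from \eqref{eq:bound_on_gL}, valid once $n \geq \sqrt{2}L$) this yields $\sup_{n \geq N_0}|P_x(1/n)| \leq (K+1)\|x\|_{C^*(\F_r)}$ for a suitable threshold $N_0$ of size $O(D_q^2)$, say $N_0 = D_q^2$ (one checks $D_q^2 \geq \max(\sqrt 2 L, L|w|)$ for the relevant range, enlarging $D_q$ by a harmless constant if needed).

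Next I would feed this into Lemma~\ref{lemm:Markov_after_epsilon_net} applied to $P = P_x$: for $i \leq D_q$,
\[
\sup_{[0,\frac{1}{2D_q^2}]} \frac{|P_x^{(i)}|}{i!} \leq \frac{2^{2i+1}D_q^{4i}}{(2i-1)!!\, i!}\,\sup_{n \geq D_q^2}|P_x(1/n)| \leq (K+1)\,\frac{2^{2i+1}D_q^{4i}}{(2i-1)!!\, i!}\,\|x\|_{C^*(\F_r)}.
\]
Using $(2i-1)!!\, i! \geq (i!)^2 \cdot c^i \gtrsim (i/e)^{2i}$ (Stirling), the prefactor is $\lesssim (K+1)(C D_q^4/i^2)^i$, which is exactly the bound claimed for $i \leq D_q$. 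For $i > D_q$ the polynomial $P_x$ has degree $< i$, so $P_x^{(i)} = 0$ and the derivative of $\varphi_x = P_x/g_L$ comes entirely from derivatives of $1/g_L$ hitting lower derivatives of $P_x$; here instead one should first establish the bound for $1/g_L$ directly. Now recover $\varphi_x = P_x \cdot (1/g_L)$ by the Leibniz rule:
\[
\varphi_x^{(i)} = \sum_{j=0}^{i}\binom{i}{j} P_x^{(j)}\left(\frac{1}{g_L}\right)^{(i-j)},
\]
and bound each factor on $[0,\frac{1}{2D_q^2}] \subseteq [0,\frac{1}{2L^2}]$ using the bound just obtained for $P_x^{(j)}/j!$ (with $P_x^{(j)} = 0$ for $j > D_q$) together with \eqref{eq:bound_on_derivative_of_gLinverse} for $(1/g_L)^{(i-j)}$. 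Dividing by $i!$ converts $\binom{i}{j}/i! = \frac{1}{j!(i-j)!}$, so the sum becomes $\sum_j \frac{|P_x^{(j)}|}{j!}\cdot\frac{|(1/g_L)^{(i-j)}|}{(i-j)!}$, and plugging in the two ingredients gives a sum of terms each bounded by $(4K+4)$ times a product of powers of $D_q$ and $L \leq D_q$; summing the geometric-type series in $j$ produces the stated $p(i,q)$ in both regimes. A convexity/monotonicity check shows which regime dominates, giving the case split at $i = D_q$.

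\textbf{Main obstacle.} The routine part is the Leibniz bookkeeping; the delicate point is making the threshold $N_0 = D_q^2$ in Lemma~\ref{lemm:epsilon-net}/Lemma~\ref{lemm:Markov_after_epsilon_net} actually compatible with the constraint $n \geq L\max(|w|,1)$ under which the interpolation formula $P_x(1/n) = n^{-K}g_L(1/n)\E_n[\Tr \sigma_n(x)]$ is valid. For $x$ supported on words of length $\leq q$ one has $\max(|w|,1) \leq q$, so we need $D_q^2 = 9K^2q^2\log^2(1+Kq) \geq Kq^2$, which holds comfortably, and also $D_q^2 \geq \sqrt 2 Kq$ for \eqref{eq:bound_on_gL}; both are fine for $Kq \geq 1$. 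The only genuine care needed is that the $C^*$-bound $|\E_n[\Tr\sigma_n(x)]| \leq \dim(\sigma_n)\|x\|_{C^*(\F_r)}$ uses $\|x\|_{C^*(\F_r)} = \sup_\pi \|\pi(x)\|$ over \emph{all} unitary representations $\pi$, which indeed dominates $\|\sigma_n(x)\|$ — so this step is clean. I expect the proof to be three or four displayed inequalities plus the Stirling estimate identifying the two regimes.
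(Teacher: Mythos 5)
Your proposal follows essentially the same route as the paper: set $P_x=g_{Kq}\varphi_x$, derive the a priori bound $\sup_{n}|P_x(1/n)|\leq(K+1)\|x\|_{C^*(\F_r)}$ from unitarity of $\sigma_n$ and the dimension bound \eqref{eq:dimension_of_sigma_n}, apply Lemma~\ref{lemm:Markov_after_epsilon_net}, and recombine via the Leibniz rule with \eqref{eq:bound_on_derivative_of_gLinverse}; your threshold-compatibility check ($D_q^2\geq Kq\max(|w|,1)$) is a point the paper passes over silently but it holds as you note. The argument is correct and matches the paper's proof in all essentials.
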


\begin{proof}
Let $P=g_{Kq}\varphi_{x}$. We know that $P$ is a polynomial of degree
$\leq D_{q}$, so to bound its derivatives using the Markov brothers
inequality, we need to derive an a priori bound on the values of $P$.
For $n\geq Kq$, we have $g_{Kq}(\frac{1}{n})\in[0,1]$ and therefore
\[
\left|P\left(\frac{1}{n}\right)\right|\leq\left|\varphi_{x}\left(\frac{1}{n}\right)\right|=n^{-K}\Big|\E[\Tr(\sigma_{n}(x(u_{1},\dots,u_{r})))]\Big|.
\]
For every $(u_{1},\dots,u_{r})\in\cU(n)^{r}$, the map $w\mapsto\sigma_{n}(w(u_{1},\dots,u_{r}))$
is a unitary representation of $\F_{r}$ (hence of $C^{*}(\F_{r})$),
so $\|\sigma_{n}(x(u_{1},\dots,u_{r}))\|\leq\|x\|_{C^{*}(\F_{r})}$
almost surely. Bounding the trace of a matrix by its norm times its
size, we obtain $|\E[\Tr(\sigma_{n}(x(u_{1},\dots,u_{r})))]|\leq\mathrm{dim}(\sigma_{n})\|x\|_{C^{*}(\F_{r})}$,
and we conclude by the dimension bound (\ref{eq:dimension_of_sigma_n})
that 
\[
\left|P\left(\frac{1}{n}\right)\right|\leq(K+1)\|x\|_{C^{*}(\F_{r})}.
\]
We deduce by Lemma~\ref{lemm:Markov_after_epsilon_net} that for
any integer $j$, 
\begin{equation}
\sup_{t\in[0,\frac{1}{2D_{q}^{2}}]}\frac{|P^{(j)}(t)|}{j!}\leq(2K+2)\left(\frac{CD_{q}^{4}}{j^{2}}\right)^{j}\|x\|_{C^{*}(\F_{r})}.\label{eq:bound_on_derivatives_of_P}
\end{equation}
By the Leibniz rule, remembering that $P$ is a polynomial of degre
$\leq D_{q}$,
\[
\frac{\varphi_{x}^{(i)}}{i!}=\frac{1}{i!}\sum_{j=0}^{\min(i,D_{q})}\binom{i}{j}P^{(j)}\left(\frac{1}{g_{Kq}}\right)^{(i-j)}.
\]
We see from Lemma~\ref{lem:bounds_on_g_L} and (\ref{eq:bound_on_derivatives_of_P}) that, on $[0,\frac{1}{2D_{q}^{2}}]$, this is bounded above by
\[(4K+4) C^i \|x\|_{C^*(\F_r)} \sum_{j=0}^{\min(i,D_{q})}\left(\frac{D_q^4}{j^2}\right)^j \big(\sqrt{i-j} (Kq)^{\frac 3 2}\big)^{i-j}.\]
We bound $D_q^4 \leq D_q^{\frac{7}{2}} \sqrt{\max(D_q,i)}$ and $\sqrt{i-j} (Kq)^{\frac 3 2} \leq D_q^{\frac 3 2}  \sqrt{\max(D_q,i)}$, so we obtain
\[ \frac{|\varphi_{x}^{(i)}(t)|}{i!} \leq (4K+4) (C D_q^{\frac 3 2}\sqrt{\max(D_q,i)})^i \|x\|_{C^*(\F_r)} \sum_{j=0}^{\min(i,D_q)} \left(\frac{D_q^2}{j^2}\right)^j.\]
Write $a_j = \left(\frac{D_q^2}{j^2}\right)^j$. For every $j<D_q$, we have $a_j \leq e^2 a_{j+1}$, so $\sum_{j=0}^{\min(i,D_q)} a_j \leq (1+e^2)^i a_{\min(i,D_q)}$ and the lemma follows.
\end{proof}

We deduce two facts from this lemma. In the first, the temperedness of $v_i$ is where most of the results of this paper are used. It is worth noting here that, in the particular case of $K=1$, a much stronger result is known: all the $v_i$'s are tempered, as proven by Parraud \cite{parraud2023asymptotic}. We will see below that they are even completely tempered, see Proposition~\ref{prop:tempered_implies_Ctempered}.
\begin{lem}
  \label{lem:derivatives_tempered} For every integer $i\geq0$, let $v_i\colon w \in  \F_r  \mapsto \frac{\varphi_{w}^{(K+i)}(0)}{(K+i)!}$. There is a polynomial $P$ of degree $4K+Ki+1$ such that $|v_i(x)| \leq P(q) \|x\|_{C^*(\F_r)}$ for every $q$ and every $x \in \C_{\leq q}[\F_r]$.
  Moreover, $v_i$ is tempered if $i<\frac K 6$.
\end{lem}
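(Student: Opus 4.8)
The first (polynomial-bound) part should be essentially immediate from Lemma~\ref{lem:bound_on_derivatives_of_phix}. The function $v_i$ is, by definition, $v_i(x) = \varphi_x^{(K+i)}(0)/(K+i)!$, and the lemma just quoted gives, with $j = K+i$, a bound of the form $|v_i(x)| \leq p(K+i,q)\,\|x\|_{C^*(\F_r)}$; one simply has to observe that $p(K+i,q)$, as a function of $q$, is dominated by a polynomial of degree $4K + Ki + 1$ once one recalls $D_q = 3Kq\log(1+Kq)$ and uses the $i \leq D_q$ vs.\ $i > D_q$ dichotomy. For $q$ large the relevant case is $K+i \leq D_q$, where $p(K+i,q) = (4K+4)(CD_q^4/(K+i)^2)^{K+i} \lesssim D_q^{4(K+i)} \lesssim q^{4(K+i)}(\log q)^{O(1)}$, comfortably inside a polynomial of degree $4K + 4i$; I would take the degree $4K + Ki + 1$ in the statement as a safe over-estimate that also absorbs the finitely many small-$q$ values. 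This also shows, via Lemma~\ref{lem:linear_forms_on_Sw}, that $v_i$ extends to a continuous linear functional on a Sobolev algebra $\mathcal{S}_\iota(\F_r)$ for a suitable $\iota$, which is needed to even speak of temperedness through Proposition~\ref{prop:characterization_of_temperedness_in_Sobolev_algebras}.

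The substantive part is temperedness of $v_i$ for $i < K/6$, and here the plan is to apply the random-walk criterion Proposition~\ref{prop:criterion_for_tempered_inRDgroups}: since $\F_r$ has the rapid decay property, it suffices to show that for every reasonable probability measure $\mu$ on $\F_r$, with associated random walk $(\gamma_n)$ and spectral radius $\rho = \rho(\mu)$, one has $\limsup_n (\E|v_i(\gamma_n)|)^{1/n} \leq \rho$. Now for a single group element $w$, $v_i(w) = \varphi_w^{(K+i)}(0)/(K+i)!$, and by Theorem~\ref{thm:summary_of_characters}\eqref{item:derivatives_not_proper_powers} this vanishes when $w$ is \emph{not} a proper power, because $K+i < K + K/6$. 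So $v_i(\gamma_n)$ is supported on the event that $\gamma_n$ is a proper power, and on that event it is controlled by Lemma~\ref{lem:bound_on_derivatives_of_phix} (applied with $x = \delta_{\gamma_n}$, $q = |\gamma_n| \leq n \cdot (\text{max generator length})$, $\|x\|_{C^*} = 1$), giving $|v_i(\gamma_n)| \leq p(K+i, O(n)) = \exp(O(\log n \cdot \log\log n))$, i.e.\ sub-exponential in $n$. Therefore
\[
\E|v_i(\gamma_n)| \leq \exp(o(n)) \cdot \prob(\gamma_n \text{ is a proper power}) \leq \exp(o(n)) \cdot C n^5 \rho^n
\]
by Proposition~\ref{prop:properPower}, and taking $n$-th roots and $\limsup$ yields exactly $\limsup_n (\E|v_i(\gamma_n)|)^{1/n} \leq \rho$, as required.

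The main obstacle — really the only place where care is needed — is bookkeeping the two competing estimates: the bound on $|v_i(\gamma_n)|$ grows like $D_{|\gamma_n|}^{O(K+i)}$, hence like $n^{O(K+i)}(\log n)^{O(1)}$, which is polynomial (in particular sub-exponential) in $n$, and must be checked to be swallowed by the exponential gain $\rho^n$ from the proper-power probability; since $i$ and $K$ are fixed while $n \to \infty$, this is routine but should be written out. One should also double-check the edge case $\rho$ could in principle be as large as $1$ for non-reasonable $\mu$, but Proposition~\ref{prop:criterion_for_tempered_inRDgroups} only requires reasonable $\mu$, and the extension to general symmetric finitely supported $\mu$ is handled inside its proof, so nothing extra is needed here. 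Finally, one notes that the inequality $K+i < K + K/6$ needed to invoke vanishing of derivatives at non-proper-powers is exactly the hypothesis $i < K/6$, which is why that is the stated range.
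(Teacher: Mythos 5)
Your proposal is correct and follows essentially the same route as the paper: the polynomial bound comes straight from Lemma~\ref{lem:bound_on_derivatives_of_phix} (and indeed the natural degree is $4K+4i+1$, the exponent $4K+Ki+1$ in the statement being a typo rather than a deliberate over-estimate), and temperedness is obtained by combining the vanishing of $\varphi_w^{(K+i)}(0)$ for non-proper-powers (Theorem~\ref{thm:summary_of_characters}\eqref{item:derivatives_not_proper_powers}), the polynomial growth of $v_i$ on balls, Proposition~\ref{prop:properPower}, and the random-walk criterion of Proposition~\ref{prop:criterion_for_tempered_inRDgroups}. Your bookkeeping of the two competing estimates (polynomial growth vs.\ the $\rho^n$ decay of the proper-power probability) matches the paper's displayed inequality exactly.
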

\begin{proof}
The existence of $P$ is a consequence of Lemma~\ref{lem:bound_on_derivatives_of_phix} and the bound $D_{q}\leq CKq\log(1+K)\log(1+q)$, which implies $\sup_{q\geq1}\frac{p(i,q)}{(1+q)^{4i+1}}<\infty$.

Let us explain why $v_i$ is tempered if $i<\frac K 6$. The proof combines several ingredients: the first is Haagerup's inequality, which tells us that we can apply the criterion for temperedness given in Proposition~\ref{prop:criterion_for_tempered_inRDgroups}. The second is the polynomial growth of $v_i$: as a particular case of what we have just shown, there is a constant $C_i$ such that for every $q$ and every $w \in \F_r$ in the ball of radius $q$, $|v_i(w)| \leq C_i (1+q)^{j}$, where $j=4i+4K+1$. The third ingredient is the small support of $v_i$ given by item~\ref{item:derivatives_not_proper_powers} in Theorem~\ref{thm:random-matrix-est}. The last is the random walk results from Section~\ref{sec:RW}. 

Let us put all these ingredients together. Let $\mu$ be a reasonable probability measure on $\F_r$, and $(\gamma_n)_n$ be the associated random walk on $\F_r$. If $\mu$ is supported in the ball of radius $q$, we know that $\gamma_n$ belongs to the ball of radius $qn$, so that
\[ \E |v_i(\gamma_n)| \leq C_i (1+qn)^j \prob( v_i(\gamma_n) \neq 0) \leq C_i C(\mu) (1+qn)^j n^5 \rho(\mu)^n.\]
We deduce
\[ \limsup_n (\E |v_i(\gamma_n)|)^{\frac 1 n} \leq \rho(\mu),\]
so $v_i$ is tempered by Proposition~\ref{prop:criterion_for_tempered_inRDgroups}.
\end{proof}
The second consequence is an analogue in our context to the master
inequality in \cite[Theorem 7.1]{chen2024new}. In the particular case
$K=1$, a similar result was obtained in \cite{parraud2023asymptotic}.

\begin{lem}
\label{lem:Taylor_expansion_in_Schwartz_space} 
Let\textup{ ${w}(q)=\exp\left(\frac{q}{\log(2+q)^{2}}\right)$.}
For all integers $r,q,n\geq 1$ and every $x\in\C_{\leq q}[\Gamma]$
\begin{multline}
\Big|\E\Tr\big(\sigma_{n}(x(u_{1},\dots,u_{r}))-\tau(x)\mathrm{Id}\big)-\sum_{i=0}^{r-1}\frac{v_i(x)}{n^{i}}\Big|\\
\leq \frac{(C(K+r)^2 \log(1+K+r)^{12} K^4 \log (1+K)^{4})^{K+r}}{n^r}w(q)\|x\|_{C^{*}(\F_{r})}.\label{eq:asymptotics_of_traces}
\end{multline}
\end{lem}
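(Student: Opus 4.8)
The aim is to show that $\E\Tr(\sigma_n(x(u))-\tau(x)\Id)$ has a Taylor expansion in $n^{-1}$ whose coefficients are the $v_i(x)$, with explicit control on the remainder in terms of $q$ and $\|x\|_{C^*(\F_r)}$. Since everything is linear in $x$, it suffices to understand $\varphi_w$ for a single non-identity $w$ of length $\leq q$ and then sum against $x$. The starting point is Theorem~\ref{thm:summary_of_characters}: $\E\Tr(\sigma_n(w(u))) = n^K\varphi_w(1/n)$ with $g_{Kq}\varphi_w$ a polynomial of degree $\leq D_q$, and $\varphi_w^{(i)}(0)=0$ for $i<K$ unless $w$ is... wait --- actually the vanishing at low orders $\varphi_w^{(i)}(0)=0$ for $i<K+\frac K6$ only holds when $w$ is not a proper power. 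So I must first reduce $\E\Tr(\sigma_n(x(u)))-\tau(x)$ to a sum over non-proper-powers. The point is that $\tau(x)=\sum_w x(w)\tau(w)$ where $\tau$ is the trace on $\F_r$, i.e. $\tau(w)=1_{w=e}$, and for $w$ a proper power one must check what $\E\Tr(\sigma_n(w(u)))$ does --- but in fact the decomposition I want is: $\varphi_w(0) = \lim_n n^{-K}\E\Tr(\sigma_n(w(u)))$, and this equals $\tau(\sigma)(w)$ for the limiting trace, which for $w\ne e$ is $0$. So $\varphi_w(0)=0$ for $w\ne e$ and $\varphi_e = $ the constant $\dim\sigma_n/n^K$ piece; thus $\varphi_x(0)=\tau(x)$ up to lower-order terms, and the $i=0$ coefficient $v_0(x)$ already absorbs the $\tau(x)\Id$ subtraction... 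I need to be careful here and let me instead structure the proof around a clean Taylor-with-remainder argument on the interval $[0,1/n]$.

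\textbf{Key steps.} First, set $P = g_{Kq}\varphi_x$, a polynomial of degree $\leq D_q = 3Kq\log(1+Kq)$, and write $\varphi_x = P/g_{Kq}$. Write the order-$(K+r-1)$ Taylor expansion of $\varphi_x$ at $0$ with integral (Lagrange) remainder:
\[
\varphi_x\!\left(\tfrac1n\right) = \sum_{m=0}^{K+r-1}\frac{\varphi_x^{(m)}(0)}{m!}\,n^{-m} + R, \qquad |R| \leq \frac{1}{n^{K+r}}\sup_{[0,1/n]}\frac{|\varphi_x^{(K+r)}|}{(K+r)!}.
\]
Multiplying by $n^K$ recovers $\E\Tr(\sigma_n(x(u)))$; the terms $m<K$ give contributions of order $n^{K-m}\geq n$, but these must cancel against $\tau(x)\Id$ and lower-order pieces --- more precisely, $\varphi_x^{(m)}(0)/m! = 0$ for $m<K$ because $\varphi_x(0) = \tau_\infty(x)$ (the limiting trace of $\sigma_n$, which on non-identity words vanishes) and the $m=0$ term is exactly $n^K\tau(x)$ only after accounting for $\dim\sigma_n$... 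Actually the cleanest route: the statement already isolates $\E\Tr(\sigma_n(x)-\tau(x)\Id) = n^K\varphi_x(1/n) - \tau(x)\dim(\sigma_n)$, and one shows directly that $n^K\varphi_x(1/n)-\tau(x)\dim\sigma_n = \sum_{i=0}^{r-1} v_i(x) n^{-i} + (\text{remainder})$ by noting $v_i(x) = \varphi_x^{(K+i)}(0)/(K+i)!$ and that the polynomial $n^K\varphi_x(1/n)$, expanded in powers of $n^{-1}$, has its coefficient of $n^{K-m}$ for $0\le m<K$ given by $\varphi_x^{(m)}(0)/m!$, which equals $\tau(x)\dim\sigma_n$ precisely in the leading pieces (this is the content of the character formula and the definition of $\sigma_n$ --- the only polynomial-in-$n$ part of $\E\Tr\sigma_n(w(u))$ for $w\ne e$ is... zero, since $\E\Tr\sigma_n(w(u)) = P_w(1/n)/g_{Kq}(1/n)$ is bounded, being $O(1)$ by part~\ref{enu:generic-decay}; wait, it's actually $O(n^{-K/6})$). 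I'll use part~\ref{enu:generic-decay}/\ref{enu:poly} of Theorem~\ref{thm:random-matrix-est} to see that for $w\ne e$ non-proper-power $n^K\varphi_w(1/n)=\E\Tr\sigma_n(w(u))\to 0$, forcing $\varphi_w^{(m)}(0)=0$ for $m\le K$, and a separate bookkeeping for proper powers and for $w=e$ to match $\tau(x)\dim\sigma_n$.

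\textbf{The remainder estimate --- the main obstacle.} The crux is bounding $\sup_{[0,1/n]}|\varphi_x^{(K+r)}|/(K+r)!$ by $C^{K+r}(\cdots)\,w(q)\,\|x\|_{C^*}$. For $n$ large this is immediate from Lemma~\ref{lem:bound_on_derivatives_of_phix} with $i=K+r$: one gets $p(K+r,q)\|x\|_{C^*}$, and since $D_q \leq CKq\log(1+K)\log(1+q)$, the bound $p(K+r,q) = (4K+4)(C\sqrt{K+r}\,D_q^{3/2})^{K+r}$ or $(4K+4)(CD_q^4/(K+r)^2)^{K+r}$ is polynomial in $q$ for fixed $K,r$ --- but we need the precise shape $(C(K+r)^2\log(1+K+r)^{12}K^4\log(1+K)^4)^{K+r}w(q)$. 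The point: $D_q^{3/2}$ or $D_q^4$ contributes powers of $q$ and $\log q$, and one absorbs all $q$-dependence into $w(q)=\exp(q/\log(2+q)^2)$ --- indeed $(\text{poly}(q,\log q))^{K+r} \leq C^{K+r} q^{C(K+r)} \leq C^{K+r}\exp(C(K+r)\log q)$, and since $K+r \leq $ (anything) one checks $\exp(C(K+r)\log q) \leq \exp(q/\log(2+q)^2)$ once $q$ is large compared to $(K+r)$, with the small-$q$ range absorbed into the constant; this is where the $(K+r)^2\log(1+K+r)^{12}$ factor comes from (it is roughly $((K+r)\log(K+r))^{O(1)}$ bounding the "cost" of the range $q \lesssim (K+r)^{1+o(1)}$). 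The $K^4\log(1+K)^4$ factor tracks $D_q/q \asymp K\log(1+K)\log(1+q)$ raised to the fourth power from the $D_q^4$ in $p(i,q)$. One subtlety: Lemma~\ref{lem:bound_on_derivatives_of_phix} only controls derivatives on $[0,\frac1{2D_q^2}]$, so the argument needs $1/n \leq 1/(2D_q^2)$, i.e. $n \geq 2D_q^2$; for smaller $n$ one argues directly that $\E\Tr(\sigma_n(x)-\tau(x)\Id)$ is trivially bounded by $2\dim\sigma_n\|x\|_{C^*}\leq 2(K+1)n^K\|x\|_{C^*}$, and since $n < 2D_q^2 = O((Kq\log)^2)$ one has $n^{K+r}\cdot(\text{trivial bound}) \leq n^{K+r}\cdot 2(K+1)n^K\|x\|_{C^*}$, all of which is again absorbed into $C^{K+r}w(q)\|x\|_{C^*}$ by the same $w(q)$-vs-polynomial comparison. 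I expect the bookkeeping of constants to be the genuinely laborious part; the conceptual content is entirely in the Markov-brothers/Lemma~\ref{lem:bound_on_derivatives_of_phix} derivative bound and in reading off the low-order Taylor coefficients from Theorem~\ref{thm:summary_of_characters}.
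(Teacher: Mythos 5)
Your proposal follows essentially the same route as the paper's proof: rewrite the left-hand side as $n^{K}$ times the order-$(K+r-1)$ Taylor remainder of $\varphi_{x}$ at $0$ (the coefficients of order $<K$ matching $\tau(x)\dim\sigma_{n}$ and those of order $\geq K$ giving the $v_{i}$), bound the remainder by Lemma~\ref{lem:bound_on_derivatives_of_phix} when $n\geq 2D_{q}^{2}$, handle $n<2D_{q}^{2}$ by the trivial bound $2\dim(\sigma_{n})\|x\|_{C^{*}(\F_{r})}$ plus the $v_{i}$ terms, and absorb all polynomial-in-$q$ factors into $w(q)$. This is exactly the paper's argument (you are in fact more explicit than the paper about why the Taylor coefficients of order $<K$ cancel against $\tau(x)\dim\sigma_{n}$), so the proposal is correct.
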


\begin{proof}
If $n\geq Kq$, by (\ref{item:def_of_phiw}) in Theorem~\ref{thm:summary_of_characters},
we know that the left-hand side of (\ref{eq:asymptotics_of_traces})
is equal to 
\[
n^{K}\Big|\varphi_{x}(\frac{1}{n})-\sum_{i=0}^{K+r-1}\frac{\varphi_{x}^{(i)}(0)}{i!n^{i}}\Big|.
\]
By Taylor's inequality and Lemma~\ref{lem:bound_on_derivatives_of_phix},
if we moreover assume that $n\geq2D_{q}^{2}$, this is less than 
\begin{equation}
\frac{1}{n^{r}}p(K+r,q)\|x\|_{C^{*}(\F_{r})}.\label{eq:general_upper_bound_for_trace}
\end{equation}
If $n\leq2D_{q}^{2}$, we claim that this bound is still valid. Indeed, we can bound the left-hand side of (\ref{eq:asymptotics_of_traces}) by 
\[
2\mathrm{dim}(\sigma_{n})\|x\|_{C^{*}(\F_{r})}+\sum_{i=0}^{r-1}\frac{|v_i(x)|}{n^{i}}.
\]
By Lemma~\ref{lem:bound_on_derivatives_of_phix} and \eqref{eq:dimension_of_sigma_n}, this is less than
\[ n^K(p(0,q) + \sum_{i=0}^{r-1} \frac{1}{n^{K+i}}p(K+i,q)) \|x\|_{C^*(\F_r)},\]
which is indeed bounded
above by (\ref{eq:general_upper_bound_for_trace}), at least if the
constant $C$ appearing there is large enough: for example $C\geq 2 e^2$ guarantees  that $\frac{1}{n^{j+1}}p(j+1,q) \geq 2 \frac{1}{n^{j}}p(j,q)$ for every $j$. The statement of the lemma follows because
\[
\sup_{q}p(i,q)\exp\Big(-\frac{q}{\log(2+q)^{2}}\Big)\leq(CK^{4}i^{2}\log(1+K)^{4}\log(1+i)^{12})^{i}.\qedhere
\]
\end{proof}

We can now deduce the main result of this section. Now we make $K$
vary, so we write $\sigma_{K,n}$ the representation that was denoted
$\sigma_{n}$ so far, and $v_{K,i}$ the function denoted $v_i$ so far.

\begin{lem}
\label{lem:convergence_in_probability} Let $A<\frac{1}{42}$. Then
for every $x\in\C[\F_{r}]$, 
\[
\lim_{n}\sup_{K\leq n^{A}}\E\Big|\|\sigma_{n,K}(x(u_{1},\dots,u_{r}))\|-\|x\|_{C_{\lambda}^{*}(\F_{r})}\Big|=0.
\]
\end{lem}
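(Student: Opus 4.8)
The strategy is to apply the abstract strong convergence criterion, Proposition~\ref{prop:criterion_for_strong_convergence}, with $\Gamma=\F_r$, $\pi_n=\sigma_{n,K}$, $u_n=u_{n,K}$ the function $w\mapsto\sum_{i=0}^{r-1}n^{-i}v_{K,i}(w)$ truncated appropriately, and $\varepsilon_n$ the right-hand side prefactor in Lemma~\ref{lem:Taylor_expansion_in_Schwartz_space} divided by $n^r$. The point is to choose the number $r$ of terms in the Taylor expansion as a function of $n$ and $K$ so that two things happen at once: the error term $\varepsilon_n$ tends to $0$ uniformly over $K\leq n^A$, and the approximating function $u_n$ is still tempered. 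First I would record that, by Lemma~\ref{lem:derivatives_tempered}, each $v_{K,i}$ is tempered provided $i<K/6$, and a finite sum of tempered functions with coefficients $n^{-i}$ is tempered (temperedness is a $\limsup$ condition on $|u((x^*x)^m)|^{1/2m}$ and is stable under finite linear combinations since $\|\lambda(x)\|$ dominates each term); it also extends continuously to $\mathcal{S}_{w}(\F_r)$ with polynomial control by the polynomial bound in Lemma~\ref{lem:derivatives_tempered}. So I must take $r\leq K/6$, i.e. $r=\lfloor K/6\rfloor$ (or a fixed fraction of $K$), which is harmless since $K$ will be allowed to grow.

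The heart of the matter is the quantitative bookkeeping. With $r\asymp K$, Lemma~\ref{lem:Taylor_expansion_in_Schwartz_space} gives an error bound of the shape
\[
\varepsilon_{n,K}\;\leq\;\frac{\big(C K^2 (\log(1+K))^{16} \big)^{CK}}{n^{\,K/6}}.
\]
Taking logarithms, $\log\varepsilon_{n,K}\leq CK\log K\cdot(\text{polylog})-\tfrac{K}{6}\log n$, and since $K\leq n^{A}$ we have $\log K\leq A\log n$, so the numerator contributes $\leq CA K(\log n)(\mathrm{polyloglog})$ while the denominator contributes $-\tfrac{K}{6}\log n$. For this to be negative and in fact to go to $-\infty$ (uniformly in $K\leq n^A$) we need $A$ small enough that $CA<\tfrac16$ up to the polyloglog factors — this is exactly where the constant $\tfrac{1}{42}$ enters, and I would carry the constants carefully: the exponent in Lemma~\ref{lem:Taylor_expansion_in_Schwartz_space} is $K+r\leq \tfrac{7K}{6}$ and the base is polynomial in $K$ of degree a fixed small power, and matching $\tfrac76\cdot(\text{that degree})\cdot A$ against $\tfrac16$ should yield $A<\tfrac{1}{42}$ (the factor $7$ from $K+r$, a factor $6$ from $n^{K/6}$, and the degree-$2$-ish growth of the base combine). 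The upshot is: for $A<\tfrac1{42}$ there is $\eta>0$ with $\varepsilon_{n,K}\leq n^{-\eta K}\to 0$ uniformly over $1\leq K\leq n^A$.

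Next I would feed this into Proposition~\ref{prop:criterion_for_strong_convergence}. For a single fixed $K$ the proposition directly gives $\lim_n\prob(\|\sigma_{n,K}(x)\|\geq\|\lambda(x)\|+\delta)=0$; since $C^*_\lambda(\F_r)$ has a unique trace (Powers), we also get the two-sided statement $\lim_n\prob(|\,\|\sigma_{n,K}(x)\|-\|\lambda(x)\|\,|\geq\delta)=0$. To upgrade this to uniformity over $K\leq n^A$, I would re-examine the proof of Proposition~\ref{prop:criterion_for_strong_convergence}: the bump function $f$, its $\mathcal{S}_w$-norm $\|x\|_w$, and the constant $c>0$ depend only on $x$ and $\delta$, not on $K$; the only $K$-dependence is through $\varepsilon_n\to\varepsilon_{n,K}$ and through the temperedness of $u_n$, both of which we have controlled uniformly. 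Hence $\prob(\|\sigma_{n,K}(x)\|>\|\lambda(x)\|+\delta)\leq c^{-1}\varepsilon_{n,K}\|x\|_w$, and since $\sup_{K\leq n^A}\varepsilon_{n,K}\to0$ we get $\sup_{K\leq n^A}\prob(\cdots)\to0$, and similarly for the lower bound via Lemma~\ref{lem:consequence_of_unique_trace} (whose data $y_1,\dots,y_n,\delta$ again depend only on $x$, $\delta$, $q$). Converting convergence in probability of a bounded random variable ($\|\sigma_{n,K}(x)\|\leq\|x\|_{C^*}$, a fixed constant) to convergence of expectations by the bounded convergence theorem then gives $\lim_n\sup_{K\leq n^A}\E\big|\,\|\sigma_{n,K}(x)\|-\|x\|_{C^*_\lambda(\F_r)}\,\big|=0$.

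\textbf{Main obstacle.} The delicate point is not any single step but the uniformity: one must check that \emph{every} constant in the machinery of \S\ref{sec:Temperdness_criterion} (the bump function $f$ from Lemma~\ref{section:bump} with $\varepsilon=\tfrac12$, its Fourier decay rate $M$, the resulting $\mathcal S_w$-norm, and the threshold constant $c$) is independent of $K$, while the \emph{only} genuinely $K$-dependent quantities — the temperedness threshold $i<K/6$ forcing $r\asymp K$, and the Taylor error $\varepsilon_{n,K}$ — are simultaneously controllable precisely in the regime $K\leq n^A$ with $A<\tfrac1{42}$. Getting the arithmetic of exponents ($K+r$ versus $n^{K/6}$ versus the polynomial-in-$K$ base, together with $K\leq n^A$) to close with the stated constant $\tfrac1{42}$ is the crux, and is where I would spend most of the care.
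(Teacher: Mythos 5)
Your proposal is correct and follows essentially the same route as the paper: choose $r$ proportional to $K/6$ so that all the $v_{K,i}$, $i\le r-1$, appearing in the Taylor approximation are tempered by Lemma~\ref{lem:derivatives_tempered}, verify the hypotheses of Proposition~\ref{prop:criterion_for_strong_convergence} uniformly in $K\le n^A$ via Lemma~\ref{lem:Taylor_expansion_in_Schwartz_space}, invoke the unique trace property of $C^*_\lambda(\F_r)$ for the two-sided bound, and pass from probability to expectation using $\|\sigma_{n,K}(x)\|\le\|x\|_{C^*(\F_r)}$. Two small points to fix. First, the approximating function must be $u_n(x)=\dim(\sigma_{n,K})\tau(x)+\sum_{i=0}^{r-1}n^{-i}v_{K,i}(x)$, since Lemma~\ref{lem:Taylor_expansion_in_Schwartz_space} controls $\E\Tr\sigma_{n,K}(x)$ only after subtracting $\dim(\sigma_{n,K})\tau(x)$; this extra term is harmless because $\tau$ is tempered, but omitting it would break the second hypothesis of the criterion (also take $r=\lceil K/6\rceil$ rather than $\lfloor K/6\rfloor$ so that $r\ge1$ for small $K$). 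Second, your exponent arithmetic is garbled: the base in Lemma~\ref{lem:Taylor_expansion_in_Schwartz_space} is $(K+r)^2K^4(\log\cdots)^{16}\asymp K^6$ up to logarithms, i.e.\ degree $6$ and not ``degree-$2$-ish''; the comparison is $(K+r)\cdot 6A\log n$ against $r\log n$ with $K+r=7r$, so the condition is $42A<1$ — the $42$ is $7\times 6$ with the $6$ coming from the degree of the base, not from $n^{K/6}$ (the two factors of $\tfrac16$ cancel). With these corrections the argument closes exactly as in the paper.
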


\begin{proof}
  Let $1 \leq K_n \leq n^A$ for every $n$, and let $\pi_n$ be the random representation of $\F_r$: $\pi_n(w) = \sigma_{K,n}(w(u_1,\dots,u_r))$. Apply Lemma~\ref{lem:Taylor_expansion_in_Schwartz_space} for the smallest integer $r \geq \frac K 6$: for every $q$ and every $x \in \C_{\leq q}[\Gamma]$,
\[\Big|\E\Tr (\pi_n(x)) - u_n(x)\Big| \leq \varepsilon_n  w(q)\|x\|_{C^{*}(\F_{r})}.\]
where
\[ \varepsilon_n = \frac{(C K_n^6 (\log K_n)^{12+8\varepsilon})^{K_n+r}}{n^r} \leq (C n^{42A-1} (\log n)^{12+8\varepsilon})^{\frac{1}{6}} \to 0\]
and
\[ u_n(x) = \dim(\sigma_{n,k})\tau(x) + \sum_{i=0}^{r-1} \frac{v_{K_n,i}(x)}{n^i}.\]
By Lemma~\ref{lem:derivatives_tempered}, the function $u_n$ is tempered as a finite sum of tempered function, and it satisfies
\[ |u_n(x)| \leq C(n) (1+q)^{4r+4K_n-3}.\]

Moreover, $C^*_\lambda(\Gamma)$ has a unique trace (it is even simple \cite{powers}). So all the hypotheses of Proposition~\ref{prop:criterion_for_strong_convergence} are satisfied; its conclusion proves the lemma. 
\end{proof}

\subsection{From convergence in probability to almost sure convergence}

Lemma~\ref{lem:convergence_in_probability} is not exactly our main
result, because we have convergence in expected value.
However, by the concentration of measure phenomenon in the groups
$\U(n)$ (see \cite[Theorem 5.17]{MR3971582}), we can improve our
results to almost sure convergence. 
\begin{prop}
\label{prop:concentration} For every $x\in\C_{\leq q}[\F_{r}]$,
there is a constant $C(x)=\sum_{w}|x(w)||w|$ such that 
\begin{align*}
 & \prob\Big(\|\sigma_{n,K}(x(U_{1}^{(n)},\dots,U_{r}^{(n)}))\|\geq\E\|\sigma_{n,K}(x(U_{1}^{(n)},\dots,U_{r}^{(n)}))\|+\varepsilon)\Big)\\
 & \leq\exp\left(-\frac{(n-2)\varepsilon^{2}}{24C(x)^{2}K^{2}}\right).
\end{align*}
\end{prop}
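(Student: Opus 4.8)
The plan is to apply the concentration of measure phenomenon on the compact group $\U(n)$ with its bi-invariant metric, using the fact that the map $(U_1,\dots,U_r)\mapsto \|\sigma_{n,K}(x(U_1,\dots,U_r))\|$ is Lipschitz with a controllable Lipschitz constant. Concretely, $\U(n)$ equipped with the Hilbert--Schmidt (Frobenius) metric $d(U,V)=\|U-V\|_{\HS}$ satisfies a log-Sobolev / concentration inequality: for any $1$-Lipschitz function $F$ on $\U(n)^r$ with the $\ell^2$-sum metric, $\prob(F\geq \E F + t)\leq \exp(-cn t^2)$ for an explicit constant (the form in \cite[Theorem 5.17]{MR3971582} gives the constant $(n-2)/24$ up to normalization of the metric). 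So the whole task reduces to bounding the Lipschitz constant of $F(U_1,\dots,U_r)=\|\sigma_{n,K}(x(U_1,\dots,U_r))\|$.

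First I would reduce to word maps: by the triangle inequality for the operator norm, $|F(U)-F(V)|\leq \sum_w |x(w)|\,\big\|\sigma_{n,K}(w(U_1,\dots,U_r))-\sigma_{n,K}(w(V_1,\dots,V_r))\big\|$, so it suffices to bound the Lipschitz constant of each $U\mapsto \sigma_{n,K}(w(U))$ and combine. Second, for a single word $w$ of length $|w|$, a standard telescoping argument (writing $w(U)-w(V)$ as a sum of $|w|$ terms, each differing in one letter, and using that all factors are unitary hence norm-$1$) shows $\|w(U)-w(V)\|\leq \sum_{i=1}^r m_i(w)\,\|U_i-V_i\|$ where $m_i(w)$ is the number of occurrences of $x_i^{\pm1}$, and in particular $\leq |w|\max_i\|U_i-V_i\|\leq |w|\,(\sum_i \|U_i-V_i\|^2)^{1/2}$. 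Third, I must pass this through the representation $\sigma_{n,K}$: since $\sigma_{n,K}$ is a unitary representation, $\|\sigma_{n,K}(g)-\sigma_{n,K}(h)\|\leq \|\sigma_{n,K}(gh^{-1}-1)\|$, and the key point is that the \emph{derivative} of $\sigma_{n,K}$ at the identity has operator norm controlled by $K$: the highest weight of any constituent $\pi_{\lambda,\mu}$ with $|\lambda|+|\mu|=K$ has $\ell^1$-norm $\leq K$, so $\|d\sigma_{n,K}(X)\|\leq \|X\|_{\mathrm{op}}\cdot\|\Lambda\|_1 \leq K\|X\|_{\mathrm{op}}$ for $X$ in the Lie algebra; integrating along a geodesic from $h$ to $g$ in $\U(n)$ gives $\|\sigma_{n,K}(g)-\sigma_{n,K}(h)\|\leq K\,d_{\mathrm{op}}(g,h)\leq K\,d_{\HS}(g,h)$ (operator norm $\leq$ Hilbert--Schmidt norm). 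Combining the three reductions, $F$ is Lipschitz with constant at most $K\sum_w|x(w)||w|=K\,C(x)$ for the $\ell^2$-sum metric on $\U(n)^r$.

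Finally I would quote the concentration inequality: applying it to $F/(K\,C(x))$ (which is $1$-Lipschitz) yields $\prob(F\geq \E F+\varepsilon)\leq \exp\big(-\frac{(n-2)\varepsilon^2}{24 K^2 C(x)^2}\big)$, as claimed. The main obstacle, and the only non-formal point, is the third step — controlling how much the representation $\sigma_{n,K}$ can expand distances — which hinges on the uniform bound $\|\Lambda\|_1\leq K$ on the highest weights appearing in $\sigma_{n,K}$; everything else (telescoping over letters of a word, triangle inequality over the polynomial $x$, and the off-the-shelf concentration estimate on $\U(n)$) is routine. One should be slightly careful that the concentration statement is used with the correct metric normalization so that the constant $24$ and the shift $n\mapsto n-2$ match \cite[Theorem 5.17]{MR3971582}; this is a bookkeeping matter and does not affect the structure of the argument.
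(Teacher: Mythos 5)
Your approach is exactly the paper's: the proof there is one line, asserting that $(U_1,\dots,U_r)\mapsto\|\sigma_{n,K}(x(U_1,\dots,U_r))\|$ is $KC(x)$-Lipschitz for the $L^2$-sum of Hilbert--Schmidt metrics and then quoting \cite[Theorem 5.17]{MR3971582}, so your job was precisely to supply the Lipschitz bound, and your decomposition (triangle inequality over $w$, telescoping over the letters of $w$, then controlling the expansion of $\sigma_{n,K}$) is the right one. The one quantitative wrinkle is in your third step: bounding $\|d\sigma_{n,K}(X)\|\leq K\|X\|_{\mathrm{op}}$ and integrating along a geodesic controls $\|\sigma_{n,K}(g)-\sigma_{n,K}(h)\|$ by $K$ times the \emph{geodesic} operator-norm distance, and the passage from geodesic to chordal distance costs a factor $\tfrac{\pi}{2}$ (since $\|e^{X}-1\|\geq\tfrac{2}{\pi}\|X\|_{\mathrm{op}}$), so as written your Lipschitz constant is $\tfrac{\pi}{2}KC(x)$ and the $24$ in the exponent would degrade to $6\pi^2$. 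This is avoided by telescoping once more inside the representation: since $\sigma_{n,K}$ is a subrepresentation of $\bigoplus_{k+\ell=K}U^{\otimes k}\otimes\overline{U}^{\otimes\ell}$, one gets directly $\|\sigma_{n,K}(U)-\sigma_{n,K}(V)\|\leq K\|U-V\|_{\mathrm{op}}\leq K\|U-V\|_{\mathrm{HS}}$ with no geodesic detour, which recovers the stated constant. Everything else in your plan is correct.
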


\begin{proof}
Equip $\U(n)^{r}$ with the $L_{2}$-sum of Hilbert-Schmidt distances.
The function $(U_{1}^{(n)},\dots,U_{r}^{(n)})\mapsto\|\sigma_{n,K}(x(U_{1}^{(n)},\dots,U_{r}^{(n)}))\|$
is $KC(x)$-Lipschitz, so the bound is \cite[Theorem 5.17]{MR3971582}
(for the original see \cite[Proof of Thm. 3.9]{Voiculescu1991}).
\end{proof}
\begin{proof}[Proof of Theorem \ref{thm:main}]
Theorem \ref{thm:main} now follows by combining Proposition \ref{prop:concentration}
and Lemma \ref{lem:convergence_in_probability}. 
\end{proof}
\subsection{Variants and operator coefficients}
If we only consider the representation $\pi_{k,\ell}$ from the introduction with $\ell=0$, we have the stronger conclusion \ref{enu:poly} in Theorem~\ref{thm:random-matrix-est} instead of \ref{enu:generic-decay}. Taking this improvement into account, with the notation of Theorem~\ref{thm:main}, the conclusion becomes that if $A < \frac 1 {12}$,
\[
\sup_{k\leq n^{A}}\left|\left\Vert \pi_{k,0}\left(p\left(U_{1}^{(n)},\ldots,U_{r}^{(n)}\right)\right)\right\Vert -\|p(x_{1},\ldots,x_{r})\|\right|=o(1).
\]
This improvement illustrates that the more of the higher derivatives $v_{K,i}$ from Lemma~\ref{lem:derivatives_tempered} are shown to be tempered, the stronger the conclusion. For example, if we knew that $v_{K,i}$ is tempered for every $i$ and every $K$ (which Parraud has proved in the case $K=1$ \cite{parraud2023asymptotic}), or at least for every $i \leq K/o(1)$, then the condition on $A$ in Theorem~\ref{thm:main} would be $A<\frac 1 6$.

\begin{thm}\label{thm:main_operator_coefficients} Let $A<\frac 1 6$ and for every $n$, let $K_n \leq n^{A}$. Assume that $v_{K_n,i}$ is tempered for every $n$ and every $i$. Let $k_n = \exp(n^{\frac 1 2 - 2A}(\log n)^{-4})$. For every $q$ and every sequence $y_n \in M_{k_n}\otimes \C_{\leq q}(\F_r)$, almost surely
  \[ \lim_n \frac{\|(\mathrm{id}\otimes \sigma_{K_n,n})(y_n)\|}{\|(\mathrm{id}\otimes \lambda)(y_n)\|}=1.\]
\end{thm}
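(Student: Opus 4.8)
The plan is to feed the matrix-coefficient criterion for strong convergence, Proposition~\ref{prop:criterion_for_strong_convergence_coefficients}, to the random representations $\pi_n\colon\F_r\to\U(\dim\sigma_{K_n,n})$ given by $\pi_n(w)=\sigma_{K_n,n}(w(U_1^{(n)},\dots,U_r^{(n)}))$, using as input the Taylor expansion furnished by Lemma~\ref{lem:Taylor_expansion_in_Schwartz_space}. The only genuine freedom is the order $m=m_n$ to which one pushes that expansion: in Lemma~\ref{lem:convergence_in_probability} it had to be $\asymp K_n/6$, because only the derivatives $v_{K,i}$ with $i<K/6$ were known to be tempered, whereas here the standing hypothesis removes that restriction, so we expand to a much larger order, taking $m_n$ to grow essentially like $n^{\frac12-2A}$ divided by a fixed power of $\log n$.

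First, fix a sequence $K_n\le n^A$ and apply Lemma~\ref{lem:Taylor_expansion_in_Schwartz_space} with this $m_n$: for every $q$ and every $x\in\C_{\le q}[\F_r]$ it gives the master inequality $|\E\Tr\pi_n(x)-u_n(x)|\le\varepsilon_n\,w(q)\,\|x\|_{C^*(\F_r)}$, where $w(q)=\exp\bigl(q/\log(2+q)^{2}\bigr)$, $u_n(x)=\dim(\sigma_{K_n,n})\tau(x)+\sum_{i=0}^{m_n-1}v_{K_n,i}(x)/n^i$, and $\varepsilon_n=\bigl(C(K_n+m_n)^2\log(1+K_n+m_n)^{12}K_n^4\log(1+K_n)^4\bigr)^{K_n+m_n}/n^{m_n}$. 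One then checks the remaining hypotheses of Proposition~\ref{prop:criterion_for_strong_convergence_coefficients} for $u_n$: the polynomial growth $|u_n(x)|\le P_n(q)\|x\|_{C^*(\F_r)}$ is inherited from the growth bound in Lemma~\ref{lem:derivatives_tempered}; and $u_n$ is completely tempered, since each $v_{K_n,i}$ is tempered by assumption and hence completely tempered by Proposition~\ref{prop:tempered_implies_Ctempered}, while $\dim(\sigma_{K_n,n})\tau$ is a scalar multiple of the canonical trace — completely tempered, the dimension factor disappearing under the $n$-th root $\limsup$.

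The step I expect to be the main obstacle is the arithmetic of choosing $m_n$ so that $\lim_n\varepsilon_n k_n=0$, in fact so that $\sum_n\varepsilon_n k_n<\infty$, for $k_n=\exp(n^{\frac12-2A}(\log n)^{-4})$. Writing $B_n$ for the base of $\varepsilon_n$, one has $\log B_n\approx 2\log(K_n+m_n)+4\log K_n+O(\log\log n)$, so $-\log\varepsilon_n=m_n\log n-(K_n+m_n)\log B_n$; the bound $K_n\le n^A$ puts $4\log K_n\le 4A\log n$ into $\log B_n$, while $m_n\asymp n^{\frac12-2A}/(\log n)^{c}$ puts $2\log(K_n+m_n)\approx(1-4A)\log n-2c\log\log n$ into it. The hypothesis $A<\frac16$, equivalently $3A<\frac12$, is exactly what makes $m_n\gg K_n(\log n)^{c+1}$, pushes $\log B_n$ below $\log n$, and lets $-\log\varepsilon_n$ dominate both $K_n\log n$ and $\log k_n$; the exponent $\frac12-2A$ in $k_n$ (and the inverse power of $\log n$) is precisely what survives this balancing against the logarithmic factors in the constant of Lemma~\ref{lem:Taylor_expansion_in_Schwartz_space}, and the threshold $\frac16$ is sharp for this route.

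With the hypotheses verified, Proposition~\ref{prop:criterion_for_strong_convergence_coefficients} applies — $C^*_\lambda(\F_r)$ is exact and has a unique trace, being simple — and yields, for every fixed $q$ and every sequence $y_n\in M_{k_n}(\C)\otimes\C_{\le q}[\F_r]$ with $\|y_n\|_{C^*(\F_r)\otimes M_{k_n}}\le1$, convergence in probability of $\|(\mathrm{id}\otimes\sigma_{K_n,n})(y_n)\|-\|(\mathrm{id}\otimes\lambda)(y_n)\|$ to $0$. To obtain the almost sure ratio statement I would upgrade the two inequalities separately. For the upper bound, the proof of Proposition~\ref{prop:criterion_for_strong_convergence_coefficients} actually gives $\prob\bigl(\|(\mathrm{id}\otimes\pi_n)(y_n)\|\ge\|(\mathrm{id}\otimes\lambda)(y_n)\|+\delta\bigr)=O(\varepsilon_n k_n)$, so choosing $m_n$ with $\sum_n\varepsilon_n k_n<\infty$ lets Borel--Cantelli conclude — no concentration is needed here, which is fortunate since the Lipschitz constant of $\mathbf{U}\mapsto\|(\mathrm{id}\otimes\sigma_{K_n,n})(y_n(\mathbf{U}))\|$ carries an unwanted factor $\sqrt{k_n}$. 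For the lower bound, Lemma~\ref{lem:consequence_of_unique_trace} reduces the comparison, using unique trace and exactness of $C^*_\lambda(\F_r)$, to controlling $\|\pi_n(z_i)\|$ for a \emph{fixed} finite family of scalar polynomials $z_i\in\C[\F_r]$; for those, concentration of measure on $\U(n)^r$ (Proposition~\ref{prop:concentration}) gives deviation probabilities $\le\exp(-c\,n^{1-2A}\epsilon^2)$, summable in $n$ since $A<\frac12$, so Borel--Cantelli again applies. Dividing by $\|(\mathrm{id}\otimes\lambda)(y_n)\|$ then gives the limit claimed.
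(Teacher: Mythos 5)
Your argument follows the paper's proof almost exactly in its core: upgrade temperedness to complete temperedness via Proposition~\ref{prop:tempered_implies_Ctempered}, apply Lemma~\ref{lem:Taylor_expansion_in_Schwartz_space} with expansion order $\asymp n^{\frac12-2A}(\log n)^{-c}$ (the paper takes $c=\frac72$), check that $A<\frac16$ makes this order dominate $K_n$ so that $\varepsilon_nk_n\to0$, and feed the result to Proposition~\ref{prop:criterion_for_strong_convergence_coefficients}; your bookkeeping of $\log B_n$ and of why $\frac16$ and $\frac12-2A$ appear is correct and matches the paper's choice. The only place you diverge is the upgrade from convergence in probability to almost sure convergence. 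The paper simply invokes the concentration of measure of Proposition~\ref{prop:concentration}, and this does work in the operator-coefficient setting: your worry about a $\sqrt{k_n}$ in the Lipschitz constant is unfounded, since for $y_n=\sum_w b_w\otimes w$ of degree $\leq q$ and norm $\leq1$ each Fourier coefficient satisfies $\|b_w\|_{op}\leq1$ and there are only $O_{q,r}(1)$ words involved, so $\mathbf{U}\mapsto\|(\mathrm{id}\otimes\sigma_{K,n})(y_n(\mathbf{U}))\|$ is $O_{q,r}(K)$-Lipschitz — the $M_{k_n}$ factor is untouched by $\mathbf{U}$ and contributes only operator norms. Your alternative (Borel--Cantelli from $\sum_n\varepsilon_nk_n<\infty$ for the upper bound, plus Lemma~\ref{lem:consequence_of_unique_trace} with exactness to reduce the lower bound to a fixed finite family of scalar polynomials handled by scalar concentration) is nevertheless valid and self-contained; it buys you independence from any operator-coefficient concentration statement at the cost of a slightly longer argument. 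Either route completes the proof.
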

If the assumption was that $v_{K_n,i}$ is completely tempered, the proof of the theorem would be a straightforward adaptation of the proof of Theorem~\ref{thm:main}. So our main task is to prove the following:
\begin{prop}\label{prop:tempered_implies_Ctempered} For every $K,I$, if $v_{K,0},\dots,v_{K,I}$ are all tempered, then they are all completely tempered.
\end{prop}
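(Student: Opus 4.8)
The plan is to exploit the one structural feature that distinguishes complete temperedness from temperedness in Proposition~\ref{prop:characterization_of_temperedness_in_Sobolev_algebras}: a function $u$ that extends continuously to some Sobolev algebra $\mathcal{S}_i(\F_r)$ is tempered iff $u(x^n)=0$ for all selfadjoint $x\in\mathcal{S}_i(\F_r)\cap\ker\lambda$ and all large $n$, and completely tempered iff the same vanishing holds for $(\Tr\otimes u)(x^n)$ with $x\in M_k(\C)\otimes(\mathcal{S}_i(\F_r)\cap\ker\lambda)$ selfadjoint. So the whole problem reduces to upgrading a scalar vanishing statement to a matrix-amplified one. First I would record, via Lemma~\ref{lem:derivatives_tempered}, that each $v_{K,i}$ extends continuously to a fixed $\mathcal{S}_{i_0}(\F_r)$ (with $i_0$ bounded in terms of $K$ and $I$), so that the criterion of Proposition~\ref{prop:characterization_of_temperedness_in_Sobolev_algebras} applies to all of them simultaneously with a common integer $n_0 = i_0+2$.

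The key point is that $v_{K,i}$ is not an arbitrary tempered function: by Theorem~\ref{thm:summary_of_characters}\ref{item:derivatives_not_proper_powers} (equivalently item~\ref{item:derivatives_not_proper_powers} of Theorem~\ref{thm:random-matrix-est}), $v_{K,i}$ is \emph{supported on proper powers together with the identity coset structure coming from the trace}. More precisely, writing $u_n$ as in Lemma~\ref{lem:convergence_in_probability}, the function $v_{K,i}(w)$ vanishes unless $w$ is conjugate to a proper power or $w=e$; this is exactly the property that made the random-walk argument in Lemma~\ref{lem:derivatives_tempered} work (the probability $\prob(v_i(\gamma_n)\neq 0)\leq C n^5\rho^n$ came from Proposition~\ref{prop:properPower}). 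The strategy is therefore to re-run the random-walk argument of Proposition~\ref{prop:criterion_for_tempered_inRDgroups} rather than to manipulate the $\mathcal{S}_i$-criterion directly: Proposition~\ref{prop:criterion_for_tempered_inRDgroups} already states that verifying \eqref{eq:absolutely_tempered} for every reasonable $\mu$ gives \emph{complete} temperedness, not merely temperedness. So in fact the proof of Lemma~\ref{lem:derivatives_tempered} already proves that $v_{K,i}$ is completely tempered whenever $i<K/6$. What remains — and this is the actual content of Proposition~\ref{prop:tempered_implies_Ctempered} — is the range $K/6\le i\le I$, where \eqref{eq:absolutely_tempered} is \emph{not} directly available, and we only know scalar temperedness as a hypothesis.

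So the real task: given that $v_{K,0},\dots,v_{K,I}$ are tempered, deduce the matrix vanishing $(\Tr\otimes v_{K,i})(x^{n_0})=0$ for selfadjoint $x\in M_k(\C)\otimes(\mathcal{S}_{i_0}(\F_r)\cap\ker\lambda)$. The plan is to reduce this to the scalar statement by the \emph{tensor-power / amplification trick} used in the proof of Proposition~\ref{prop:criterion_for_tempered_inRDgroups}. Concretely: for $x=\sum_\gamma x(\gamma)\otimes\gamma\in M_k(\C)\otimes\C_{\le q}[\F_r]$ selfadjoint with $(\id\otimes\lambda)(x)=0$, expand $(\Tr\otimes v_{K,i})(x^{n})=\sum_{s_1,\dots,s_n}\Tr(x(s_1)\cdots x(s_n))\,v_{K,i}(s_1\cdots s_n)$. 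Now I would use that $v_{K,i}$ is supported on (conjugates of) proper powers and $e$: decompose the sum according to whether the reduced word $s_1\cdots s_n$ is trivial or a nontrivial proper power, and use the conjugation-invariance of $v_{K,i}$ (it is a class function, being built from $\E_n\Tr\sigma_n(w)$) to bring the proper-power piece into cyclically reduced form, exactly as in Proposition~\ref{prop:properPower}. For the trivial piece, $\sum_{s_1\cdots s_n = e}\Tr(x(s_1)\cdots x(s_n)) = (\Tr\otimes\tau)(x^n)=0$ because $x\in\ker\lambda$ (here $\tau$ is the canonical trace on $C^*_\lambda(\F_r)$, and $(\Tr\otimes\tau)$ vanishes on $\ker(\id\otimes\lambda)$). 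For the proper-power piece I would introduce, following the combinatorics of Proposition~\ref{prop:properPower}, a splitting of each power-word into three geodesic segments; at that point the matrix coefficients $\Tr(x(s_1)\cdots x(s_n))$ factor through products $x(s_i)$ along segments that — after taking $\|x(\cdot)\|$ as a measure $\mu$ — reduce the estimate to $\E|v_{K,i}(\gamma_n)|$ bounds in the associated random walk, where temperedness of $v_{K,i}$ (scalar!) together with the polynomial-growth bound $|v_{K,i}(w)|\le C(1+|w|)^{j}$ from Lemma~\ref{lem:derivatives_tempered} is enough. The upshot will be $\limsup_n |(\Tr\otimes v_{K,i})(x^n)|^{1/n}\le \rho(\mu)\le \|(\id\otimes\lambda)(x)\|=0$ after the usual Haagerup-inequality step, hence $(\Tr\otimes v_{K,i})(x^{n_0})=0$ by Proposition~\ref{prop:characterization_of_temperedness_in_Sobolev_algebras}, which is exactly complete temperedness.

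The main obstacle I anticipate is the proper-power piece: unlike in Lemma~\ref{lem:derivatives_tempered}, where $\mu$ was a genuine probability measure on $\F_r$ and the event $\{v_i(\gamma_n)\neq 0\}$ was controlled by Proposition~\ref{prop:properPower} verbatim, here the "weights" are the operator-valued matrices $x(s)$, and one must be careful that the matrix trace $\Tr(x(s_1)\cdots x(s_n))$ does not reintroduce cancellation-sensitive signs — it is bounded only by $k\,\|x(s_1)\|\cdots\|x(s_n)\|$, which is what costs the harmless factor $\sqrt{k}$ that washes out in the tensor-power trick. The bookkeeping to make the three-segment decomposition of Proposition~\ref{prop:properPower} interact correctly with the matrix product (so that the "middle segment" genuinely contributes a $\rho^{k_2}$-type factor after one applies the random-walk spectral-radius bound \eqref{eq:prob_for_RW_in_terms_of_spectral_radius}, now in the amplified setting) is the delicate step; everything else is a routine repackaging of arguments already in Sections~\ref{sec:RW} and~\ref{sec:Temperdness_criterion}. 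I would present it by first stating a clean lemma: \emph{a class function $u$ on $\F_r$ that is supported on $\{e\}\cup\{\text{proper powers}\}$, has polynomial growth, and is (scalarly) tempered, is completely tempered}, and then deduce Proposition~\ref{prop:tempered_implies_Ctempered} from it together with Theorem~\ref{thm:summary_of_characters}\ref{item:derivatives_not_proper_powers}.
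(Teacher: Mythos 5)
There is a genuine gap, and it sits exactly in the range you yourself identify as ``the actual content'' of the proposition, namely $K/6\le i\le I$. Your strategy there still leans on the claim that $v_{K,i}$ is supported on $\{e\}\cup\{\text{proper powers}\}$, but item~\ref{item:derivatives_not_proper_powers} of Theorem~\ref{thm:summary_of_characters} gives $\varphi_w^{(K+i)}(0)=0$ for non-proper-powers only when $i<K/6$; for larger $i$ no support information is available (and none is assumed in the proposition, whose hypothesis is bare temperedness --- this matters for Theorem~\ref{thm:main_operator_coefficients}, where temperedness for all $i$ is an assumption, e.g.\ supplied by Parraud for $K=1$, with no accompanying support statement). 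Without the support property the random-walk route collapses for a second, independent reason: Proposition~\ref{prop:criterion_for_tempered_inRDgroups} needs the \emph{absolute} bound $\limsup_n(\E|u(\gamma_n)|)^{1/n}\le\rho(\mu)$, and scalar temperedness ($\limsup|u((x^*x)^n)|^{1/2n}\le\|\lambda(x)\|$) does not control $\E|u(\gamma_n)|$ --- the absolute values inside the expectation destroy exactly the cancellation that temperedness encodes. In Lemma~\ref{lem:derivatives_tempered} the absolute bound was obtained from support-on-proper-powers plus polynomial growth via Proposition~\ref{prop:properPower}, not from temperedness; so your plan to ``reduce the estimate to $\E|v_{K,i}(\gamma_n)|$ bounds \dots where temperedness of $v_{K,i}$ (scalar!) \dots is enough'' cannot be carried out.

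The mechanism the paper actually uses is different and is the idea your proposal is missing: temperedness, read through Proposition~\ref{prop:characterization_of_temperedness_in_Sobolev_algebras}, gives vanishing of $v_{K,i}$ on high \emph{powers} of selfadjoint elements of $\mathcal{A}_i=\mathcal{S}_{4K+4i+1}(\F_r)\cap\ker\lambda$; to transfer anything to $M_k(\C)\otimes\mathcal{A}_i$ one needs vanishing on a \emph{linear subspace} of the scalar algebra, since $\Tr\otimes u$ automatically kills $M_k(\C)\otimes V$ whenever $u$ kills $V$. The upgrade from power-vanishing to subspace-vanishing is done by positivity plus Cauchy--Schwarz (Lemma~\ref{lem:positive_forms_on_staralgebras}): a positive functional killing $x^{n_0}$ for all selfadjoint $x$ kills the cube of the algebra. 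Positivity of $v_{K,i}$ is not free --- it holds only on the subalgebra $(\mathcal{A}_i)^{3^i}$, where the inductively established vanishing of $v_{K,0},\dots,v_{K,i-1}$ lets one write $v_{K,i}(x)=\lim_n n^i\,\E\Tr\sigma_{K,n}(x)$ via Lemma~\ref{lem:Taylor_expansion_in_Schwartz_space}, exhibiting $v_{K,i}$ as a limit of positive maps there. This induction on $i$, the positivity argument, and Lemma~\ref{lem:positive_forms_on_staralgebras} are absent from your proposal and cannot be replaced by the random-walk machinery in the relevant range.
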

For the proof, we need the following result. Here if $\mathcal{A}$ is an algebra, $\mathcal{A}^n$ denotes its subalgebra equal to the linear span of $\{x_1 \dots x_n \mid x_i \in \mathcal{A}\}$.
\begin{lem}\label{lem:positive_forms_on_staralgebras} Let $\mathcal{A}$ be a $*$-algebra and $u \colon \mathcal{A} \to \C$ a positive linear map: $u(x^*x)\geq 0$ for every $x \in \mathcal{A}$. If there is $i$ such that $u(x^i)=0$ for every self-adjoint $x$, then $u= 0$ on $\mathcal{A}^3$.
\end{lem}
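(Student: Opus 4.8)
The plan is to reduce to a statement about a single positive semidefinite inner product. Since $u$ is positive, the sesquilinear form $\langle x,y\rangle \eqdf u(y^*x)$ is a positive semidefinite Hermitian form on $\mathcal{A}$, so it satisfies the Cauchy--Schwarz inequality: $|u(y^*x)|^2 \leq u(x^*x)\,u(y^*y)$ for all $x,y\in\mathcal{A}$. In particular, if $u(z^*z)=0$ then $u(y^*z)=0=u(z^*y)$ for every $y$; that is, the set $\mathcal{N}\eqdf\{z : u(z^*z)=0\}$ is a left ideal and $u$ vanishes on $\mathcal{N}^*\mathcal{A}+\mathcal{A}^*\mathcal{N}$. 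So it suffices to show that the hypothesis ``$u(x^i)=0$ for every self-adjoint $x$'' forces $u(y^*y)=0$ for every self-adjoint $y$, and more generally for every $y\in\mathcal{A}$; once we know $u(y^*y)=0$ for all $y$, every product $abc$ with $a,b,c\in\mathcal{A}$ can be written (by the polarization identity, writing $b$ as a combination of terms $d^*d$) as a linear combination of elements of $\mathcal{N}^*\mathcal{A}$, on which $u$ vanishes, giving $u=0$ on $\mathcal{A}^3$.

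The first step is to handle self-adjoint elements. Fix a self-adjoint $y\in\mathcal{A}$ and consider the commutative $*$-subalgebra (with the obvious involution) generated by $y$; on it $u$ restricts to a positive linear functional, and the hypothesis says $u$ annihilates $y^i$. The key point is that for a positive functional on a commutative $*$-algebra generated by one self-adjoint element, vanishing on $y^i$ forces vanishing on $y^j$ for all $j\geq i$ (because $u((y^{m})^* y^{m})\leq u((y^{m-1})^*y^{m-1})^{1/2} u((y^{m+1})^* y^{m+1})^{1/2}$ shows the sequence $a_m = u(y^{2m})$ is log-convex and nonnegative, hence if it hits $0$ at some index it stays $0$ for larger even indices; combined with Cauchy--Schwarz pairing against $1$ this propagates to all higher powers, even and odd). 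One then walks this down: if $i\geq 3$, we use $u(y^{i})=0$ and log-convexity of $m\mapsto u(y^{2m})$ ``in the other direction'' together with $u(y^{i})=u(y^{i+2})=\dots=0$ to conclude $u(y^{2})=0$; more directly, taking $m$ with $2m\geq i$ gives $u(y^{2m})=0$, and log-convexity of the nonnegative sequence $(u(y^{2m}))_m$ plus the boundary behavior forces $u(y^{2})=u((y)^*y)=0$. (If $i\leq 2$ the conclusion $u(y^2)=0$ is immediate or even stronger.) Hence $y\in\mathcal{N}$ for every self-adjoint $y$.

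The second step is to pass from self-adjoint elements to all of $\mathcal{A}$: for arbitrary $y\in\mathcal{A}$, write $y = \Re y + i\,\Im y$ with $\Re y = \tfrac12(y+y^*)$, $\Im y = \tfrac1{2i}(y-y^*)$ self-adjoint; since $\Re y,\Im y\in\mathcal{N}$ and $\mathcal{N}$ is a subspace, we get $y\in\mathcal{N}$, i.e.\ $u(y^*y)=0$ for all $y\in\mathcal{A}$. The third and final step is the polarization argument sketched above: for $a,b,c\in\mathcal{A}$, expand $u(a^*\!\cdot b\cdot c)$ — first replace $b$ by $\tfrac14\sum_{k=0}^3 i^k (1+i^{-k}\cdot)^*(\cdots)$-type polarization to reduce to $b$ of the form $d^*d$, then note $a^* d^* d c = (da)^*(dc)$ and apply Cauchy--Schwarz with $u((da)^*(da))=0$ — to conclude $u(a^*bc)=0$, and since $\mathcal{A}^3$ is spanned by such products $abc$ (absorbing adjoints is harmless as $\mathcal{A}$ is closed under $*$), $u$ vanishes on $\mathcal{A}^3$. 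The main obstacle is the elementary but slightly fiddly step~1 — making the log-convexity/propagation argument for the one-variable moment sequence fully rigorous, in particular handling odd powers and the descent from exponent $i$ down to exponent $2$; everything else is formal manipulation with Cauchy--Schwarz and polarization.
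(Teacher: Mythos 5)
Your overall toolkit (Cauchy--Schwarz for the positive semidefinite form $\langle a,b\rangle=u(a^*b)$, the null space $\mathcal{N}$ being a left ideal, polarization at the end) is the same as the paper's, but your Step~1 contains a genuine error: the claim that $u(y^2)=0$ for every self-adjoint $y$ (equivalently $\mathcal{N}=\mathcal{A}$) is \emph{false} under the hypotheses, not merely hard to prove. Counterexample: let $\mathcal{A}$ be the non-unital $*$-algebra spanned by $t,t^2$ with $t^3=0$ and $t^*=t$, and $u(at+bt^2)=b$. Then $u(x^*x)=|a|^2\geq 0$ so $u$ is positive, and $u(x^3)=0$ for every $x$ since $\mathcal{A}^3=0$, so the hypothesis holds with $i=3$; yet $u(t^*t)=u(t^2)=1\neq 0$. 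The place where your descent breaks is exactly the last step: the log-convexity inequality $a_m^2\leq a_{m-1}a_{m+1}$ for $a_m=u(y^{2m})$ comes from writing $y^{2m}=(y^{m+1})^*y^{m-1}$, which requires $m-1\geq 1$; to pass from $a_2=0$ to $a_1=u(y^2)=0$ you would need $a_0=u(1)$, i.e.\ a unit (or a square root of $y$), and neither is available. So for a general self-adjoint $y$ the descent only reaches $u(y^4)=0$, i.e.\ $y^2\in\mathcal{N}$, and the span of $\{y^2: y=y^*\}$ need not contain the commutators $[\Re x,\Im x]$ you would need to conclude $x^*x\in\mathcal{N}$. (A secondary slip: log-convexity propagates zeros \emph{downward}, not upward; your upward propagation ``$u(y^i)=0\Rightarrow u(y^j)=0$ for $j\geq i$'' is true but needs the left-ideal property of $\mathcal{N}$, or applying the hypothesis to $z=y^j$, not log-convexity alone.)

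The paper's fix is to run the descent on the full moment sequence $m_k=u((x^*x)^k)$ of the \emph{specific} self-adjoint element $x^*x$, which does have a ``square root'' $x$ in the algebra: splitting $(x^*x)^k=(x^*xx^*\cdots)^*(\cdots x)$ into factors of $k+1$ and $k-1$ letters gives $m_k^2\leq m_{k+1}m_{k-1}$ for all $k\geq 2$, so starting from $m_i=0$ (the hypothesis applied to $x^*x$) one legitimately descends to $m_2=u((x^*x)^2)=0$, i.e.\ $x^*x\in\mathcal{N}$ --- not $x\in\mathcal{N}$. From there your endgame works verbatim: Cauchy--Schwarz gives $u(x^*xz)=0$ for all $z$, and polarization in $x$ gives $u(xyz)=0$ on $\mathcal{A}^3$. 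Note that this weaker intermediate statement is also exactly what the conclusion permits: the lemma only asserts vanishing on $\mathcal{A}^3$, not on $\mathcal{A}$ or $\mathcal{A}^2$, and the counterexample above shows this cannot be improved.
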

\begin{proof} The form $\langle a,b\rangle = u(a^*b)$ is a scalar product. By the Cauchy-Schwarz inequality, if $k \geq 2$, if $a^* = x^*xx^* \dots$ ($k+1$) terms and $b= \dots x$ ($k-1$) terms,
  \[ |u((x^*x)^k)|^2 = |\langle a,b\rangle|^2\leq u((x^*x)^{k+1}) u((x^*x)^{k-1}),\]
  so by induction we have $u((x^*x)^2)=0$, and by the Cauchy-Schwarz inequality again, $u(x^* x z)=0$ for every $z \in \mathcal A$. By polarization we deduce $u(xyz)=0$ for every $x,y,z$.
\end{proof}
\begin{proof}[Proof of Proposition~\ref{prop:tempered_implies_Ctempered}]
  We use the notation and results from Section~\ref{subsec:Sobolev}. Lemma~\ref{lem:derivatives_tempered} tells us that $v_{K,i}$ extends by continuity to a linear map $\mathcal{S}_{4K+4i+1}(\F_r) \to \C$. Let $\mathcal{A}_i = \mathcal{S}_{4K+4i+1}(\F_r) \cap \ker\lambda$. We prove by induction that $v_{K,i}$ vanishes on $(\mathcal{A}_i)^{3^{i+1}}$ for every $i\leq I$. This readily implies that $\mathrm{\Tr}\otimes v_{K,i}$ vanishes on $M_k \otimes (\mathcal{A}_i)^{3^{i+1}}$ for all $k$. In particular, we see that for every self-adjoint $x \in M_k\otimes \mathcal{A}_i$, $(\Tr\otimes v_{K,i})(x^{3^{i+1}})=0$ because $x^{3^{i+1}}$ belongs to $M_k \otimes (\mathcal{A}_i)^{3^{i+1}}$.  Proposition~\ref{prop:characterization_of_temperedness_in_Sobolev_algebras} therefore implies that $v_{K,i}$ is completely tempered.

So let us assume that $v_{K,j}$ vanishes on $(\mathcal{A}_j)^{3^{j+1}}$ for every $j<i$ (we assume nothing if $i=0$), and let us show that it is true for $j=i$.
 
Using the induction hypothesis, it follows from Lemma~\ref{lem:Taylor_expansion_in_Schwartz_space} that on the $*$-algebra $(\mathcal{A}_i)^{3^i}$ (which is contained in $\ker \tau$ and $(\mathcal{A}_j)^{3^{j+1}}$ for every $j<i$),
  \[ v_{K,i}(x) = \lim_n n^{i} \E \Tr \sigma_{K,n}(x).\]
  In particular, $v_{K,i}$ is positive on  $(\mathcal{A}_i)^{3^i}$ as a limit of positive maps. Moreover, the fact that $v_{K,i}$ is tempered implies that $v_{K,i}(x^n)=0$ for all $n\geq 4K+4i+3$, see Proposition~\ref{prop:characterization_of_temperedness_in_Sobolev_algebras}. By Lemma~\ref{lem:positive_forms_on_staralgebras}, we obtain that $v_{K,i}$ vanishes on $(\mathcal{A}_i)^{3^{i+1}}$. This concludes the proof of the proposition.
  \end{proof}
\begin{proof}[Proof of Theorem~\ref{thm:main_operator_coefficients}]
Thanks to Proposition~\ref{prop:tempered_implies_Ctempered}, the temperedness assumption is automatically upgraded to complete temperedness. We apply Lemma~\ref{lem:Taylor_expansion_in_Schwartz_space} with $r = n^{\frac 1 2 - 2A} (\log n)^{-\frac 7 2}$, which is chosen so that
  \[ k_n \frac{(C(K+r)^2 \log(1+K+r)^{12} K^4 \log (1+K)^{4})^{K+r}}{n^r} \to 0.\]
So by Proposition~\ref{prop:criterion_for_strong_convergence_coefficients} we deduce the convergence in probability. The almost sure convergence is obtained by the concentration of measure phenomenon from Proposition~\ref{prop:concentration}.\end{proof}
For $K_n=1$ (that is, $A=0$), Parraud proved that $v_{1,i}$ are tempered for every $i$. As a consequence, we obtain the unconditional result from Theorem~\ref{thm:large_coefficients_standard}.

\section{Proof of Corollary \ref{cor:mainSUn} }

Corollary \ref{cor:mainSUn} follows easily from the proof of Theorem
\ref{thm:main} and the following lemma:
\begin{lem}
\label{lem:comparisonUnSUn}For every partitions $\lambda\vdash k,\mu\vdash\ell$,
every $w\in\F_{r}$ and every $n>(k+\ell)|w|$, we have
\end{lem}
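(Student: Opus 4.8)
The plan is to compare the stable character $s_{\lambda,\mu}$ evaluated on a word map over $\U(n)$ versus over $\SU(n)$, and show that for non-identity $w$ the two expected values essentially agree, with the only discrepancy coming from the determinant. First I would recall that $\U(n) \cong (\SU(n) \times \U(1))/\mu_n$ where $\U(1)$ is the center and $\mu_n$ the group of $n$-th roots of unity acting diagonally, or more concretely write any $u \in \U(n)$ as $u = z \cdot v$ with $v \in \SU(n)$ and $z \in \U(1)$ a scalar with $z^n = \det(u)$ (this $z$ being well-defined up to $\mu_n$). Under $\pi_{\lambda,\mu}$, a central scalar $z\,\mathrm{Id}$ acts by the scalar $z^{k-\ell}$ since the dominant weight sums to $k - \ell$. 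Therefore, for a word $w$ of length summing exponents to some value, the key observation is: $\det(w(u_1,\dots,u_r)) = \prod_i \det(u_i)^{e_i(w)}$ where $e_i(w)$ is the total exponent of $x_i$ in $w$.

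The main case split is then on whether $w$ lies in the commutator subgroup (equivalently all $e_i(w) = 0$) or not. If some $e_i(w) \neq 0$, then writing $u_i = z_i v_i$ with $z_i$ Haar on $\U(1)$ independent of $v_i$ Haar on $\SU(n)$ (this is the correct disintegration of Haar measure on $\U(n)$ up to the finite $\mu_n$ ambiguity, which is measure-zero-irrelevant), we get $s_{\lambda,\mu}(w(u_1,\dots,u_r)) = \left(\prod_i z_i^{e_i(w)}\right)^{?}$ times $s_{\lambda,\mu}(w(v_1,\dots,v_r))$ — more precisely the scalar $w(z_1\mathrm{Id},\dots) = \prod z_i^{e_i(w)}\,\mathrm{Id}$ pulls out as $\left(\prod_i z_i^{e_i(w)}\right)^{k-\ell}$. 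Integrating over the $z_i$ first kills this unless $(k-\ell)e_i(w) = 0$ for all $i$; so when $w$ has some nonzero exponent and $k \neq \ell$, the $\U(n)$-integral vanishes. When $k = \ell$ or all exponents vanish, the scalar contributes trivially and $\E_n^{\U(n)}[s_{\lambda,\mu}(w)] = \E_n^{\SU(n)}[s_{\lambda,\mu}(w)]$ exactly. I would state the lemma's conclusion as this equality (for the relevant case) or as a clean relation covering both cases.

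The hard part, I expect, is handling the $\mu_n$-ambiguity carefully: the map $\SU(n) \times \U(1) \to \U(n)$ is an $n$-to-$1$ cover, so pushing forward the product of Haar measures gives Haar measure on $\U(n)$, and since we only integrate, the finite fibers cause no trouble — but one must check that $s_{\lambda,\mu}(w(\cdot))$ is genuinely a function on $\U(n)^r$ (it is, being a character) so that the disintegration argument is legitimate. A secondary subtlety is that on $\SU(n)$ the representation $\pi_{\lambda,\mu}$ may coincide with $\pi_{\lambda',\mu'}$ for shifted weights, but this does not affect the \emph{value} $s_{\lambda,\mu}(v)$ for $v \in \SU(n)$, so it is harmless here. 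Given these checks, the proof is short: disintegrate Haar measure, pull the central scalar through the representation, integrate the $\U(1)$ variables, and read off the two cases. I would also record the hypothesis $n > (k+\ell)|w|$ is exactly what is needed elsewhere (for the polynomiality statements) rather than for this lemma per se, but it is harmless to carry it.

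\begin{proof}[Proof sketch of Lemma~\ref{lem:comparisonUnSUn}]
Write each $u_i \in \U(n)$ as $u_i = z_i v_i$ with $z_i \in \U(1)$ scalar and $v_i \in \SU(n)$; pushing forward $(\mathrm{Haar}_{\U(1)} \times \mathrm{Haar}_{\SU(n)})^{\otimes r}$ along $(z_i,v_i) \mapsto (z_iv_i)_i$ yields $\mathrm{Haar}_{\U(n)^r}$ (the map is a finite cover, irrelevant under integration). Let $e_i = e_i(w) \in \Z$ be the total exponent of $x_i$ in $w$. Since $w(z_1v_1,\dots,z_rv_r) = \left(\prod_i z_i^{e_i}\right) w(v_1,\dots,v_r)$ and $\pi_{\lambda,\mu}$ sends a central scalar $c\,\mathrm{Id}$ to $c^{k-\ell}\,\mathrm{Id}$, we get
\[
s_{\lambda,\mu}(w(u_1,\dots,u_r)) = \Big(\prod_i z_i^{e_i}\Big)^{k-\ell} s_{\lambda,\mu}(w(v_1,\dots,v_r)).
\]
Integrating over $z_1,\dots,z_r \in \U(1)$ gives $\E_n^{\U(n)}[s_{\lambda,\mu}(w)] = \E_n^{\SU(n)}[s_{\lambda,\mu}(w)]$ if $(k-\ell)e_i = 0$ for all $i$, and $\E_n^{\U(n)}[s_{\lambda,\mu}(w)] = 0$ otherwise. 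In particular the two expectations agree whenever $w$ lies in $[\F_r,\F_r]$ or $k=\ell$.
\end{proof}
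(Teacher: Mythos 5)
Your decomposition $u_i = z_i v_i$ and the computation of the central character $z \mapsto z^{k-\ell}$ match the first half of the paper's argument, and that half is fine. But your proof only establishes the lemma in the case $(k-\ell)e_i(w)=0$ for all $i$; in the complementary case you show that the $\U(n)$ integral vanishes and then stop, concluding only that "the two expectations agree whenever $w\in[\F_r,\F_r]$ or $k=\ell$." That is strictly weaker than the statement, which asserts $\E_n[s_{\lambda,\mu}(w)]=\int_{\SU(n)^r}s_{\lambda,\mu}(w(v_1,\dots,v_r))\,dv$ for \emph{every} $w$ with $n>(k+\ell)|w|$. You never show that the $\SU(n)$ integral also vanishes when some $(k-\ell)e_i(w)\neq 0$, and it need not vanish for general $n$: the center of $\SU(n)$ consists of the $n$-th roots of unity, so the same disintegration applied on the $\SU(n)$ side (writing a Haar element of $\SU(n)$ as $z'_i v_i$ with $z'_i$ uniform in $\mu_n$) kills the integral only when $(k-\ell)e_i(w)\equiv 0 \pmod n$ for all $i$, not when $(k-\ell)e_i(w)=0$ in $\Z$. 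The two vanishing conditions coincide exactly because $n>(k+\ell)|w|\geq |k-\ell|\cdot|e_i(w)|$ forces "divisible by $n$" to mean "equal to $0$."

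So your closing remark that the hypothesis $n>(k+\ell)|w|$ is "not needed for this lemma per se" is precisely where the gap sits: that hypothesis is what makes the $\Z$-condition and the $\Z/n\Z$-condition agree, and hence what makes the two integrals equal in the case you left open. The fix is short --- run your own argument a second time with the center of $\SU(n)$ in place of $\U(1)$, compare the two resulting indicator factors, and invoke the bound on $n$ --- and this is exactly what the paper does.
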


\[
\E_{n}[s_{\lambda,\mu}(w)]=\int_{\SU(n)^{r}}s_{\lambda,\mu}(w(v_{1},v_{2},\ldots,v_{r}))dv_{1}\cdots dv_{r}.
\]

\begin{proof}
Let $z_{1},\dots,z_{r},z'_{1},\dots,z'_{r},v_{1},\dots,v_{r}$ be
independent random variables where $z_{i}$ are uniform in the center
of $\U(n)$ (the complex numbers of modulus one), $z'_{i}$ are uniform
in the center of $\SU(n)$ (the $n$-th roots of unity) and $v_{i}\in\SU(n)$
are Haar distributed, so that $(z_{1}v_{1},\dots,z_{r}v_{r})$ are
independent Haar-distributed variables in $\U(n)$, and $(z'_{1}v_{1},\dots,z'_{r}v_{r})$
are independent Haar-distributed variables in $\SU(n)$. 

By Schur's lemma, for every $z$ in the center of $U(n)$, $\pi_{\lambda,\mu}(z)$
is a multiple of the identity. On the other hand, we know that $(\lambda_{1},\dots,\lambda_{p},\dots,-\mu_{q},\dots,-\mu_{1})$
is the maximal weight of $\pi_{\lambda,\mu},$ so the corresponding
character of the maximal torus appears in the restriction of $\pi_{\lambda,\mu}$.
This means that the scalar $\pi_{\lambda,\mu}(z)$ is $z^{\lambda_{1}+\dots+\lambda_{p}-\mu_{q}-\dots-\mu_{1}}=z^{k-\ell}$
for every $z$ in the center of $\U(n)$. Putting these two facts
together, we see that
\begin{align*}
\E_{n}[s_{\lambda,\mu}(w)] & =\E[s_{\lambda,\mu}(w(z_{1}v_{1},\dots,z_{r}v_{r}))]
\end{align*}
decomposes by independence as the product 
\begin{align*}
\E_{n}[s_{\lambda,\mu}(w)]=\E[s_{\lambda,\mu}(w(v_{1},\dots,v_{r}))]\cdot\E[w(z_{1,}\dots,z_{r})^{k-\ell}],
\end{align*}
and similarly
\begin{align*}
\E[s_{\lambda,\mu}(w(z'_{1}v_{1},\dots,z'_{r}v_{r}))]=\E[s_{\lambda,\mu}(w(v_{1},\dots,v_{r}))]\cdot\E[w(z'_{1},\dots,z'_{r})^{k-\ell}].
\end{align*}
But $\E w(z_{1,}\dots,z_{r})^{k-\ell}$ is equal to $1$ if $w^{k-\ell}$
vanishes in the abelianization $\Z^{r}$ of $\F_{r}$, and is equal
to $0$ otherwise. Similarly, $\E w(z'_{1,}\dots,z'_{r})^{k-\ell}$
is equal to $1$ if $w^{k-\ell}$ vanishes in $(\Z/n\Z)^{r}$ and $0$
otherwise. These two conditions coincide if $n>|k-\ell| \cdot |w|$, and
in particular if $n>(k+\ell)|w|$.
\end{proof}
\begin{proof}[Proof of Corollary \ref{cor:mainSUn}]
 Using Lemma \ref{lem:comparisonUnSUn}, we see that Lemma \ref{lem:Taylor_expansion_in_Schwartz_space}
also holds if $\E_{n}$ denotes the integration over $\SU(n)^{r}$:
the validity of (\ref{eq:general_upper_bound_for_trace}) when $x\in\C_{\leq q}[\F_{r}]$
with $n\geq2D_{q}^{2}$ is by Lemma \ref{lem:comparisonUnSUn}, the
extension for arbitrary $x$ follows with the same proof. Therefore,
Lemma \ref{lem:convergence_in_probability} also holds for $\SU(n)$.
Moreover, the concentration of measure from Proposition \ref{prop:concentration}
also holds for $\SU(n)$ (same reference as there), so Theorem \ref{thm:main}
holds also for independent variables in $\SU(n).$ Corollary \ref{cor:mainSUn}
follows by Corollary \ref{cor:dimensions_SUn_representations}. 
\end{proof}

\section{Transverse maps\label{sec:Transverse-maps}}

We now fix the free group we work with $\F_{r}$ and its generators
$\{x_{1},\ldots,x_{r}\}$. Here we introduce a framework, following
\cite{MageePuder1}, that we use to prove Theorem \ref{thm:random-matrix-est}.

In the sequel, a marked rose refers to a finite \emph{CW}-complex
structure $R_{r}$ whose underlying topological space is the wedge
of $r$ circles, with the base-point being the wedge point, denoted
$o$, and with a marking $\pi_{1}(R_{r},o)\cong\F_{r}$ that identifies
the generators $\{x_{1},\ldots,x_{r}\}$ with oriented circles of
the rose.

The following definition will not be used in the paper but gives important
background motivation for the definitions to follow.
\begin{defn}[Motivational]
A transverse map is a manifold $M$, a based marked rose $R_{r}$
and a continuous function
\[
f:M\to R_{r}
\]
transverse to all the vertices of $R_{r}$. 
\end{defn}

In the rest of the paper we work with \emph{classes of transverse
maps. }Instead of getting into what is an isotopy of transverse maps,
etc., we make the following equivalent combinatorial definition.
\begin{defn}
\label{rem:ribbon-graph}An \emph{(isotopy) class} of (filling) transverse
map from a surface with boundary to a marked rose $R_{r}$ is the
finitary combinatorial data:
\begin{itemize}
\item a marked base-point and orientation for each boundary component
\item a ribbon graph structure on the surface, where vertices are discs
and edges correspond to interfaces between discs on their boundaries;
such an interface is called an \emph{arc}
\item information about which arc is in the preimage of each vertex of $R_{r}$
other than $o$, and which sides of the vertex correspond to which
sides of the arc
\item up to isomorphism of the above data under decoration respecting homemorphisms.
\end{itemize}
We refer to these simply as \emph{classes of transverse maps.}
\end{defn}

\begin{defn}[Strict transverse maps]
A class of transverse map $f$ as in Definition \ref{rem:ribbon-graph}
is \emph{strict} if for every two vertices $p_{1},p_{2}$ of $R_{r}$
that are consecutive on some circle (and not $o$), do \uline{not}
have parallel preimages in the following sense. Suppose the orientation
of their circle in $R_{r}$ points from $p_{1}$ to $p_{2}$. The
preimages of $p_{1}$ and $p_{2}$ are \emph{parallel} if for every
arc $\alpha$ in the preimage of $p_{1}$, there is a parallel\footnote{Cobounding a rectangle.}
arc in the preimage of $p_{2}$ on the side of $\alpha$ `towards'
$p_{2}$. 
\end{defn}

Given a class $[f]$ of transverse map on surface with boundary there
is an obvious way to get a boundary class of transverse map on a union
of circles, denoted $\partial[f]$. 

Given $\kappa:[r]\to\N\cup\{0\}$ let $R_{r}^{\kappa}$ be the (isomorphism
class of) rose with $\kappa(i)+1$ vertices in the interior of the
circle corresponding to $x_{i}$. Let $|\kappa|=\sum_{i\in[r]}\kappa_{i}$. 

Given every non-identity $w\in F_{r}$, let $\mathfrak{\mathfrak{w}}:S^{1}\to R_{r}$
denote a fixed immersion\footnote{Away from the base point of $S^{1}$, where there could be backtracking
if $w$ is not cyclically reduced.} from a based oriented circle to the based rose such that if $\gamma$
is the generator of $\pi_{1}(S^{1},\mathrm{basepoint})$ corresponding
to the chosen orientation, 
\[
\mathfrak{w}_{*}(\gamma)=w\in\pi_{1}(R_{r},o).
\]

\section{Proof of Theorem \ref{thm:random-matrix-est} Part \ref{enu:deg-of-poly}}

Suppose $|w|\leq q$, $\lambda\vdash k,\mu\vdash\ell$ as before.
By a result of Koike \cite[eq. (0.3)]{Koike} it is possible to write
for $g\in\U(n)$
\[
s_{\lambda,\mu}(g)=\sum_{|\lambda'|\leq k,|\mu'|\leq\ell}\alpha_{\lambda',\mu'}^{\lambda,\mu}s_{\lambda'}(g)s_{\mu'}(g^{-1})
\]
where the coefficients $\alpha_{\lambda'.\mu'}^{\lambda,\mu}$ are
given in \emph{(ibid.)} as sums of products of Littlewood--Richardson
coefficients and importantly, do not depend on $n$. Then by base
change between Schur polynomials as above and power sum symmetric
polynomials, we obtain
\begin{equation}
s_{\lambda,\mu}(w)=\sum_{|\lambda'|\leq k,|\mu'|\leq\ell}\beta_{\lambda',\mu'}^{\lambda,\mu}p_{\lambda'}(w)p_{\mu'}(w^{-1})\label{eq:power-sum-expansion}
\end{equation}
 where again the coefficients do not depend on $n$. This tells us
the poles of $\E[s_{\lambda,\mu}(w)]$ are at most the union of every
possible pole of 
\[
\E_{n}[p_{\lambda'}(w)p_{\mu'}(w^{-1})]
\]
 with $k'=|\lambda'|\leq k,\ell'=|\mu'|\leq\ell.$ 

For the purpose of bounding the poles, we can here use the Weingarten
calculus in the most naive possible way (later, we need to do something
more sophisticated). To this end, \cite[Theorem 2.8]{MageePuder1}
yields
\begin{equation}
\E_{n}[p_{\lambda'}(w)p_{\mu'}(w^{-1})]=\sum_{\substack{[f:\Sigma\to R_{r}^{(2)}]\\
\partial f\cong\mathfrak{w}\circ\varphi_{\lambda',\mu'}
}
}\left(\prod_{1\leq i\leq r}\mathrm{Wg}_{(k'+\ell')L_{i}(w)}\left(\pi_{f,i}\right)\right)n^{N(f)}\label{eq:sum_of_wg_function}
\end{equation}
where $R_{r}^{(2)}=R_{r}^{(\kappa)}$ with $\kappa=(2,2,\ldots,2)$,
$\pi_{f,i}\in S_{(k'+\ell')L_{i}(w)}$ are permutations determined
from the combinatorial structure of $f$, and $N(f)\in\N$ is similar
--- neither depend on $n$. It is a finite sum.

\uline{}

Each term
\[
\mathrm{Wg}_{(k'+\ell')L_{i}(w)}\left(\pi_{f,i}\right)\in\Q(n)
\]
is given by the formula for the \emph{Weingarten function }\cite[eq. (9)]{CollinsSniady}
\begin{equation}
\mathrm{Wg}_{L}(\pi)=\frac{1}{(L!)^{2}}\sum_{\lambda\vdash L}\frac{\chi_{\lambda}(1)^{2}}{s_{\lambda}(1)}\chi_{\lambda}(\pi);\label{eq:Wg-def}
\end{equation}
here we are viewing $s_{\lambda}(1)$ as the formal element of $\Q(n)$
\[
s_{\lambda}(1)\eqdf\frac{\prod_{\square\in\lambda}(n+c(\square))}{\prod_{\square\in\lambda}h_{\lambda}(\square)},
\]
$c(\square)$ is the content of the box,
\global\long\def\F{\mathbf{F}}%
\[
c(\square)=j(\square)-i(\square)
\]
and $h_{\lambda}(\square)$ is the hook-length of the box. The only
terms in the formula for the Weingarten function that depend on $n$
are therefore the denominators $\prod_{\square\in\lambda}(n+c(\square))$.

If $c\geq0$ then the factor $(n+c)$ appears at most $d(c,L)$
times in the denominator of $\mathrm{Wg}_{L}(\pi)$ where $d(c,L)$
is the largest natural number $d$ such that 
\[
d(d+c)\leq L.
\]
Clearly $d(c,L)\leq\frac{L}{c}$ for $c>0$. If $c<0$ then a similar
argument says $(n+c)$ appears at most $d(n,-c)$ times.

This means that $\prod_{\square\in\lambda}(n+c(\square))$ can be
written as $n^{L}Q_{\lambda}(\frac{1}{n})$ where $Q_{\lambda}$ is
a polynomial that divides $g_{L}(\frac{1}{n})$ where (as in (\ref{eq:g-def}))
\[
g_{L}(x)\eqdf\prod_{c=1}^{L}(1-c^{2}x^{2})^{\lfloor\frac{L}{c}\rfloor}.
\]
Therefore, $\frac{g_{L}(\frac{1}{n})}{\prod_{\square\in\lambda}(n+c(\square))}=\frac{1}{n^{L}}\frac{g_{L}}{Q_{\lambda}}(\frac{1}{n})$
is a polynomial in $\frac{1}{n}$ of degree $\leq L + \operatorname{deg}(g_L)$.
By \eqref{eq:Wg-def} the same is true for $g_{L}(\frac{1}{n})\mathrm{Wg}_{L}(\pi)$.
Since 
\[
\prod_{i=1}^{r}g_{(k'+\ell')L_{i}}
\]
divides $g_{(k+\ell)q},$ we see from (\ref{eq:power-sum-expansion})
that for $n\geq(k+\ell)q$, the rational function agreeing with $g_{(k+\ell)q}(\frac{1}{n})\E[s_{\lambda,\mu}(w)]$
in this range can be written as a sum of a polynomial in $\frac{1}{n}$
of degree $\leq(k+\ell)q+\operatorname{deg}(g_{(k+\ell)q})$ and possibly
a polynomial in $n$ (because of the terms $n^{N(f)}$). However,
the polynomial in $n$ is necessarily constant because it is known
(see Theorem 1.7 in \cite[Thm. 1.7]{MageePuder1}) that $g_{(k+\ell)q}\left(\frac{1}{n}\right)\E[s_{\lambda,\mu}(w)]$
remains bounded as $n\to\infty$.

To conclude, observe that we can bound the degree of $g_{L}$ by
\[
2\sum_{c=1}^{L}\lfloor\frac{L}{c}\rfloor\leq2\sum_{c=1}^{L}\frac{L}{c}\leq2L(1+\log L),
\]
so that $(k+\ell)q+\operatorname{deg}(g_{(k+\ell)q})\leq3L(1+\log L)$.

\section{Proof of Theorem \ref{thm:random-matrix-est} Part \ref{enu:generic-decay}
and \ref{enu:poly}\label{sec:Proof-of-Theorem-random-matric}}

In the paper \cite{MageeRURSGII} the integrals 
\[
\int_{U(n)^{4}}\Tr(\tilde{w}(u_{1},u_{2},u_{3},u_{4}))s_{\lambda,\mu}([u_{1},u_{2}][u_{3},u_{4}])du_{1}du_{2}du_{3}du_{4}
\]
w.r.t. Haar measure are calculated in a very specific way that exhibits
cancellations that cannot be easily seen\footnote{In fact, we do not know how to.}
using the formula (\ref{eq:power-sum-expansion}). The initial part
of this calculation is very general and does not depend on $\tilde{w}$
or the word $w=[x_{1},x_{2}][x_{3},x_{4}]$. So the method generalizes
as-is to the case when the $\Tr(\tilde{w})$ term is not present and
$[x_{1},x_{2}][x_{3},x_{4}]$ is replaced by any reduced word $w$.
That is, to integrals of the form
\[
\E_{n}[s_{\lambda,\mu}(w)]\eqdf\int_{U(n)^{r}}s_{\lambda,\mu}(w(u_{1},\ldots,u_{r}))du_{1}\cdots du_{r}
\]
 where $w$ is any element of $\F_{r}$, viewed as a reduced word
in the generators. 

The analog of \cite[Cor. 4.7]{MageeRURSGII} here is the following
bound. Let $R_{r}^{0}$ be the based $CW$-complex structure on $R_{r}$
with one interior vertex per circle (and a vertex for the wedge point
$o$). Recall the immersion $\mathfrak{w}:(S^{1},\mathrm{basepoint})\to(R_{r}^{0},o)$
from $\S$\ref{sec:Transverse-maps}. Let 
\[
\varphi_{\lambda,\mu}:\coprod_{i=1}^{\ell(\lambda)}S^{1}\sqcup\coprod_{i=1}^{\ell(\mu)}S^{1}\to S^{1}
\]
denote the map whose components are 
\begin{itemize}
\item multiplication by $\lambda_{i}$ on the $i$\textsuperscript{th}
circle, if $i\leq\ell(\lambda)$, 
\item and multiplication by $-\mu_{i}$ (so orientation reversing) on the
$(i-\ell(\lambda))$\textsuperscript{th}circle for $i>\ell(\lambda)$. 
\end{itemize}
This map is a convenient way of book-keeping winding numbers.
\begin{prop}
\label{prop:Magee-decay-rate}We have 
\[
\E_{n}[s_{\lambda,\mu}(w)]=O_{k,\ell,w}(n^{\max_{w,k,l}\chi(\Sigma)})
\]
where $\max_{w,k,\ell}$ is the maximum over classes of strict transverse
maps $[f]$ from a surface with boundary to $R_{r}^{0}$ such that
\begin{description}
\item [{~}] $\Sigma$ is an oriented surface with compatibly oriented
boundary
\item [{~}] For some $\lambda'\vdash k$ and $\mu'\vdash\ell$, $\partial[f]$
factors as 
\begin{equation}
\partial[f]=[\mathfrak{w}\circ\varphi_{\lambda',\mu'}]\label{eq:boundary-map-form}
\end{equation}
where $[\mathfrak{w}\circ\varphi_{\lambda',\mu'}]$ is the (unique)
class of transverse map to $R_{r}^{0}$ obviously associated to the
immersion $\mathfrak{w}\circ\varphi_{\lambda',\mu'}$. Components
of $\partial\Sigma$ mapped in an orientation respecting (resp. non-orientation
respecting) way by $\varphi_{\lambda',\mu'}$ are called positive
(resp. negative).
\item [{Forbidden~Matchings}] In the factorization of $\partial [f]$ above,
no such arc has endpoints in a positive and negative component of
$\partial\Sigma$ that are mapped to the same point under $\varphi_{\lambda',\mu'}$.
\end{description}
\end{prop}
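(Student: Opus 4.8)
The plan is to adapt the computation of \cite[\S 4]{MageeRURSGII} essentially verbatim, dropping the factor $\Tr(\tilde w)$ and allowing an arbitrary reduced word $w$ in place of $[x_1,x_2][x_3,x_4]$. The starting point is \emph{not} the naive power-sum expansion \eqref{eq:power-sum-expansion}, since the cancellations we need are invisible there; instead one works directly in mixed tensor space. Concretely, I would realize $s_{\lambda,\mu}$ as a weighted trace of a word-substituted operator on $\left(\C^n\right)^{\otimes k}\otimes\left(\left(\C^n\right)^{\vee}\right)^{\otimes\ell}$, using the idempotent in $\C[S_k]\otimes\C[S_\ell]$, together with Koike's description \cite{Koike} of $V^{\lambda,\mu}$ inside the mixed tensor space, that projects onto the $\pi_{\lambda,\mu}$-isotypic component. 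Substituting $w(u_1,\dots,u_r)$ for $g$ and expanding, one integrates out each $u_i$ over $\U(n)$ by the Weingarten formula, so that each generator $x_i$ contributes, via its signed occurrences in $w$, a sum over a pair of permutations weighted by a Weingarten function $\mathrm{Wg}$, exactly as in \eqref{eq:sum_of_wg_function}--\eqref{eq:Wg-def}.

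Next I would reassemble the combinatorial data of such a term into a class of transverse map $f\colon\Sigma\to R_r^0$: the cycles coming from the character force the boundary $\partial[f]$ to factor as $[\mathfrak{w}\circ\varphi_{\lambda',\mu'}]$ for some $\lambda'\vdash k$, $\mu'\vdash\ell$ (the $\lambda',\mu'$ recording which internal contractions the idempotent has used), while the Weingarten matchings together with the ribbon structure of the word cycle glue discs along arcs so that the term carries a power of $n$ equal to $\chi(\Sigma)$ to leading order, the bookkeeping being the standard Euler-characteristic count (each face of the ribbon graph contributes a factor $n$, each transposition in a leading Weingarten term contributes $n^{-1}$, and so on).

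The heart of the argument --- and where \cite{MageeRURSGII} goes beyond a naive estimate --- is the cancellation that restricts the surviving $[f]$. Using the Möbius-function structure of $\mathrm{Wg}_L$ in its large-$n$ expansion together with the idempotent defining $V^{\lambda,\mu}$, I expect the terms indexed by \emph{non-strict} transverse maps to telescope: whenever the preimages of two consecutive vertices on a circle of $R_r^0$ are parallel, the contributions organize into an alternating sum that vanishes, leaving only strict $[f]$. The \emph{forbidden matchings} condition is then the geometric shadow of the passage from $\left(\C^n\right)^{\otimes k}\otimes\overline{\left(\C^n\right)}^{\otimes\ell}$ to the genuine representation $V^{\lambda,\mu}$: Koike's formula subtracts the traces obtained by contracting a $\C^n$-leg against a $\overline{\C^n}$-leg, and such a contraction is precisely an arc whose endpoints lie on a positive and a negative boundary component that $\varphi_{\lambda',\mu'}$ sends to the same point of $S^1$. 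After these cancellations, $\E_n[s_{\lambda,\mu}(w)]$ is a finite sum of terms each of size $O_{k,\ell,w}(n^{\chi(\Sigma)})$ with $[f]$ ranging over the allowed strict transverse maps, and taking the maximum over $\chi(\Sigma)$ gives the proposition.

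The main obstacle is organizational rather than conceptual: one must verify that every step of \cite[\S 4]{MageeRURSGII} preceding the point where $\tilde w$ enters is genuinely independent of the presence of the $\Tr(\tilde w)$ factor and of the particular word. This should hold because $\tilde w$ only contributes its own surface piece glued onto the rest at a later, logically separate stage, so the cancellation analysis that produces the strictness and forbidden-matchings constraints is untouched; nonetheless, stating this cleanly in the present generality, and confirming that no step of that analysis secretly used the commutator structure of $[x_1,x_2][x_3,x_4]$, is the part that will require the most care.
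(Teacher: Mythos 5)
Your plan follows the same route as the paper's appendix: realize $s_{\lambda,\mu}(w(u))$ as a trace against the isotypic projection $\q$ on the mixed tensor space $\left(\C^{n}\right)^{\otimes k}\otimes\left(\left(\C^{n}\right)^{\vee}\right)^{\otimes\ell}$, expand the word, integrate with the Weingarten calculus, assemble the matching data into a ribbon-graph surface, and derive the \textbf{Forbidden Matchings} constraint from the fact that the projection onto $V^{\lambda,\mu}$ annihilates contractions of a covariant against a contravariant slot (the paper's Theorem \ref{thm:projection}, part 2) --- exactly the mechanism you describe via Koike. The only point where your described mechanism differs in flavor is the reduction to the surviving surfaces with boundary of the form \eqref{eq:boundary-map-form}: you propose an exact telescoping cancellation, whereas the paper (following \cite[\S\S 4.2]{MageeRURSGII}) uses a monotone surgery replacing $(\pi_i,\D)$ by $(\pi_i',\D')$ with $\pi_i'\in S_k\times S_\ell$ and $\sigma_i'=\tau_i'$ without decreasing the contribution; this is a detail of the adaptation you already defer to that reference, not a gap.
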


This proposition follows formally from \cite[\S\S 3.1 - \S\S 4.3]{MageeRURSGII},
but we give the details in the Appendix, $\S$\ref{sec:Appendix:-Extension-of}.
The \textbf{Forbidden Matchings} property is completely crucial and
the main point of the method introduced in \emph{(ibid.). }The following
topological result is a new input to that method.
\begin{prop}
\label{prop:key-top-prop}If $w$ is not a proper power in $\F_{r}$
and cyclically reduced, then any class of transverse map $[f]$ on
underlying surface $\Sigma$ satisfying the conditions of Proposition
\ref{prop:Magee-decay-rate} has 
\[
\chi(\Sigma)\leq-\frac{k+\ell}{6}.
\]
\end{prop}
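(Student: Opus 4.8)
The plan is to reduce the estimate to the Duncan--Howie theorem \cite{DuncanHowie}, which asserts that $\scl(w)\geq\tfrac12$ for any non-power $w$ in a free group, by realizing $\Sigma$ --- after a controlled surgery --- as an admissible surface for a stable commutator length computation. This is already the mechanism behind the case $\mu=\emptyset$ of Theorem~\ref{thm:random-matrix-est}: with no negative boundary components, $\Sigma$ \emph{is}, since $R_r^0$ is just the rose $R_r$ equipped with extra vertices, an admissible surface for the chain $k\cdot[w]$ of degree one, so $-\chi(\Sigma)\geq2k\,\scl(w)\geq k$; cyclic reducedness of $w$ guarantees that every boundary component maps to a nontrivial conjugacy class, so $\Sigma$ has no disc components, and strictness of $[f]$ rules out sphere components, whence $-\chi(\Sigma)=-\chi^-(\Sigma)$. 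The genuinely new difficulty is the presence of boundary components of \emph{both} signs; this is exactly where the \textbf{Forbidden Matchings} hypothesis enters and where the constant degrades from $1$ to $\tfrac16$.

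First I would reduce to $\Sigma$ connected with $\partial\Sigma\neq\emptyset$: Euler characteristic and the partitions $\lambda',\mu'$ both distribute over connected components, and a closed component contributes $\chi\leq0$ once sphere components are excluded by strictness. Write $m=k+\ell$ for the total wrapping number of $\partial\Sigma$ around $w$. The core construction is this: cutting $\Sigma$ along the arcs (the preimages of the interior vertices $v_1,\dots,v_r$ of $R_r^0$) exhibits it as a union of $2$-cells, each mapping to a single edge of $R_r^0$. I would $2$-colour these cells ``positive''/``negative'' so that every cell meeting a positive (resp.\ negative) boundary component of $\Sigma$ is coloured accordingly, cut $\Sigma$ along every arc whose two sides carry different colours, and then cap off with discs all the new boundary circles thereby created. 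Using the identification $R_r^0\cong R_r$, this yields surfaces $S^{+}$ and $S^{-}$ mapping to $R_r$, whose only boundary consists of the original $w$-wrapping circles, so that $S^{+}$ is admissible for $k\cdot[w]$ and $S^{-}$ for $\ell\cdot[w^{-1}]$, each of degree one. Since cutting along an arc and capping a circle each raise $\chi$ by one, one gets $\chi(\Sigma)\leq\chi(S^{+})+\chi(S^{-})$, i.e.\ $-\chi(\Sigma)\geq -\chi(S^{+})-\chi(S^{-})$, and $S^{\pm}$ has no disc component (such a disc would have boundary a nontrivial power of $w$).

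The role of \textbf{Forbidden Matchings}, together with strictness, is to make this colouring ``essentially consistent'': it rules out the degenerate local configurations --- most visibly the trivial annulus joining a positive $w$-circle to a negative one by the identity on $w$ --- which would otherwise allow a surface with $\chi=0$, and which, under the cut-and-cap procedure, would manifest as either cells forced to carry both colours or an uncontrolled number of sphere components of $S^{\pm}$. The step I expect to be the main obstacle is precisely the quantitative form of this statement: bounding, in terms of $-\chi(\Sigma)$ and $m$, both the Euler-characteristic cost of resolving the cells that meet boundary of both signs and the number of sphere components of $S^{\pm}$, so as to conclude $-\chi(\Sigma)\geq c\bigl(-\chi^-(S^{+})-\chi^-(S^{-})\bigr)$ for a universal constant $c$; this book-keeping is what should produce $c=\tfrac16$. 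Granting it, Duncan--Howie applied to $S^{+}$ and to $S^{-}$ gives $-\chi^-(S^{+})\geq2k\,\scl(w)\geq k$ and $-\chi^-(S^{-})\geq2\ell\,\scl(w)\geq\ell$, hence $-\chi(\Sigma)\geq(k+\ell)/6$, as claimed. Observe that the hypothesis that $w$ is not a proper power is used \emph{only} through Duncan--Howie, whereas cyclic reducedness is used in the reductions above and to guarantee that the boundary data of $S^{\pm}$ is reduced; any sharpening of the book-keeping would immediately improve the exponent in Part~\ref{enu:generic-decay} of Theorem~\ref{thm:random-matrix-est} towards the conjectured value $1$.
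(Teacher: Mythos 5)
Your strategy --- surgering $\Sigma$ into admissible surfaces $S^{+}$, $S^{-}$ for $k\cdot[w]$ and $\ell\cdot[w^{-1}]$ and invoking Duncan--Howie --- is genuinely different from the paper's argument, but the step you yourself flag as ``the main obstacle'' is not a book-keeping detail: it is the entire content of the proposition, and as set up it does not go through. Two concrete problems. First, the two-colouring of cells is not well defined: a single disc of the ribbon graph can have boundary segments on both positive and negative components of $\partial\Sigma$, and the \textbf{Forbidden Matchings} hypothesis does not forbid this --- it only forbids a single \emph{arc} from having its two endpoints at a positive and a negative boundary point lying over the same point of the $w$-circle. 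Second, and more seriously, the inequality you need runs the wrong way with respect to the closed components created by capping: cutting and capping give $\chi(\Sigma)\leq\chi(S^{+})+\chi(S^{-})$, but Duncan--Howie controls $-\chi^{-}(S^{\pm})$, and $-\chi(S^{\pm})\leq-\chi^{-}(S^{\pm})$ since each sphere component contributes $+2$ to $\chi(S^{\pm})$. So every sphere produced by your capping eats into the lower bound, and nothing in the proposal bounds their number; the constant $\tfrac16$ is asserted to emerge from unspecified accounting rather than derived. Without a mechanism that converts Forbidden Matchings into a quantitative bound on both the colour-resolution cost and the capped spheres, the argument yields no lower bound on $-\chi(\Sigma)$ at all.

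For contrast, the paper does not route this proposition through $\scl$ (Duncan--Howie is used only for Part~\ref{enu:poly}, where $\mu=\emptyset$). Instead it works directly with the ribbon graph: writing $V^{*},E^{*}$ for the vertices of valence $\geq3$ and the maximal chains between them, trivalence gives $-\chi(\Sigma)\geq E^{*}/3$, and the heart of the proof is a word-combinatorial argument showing every such chain has length at most $|w|$. That argument is where both hypotheses are consumed: ``not a proper power'' rules out the same-sign overlap $w^{a}w_{2}=w_{1}w^{B}$ (via the standard lemma that a word commuting in this way with $w$ is a power of $w$), and \textbf{Forbidden Matchings} rules out the exact-offset alignment $|u_{1}|=|\overline{v_{2}}|$ between a positive and a negative boundary segment, which is precisely the ``trivial annulus'' degeneration you mention. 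Since the total boundary length is $(k+\ell)|w|$ shared between two sides of each chain, one gets $E^{*}\geq\frac{k+\ell}{2}$ and hence $-\chi(\Sigma)\geq\frac{k+\ell}{6}$; the $\tfrac16$ is just $\tfrac12\cdot\tfrac13$ from these two counts, not from any surgery cost. If you want to salvage your approach, the missing lemma you would have to prove is essentially equivalent to this edge-length bound, at which point the detour through $\scl$ buys nothing for mixed signatures.
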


\begin{cor}[Theorem \ref{thm:random-matrix-est} Part \ref{enu:generic-decay}]
\label{cor:part3}If $w$ is not a proper power in $\F_{r}$ then
\[
\E_{n}[s_{\lambda,\mu}(w)]=O_{k,\ell,w}\left(n^{-\frac{1}{6}\left(k+\ell\right)}\right).
\]
\end{cor}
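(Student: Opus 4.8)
The plan is to deduce Corollary~\ref{cor:part3} by combining Proposition~\ref{prop:Magee-decay-rate} with the topological estimate Proposition~\ref{prop:key-top-prop}, after first reducing to the case where $w$ is cyclically reduced. Since $s_{\lambda,\mu}$ is a character of $\U(n)$, hence a class function, and since for every $g\in\F_{r}$ and $v_{1},\dots,v_{r}\in\U(n)$ one has $(gwg^{-1})(v_{1},\dots,v_{r})=g(v_{1},\dots,v_{r})\,w(v_{1},\dots,v_{r})\,g(v_{1},\dots,v_{r})^{-1}$, the integral $\E_{n}[s_{\lambda,\mu}(w)]$ depends only on the conjugacy class of $w$ in $\F_{r}$. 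Every $w\neq e$ is conjugate to a cyclically reduced $w_{0}\neq e$, and $w$ is a proper power if and only if $w_{0}$ is (if $w_{0}=h^{d}$ then $w=g w_{0} g^{-1}=(ghg^{-1})^{d}$, and conversely). Since the implied constant in $O_{k,\ell,w}$ is allowed to depend on $w$, it therefore suffices to prove the corollary under the additional assumption that $w$ is cyclically reduced.

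Granting this, Proposition~\ref{prop:Magee-decay-rate} gives
\[
\E_{n}[s_{\lambda,\mu}(w)]=O_{k,\ell,w}\!\left(n^{\,\max\chi(\Sigma)}\right),
\]
the maximum being taken over the finite set of classes of strict transverse maps $[f\colon\Sigma\to R_{r}^{0}]$ whose boundary factors as in \eqref{eq:boundary-map-form} and which satisfy the Forbidden Matchings property (if this set is empty the expectation vanishes identically and there is nothing to prove). For each such class, Proposition~\ref{prop:key-top-prop} — this is precisely the point where the hypotheses that $w$ is cyclically reduced and not a proper power are used — yields $\chi(\Sigma)\le-\tfrac{k+\ell}{6}$. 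Taking the maximum over this finite set gives $\max\chi(\Sigma)\le-\tfrac{k+\ell}{6}$, hence $\E_{n}[s_{\lambda,\mu}(w)]=O_{k,\ell,w}\!\left(n^{-\frac{1}{6}(k+\ell)}\right)$, as claimed.

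The corollary thus has essentially no content beyond assembling the two propositions; all the genuine difficulty is concentrated in Proposition~\ref{prop:key-top-prop}, whose proof (the surface argument exploiting that $w$ is not a proper power, together with the Forbidden Matchings constraint) is the real work and is carried out separately. The only subtlety to verify here is that the reduction to cyclically reduced $w$ is legitimate, which it is by the class-function property of $s_{\lambda,\mu}$ and the conjugation-invariance of being a proper power.
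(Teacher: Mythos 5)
Your proposal is correct and follows exactly the paper's route: reduce to cyclically reduced $w$ using that conjugation does not change $\E_{n}[s_{\lambda,\mu}(w)]$ (the paper records this in the N.B. after the corollary), then combine Proposition~\ref{prop:Magee-decay-rate} with the Euler-characteristic bound of Proposition~\ref{prop:key-top-prop}. Nothing is missing.
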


N.B. One can assume without loss of generality in Corollary \ref{cor:part3}
that $w$ is cyclically reduced, as conjugating $w$ does not change
$\E_{n}[s_{\lambda,\mu}(w)]$.

\subsection{Proof of Proposition \ref{prop:key-top-prop}\label{subsec:Proof-of-Proposition}}

Suppose $w$ is cyclically reduced and not a proper power or the identity,
with word length $|w|$. Suppose that some class of transverse map
$[f:\Sigma\to R_{r}^{0}]$ satisfies all the properties of Proposition
\ref{prop:Magee-decay-rate}.

The transverse map $f$ is a (isomorphism class of)
ribbon graph $\mathcal{R}$ whose edges correspond to the arcs of
$f$. Each edge of the ribbon graph inherits a direction and labeling
by $\{x_{1},\ldots,x_{r}\}$ from the manner in which $f$ crosses
the (sole) interior vertex of the $i$\textsuperscript{th} circle
in $R_{r}^{0}$ along this edge. 

Say that a vertex $v$ of $\mathcal{R}$ is a \emph{topological vertex
}if it has valence $d(v)\geq3$. A \emph{topological edge }$e$ is
a maximal chain of edges incident at vertices of valence $2$. (As
$w$ is cyclically reduced, there are no vertices of valence $1$).

Let $V^{*}$, $E^{*}$ denote the topological vertices and edges respectively.
We have
\[
2E^{*}=\sum_{v\in V^{*}}d(v)\geq3V^{*}
\]
and hence
\begin{equation}
-\chi(\Sigma)=E^{*}-V^{*}\geq\frac{E^{*}}{3}.\label{eq:chi-E-bound}
\end{equation}
We now aim to show $E^{*}$ is large under the previous assumptions.

Every topological edge $e$ is bordered by two segments of $\partial\mathcal{R}$
that each spell a subword of $w^{k_{i}}$ or $w^{-\ell_{i}}$ for
some $i$, reading along the fixed orientation of the boundary. 

Let $L(e)$ denote the number of edges in the chain defining $e$. Suppose that for some topological edge $e$, $L(e)>|w|$. For a reduced
word $v$ in the generators of $\F_{r}$, write $\overline{v}$ for
its mirror (reversing order and replacing generators by inverses).
Then both boundary segments spell words
\[
u_{1}w^{a}u_{2},\quad v_{1}w^{b}v_{2}
\]
 with $a,b\in\Z\backslash\{0\}$, $|u_{1}|,|u_{2}|,|v_{1}|,|v_{2}|<|w|$. 

\textbf{Case 1. If $a,b$ have same sign. }In this case one arrives
at
\[
wu=v\overline{w}
\]
with $|u|,|v|<w$. (see Figure \ref{fig:Auxiliary-to-proof}).
\begin{figure}
\centering{}\includegraphics[scale=1.3]{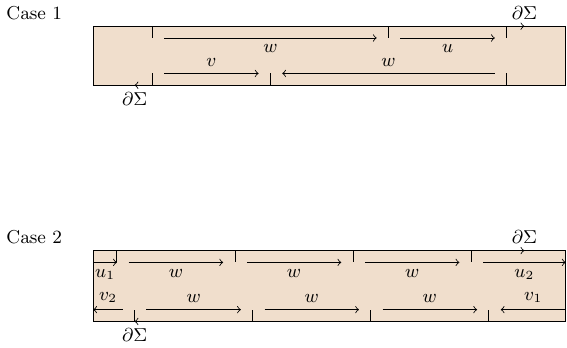}\caption{\label{fig:Auxiliary-to-proof}Auxiliary to proof of Proposition \ref{prop:key-top-prop}.}
\end{figure}
 This is impossible since then $v$ is a prefix of $w$ and $\overline{v}$
is too, meaning $v=\bar{v}$ since they have the same length, so $v$
is empty. Similarly $u$ is empty and $w=\overline{w}$ contradicting
$w$ being the identity. 

\textbf{Case 2. If $a,b$ have opposite signs. }So without loss of
generality assume $a>0$ and $b=-B$ with $B>0$, then we have 
\[
u_{1}w^{a}u_{2}=\overline{v_{2}}w^{B}\overline{v_{1}}.
\]
The \textbf{Forbidden Matchings} property implies that $|u_{1}|\neq|\overline{v_{2}}|$.
Again without loss of generality assume $|u_{1}|<|\overline{v_{2}}|$.
Write $\overline{v_{2}}=u_{1}w_{1}$.

If $|u_{2}|>|\overline{v_{1}}|$ then write $u_{2}=w_{2}\text{\ensuremath{\overline{v_{1}}}}$
to obtain

\begin{equation}
w^{a}w_{2}=w_{1}w^{B}\label{eq:commute}
\end{equation}
as reduced words. This immediately implies $w_{2}=w_{1}$, as both
are prefixes of $w$ of the same length. Since $w$ is not a proper
power, e.g. \cite[Lemma 2.2]{razborov} implies that $w_{1}$ is a
power of $w$, which must be empty since its length is less than $w$.

If $|u_{2}|<|\overline{v_{1}}$\textbar{} then write $w_{2}u_{2}=\overline{v_{1}}$
to arrive at 
\[
w^{a}=w_{1}w^{B}w_{2}.
\]
 This implies $w_{1}$ and $w_{2}$ are both prefixes and suffixes
of $w$. 

Write $w=w_{1}w_{3}$ and obtain
\[
w_{3}w^{a-1}=w^{B}w_{2}.
\]
Since $B>0$, $a>1$ here and we argue as we did from (\ref{eq:commute})
to get a contradiction.

The upshot of this argument is that each topological edge $e$ has
\[
L(e)\le|w|.
\]
Since the total lengths of topological edges is $\frac{1}{2}(k+\ell)|w|$ this
gives $|E^{*}|\geq \frac{k+\ell}{2}$. Hence from (\ref{eq:chi-E-bound})
\[
-\chi(\Sigma)\geq\frac{k+\ell}{6}.
\]

\subsection{Proof of Theorem \ref{thm:random-matrix-est} Part \ref{enu:poly}\label{sec:Improved-bounds-for-poly-char}}

If $\mu=\emptyset$, i.e. $s_{\lambda,\mu}=s_{\lambda}$ is a polynomial
stable character, then instead we can use the following two results.

For $w\in\F_{r}$ the commutator length of $w$, denoted by $\mathrm{cl}(w)\in\N\cup\{\infty\}$
is defined by 
\[
\mathrm{cl}(w)=\inf\{\,g:\,w=[u_{1},v_{1}]\cdots[u_{g},v_{g}]\,,\,u_{i},v_{i}\in\F_{r}\,\}
\]
 interpreted as $\infty$ if it is not possible to write $w$ as a
product of commutators, i.e. $w\notin[\F_{r},\F_{r}]$. The \emph{stable
commutator length} of $w$, denoted $\mathrm{scl}(w)$, is defined
as
\[
\mathrm{scl}(w)\eqdf\lim_{m\to\infty}\frac{\mathrm{cl}(w^{m})}{m}.
\]
On $\F_{r}$, scl takes values in $\Q$ by a result of Calegari \cite{Calegari}.
Duncan and Howie proved in \cite{DuncanHowie} that 
\begin{equation}
\scl(w)\geq\frac{1}{2}\label{eq:duncan-howie}
\end{equation}
 for all $w\in\F_{r}$. This bound can be combined with the following
result of the first named author and Puder following immediately from
\cite[Cor. 1.11]{MageePuder1}.
\begin{thm}
We have 
\begin{equation}
\E_{n}[s_{\lambda}(w)]=O\left(\frac{1}{n^{2k\scl(w)}}\right).\label{eq:mpsclbound}
\end{equation}
\end{thm}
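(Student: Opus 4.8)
The statement is, at bottom, a translation of the surface-counting formula of \cite{MageePuder1} into the language of stable commutator length, and the plan is to prove it by feeding Proposition~\ref{prop:Magee-decay-rate}, specialised to $\mu=\emptyset$, into Calegari's description of $\scl$ via admissible surfaces \cite{Calegari}. Fix $w\in\F_{r}\setminus\{1\}$ (the case $w=1$ is excluded, since $s_{\lambda}(1)$ grows polynomially in $n$); conjugating $w$ changes neither side, so one may assume $w$ cyclically reduced. Since $s_{\lambda,\emptyset}=s_{\lambda}$, Proposition~\ref{prop:Magee-decay-rate} already gives
\[
\E_{n}[s_{\lambda}(w)]=O_{\lambda,w}\big(n^{\max_{[f]}\chi(\Sigma)}\big),
\]
where the maximum is over classes of strict filling transverse maps $[f\colon\Sigma\to R_{r}^{0}]$ with $\partial[f]=[\mathfrak{w}\circ\varphi_{\lambda',\emptyset}]$ for some $\lambda'\vdash k$. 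As $\ell=0$ there are no negative boundary components, so the \textbf{Forbidden Matchings} hypothesis is automatically satisfied and plays no role; what remains is to bound $\chi(\Sigma)$ from above for every such $\Sigma$.

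The key step is the inequality $\chi(\Sigma)\le-2k\,\scl(w)$. A surface $\Sigma$ arising above carries a continuous map to $R_{r}^{0}\simeq K(\F_{r},1)$ whose restriction to $\partial\Sigma$ represents the conjugacy class of $w$ with total winding number $\sum_{i}\lambda'_{i}=|\lambda'|=k$, all boundary components positively oriented; in Calegari's terminology this is exactly an admissible surface for $w$ with $n(\Sigma)=k$. The ``filling'' condition excludes closed components, and since $\F_{r}$ is torsion-free and $w\neq1$ no boundary component bounds a disc, so no component has positive Euler characteristic and $\chi^{-}(\Sigma)=\chi(\Sigma)$. By the definition of stable commutator length as an infimum of $-\chi^{-}(S)/(2\,n(S))$ over admissible surfaces $S$, we get $-\chi(\Sigma)=-\chi^{-}(\Sigma)\ge2k\,\scl(w)$, hence $\chi(\Sigma)\le-2k\,\scl(w)$. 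Combining with the displayed bound proves $\E_{n}[s_{\lambda}(w)]=O(n^{-2k\,\scl(w)})$, which is \eqref{eq:mpsclbound}; this is also exactly the content of \cite[Cor.~1.11]{MageePuder1}, and invoking that corollary directly is the shortest route.

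If $\scl(w)=\infty$, i.e.\ $w\notin[\F_{r},\F_{r}]$, the bound holds for an independent reason: there are then no admissible surfaces, and correspondingly $\E_{n}[s_{\lambda}(w)]=0$ identically. Indeed, the centre of $\U(n)$ acts on the irreducible module $V^{\lambda}$ through the character $z\mapsto z^{k}$, so replacing each Haar unitary $u_{i}$ by $z_{i}u_{i}$ with $z_{i}$ independent Haar in the centre of $\U(n)$ multiplies $s_{\lambda}(w(u_{1},\dots,u_{r}))$ by $w(z_{1},\dots,z_{r})^{k}$; taking expectations forces $\E_{n}[s_{\lambda}(w)]=0$ unless $k[w]=0$ in the abelianisation $\Z^{r}$, that is unless $w\in[\F_{r},\F_{r}]$.

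The part that deserves the most care is not the $\scl$ manipulation but the verification that the surfaces produced by the Weingarten/transverse-map machinery are genuinely admissible in Calegari's sense: that for $\mu=\emptyset$ the boundary wraps $w$ with a single consistent orientation and total degree exactly $k$, so that there is no cancellation in $n(\Sigma)$, and that the ``filling'' and strictness conventions rule out the degenerate components that would otherwise make $\chi$ larger than $\chi^{-}$. This is precisely what is established in \cite{MageePuder1} and, for Proposition~\ref{prop:Magee-decay-rate}, in the appendix of the present paper; so in practice one simply cites \cite[Cor.~1.11]{MageePuder1}, the argument above being how one reconstructs it from the tools already assembled here.
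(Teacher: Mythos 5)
Your proposal is correct and ends up at the same place as the paper: the paper obtains \eqref{eq:mpsclbound} simply by quoting \cite[Cor.~1.11]{MageePuder1}, which you also identify as the intended shortest route. Your additional reconstruction---specialising Proposition~\ref{prop:Magee-decay-rate} to $\mu=\emptyset$ (where the \textbf{Forbidden Matchings} condition is vacuous), invoking Calegari's characterisation of $\scl$ as an infimum of $-\chi^{-}(S)/(2n(S))$ over admissible surfaces to get $\chi(\Sigma)\leq-2k\,\scl(w)$, checking $\chi=\chi^{-}$, and disposing of $w\notin[\F_{r},\F_{r}]$ by the central-character argument---is sound and is essentially the content of the cited corollary, just unpacked with the machinery already assembled in this paper.
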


In fact, \cite[Cor. 1.11]{MageePuder1} says that for every $w\in[\F_{r},\F_{r}]$,
there is some $\lambda$ such that the bound (\ref{eq:mpsclbound})
is saturated, but we do not use this here.

Combining (\ref{eq:duncan-howie}) with (\ref{eq:mpsclbound}) gives
\[
\E_{n}[s_{\lambda}(w)]=O\left(n^{-k}\right)
\]
as required.

\section{Appendix: Extension of the method of Random Unitary Representations
of Surface Groups II\label{sec:Appendix:-Extension-of}}

\global\long\def\SU{\mathsf{SU}}%
\global\long\def\p{\mathfrak{p}}%
\global\long\def\NN{\mathcal{N}}%
\global\long\def\hN{\hat{\mathcal{N}}}%
\global\long\def\Wg{\mathrm{Wg}}%
\global\long\def\Res{\mathrm{Res}}%
\global\long\def\ABG{\mathrm{ABG}}%
\global\long\def\zd{\mathrm{zd}}%
\global\long\def\SSTab{\mathcal{SST}}%
\global\long\def\FF{\mathcal{F}}%
\global\long\def\q{\mathfrak{q}}%
\global\long\def\PSU{\mathsf{PSU}}%
\global\long\def\irr{\mathrm{irr}}%
\global\long\def\su{\mathfrak{su}}%
\global\long\def\Ad{\mathrm{Ad}}%
\global\long\def\m{\mathbf{m}}%
\global\long\def\x{\mathbf{x}}%
\global\long\def\LR{\mathsf{LR}}%
\global\long\def\Q{\mathbf{Q}}%
\global\long\def\Z{\mathbf{Z}}%
\global\long\def\T{\mathcal{T}}%
\global\long\def\tkld{\dot{\T}_{n}^{k,\ell}}%
\global\long\def\tkl{\T_{n}^{k,\ell}}%
\global\long\def\match{\mathsf{MATCH}}%
\global\long\def\surfaces{\mathsf{surfaces}}%
\global\long\def\tkldc{\check{\dot{\mathcal{T}}}_{n}^{k,\ell}}%
\global\long\def\tklc{\check{\mathcal{T}}_{n}^{k,\ell}}%
\global\long\def\II{\mathfrak{I}}%
\global\long\def\j{\mathfrak{j}}%

\global\long\def\u{\mathbf{u}}%
\global\long\def\v{\mathbf{v}}%
\global\long\def\r{\mathbf{r}}%
\global\long\def\s{\mathbf{s}}%
\global\long\def\UU{\mathbf{U}}%
\global\long\def\V{\mathbf{V}}%
\global\long\def\RR{\mathbf{\mathbf{R}}}%
\global\long\def\SS{\mathcal{S}}%

In this section we explain the extension of the methodology of \cite{MageeRURSGII}
to bound the integrals $\E_{n}[s_{\lambda,\mu}(w)]$ defined in (\ref{eq:exp-trace-word-def}).
We import some results from \cite{MageeRURSGII}. 

\subsection*{Background }

In this section, fix $\lambda\vdash k$ and $\mu\vdash\ell$, and
assume $n\geq k+\ell$. Let $D_{\lambda,\mu}(n)=s_{\lambda,\mu}(1)$.\textbf{
}Write $\chi_{\lambda}$ for the character of $S_{k}$ associated
to the irreducible representation $W^{\lambda}$ that corresponds
to $\lambda$. Let $d_{\lambda}\eqdf\chi_{\lambda}(\id)=\dim W^{\lambda}$.
Given $\lambda\vdash k$, the element
\[
\p_{\lambda}\eqdf\frac{d_{\lambda}}{k!}\sum_{\sigma\in S_{k}}\chi_{\lambda}(\sigma)\sigma\in\C[S_{k}]
\]
 is the central projection in $\C[S_{k}]$ to the $W^{\lambda}$-isotypic
component. We view $S_{k}\times S_{\ell}$ as a subgroup of $S_{k+\ell}$
in the standard way. 

Let 
\[
\p_{\lambda\otimes\mu}\eqdf\p_{\lambda}\p'_{\mu}
\]
 where $\p'_{\mu}$ is the image of $\p_{\mu}$ under the inclusion
$S_{\ell}\leq S_{k}\times S_{\ell}\leq S_{k+\ell}$. We define Young
subgroups, for $\lambda\vdash k$ 
\[
S_{\lambda}\eqdf S_{\lambda_{1}}\times S_{\lambda_{2}}\times\cdots\times S_{\lambda_{\ell(\lambda)}}\leq S_{k}.
\]
Let 
\[
\tkl\eqdf\left(\C^{n}\right)^{\otimes k}\otimes\left(\left(\C^{n}\right)^{\vee}\right)^{\otimes\ell}.
\]
For $I=(i_{1},\ldots,i_{k+\ell})$ let 
\begin{align*}
I'(I;\pi) & \eqdf i_{\pi(1)},\ldots,i_{\pi(k)},\\
J'(I;\pi) & \eqdf i_{\pi(k+1)},\ldots,i_{\pi(k+\ell)}.
\end{align*}
For $\pi\in S_{k+\ell}$ let 
\[
\Phi(\pi)\eqdf\sum_{I=(i_{1},\ldots,i_{k}),J=(j_{k+1},\ldots,j_{k+\ell})}e_{I'(I\sqcup J;\pi)}^{J}\otimes\check{e}_{I}^{J'(I\sqcup J;\pi)}\in\End(\tkl)
\]
and extend the map $\Phi$ linearly to $\C[S_{k+\ell}]$. 

Recall the definition of the Weingarten function from (\ref{eq:Wg-def}).
Let

\begin{align}
z & \eqdf\sum_{\tau\in S_{k+\ell}}z(\tau)\tau\label{eq:z-theta-def}\\
 & \eqdf\frac{[S_{k}:S_{\lambda}][S_{\ell}:S_{\mu}]}{d_{\lambda}d_{\mu}}\p_{\lambda\otimes\mu}\left(\sum_{\sigma\in S_{\lambda}\times S_{\mu}}\sigma\right)\p_{\lambda\otimes\mu}\Wg_{n,k+\ell}\in\C[S_{k+\ell}].\nonumber 
\end{align}
One has the following bound on the coefficients of $z$ \cite[Lemma 3.3]{MageeRURSGII}
\begin{equation}
z(\tau)=O_{k,\ell}(n^{-k-\ell-\|\tau\|_{k,\ell}}).\label{eq:z-bound}
\end{equation}

Let 
\[
\q\eqdf D_{\lambda,\mu}(n)\Phi(z).
\]

\begin{thm}
\label{thm:projection}~
\begin{enumerate}
\item The operator $\q$ is an orthogonal projection with $\U(n)$-invariant
image that is isomorphic to $V^{\lambda,\mu}$ as a $\U(n)$-representation.
\item For any $i_{1},\ldots,i_{k+\ell},j_{1},\ldots,j_{k+\ell}$ and any
$p\in[k]$,$q\in [k+1,k+\ell]$
\begin{align}	
\sum_{u}\q_{(i_{1}\cdots  i_{k+\ell}),(j_{1}\cdots j_{p-1}uj_{p+1}\cdots j_{q-1} u j_{q+1} \cdots j_{k+\ell})}&=0,\\
\sum_{u}\q_{(j_{1}\cdots j_{p-1}uj_{p+1}\cdots j_{q-1} u j_{q+1} \cdots j_{k+\ell}),(i_{1}\cdots  i_{k+\ell})}&=0 .
\end{align}

\end{enumerate}
\end{thm}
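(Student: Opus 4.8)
The plan is to obtain both parts from the mixed Schur--Weyl duality realized by the map $\Phi$ together with the Weingarten calculus, deriving the zero-sum identities of the second part as a structural consequence of the first. For the first part I would start from the fact that, for $n\geq k+\ell$, $\Phi$ maps $\C[S_{k+\ell}]$ onto the commutant $\Hom_{\U(n)}(\tkl,\tkl)$ --- the mixed (walled-Brauer) form of Schur--Weyl duality --- compatibly with the $*$-structures, the involution $\sigma\mapsto\sigma^{-1}$ on $\C[S_{k+\ell}]$ corresponding to the Hermitian adjoint on $\End(\tkl)$. The element $z$ is then exactly the Weingarten-calculus expression for a minimal projection onto one copy of the irreducible $V^{\lambda,\mu}$ inside $\tkl$: the factor $\Wg_{n,k+\ell}$ turns an a priori non-idempotent averaging operator into the $\U(n)$-equivariant projection onto the correct isotypic-type piece (the standard role of the Weingarten function, cf.\ \cite{CollinsSniady}), the two copies of $\p_{\lambda\otimes\mu}$ together with the averaging $\sum_{\sigma\in S_\lambda\times S_\mu}\sigma$ implement the Young-symmetrizer mechanism cutting the $d_\lambda d_\mu$-fold isotypic component down to a single copy, and the prefactor $[S_k:S_\lambda][S_\ell:S_\mu]/(d_\lambda d_\mu)$ is precisely the scaling that makes $\q=D_{\lambda,\mu}(n)\Phi(z)$ idempotent. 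Self-adjointness of $\q$ follows from $z=z^*$, using that $\p_{\lambda\otimes\mu}$ and the group-sum element are self-adjoint and that $\Wg_{n,k+\ell}$ is central (a linear combination of characters) and self-adjoint, so it commutes past everything. For this bookkeeping I would lean on \cite{MageeRURSGII,Koike} rather than redo it.

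The second part is the crucial point, and I would prove it conceptually from the first. Fix $p\in[k]$ and $q\in[k+1,k+\ell]$, let $C_{p,q}\colon \tkl\to\T_n^{k-1,\ell-1}$ be the contraction pairing the $p$-th (upper) tensor slot with the $q$-th (lower) slot, and $E_{p,q}\colon\T_n^{k-1,\ell-1}\to\tkl$ the co-contraction inserting $\sum_u e_u\otimes\check e_u$ in those two slots. Unwinding the index notation, the first displayed identity of the second part says exactly $\q E_{p,q}=0$ and the second says exactly $C_{p,q}\q=0$: ``setting the two input slots equal and summing'' is precomposition with $E_{p,q}$, while ``setting the two output slots equal and summing'' is postcomposition with $C_{p,q}$. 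Since $\q=\q^*$ and $E_{p,q}=C_{p,q}^*$, these two identities are adjoint to one another, so it suffices to prove $C_{p,q}\q=0$. Now $C_{p,q}$ is $\U(n)$-equivariant, so $C_{p,q}\q$ restricted to $\operatorname{im}\q$ is an element of $\Hom_{\U(n)}(V^{\lambda,\mu},\T_n^{k-1,\ell-1})$. But as a $\U(n)$-module $\T_n^{k-1,\ell-1}$ is a sum of irreducibles $V^{\nu,\rho}$ with $|\nu|\leq k-1$ and $|\rho|\leq\ell-1$, since $(\C^n)^{\otimes(k-1)}$ involves only partitions of size exactly $k-1$, likewise $((\C^n)^\vee)^{\otimes(\ell-1)}$, and tensoring a $V^{\nu}$ with a $V^{\emptyset,\rho}$ can only decrease these sizes by contraction (Koike's rule \cite{Koike}). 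In particular $V^{\lambda,\mu}$, which has $|\lambda|=k$, does not occur in $\T_n^{k-1,\ell-1}$, so $\Hom_{\U(n)}(V^{\lambda,\mu},\T_n^{k-1,\ell-1})=0$ and $C_{p,q}\q=0$, as desired.

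The step I expect to be the main obstacle is pinning down the dictionary between the index-sum identities as written and the operator identities $C_{p,q}\q=0$, $\q E_{p,q}=0$ --- that is, matching the row/column conventions in the definition of $\Phi$ and of the matrix entries $\q_{A,B}$ with the claim that contracting one upper and one lower output (resp.\ input) slot and summing is postcomposition (resp.\ precomposition) with $C_{p,q}$ (resp.\ $E_{p,q}$), and in particular that $E_{p,q}=C_{p,q}^*$ in the chosen conventions. Once this identification is in place the argument above is formal. As a fallback, the second part can also be proved entirely inside $\C[S_{k+\ell}]$: identify the composite $E_{p,q}C_{p,q}$ with $\Phi$ of an appropriate ``wall-crossing'' element of $\C[S_{k+\ell}]$, and show that multiplying $z$ by it vanishes because the outer idempotents $\p_{\lambda\otimes\mu}$ with $|\lambda|=k$, $|\mu|=\ell$ annihilate the contraction ideal of the walled Brauer algebra; but the representation-theoretic argument is cleaner, so I would present that one.
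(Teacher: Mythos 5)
Your proposal is correct and takes essentially the same route as the paper: Part 1 is assembled from the facts in \cite{MageeRURSGII} (which the paper simply cites), and your Part 2 argument --- reading the two index identities as $\q E_{p,q}=0$ and $C_{p,q}\q=0$, reducing one to the other by self-adjointness, and killing the intertwiner because $V^{\lambda,\mu}$ with $|\lambda|=k$ cannot occur in $\T_{n}^{k-1,\ell-1}$ --- is exactly the content of the paper's one-line justification that $\q$ vanishes on the orthocomplement of the contraction-free subspace $\tkld$, that orthocomplement being the span of the images of the co-contractions $E_{p,q}$. The only point worth making explicit is that distinguishing $V^{\lambda,\mu}$ from the $V^{\nu,\rho}$ with $|\nu|\leq k-1$ uses the standing assumption $n\geq k+\ell$.
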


Part 1 is the combination of \cite[Lemma 2.3, eq. (3.11), Prop. 3.2, eq. (3.12)]{MageeRURSGII}.

Part 2 arises from the fact that $\q$ is zero on the orthocomplement
to the contraction free subspace $\tkld$ from \cite{MageeRURSGII}.

\subsection*{Combinatorial integration}

We write $w$ in reduced form:
\begin{equation}
w=f_{1}^{\epsilon_{1}}f_{2}^{\epsilon_{2}}\ldots f_{q}^{\epsilon_{q}},\quad\epsilon_{u}\in\{\pm1\},\,f_{u}\in\{x_{i}\},\label{eq:combinatorial-word}
\end{equation}
where if $f_{u}=f_{u+1}$, then $\epsilon_{u}=\epsilon_{u+1}$. For
$f\in\{x_i\}$ let $p_{f}$ denote the number of occurrences of
$f^{+1}$ in (\ref{eq:combinatorial-word}). 
The expression (\ref{eq:combinatorial-word}) implies that for $u\eqdf(u_{i}:i\in[r])\in\U(n)^{r}$,
\begin{align}
s_{\lambda,\mu}(w(u)) & =\Tr_{\tkl}\left(\q f_{1}^{\epsilon_{1}}\q f_{2}^{\epsilon_{2}}\q\ldots\q f_{q}^{\epsilon_{q}}\right)\label{eq:word-trace}\\
 & =\sum_{I_{j},K_j\in[n]^{k},J_{j},L_j\in[n]^{\ell}}\prod_{u=1}^{q}\q_{K_{i}\sqcup L_{i},I_{i+1}\sqcup J_{i+1}}\nonumber \\
 & (u_{f_{1}}^{\epsilon_{1}})_{I_{1}\sqcup J_{1},K_{1}\sqcup L_{1}}(u_{f_{2}}^{\epsilon_{2}})_{I_{2}\sqcup J_{2},K_{2}\sqcup L_{2}}\cdots(u_{f_{q}}^{\epsilon_{q}})_{I_{q}\sqcup J_{q},K_{q}\sqcup L_{q}}\label{eq:expansion1}\\
 & =\sum_{\pi_{1},\ldots,\pi_{1}\in S_{k+\ell}}\prod_{i=1}^{q}z(\pi_{i})\sum_{I_{j},K_j\in[n]^{k},J_{j},L_j\in[n]^{\ell}}\q_{K_{i}\sqcup L_{i},I_{i+1}\sqcup J_{i+1}}\nonumber \\
 & (u_{f_{1}}^{\epsilon_{1}})_{I_{1}\sqcup J_{1},K_{1}\sqcup L_{1}}(u_{f_{2}}^{\epsilon_{2}})_{I_{2}\sqcup J_{2},K_{2}\sqcup L_{2}}\cdots(u_{f_{q}}^{\epsilon_{q}})_{I_{q}\sqcup J_{q},K_{q}\sqcup L_{q}}\nonumber \\
 & \mathbf{1}\left\{ K_{i}\sqcup J_{i+1}=(I_{i+1}\sqcup L_{i})\circ\pi_{i}\,:\,i\in[q]\,\right\} .\label{eq:expansion2}
\end{align}
In the last line above the indices run mod $q$. Each product of matrices
here can be integrated using the Weingarten calculus.

We view all $I_{u}$ etc as functions from \emph{indices} (the domain)
to $[n]$. We view all domains for distinct $u$ as disjoint as possible. There are however, fixed matchings between the domains of each 
\[
I_{u}\sqcup J_{u}\text{ and }K_{u}\sqcup L_{u}.
\]
These will come into play later.  We now define sub-collections of all the indices that are treated
as `the same type' by the Weingarten calculus. 

For each $i\in[r]$ let:
\begin{itemize}
\item $\SS_{i}$ be all indices of $I_{u}$ such that $f_{u}=x_{i}$ and
$\epsilon_{u}=+1$ and indices of $L_{u}$ such that $f_{u}=x_{i}$
and $\epsilon_{u}=-1$,
\item $\SS_{i}^{*}$ be all indices of $J_{u}$ such that $f_{u}=x_{i}$
and $\epsilon_{u}=+1$ and indices of $K_{u}$ such that $f_{u}=x_{i}$
and $\epsilon_{u}=-1$,
\item $\T_{i}$ be all indices of $K_{u}$ such that $f_{u}=x_{i}$ and
$\epsilon_{u}=+1$ and indices of $J_{u}$ such that $f_{u}=x_{i}$
and $\epsilon_{u}=-1$,
\item $\T_{i}^{*}$ be all indices of $L_{u}$ such that $f_{u}=x_{i}$
and $\epsilon_{u}=+1$ and indices of $I_{u}$ such that $f_{u}=x_{i}$
and $\epsilon_{u}=-1$.
\end{itemize}
Let $\M$ denote the set of data consisting of 
\begin{itemize}
\item for each $i\in[r]$, $\sigma_{i}$ a bijection from $\SS_{i}$ to
$\SS_{i}^{*}$,
\item for each $i\in[r]$, $\tau_{i}$ a bijection from $\T_{i}$ to $\T_{i}^{*}$.
\end{itemize}
Doing the integral of terms in either (\ref{eq:expansion1}) or (\ref{eq:expansion2})
replaces 
\[
(u_{f_{1}}^{\epsilon_{1}})_{I_{1}\sqcup J_{1},K_{1}\sqcup L_{1}}(u_{f_{2}}^{\epsilon_{2}})_{I_{2}\sqcup J_{2},K_{2}\sqcup L_{2}}\cdots(u_{f_{q}}^{\epsilon_{q}})_{I_{q}\sqcup J_{q},K_{q}\sqcup L_{q}}
\]
by 
\[
\sum_{\D\in\M}\prod_{i\in[r]}\Wg_{k+\ell}(\sigma_{i}\tau_{i}^{-1})\mathbf{1}\{\D\text{\, `respected' by \ensuremath{I_{u},J_{u},K_{u},L_{u}}\}.}
\]
Before going on, we make a key argument. Suppose that some index of
$I_{2}$ is matched to an index of $J_{2}$ by $\D$, for example
the two first indices. Then the result of integrating (\ref{eq:expansion1}),
the terms corresponding to $\D$ contain all contain factors 
\[
\sum_{a=I_{2}(1)=J_{2}(1)}\q_{K_{1}\sqcup L_{1},I_{2}\sqcup J_{2}}
\]
where all indices but the first indices of $I_{2}$ and $J_{2}$ are
frozen (conditioned upon). This is zero by Theorem \ref{thm:projection}
part 2. 

This means, going back to (\ref{eq:expansion2}), if we define $\M^{*}$
to be the subset of $\M$ such that 
\begin{itemize}
\item $\sigma_{i}$ never matches indices $I_{u}$ to those of $J_{u}$
for any $u$,
\item $\sigma_{i}$ never matches indices of $L_{u}$ to those of $K_{u}$
for any $u$,
\item $\tau_{i}$ never matches indices of $K_{u}$ to those of $L_{u}$
for any $u$,
\item $\tau_{i}$ never matches indices of $J_{u}$ to those of $I_{u}$
for any $u$,
\end{itemize}
then we obtain
\begin{equation}
\E_{n}[s_{\lambda,\mu}(w(u))]=\sum_{\pi_{1},\ldots,\pi_{q}\in S_{k+\ell}}\sum_{\D\in\M^{*}}\prod_{i=1}^{q}z(\pi_{i})\prod_{i\in[r]}\Wg_{k+\ell}(\sigma_{i}\tau_{i}^{-1})\NN(\pi_{1},\ldots,\pi_{q},\D)\label{eq:fukll-expansion}
\end{equation}
where $\NN(\pi_{1},\ldots,\pi_{q},\D)$ is the number of choices of
$I,J,K,L$ such that 
\begin{align}
K_{u}\sqcup J_{u+1} & =(I_{u+1}\sqcup L_{u})\circ\pi_{u}\,:\,u\in[q],\,\label{eq:pi-matchings}\\
\text{indices matched by \ensuremath{\D}} & \text{ have the same value}.\nonumber 
\end{align}

\subsection*{Surface construction}

Now we construct a surface from $\D\in \M^*$ as follows. Begin with a vertex for every index. Add an edge (called $\D$-edge)
between all matched indices (by $\D$). Add an edge (called $w$-edge)
between indices paired by the fixed identifications
\[
I_{u}\sqcup J_{u}\cong[k+\ell]\cong K_{u}\sqcup L_{u}
\]
and direct this edge from the indices on the left hand side above
to those on the right hand side.

Add also an edge (called $\pi$-edge) between indices matched by (\ref{eq:pi-matchings}).
We now have a trivalent graph.

We now glue two types of discs to this graph following \cite[\S\S 4.1]{MageeRURSGII}.
The boundaries of the discs are glued along two types of cycles in
the graph:
\begin{description}
\item [{Type-I}] Cycles that alternate between $\pi$-edges and $\D$ edges.
Such cycles are disjoint.
\item [{Type-II}] Cycles that alternate between $w$-edges and $\D$-edges.
Again, such cycles are disjoint.
\end{description}
The resulting glued discs therefore meet only along the $\D$-edges
and the resulting total object is a topological surface we call 
\[
\Sigma(\D,\{\pi_{i}\}).
\]
The boundary cycles of this surface alternate between $w$-edges and
$\pi$-edges.

For $\sigma\in S_{k+\ell}$, let $\|\sigma\|_{k,\ell}$ denote the
minimum $m$ for which 
\[
\sigma=\sigma_{0}t_{1}t_{2}\cdots t_{m}
\]
 where $\sigma_{0}\in S_{k}\times S_{\ell}$ and $t_{1},\ldots,t_{m}$
are transpositions in $S_{k+\ell}$. The analog of \cite[Thm. 4.3]{MageeRURSGII}
is that, after an elementary calculation using (\ref{eq:z-bound})
and bounds for the Weingarten function one obtains
\[
\prod_{i=1}^{q}z(\pi_{i})\prod_{i\in[r]}\Wg_{k+\ell}(\sigma_{i}\tau_{i}^{-1})\NN(\pi_{1},\ldots,\pi_{q},\D)\ll_{w,k,\ell}n^{-\sum_{i}\|\pi_{i}\|_{k,\ell}}n^{\chi(\Sigma(\D,\{\pi_{i}\}))},
\]
hence from (\ref{eq:fukll-expansion})
\[
\E_{n}[s_{\lambda,\mu}(w(u))]\ll_{w,k,\ell}\sum_{\pi_{1},\ldots,\pi_{q}\in S_{k+\ell}}\sum_{\D\in\M^{*}}n^{-\sum_{i}\|\pi_{i}\|_{k,\ell}}n^{\chi(\Sigma(\D,\{\pi_{i}\}))}.
\]

\subsection*{Surfaces with large contribution}

It is shown in \cite[\S\S 4.2]{MageeRURSGII} --- the same proof
applies without change to the current setting --- that given any
\[
\pi_{1},\ldots,\pi_{q}\in S_{k+\ell},\,\D\in\M^{*},
\]
it is possible to modify these so that
\begin{equation}
\pi'_{1},\ldots,\pi'_{q}\in S_{k}\times S_{\ell},\,\,\,\,\,\sum_{i}\|\pi'_{i}\|_{k,\ell}=0\label{eq:first-surgery}
\end{equation}

\begin{equation}
\D'=\{\sigma'_{i},\tau_{i}'\}\in\M^{*}\,\text{with \ensuremath{\sigma'_{i}=\tau'_{i}} for all \ensuremath{i\in[r]}}\label{eq:second-surgery}
\end{equation}
and there exists an inequality between corresponding terms
\[
n^{\chi(\Sigma(\D',\{\pi'_{i}\}))}\geq n^{-\sum_{i}\|\pi_{i}\|_{k,\ell}}n^{\chi(\Sigma(\D,\{\pi_{i}\}))}.
\]

The condition (\ref{eq:second-surgery}) means all type-II cycles
are now rectangles with two (non-consecutive) edges in the boundary;
we now replace each rectangle with an arc connecting the two boundary
segments of the rectangle.

\subsection*{Connection to transverse maps}

We now create a class of transverse map on the surface $\Sigma(\D',\{\pi'_{i}\})$
as follows (cf. Definition \ref{rem:ribbon-graph}). The ribbon graph structure is the one dictated by the arcs we just
prior constructed. By construction, they cut the surface into discs.
For each boundary component, place a marked point in some (it does
not matter) $\pi$-edge that arose from $\pi'_{q}$. Recall the $CW$-complex
rose $R_{r}^{0}$ with one point (call it $z_{i}$) in the interior
of the circle corresponding to $x_{i}$. If an arc arose from a rectangle
with edges that arose from $\sigma'_{i}=\tau'_{i}$ then we declare
our function to take the constant value $z_{i}$ on this arc and the
transverse map will traverse the arc from the $\sigma'_{i}$ side
to the $\tau'_{i}$ side. 

The property (\ref{eq:first-surgery}) implies that for every boundary
component of the surface, the $w$-edges are all directed the same
way along this boundary component and hence give an orientation to
the boundary. With respect to this orientation, the isotopy class
of SF transverse map satisfies (\ref{eq:boundary-map-form}).

The fact that $\D'\in\M^{*}$, rather than $\M$, implies that the
class of transverse map has the crucial \textbf{Forbidden} \textbf{Matchings}
property. This completes the proof of Proposition \ref{prop:Magee-decay-rate}.

\bibliographystyle{amsalpha}
\bibliography{strong_convergence}

\noindent Michael Magee, \\
Department of Mathematical Sciences, Durham University, Lower Mountjoy,
DH1 3LE Durham, UK\\
IAS Princeton, School of Mathematics, 1 Einstein Drive, Princeton
08540, USA\\
\texttt{michael.r.magee@durham.ac.uk}~\\

\noindent Mikael de la Salle, \\
Institut Camille Jordan, CNRS, Universit\'{e} Lyon 1, France\\
IAS Princeton, School of Mathematics, 1 Einstein Drive, Princeton
08540, USA\\
\texttt{delasalle@math.univ-lyon1.fr}\\

\end{document}